\numberwithin{equation}{section}
\newcommand{\dd}{\mathrm{d}}
\newcommand{\ee}{\mathrm{e}}
\newcommand{\prob}{\mathsf{P}}
\newcommand{\probd}{\mathbb{P}}
\newcommand{\Ex}{\mathsf{E}}
\newcommand{\Exd}{\mathbb{E}}
\newcommand{\Rl}{\mathbb{R}}
\newcommand{\Cm}{\mathbb{C}}
\newcommand{\pae}{\mathbb{P}\mbox{-a.e.\ }\omega}
\newcommand{\N}{\mathbb{N}}
\newcommand{\cov}{\mathsf{cov}}
\newcommand{\var}{\mathsf{var}}
\newcommand{\ga}{\alpha}
\newcommand{\gb}{\beta}
\newcommand{\gvr}{\varrho}
\newcommand{\go}{\omega}
\newcommand{\gO}{\Omega}
\newcommand{\gl}{\lambda}
\newcommand{\gS}{\Sigma}
\DeclareMathOperator*{\limsmallspace}{\lim~}
\DeclareMathOperator*{\limspace}{\lim~~}
\DeclareMathOperator*{\maxp}{\max\phantom{p}}
\newtheoremstyle{mystyle}%                % Name
  {}%                                     % Space above
  {}%                                     % Space below
  {\itshape}%                                     % Body font
  {}%                                     % Indent amount
  {\bfseries}%                            % Theorem head font
  {.}%                                    % Punctuation after theorem head
  { }%                                    % Space after theorem head, ' ', or \newline
  {\thmname{#1}\thmnumber{ #2}\thmnote{ (#3)}}%                                     % Theorem head spec (can be left empty, meaning `normal')
\theoremstyle{mystyle}
\newtheorem{theorem}{Theorem}[section]
\newtheorem{lemma}[theorem]{Lemma}
\newtheorem{proposition}[theorem]{Proposition}
\newtheorem{corollary}[theorem]{Corollary}
\newtheorem{remark}[theorem]{Remark}
\newtheorem{assumption}{Assumption}[section]
\title[On the localized phase of the pinning model]{Concentration and fluctuation phenomena \\ in the localized phase of the pinning model}
\author{Giambattista Giacomin}
\address{Universit\`a di Padova, Dipartimento di Matematica Tullio Levi-Civita, 
Via Trieste 63, 35121 Padova, Italy}
\email{giacomin@math.unipd.it}
\author{Marco Zamparo}
\address{Universit\`a degli Studi del Piemonte Orientale, Dipartimento di Scienze e Innovazione Tecnologica, 
  Viale Teresa Michel 11, I-15121 Alessandria, Italy}
\email{marco.zamparo@uniupo.it}
\date{}
\begin{document}  

\begin{abstract}
We focus on the localized phase of pinning models with i.i.d.\ site
disorder on which we assume only that the moment generating function
is bounded in a neighborhood of the origin. We develop quantitative
correlation functions estimates for local observables that entail
quantitative $C^\infty$ estimates on the free energy density, showing
in particular that its regularity class is at least Gevrey-3 in the
whole localized phase.  We then explain how a quenched concentration
bound and the quenched Central Limit Theorem (CLT) on the number of
the pinned sites, i.e., the \emph{contact number}, can be extracted
from the regularity estimates on the free energy: this identifies the
thermal fluctuations of the contact number.  But the centering
sequence in the quenched CLT is random in the sense that it is
disorder dependent: we show that the (disorder induced) fluctuations
of the centering are on the same scale of the thermal fluctuations by
establishing a CLT, with a non degenerate variance, also for the
centering.
 % {\red We also provide a concentration bound and a Hardy--Littlewood random walk type-estimate for the centering sequence.} 
 For what concerns the correlation and $C^\infty$ estimates, our work
 substantially generalizes and expands the analysis in
 \cite{giacomin2006_1} that dealt with pinning models with restrictive
 conditions on the disorder distributions and in which less explicit,
 non uniform bounds were obtained.
\bigskip

\noindent  \emph{AMS  subject classification (2020 MSC)}:
60K37,  %Processes in random environments
82B44, % Disordered systems
60K35, % Stat mech. type models
60F05  	%Central limit and other weak theorems

\smallskip
\noindent
\emph{Keywords}: Disordered Pinning Model, Localized Phase, Correlations estimates, Concentration Bounds, Central Limit Theorems 

%\bigskip

%\centerline{This version:  \today}
\end{abstract}     

\maketitle

%\tableofcontents

\section{The pinning model and its localized phase}
\label{sec:1}

\subsection{The pinning model}
\label{sec:intro-model}
Set $\N:=\{1,2,\ldots\}$ and $\N_0:=\N\cup\{0\}$ and let
$T_1,T_2,\cdots$ be i.i.d.\ random variables on a probability
space $(\mathcal{S},\mathfrak{S},\prob)$ valued in
$\N\cup\{\infty\}$. Put $S_0:=0$ and $S_i:=T_1+\cdots+T_i$ for
$i\in\N$. Regarding $S_0,S_1,\ldots$ as renewal times with
inter-arrival times $T_1,T_2,\ldots$, the number of renewals by
the time $n\in\N_0$ is
\begin{equation*}
  L_n:=\sup\big\{i\in \N_0~:~S_i\le n\big\}\,.
\end{equation*}

Given a parameter $h\in\Rl$ and a real sequence
$\omega:=\{\omega_a\}_{a\in\N_0}$, that we call \emph{charges} and
that model impurities or disorder present in the system, for every
$n\in\N_0$ we introduce the \textit{probability pinning model}
\begin{equation*}
  \frac{\dd\prob_{n,h,\omega}}{\dd\prob}:=\frac{1}{Z_{n,h}(\omega)}\ee^{\sum_{i=1}^{L_n}(h+\omega_{S_i})}\mathds{1}_{\{n\in\{S_i\}_{i\in\N_0}\}}\,,
\end{equation*}
provided that the \textit{partition function}
$Z_{n,h}(\omega):=\Ex[\ee^{\sum_{i=1}^{L_n}(h+\omega_{S_i})}\mathds{1}_{\{n\in\{S_i\}_{i\in\N_0}\}}]$
is not 0. Here and in what follows we make the usual convention that
empty sums are 0 and empty products are 1.  We refer to
\cite{giacomin2007,cf:G-SF,cf:dH,cf:Velenik} for an explanation of the
origin and relevance of pinning models in applied sciences, notably
physics and biology. In short, we just point out that the renewal set
is interpreted as the ensemble of points, the \emph{contacts} or
\emph{pinned sites}, at which a polymer, an interface, or another
linear structure enters in contact with a \emph{defect line},
receiving a penalty at the contact point $a$ if $h+\omega_a<0$ or a
reward if $h+\omega_a>0$.  The size of the contact set $L_n$ is
henceforth referred to as \emph{contact number}.

\smallskip

The assumptions on the law of inter-arrival times and the charges are
the following.

\medskip

\begin{assumption}
  \label{assump:p}
 $p(t):=\prob[T_1=t]={\ell(t)}/{t^{\alpha+1}}$ for $t\in\N$ with
  $\alpha\ge 1$ and a slowly varying function at infinity
  $\ell$. Moreover, $p(t)>0$ for all $t\in\N$.
\end{assumption}

\medskip

We recall that a real measurable function $\ell$, defined on
$[1,+\infty)$ in our case, is slowly varying at infinity if
  $\ell(z)>0$ for all sufficiently large $z$ and $\lim_{z \uparrow
    +\infty}\ell(\lambda z)/\ell(z)=1$ for every $\lambda>0$
  \cite{bingham1989}. Besides this, we require that $p(t)>0$, i.e.,
  $\ell(t)>0$, for all $t\in\N$, which in particular guarantees that
  $Z_{n,h}(\omega)>0$ for every $n$ and every $\omega$.  We believe
  that there is no substantial difficulty in dropping this further
  hypothesis, thus generalizing our results to the case in which
  $p(t)>0$ only for $t$ sufficiently large.  This however would add
  one more step of complexity to arguments that are already rather
  technical. We remark also that we do not require that $\sum_{t\in
    \N} p(t)=1-\prob[T_1=\infty]=1$, so the law $p(\cdot)$ is a
  sub-probability on $\N$, but this is just a presentation choice
  because it is well known that there is no loss of generality in
  assuming $\prob[T_1=\infty]=0$ (see, e.g., \cite[pages
    14-15]{cf:G-SF}).

  \medskip
  
\begin{assumption}
  \label{assump:omega}
  The disorder sequence $\omega:=\{\omega_a\}_{a\in\N_0}$ is sampled
  from a probability space $(\Omega,\mathcal{F},\probd)$ in such a way
  that the canonical projections $\omega\mapsto\omega_a$ form a
  sequence of i.i.d.\ random variables. Furthermore,
  $\int_\Omega\ee^{\eta|\omega_0|}\probd[\dd\omega]<+\infty$ for some
  number $\eta>0$ and $\int_\Omega\omega_0\probd[\dd\omega]=0$.
\end{assumption}

\medskip

We say that the model is \emph{pure} or \emph{non disordered} if
$\probd[\{\omega=0\}]=1$, which is tantamount to the condition
$\int_\Omega\omega_0^2\,\probd[\dd\omega]=0$. In this case the model
is explicitly solvable (see \cite{giacomin2007,cf:G-SF,cf:dH}).

\smallskip

Under Assumptions \ref{assump:p} and \ref{assump:omega}, a
superadditivity argument shows that the limit defining the \emph{free
  energy density} (we will simply say \emph{free energy} henceforth)
\begin{equation*}
  f(h):=\lim_{n\uparrow\infty}\Exd\bigg[\frac{1}{n}\log Z_{n,h}\bigg]
\end{equation*}
exists and is finite for all $h\in\Rl$ (see \cite[Theorem
  4.6]{giacomin2007}), thus introducing a non-decreasing Lipschitz
convex function $f$ over $\Rl$.  The bounds
$Z_{n,h}(\omega)\ge\ee^{h+\omega_n}p(n)$ and
$Z_{n,h}(\omega)\ge\ee^{\sum_{a=1}^n(h+\omega_a)}p(1)^n$ give $f(h)\ge
\max\{0,h+\log p(1)\}$.  Putting $h_c:=\inf\{h\in\Rl:f(h)>0\}$, we
deduce that $-\infty\le h_c<+\infty$ and, when $h_c>-\infty$, that
$\lim_{h\downarrow h_c}f(h)=0$ and that $f(h)=0$ for $h\le h_c$.  The
bound $Z_{n,h}(\omega)\le\ee^{\sum_{a=1}^n\max\{0,h+\omega_a\}}$
yields $f(h)\le\int_\Omega \max\{0,h+\omega_0\}\probd[\dd\omega]$,
which shows that $\lim_{h\downarrow h_c}f(h)=0$ even when
$h_c=-\infty$. We say that the model is
\begin{itemize}
\item \emph{delocalized}, or that it is in the \emph{delocalized phase}, if $h<h_c$ (when $h_c>-\infty$);
\item  \emph{critical} if $h=h_c$ (when $h_c>-\infty$);  
\item  \emph{localized}, or that it is in the \emph{localized phase}, if $h>h_c$.
\end{itemize}
This terminology is easily justified by exploiting the convexity
properties of the free energy that, together with the definition of
$h_c$, readily yield that the mean contact number
$\Ex_{n,h,\cdot}[L_n]=\partial_h\log Z_{n,h}$ satisfies
$\lim_{n\uparrow\infty}\Exd[\Ex_{n,h,\cdot}[L_n/n]]=0$ for $h<h_c$
(when $h_c>-\infty$) and
$\liminf_{n\uparrow\infty}\Exd[\Ex_{n,h,\cdot}[L_n/n]]>0$ for $h>h_c$.
Convexity also assures that the inferior limit is a limit except
possibly for a countable set of values of $h$: we will see that a
byproduct of our analysis is that it is a limit in full generality. We
refer to \cite{giacomin2007,cf:G-SF,cf:dH,cf:Velenik} for much more
precise results and extensive discussions, including several open
questions (for the delocalized phase we add \cite{cf:AZ14} and
references therein).

\smallskip

The results we propose here represent a progress in the understanding
of the localized phase of the pinning model.  We underline that much
of the existing literature focuses on the case in which
$\int_\Omega\ee^{z|\omega_0|}\probd[\dd\omega]<+\infty$ for every
$z>0$, and this implies that $h_c>-\infty$.  In fact, it suffices that
$c:=\log\int_\Omega\ee^{\omega_0}\probd[\dd\omega]<+\infty$ to have
$h_c\ge-c>-\infty$ because of the annealed bound, which is Jensen's
inequality:
 \begin{equation}
 \label{eq:annealedbound}
  \Exd\big[\log Z_{n,h}\big]\le\log \Exd\big[Z_{n,h}\big]= 
 \log \Ex \left[ \ee^{(h+c) L_n}\mathds{1}_{\{n\in\{S_i\}_{i\in\N_0}\}}\right]\le 0
 \end{equation}
for all $n\in\N_0$ and $h\le-c$. Next is a more general, almost optimal, result for
deciding whether $h_c>-\infty$ or $h_c=-\infty$.  We set
\begin{equation*}
  \gvr :=\sup\bigg\{z\ge 0\,:\int_\Omega\ee^{z\omega_0}\probd[\dd\omega]<+\infty\bigg\}\in [\eta, +\infty]\,,
%\label{def:rate_r}
\end{equation*}
where $\eta>0$ is introduced in Assumption~\ref{assump:omega}.

\medskip

\begin{proposition}
\label{th:h_c}
 If $(\alpha+1)\gvr>1$, then $h_c>-\infty$. If instead
 $(\alpha+1)\gvr<1$, then $h_c=-\infty$.
\end{proposition}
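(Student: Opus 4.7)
My plan is to prove the two implications separately, both by direct estimates of $f(h)$.

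For $(\alpha+1)\gvr>1\Rightarrow h_c>-\infty$, I would use the fractional moment method. Pick $z\in(1/(\alpha+1),\gvr)\cap(0,1]$, which is nonempty since $1/(\alpha+1)\le 1/2$ and $\gvr>1/(\alpha+1)$. Expanding $Z_{n,h}(\omega)$ as a sum over the number and locations of the renewals and using the inequality $(a+b)^z\le a^z+b^z$ (valid for $z\in(0,1]$), taking expectation yields
\[
\bbE\big[Z_{n,h}(\omega)^z\big]\le \sum_{k\ge 1}\big(e^{zh}M(z)C_z\big)^k,\qquad M(z):=\bbE\big[e^{z\omega_0}\big],\ C_z:=\sum_{t\in\N}p(t)^z,
\]
with $M(z)<+\infty$ because $z<\gvr$ and $C_z<+\infty$ because $z(\alpha+1)>1$ and $\ell^z$ remains slowly varying. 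Choosing $h$ so negative that $e^{zh}M(z)C_z<1$ bounds $\bbE[Z_{n,h}^z]$ uniformly in $n$, and then Jensen's inequality $\bbE[\log Z_{n,h}]\le\tfrac{1}{z}\log\bbE[Z_{n,h}^z]$ gives $f(h)=0$, hence $h\le h_c$.

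For $(\alpha+1)\gvr<1\Rightarrow h_c=-\infty$, fix an arbitrary $h\in\Rl$, pick $\lambda\in(\gvr,1/(\alpha+1))$ and $\lambda'\in(\gvr,\lambda)$. Since $M(\lambda')=+\infty$ while $\bbE[e^{\lambda'\omega_0}\mathds{1}_{\omega_0\le 0}]\le 1$, a direct estimate gives $\sum_{m\in\N}e^{\lambda' m}\probd[\omega_0>m]=+\infty$, which rules out $\probd[\omega_0>m]\le e^{-\lambda m}$ holding for all large $m$; hence there are arbitrarily large integers $M$ with $q:=\probd[\omega_0>M]\ge e^{-\lambda M}$. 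For such an $M$, let $A_M(\omega):=\{a\in\N:\omega_a>M\}$ (i.i.d.\ Bernoulli$(q)$), order its elements $0=:\sigma_0<\sigma_1<\cdots$ with gaps $\tau_i:=\sigma_i-\sigma_{i-1}$ i.i.d.\ Geom$(q)$, and set $N_n:=|A_M\cap[1,n]|$. Restricting $Z_{n,h}$ to the trajectory pinned at $\sigma_1,\ldots,\sigma_{N_n}$ (and at $n$ if needed) gives the lower bound $\log Z_{n,h}(\omega)\ge \sum_{i=1}^{N_n}[\log p(\tau_i)+h+\omega_{\sigma_i}]+O(\log n)$. Taking expectation, using $\bbE[\sum_{a=1}^n\omega_a\mathds{1}_{\omega_a>M}]\ge nqM$ and a Wald estimate on $\sum_{i\le N_n+1}\log p(\tau_i)$ (since $N_n+1$ is a stopping time for the $\tau_i$ and $\log p\le 0$), then dividing by $n$ and letting $n\uparrow\infty$ yields
\[
f(h)\ge q(h+M)+q\,\bbE[\log p(\tau_1)].
\]
Elementary Geom$(q)$ computations ($\bbE[\log\tau]=\log(1/q)+O(1)$) together with the representation theorem for slowly varying functions (giving $|\log\ell(t)|\le\epsilon\log t+O_\epsilon(1)$ for any $\epsilon>0$) yield $\bbE[\log p(\tau_1)]=-(\alpha+1)\log(1/q)(1+o_q(1))$. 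Inserting $\log(1/q)\le\lambda M$, the right-hand side becomes $q[h+M(1-(\alpha+1)\lambda)+o(M)]$, which is strictly positive for $M$ large enough within the above subsequence (using $(\alpha+1)\lambda<1$). Since $h$ was arbitrary, $h_c=-\infty$.

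The fractional-moment part is essentially routine once the exponent $z$ is identified. The main obstacles lie in the second part: (i) producing the subsequence of $M$'s with the quantitative tail bound $\probd[\omega_0>M]\ge e^{-\lambda M}$ without pointwise control on the tail of $\omega_0$ near $\gvr$, which I handle by shrinking $\lambda$ to $\lambda'<\lambda$ and invoking divergence of $\sum_m e^{\lambda' m}\probd[\omega_0>m]$; and (ii) extracting the correct leading order $(\alpha+1)\log(1/q)$ from $-\bbE[\log p(\tau_1)]$ with a uniform-in-$q$ treatment of the slowly varying correction $\ell$.
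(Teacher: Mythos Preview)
Your proposal is correct and follows essentially the same approach as the paper's proof: the fractional moment bound for $(\alpha+1)\gvr>1$ is identical in spirit (the paper separates the case $\gvr>1$ and uses the annealed bound there, but your choice $z\in(1/(\alpha+1),\gvr)\cap(0,1]$ covers this uniformly), and for $(\alpha+1)\gvr<1$ both arguments force the renewal trajectory to visit exactly the sites with large charges, reduce to the estimate $f(h)\ge q(h+M)+q\,\bbE[\log p(\mathrm{Geom}(q))]$, control $\bbE[\log p(\mathrm{Geom}(q))]$ via $\bbE[\log\mathrm{Geom}(q)]\sim\log(1/q)$ and the slowly varying correction, and extract a subsequence of thresholds with $q\ge e^{-\lambda M}$ from the divergence of the moment generating function just above $\gvr$. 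Your use of Wald's identity (with $N_n+1$ as the stopping time and $\log p\le 0$) is a clean way to handle the random number of terms, where the paper instead computes the expectation directly; the two are equivalent.
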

\medskip

The first part of Proposition \ref{th:h_c} follows from the basic
fractional moment bound \cite{cf:Ton-AAP08}, while the second part is
obtained via a lower bound on the partition function by restricting to
the renewal trajectory that makes contacts at the points $a$ such that
$\go_a > \gl$ with a suitably large number $\gl$ (see, e.g.,
\cite[Theorem 3.18]{cf:notesQB}).  One can tackle the case
$(\alpha+1)\gvr=1$ and decide whether $h_c>-\infty$ or not in a
number of cases, but we are not aware of a necessary and sufficient
condition when $(\alpha+1)\gvr=1$.

We remark that in \cite{cf:LS2017} the analysis of a class of pinning
models with heavy-tailed disorder law is considered, addressing the
issue of disorder relevance, i.e., the effect of disorder at
criticality (see, e.g., \cite[Chapter~4]{cf:G-SF} for the notion of
disorder relevance). Sharper conditions of the tail behaviour of the
charges are imposed with respect to Assumption~\ref{assump:omega}, but
in the language of Proposition~\ref{th:h_c} we see that in
\cite{cf:LS2017} it is assumed that $\varrho>1$. Therefore, in
particular, \cite{cf:LS2017} is restricted to the case in which
$h_c>-\infty$, and this follows directly from \eqref{eq:annealedbound}
without the need of Proposition~\ref{th:h_c}.

\subsection{The main results}
\label{sec:mainresults}

Most of our results rely, directly or indirectly, on correlation
estimates that generalize analogous results proved in
\cite{giacomin2006_1}.  In spite of the fact that they are central to
our analysis and that ultimately they can be resumed by saying that
\emph{in the localized phase correlations decay exponentially fast},
they are rather technical and we refer the reader directly to
Section~\ref{sec:corr}.

\smallskip

Here we focus on the following consequences, where of course requiring
a result to hold for every $H\subset (h_c,+\infty)$ closed
(respectively, compact) is equivalent to requiring it for every
$H\subset (h_c,+\infty)$ with $\inf H>h_c$ (respectively, with $\inf
H>h_c$ and $\sup H< +\infty$).

\medskip

\begin{theorem}
  \label{th:Cinfty}
  The free energy $f$ is strictly convex and infinitely
  differentiable on $(h_c,+\infty)$, and the following property holds
  for $\pae$: for every compact set $H\subset (h_c,+\infty)$ and
  $r\in\N_0$
  \begin{equation*}
    \adjustlimits\lim_{n\uparrow\infty}\sup_{h\in H}\bigg|\frac{1}{n}\partial^r_h\log Z_{n,h}(\omega)-\partial_h^r f(h)\bigg|=
    \adjustlimits\lim_{n\uparrow\infty}\sup_{h\in H}\bigg|\Exd\bigg[\frac{1}{n}\partial^r_h\log Z_{n,h}\bigg]-\partial_h^r f(h)\bigg|=0\,.
\end{equation*}
Moreover, for every closed set $H\subset (h_c,+\infty)$ there exists a
constant $c>0$ such that for all $r\in\N$
 \begin{equation*}
 \sup_{h \in H} \Big| \partial_h^r f(h)\Big| \le c^r (r!)^3\,.
 \end{equation*}
\end{theorem}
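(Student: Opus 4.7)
I would reduce the theorem to a cumulant bound for the contact number and then feed it to standard convex-analytic machinery. Writing $\delta_a:=\mathds{1}_{\{a\in\{\ST_i\}_{i\in\N_0}\}}$ so that $L_n=\sum_{a=1}^n\delta_a$, and using multilinearity of joint cumulants,
\[
\partial_h^r\log Z_{n,h}(\omega)\;=\;\sum_{a_1,\ldots,a_r=1}^n \kappa^{n,h,\omega}(\delta_{a_1},\ldots,\delta_{a_r}),
\]
where $\kappa^{n,h,\omega}$ denotes the joint (truncated) cumulant under $\prob_{n,h,\omega}$. This reduces the regularity of $f$ to controlling multi-point truncated correlations of pinned-site indicators.

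Next, I would invoke the correlation estimates of Section~\ref{sec:corr} to bound each summand. Expanding the joint cumulant as an alternating sum over partitions of $\{1,\ldots,r\}$ produces products of joint moments whose truncated parts decay exponentially in the internal distances of the partition blocks. Organising the position sum according to a spanning-tree structure on $\{a_1,\ldots,a_r\}$ and summing the exponentials by fixing a root, one obtains for every compact $H\subset(h_c,+\infty)$ a constant $c=c(H)>0$ such that
\[
\sup_{h\in H}\bigl|\partial_h^r\log Z_{n,h}(\omega)\bigr|\;\le\;c^r\,(r!)^3\,n
\]
for every $n$, every $r\in\N$, and $\pae$; taking $\Exd$ yields the corresponding annealed bound. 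The three $r!$ factors are the combinatorial cost: one from the partition sum, one from the spanning-tree enumeration on the indices, and one from summing positions after the root has been fixed.

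The transfer to $f$ is then classical. The maps $h\mapsto \frac{1}{n}\log Z_{n,h}(\omega)$ and $h\mapsto \Exd[\frac{1}{n}\log Z_{n,h}]$ are convex and converge pointwise to $f$ (almost surely, respectively by definition), which by convexity upgrades to uniform convergence on compact subsets of $(h_c,+\infty)$. The uniform bound $|\frac{1}{n}\partial_h^{r+1}\log Z_{n,h}|\le c^{r+1}((r+1)!)^3$ makes the $r$-th derivatives equi-Lipschitz; induction on $r$ combined with Arzel\`a--Ascoli and the fundamental theorem of calculus then shows that $\frac{1}{n}\partial_h^r\log Z_{n,h}$ converges uniformly on compacts to $\partial_h^r f$. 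Passing to the limit $n\to\infty$ in the cumulant bound finally gives $\sup_{h\in H}|\partial_h^r f(h)|\le c^r(r!)^3$, which is the Gevrey-3 property.

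The hard part will be the combinatorial step: turning the exponential correlation decay of Section~\ref{sec:corr} into the sharp $c^r(r!)^3 n$ bound requires tracking the constants in the partition / spanning-tree argument precisely enough to land exactly on Gevrey-3 rather than a weaker regularity class. All the subsequent convex-analytic and equicontinuity arguments are routine.
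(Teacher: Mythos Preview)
Your overall architecture is close to the paper's, but there is a real gap in the quenched step. The correlation estimates of Section~\ref{sec:corr} (Lemma~\ref{lem:decay}) are bounds on \emph{expectations} over the disorder: they say $\Exd[\sup_{h\in H}\Ex_{j,h,\cdot}^{\otimes 2}[\prod_{k=1}^{j-1}(1-X_kX_k')]]\le G\ee^{-\gamma j}$, not that the two-replica quantity decays exponentially for a typical fixed $\omega$. So when you write that the tree-graph sum yields $\sup_{h\in H}|\partial_h^r\log Z_{n,h}(\omega)|\le c^r(r!)^3 n$ ``for every $n$, every $r$, and $\pae$'', you have not explained where the pathwise control comes from. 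Summing expectations gives the annealed bound, but going from there to an a.s.\ bound uniform in $n$ is exactly the nontrivial step. The paper handles this by first arranging the cumulant bound as a genuine \emph{Birkhoff sum},
\[
\bigl|\partial_h^r\log Z_{n,h}(\omega)\bigr|\;\le\;2^r(r!)^2\sum_{i=0}^{n-1}\sum_{j\in\N}j^r\,\Ex_{j,h,\vartheta^i\omega}^{\otimes 2}\Bigl[\prod_{k=1}^{j-1}(1-X_kX_k')\Bigr]\;\le\;(4/\gamma_s)^r(r!)^3\sum_{i=0}^{n-1}\Lambda_s(\vartheta^i\omega),
\]
where $\Lambda_s$ is an integrable random variable independent of $n$; Birkhoff's ergodic theorem then gives $\sup_n\frac{1}{n}\sum_{i=0}^{n-1}\Lambda_s(\vartheta^i\omega)<\infty$ for $\pae$, and this single full-measure set works for all $r$ and all compact $H$. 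Your spanning-tree organisation, with the root free among $n$ sites, does not obviously produce a sum of shifts of a single integrable functional, so as written the a.e.\ claim is unjustified.

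On the combinatorics: the paper does not use a spanning-tree / cluster-expansion bound. Instead it proves inductively that the $r$-point Ursell function can be written as an alternating sum of fewer than $2^{r-1}(r-1)!$ terms, each containing a factor $\cov_{n,h,\omega}[\prod_{k\in\mathcal A}X_k,\prod_{k\in\mathcal B}X_k]$ with $\mathcal A\subseteq\{a_1,\ldots,a_s\}$ and $\mathcal B\subseteq\{a_{s+1},\ldots,a_r\}$ for any chosen cut $s$; Lemma~\ref{lem:corr} then bounds that covariance by a two-replica quantity depending only on the gap $a_{s+1}-a_s$. Choosing $s$ to be the largest gap and summing over ordered tuples gives the factor $j^r$ and the Birkhoff-sum form above. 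Your tree-graph route might be made to land in the same place, but you would have to show that the resulting upper bound is a sum of translates of one integrable functional, which is where the real work lies. The convex-analytic / Arzel\`a--Ascoli transfer you describe at the end is indeed routine and matches the paper.
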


\medskip

Theorem~\ref{th:Cinfty} shows in particular that the free energy $f$
is of class Gevrey-3 on $(h_c,+\infty)$, which means that for every
$H\subset (h_c,+\infty)$ compact there exists a constant $c>0$ such
that $\sup_{h \in H}|\partial_h^r f(h)| \le c^r (r!)^3$ for all
$r\in\N$.  This theorem generalizes \cite[Theorem 2.1]{giacomin2006_1}
because of the weaker assumptions on the charges and expands it in two
respects: a Gevrey class to which the free energy belongs is
identified and converge of the derivatives of the finite-volume free
energy for typical realizations of the charges is demonstrated. 
% Note that (the $r=0$ case of) the theorem improves also the usual result on existence of the free energy.
Besides, we stress that Theorem~\ref{th:Cinfty}, like all the results
that follow, is uniform with respect to the free parameter $h$: we
identify a set of charges of full probability for which the results
hold for every $h>h_c$.

\medskip

\begin{remark}\label{rem:Griffiths}
 {\rm In \cite{cf:KM2003} it is claimed that the pinning model exhibits a
 Griffiths singularity in the localized phase. Mathematically, this is
 an open problem. In \cite{cf:GG-MPAG2022}, motivated by the arguments
 in \cite{cf:KM2003}, a toy pinning model is introduced and studied:
 notably for $\ga=1/2$ this model exhibits a Griffiths singularity
 whose precise behaviour is established, in particular the free energy
 is in the Gevrey-(3/2) class. In spite of not being completely
 implausible that the toy model result holds for the true model,
 establishing this appears to be very challenging and how the Gevrey
 class depends on the exponent $\ga$ remains elusive even in the toy
 model.}
\end{remark}

\medskip

We will then show that from Theorem~\ref{th:Cinfty} one can extract a
quenched concentration bound and a quenched Central Limit Theorem
(CLT) for the contact number in the localized phase, i.e., a
concentration bound and a CLT for the contact number in the localized
phase conditional on a typical realization of the charges. The first
two derivatives of the free energy in the localized phase, which are
the functions $\rho:=\partial_hf$ and $v:=\partial_h^2 f$ on
$(h_c,+\infty)$, play a special role in what follows. We shall refer
to $\rho$ as the \textit{contact density} since, recalling that
$\Ex_{n,h,\omega}[L_n]=\partial_h\log Z_{n,h}(\omega)$,
Theorem~\ref{th:Cinfty} shows that the following holds for $\pae$: for
every $h>h_c$
 \begin{equation}
   \lim_{n\uparrow\infty}\Ex_{n,h,\omega}\bigg[\frac{L_n}{n}\bigg]=\lim_{n\uparrow\infty}\Exd\Bigg[\Ex_{n,h,\cdot}\bigg[\frac{L_n}{n}\bigg]\Bigg]=\partial_hf(h)=:\rho(h)\,,
   \label{eq:contact_fraction}
 \end{equation}
confirming in addition that the mean contact fraction
$\Exd[\Ex_{n,h,\cdot}[L_n/n]]$ has a limit when $n$ goes to
infinity. Theorem~\ref{th:Cinfty} also states that $v$ is the scaled
limiting variance of $L_n$ in the localized phase. In fact, as
$\Ex_{n,h,\cdot}[(L_n-\Ex_{n,h,\cdot}[L_n])^2]=\partial_h^2\log
Z_{n,h}$, Theorem~\ref{th:Cinfty} implies that also the following is
valid for $\pae$: for every $h>h_c$
\begin{align}
  \nonumber
  &\lim_{n\uparrow\infty}\Ex_{n,h,\omega}\bigg[\bigg(\frac{L_n-\Ex_{n,h,\omega}[L_n]}{\sqrt{n}}\bigg)^{\!\!2}\bigg]\\
  &\qquad=\lim_{n\uparrow\infty}\Exd\Bigg[\Ex_{n,h,\cdot}\bigg[\bigg(\frac{L_n-\Ex_{n,h,\cdot}[L_n]}{\sqrt{n}}\bigg)^{\!\!2}\bigg]\Bigg]=\partial_h^2f(h)=:v(h)\,.
\label{eq:lim_vh}
\end{align}
We stress that $\rho(h)>0$ and $v(h)>0$ for all $h>h_c$, the former
being the hallmark of the localized phase, as seen in the previous
section, and the latter being an expression of the strict convexity
property of $f$.

\medskip

\begin{theorem}
  \label{th:CLT+concentration}
  The following properties hold for $\pae$:
 % \begin{enumerate}[(i)]
\begin{enumerate}[leftmargin=0.7 cm, itemsep=1ex,label=({\roman*})]
\item for every closed set $H\subset (h_c,+\infty)$ there exists a
  constant $\kappa_\omega>0$ such that for all $u\ge 0$ and $n\in\N$
 \begin{equation*}
   \sup_{h\in H}\prob_{n,h,\omega}\Big[\big|L_n-\Ex_{n,h,\omega}[L_n]\big|>u\Big] \le 2\exp\left\{-\kappa_\omega\frac{u^2}{n+u^{5/3}}\right\};
 \end{equation*}
    
 \item for every compact set $H\subset (h_c,+\infty)$
 \begin{equation*}
 \adjustlimits\limsmallspace_{n\uparrow\infty}\sup_{h\in H}\,\sup_{u\in\Rl}\,\Bigg|\prob_{n,h,\omega}\bigg[\frac{L_n-\Ex_{n,h,\omega}[L_n]}{\sqrt{n v(h)}}\le u\bigg]-
     \frac{1}{\sqrt{2\pi}}\int_{-\infty}^u\ee^{-\frac{1}{2}z^2}\dd z\Bigg|=0
\end{equation*}
with $v(h):=\partial_h^2 f(h)>0$.

 \end{enumerate}
\end{theorem}

\medskip

Theorem~\ref{th:CLT+concentration} involves the random centering
$\Ex_{n,h,\cdot}[L_n]$. It is therefore natural to address the size of
this centering: as a matter of fact, without control on the centering
the CLT turns out to be void of interest.  To this aim we propose for
the centering variable a concentration bound, a control on the
expectation, existence of a limiting variance, and the CLT.

\medskip

\begin{theorem}
  \label{th:centering}
  The following conclusions hold:
 % \begin{enumerate}[(i)]
\begin{enumerate}[leftmargin=0.8 cm,itemsep=1ex,label=({\roman*})]
\item for every closed set $H\subset (h_c,+\infty)$ there exists a
  constant $\kappa>0$ such that for all $n\in\N$ and $u\ge 0$
 \begin{equation*}
\sup_{h\in H}\probd\bigg[\Big|\Ex_{n,h,\cdot}[L_n]-\Exd\big[\Ex_{n,h,\cdot}[L_n]\big]\Big|>u\bigg]\le 2\exp\left\{-\kappa \frac{ u^2}{n+u^{5/3}}\right\}\,;
 \end{equation*}
    
  \item for every $H\subset (h_c,+\infty)$ closed there exists a
    constant $c>0$ such that for all $n\in\N$
\begin{equation*}
  \sup_{h\in H}\Big|\Exd\big[\Ex_{n,h,\cdot}[L_n]\big]-\rho(h)n\Big|\le c
\end{equation*}
with $\rho(h):=\partial_hf(h)$;

\item  there exists a locally Lipschitz continuous function $w$
  on $(h_c,+\infty)$ such that for every compact set $H\subset (h_c,+\infty)$
   \begin{equation*}
     \adjustlimits\limsmallspace_{n\uparrow\infty}\sup_{h\in H}
     \Bigg|\Exd\Bigg[\bigg(\frac{\Ex_{n,h,\cdot}[L_n]-\Exd[\Ex_{n,h,\cdot}[L_n]]}{\sqrt{n}}\bigg)^{\!\!2}\Bigg]-w(h)\Bigg|=0\,;
   \end{equation*}
   
\item if $\int_\Omega\omega_0^2\, \probd[\dd\omega]>0$, then $w(h)>0$
  for all $h>h_c$ and for every $H\subset (h_c,+\infty)$ compact
\begin{equation*}
\adjustlimits\limsmallspace_{n\uparrow\infty}\sup_{h\in H}\,\sup_{u\in\Rl}\,\Bigg|\probd\bigg[\frac{\Ex_{n,h,\cdot}[L_n]-\Exd[\Ex_{n,h,\cdot}[L_n]]}{\sqrt{n w(h)}}\le u\bigg]
 -\frac{1}{\sqrt{2\pi}}\int_{-\infty}^u\ee^{-\frac{1}{2}z^2}\dd z\Bigg|=0\,.
\end{equation*}

\end{enumerate}
\end{theorem}

\medskip

We highlight the very different nature of the CLTs in Theorems
\ref{th:CLT+concentration} and \ref{th:centering}: the fluctuations of
the contact number for a typical environment are thermal, whereas the
fluctuations of the centering are due to the disorder.  In the absence
of disorder the random centering $\Ex_{n,h,\cdot}[L_n]$ is of course
constant, so parts $(i)$ and $(iii)$ of Theorem \ref{th:centering} are
trivial, as well as Proposition \ref{prop:fluctuations_EL} below. On
the other hand, as soon as the model is disordered, Theorem
\ref{th:centering} assures among others that the limiting distribution
of the random centering is non degenerate, i.e., $w(h)>0$ for
$h>h_c$. Thus, the fact that $w(h)>0$, unlike for $v(h)$, is disorder
dependent in the sense that $w(h)=0$ for the pure model.  The analysis
of the variance of $\Ex_{n,h,\cdot}[L_n]$ and establishing the
positivity of $w(h)$ in the presence of disorder, which are important
preliminary results in view of formulating the CLT, require a specific
and demanding study.  In particular, the positivity of $w(h)$ can be
established in a rather straightforward way when the charges are
Gaussian variables via integration by parts, but leaving the Gaussian
framework is not at all straightforward.

\smallskip

Resorting to the concentration bound stated by Theorem
\ref{th:centering}, we are also able to provide an upper bound for the
magnitude of the fluctuations of the centering for typical
realizations of the charges, in the spirit of Hardy--Littlewood
estimates for random walks \cite{feller1943}.

\medskip
  
\begin{proposition}
  \label{prop:fluctuations_EL}
  The following property holds for $\pae$: for every compact set
  $H\subset (h_c,+\infty)$ there exists a constant $c>0$ such that for
  all $n\in\N$
  \begin{equation*}
    \sup_{h\in H}\frac{|\Ex_{n,h,\omega}[L_n]-\rho(h) n|}{\sqrt{n\ln n}}\le c\,.
  \end{equation*}
\end{proposition}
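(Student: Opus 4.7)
The plan is to reduce the statement to a Borel--Cantelli argument based on the quenched concentration bound of Theorem \ref{th:centering}(i), upgraded from pointwise to uniform control on $H$ by a covering argument. By Theorem \ref{th:centering}(ii) there exists $c_0>0$ with $\sup_{h \in H}|\Exd[\Ex_{n,h,\cdot}[L_n]] - \rho_h n| \le c_0$ for every $n$, so it is enough to prove that for $\pae$,
$$\sup_{h \in H}\bigl|\Ex_{n,h,\omega}[L_n] - \Exd[\Ex_{n,h,\cdot}[L_n]]\bigr| = O(\sqrt{n \ln n}).$$

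Fix $c>0$ to be chosen later and set $u_n := c\sqrt{n \ln n}$. Since $u_n^{5/3} = o(n)$, the bound in Theorem \ref{th:centering}(i) reduces, for every $h \in H$ and every $n$ large enough, to
$$\probd\bigl[|\Ex_{n,h,\cdot}[L_n] - \Exd[\Ex_{n,h,\cdot}[L_n]]| > u_n\bigr] \le 2 n^{-\kappa c^2/2}.$$
To turn this pointwise estimate into uniformity on $H$ I would introduce, for each $n$, an $\epsilon_n$-net $H_n \subset H$ of mesh $\epsilon_n := n^{-2}$ and cardinality $|H_n| = O(n^2)$. Because $L_n$ takes values in $[0,n]$ deterministically, the Gibbs variance satisfies
$$\partial_h \Ex_{n,h,\omega}[L_n] \,=\, \partial_h^2 \log Z_{n,h}(\omega) \,=\, \Var_{n,h,\omega}(L_n) \,\le\, n^2/4,$$
so for any $h \in H$ and its nearest $h' \in H_n$, both $|\Ex_{n,h,\omega}[L_n] - \Ex_{n,h',\omega}[L_n]|$ and the corresponding quantity with expectations (obtained by differentiating under $\Exd$) are bounded by $1/4$. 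A union bound over $H_n$ thus gives a failure probability of order $n^{2-\kappa c^2/2}$; choosing $c$ large enough that $\kappa c^2/2 > 3$ makes this summable, and Borel--Cantelli provides, for $\pae$, an eventual bound $u_n$ on the supremum over $H_n$, which interpolation extends to all of $H$ at an extra cost of $1/2$. A countable exhaustion $H_k := [h_c + 1/k,k]$ of $(h_c,+\infty)$ by compact intervals yields a single full-probability set on which the estimate holds uniformly on every compact $H \subset (h_c,+\infty)$.

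The only delicate point is the uniformity in $h$: Theorem \ref{th:centering}(i) is stated pointwise, so some covering argument is unavoidable. Fortunately the crude deterministic bound $\Var_{n,h,\omega}(L_n) \le n^2/4$ controls the modulus of continuity well enough that a polynomial-size net, combined with the Gaussian-in-exponent concentration, already yields a summable series; in particular the finer regularity input of Theorems \ref{th:Cinfty} and \ref{th:CLT+concentration} is not required for this proposition.
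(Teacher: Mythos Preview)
Your proposal is correct and follows essentially the same approach as the paper: reduce via Theorem~\ref{th:centering}(ii), apply the concentration bound of Theorem~\ref{th:centering}(i) on an $O(n^2)$-size grid, use a union bound and Borel--Cantelli, then interpolate and exhaust $(h_c,+\infty)$ by compacts. The only minor difference is the interpolation step: the paper bounds the change in $\Ex_{n,h,\omega}[L_n]$ between grid points via a ratio-of-partition-functions argument (using $L_n\le n$), while you use the cleaner variance bound $\partial_h\Ex_{n,h,\omega}[L_n]=\Var_{n,h,\omega}(L_n)\le n^2/4$; both give $O(1)$ interpolation error and are equally valid.
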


\medskip

We complete the presentation of our results by an aspect that may
appear at first more specific and less central than the previous ones:
the definition of an \emph{alternative free energy} $\mu(h)$. This
impression, which is due to the fact that we have decided not to
present the decay of correlation estimates in the introduction, is
however false because they are the crucial building block of our
analysis and, in turn, their exponential decay relies on the strict
positivity of $\mu(h)$. As already explained in
\cite{cf:AZ96,giacomin2006_1}, $\mu(h)$ is another natural free energy
associated to our model, and we let the following statement introduce
it, where we use
$Z_{n,h}^{\textbf{-}}(\omega):=Z_{n,h}(\omega)\ee^{-h-\omega_n}$.

 \medskip
 
\begin{proposition}
  \label{prop:mu}
 For every $h\in \Rl$ the limit
 \begin{equation}
 \label{eq:mu-def}
 -\lim_{n \uparrow \infty} \frac 1n \log \Exd \bigg[\frac {1}{Z_{n,h}^{\textbf{-}}}\bigg] =: \mu(h)
 \end{equation}
exists  and defines a Lipschitz function $\mu$ with Lipschitz
  constant equal to $1$. Moreover, there exists a constant $c>0$ such
that $c\min\{f(h),f(h)^2\}\le \mu(h) \le f(h)$ for all $h$.  Hence,
$\mu(h)=0$ for $h \le h_c$ (when $h_c>-\infty$) and $\mu(h)>0$ for
$h>h_c$.
\end{proposition}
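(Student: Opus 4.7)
The proposition splits into existence of the limit, the sandwich $c\min\{f(h),f(h)^2\}\le\mu(h)\le f(h)$, and the identification of the sign of $\mu$ with the phase. The easy halves of the sandwich are disposed of first. The pointwise bound $Z_{n,h}^{\text{-}}(\omega)\ge p(n)$, obtained from the single-jump trajectory $0\to n$, gives $\Exd[1/Z_{n,h}^{\text{-}}]\le 1/p(n)$, whence $\liminf_n-(1/n)\log\Exd[1/Z_{n,h}^{\text{-}}]\ge 0$ because $\log(1/p(n))=O(\log n)$ by Assumption~\ref{assump:p}. For the matching upper bound $\mu(h)\le f(h)$, convexity of $x\mapsto-\log x$ together with $Z_{n,h}^{\text{-}}=Z_{n,h}\ee^{-h-\omega_n}$ and $\Exd[\omega_n]=0$ yields, by Jensen's inequality,
\[
-\log\Exd[1/Z_{n,h}^{\text{-}}]\le \Exd[\log Z_{n,h}^{\text{-}}]=\Exd[\log Z_{n,h}]-h,
\]
which divided by $n$ gives $\limsup_n -(1/n)\log\Exd[1/Z_{n,h}^{\text{-}}]\le f(h)$. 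For $h\le h_c$, where $f(h)=0$, these two estimates already force the limit to exist and equal $0$.

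For existence of the limit at arbitrary $h$, the main mechanism is approximate subadditivity. Restricting the trajectories contributing to $Z_{m+n,h}^{\text{-}}(\omega)$ to those visiting the site $m$, and factoring the resulting double sum, produces
\[
Z_{m+n,h}^{\text{-}}(\omega)\ge Z_{m,h}^{\text{-}}(\omega)\cdot\ee^{h+\omega_m}\cdot Z_{n,h}^{\text{-}}(\theta^m\omega),
\]
with $\theta^m$ the left shift by $m$. The three factors are measurable respectively with respect to $\{\omega_a:a<m\}$, $\omega_m$, and $\{\omega_a:m<a<m+n\}$, hence are $\probd$-independent. Inverting and integrating,
\[
\Exd[1/Z_{m+n,h}^{\text{-}}]\le\ee^{-h}\Exd[\ee^{-\omega_0}]\cdot\Exd[1/Z_{m,h}^{\text{-}}]\cdot\Exd[1/Z_{n,h}^{\text{-}}],
\]
and Fekete's lemma applied to $n\mapsto\log\bigl(\ee^{-h}\Exd[\ee^{-\omega_0}]\,\Exd[1/Z_{n,h}^{\text{-}}]\bigr)$ delivers the limit. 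Since Assumption~\ref{assump:omega} only forces $\Exd[\ee^{\eta|\omega_0|}]<\infty$ for some possibly small $\eta>0$, the finiteness of $\Exd[\ee^{-\omega_0}]$ is not automatic; I would handle this by first running the argument with the disorder truncated from below by $-M$ and then removing the truncation as $M\uparrow\infty$ via monotone convergence together with the continuity of the truncated free energy.

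The lower bound $\mu(h)\ge c\min\{f(h),f(h)^2\}$ is the main obstacle. The strategy is to convert concentration of $\log Z_{n,h}^{\text{-}}$ around $nf(h)$ into a Laplace-type upper bound on $\Exd[1/Z_{n,h}^{\text{-}}]$. Direct differentiation gives $\partial_{\omega_a}\log Z_{n,h}^{\text{-}}(\omega)=\prob_{n,h,\omega}[a\in\{\ST_i\}_{i\in\N_0}]-\mathds{1}_{\{a=n\}}\in[0,1]$, so $\omega\mapsto\log Z_{n,h}^{\text{-}}(\omega)$ is coordinatewise $1$-Lipschitz; combined with the exponential-moment assumption on $\omega$, a Herbst-type entropy argument should yield a Bernstein tail
\[
\probd\bigl[\log Z_{n,h}^{\text{-}}<\Exd[\log Z_{n,h}^{\text{-}}]-\lambda\bigr]\le 2\exp\bigl(-c\min\{\lambda^2/n,\lambda\}\bigr),\qquad\lambda\ge 0.
\]
Using $\Exd[\log Z_{n,h}^{\text{-}}]=nf(h)+o(n)$ (the $r=0$ case of Theorem~\ref{th:Cinfty}), we decompose
\[
\Exd[1/Z_{n,h}^{\text{-}}]=\Exd\bigl[\ee^{-\log Z_{n,h}^{\text{-}}};\,\log Z_{n,h}^{\text{-}}\ge nf(h)/2\bigr]+\Exd\bigl[\ee^{-\log Z_{n,h}^{\text{-}}};\,\log Z_{n,h}^{\text{-}}<nf(h)/2\bigr].
\]
The first summand is bounded by $\ee^{-nf(h)/2}$, while the second is controlled via the a priori bound $Z_{n,h}^{\text{-}}\ge p(n)$ and the Bernstein tail at $\lambda=nf(h)/2$, yielding a bound of order $p(n)^{-1}\exp\bigl(-c'n\min\{f(h),f(h)^2\}\bigr)$. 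Taking $-(1/n)\log$ and letting $n\uparrow\infty$ gives the claimed lower bound on $\mu(h)$, and the dichotomy $\mu(h)=0\Leftrightarrow h\le h_c$ follows. The technical crux is proving the Bernstein inequality with constants uniform in $h$ on compact subsets of $(h_c,+\infty)$ for a Lipschitz functional of i.i.d.\ variables with only sub-exponential tails.
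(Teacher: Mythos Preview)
Your treatment of the two halves of the sandwich is essentially the paper's: Jensen for $\mu(h)\le f(h)$, and a concentration bound for $\log Z_{n,h}^{\text{-}}$ combined with the deterministic lower bound $Z_{n,h}^{\text{-}}\ge p(n)$ for $\mu(h)\ge c\min\{f(h),f(h)^2\}$. The paper formalizes the concentration step as Theorem~\ref{th:concentration} (a subexponential McDiarmid inequality from \cite{maurer2021}) rather than a Herbst argument, but the decomposition you propose is exactly the one used. One cosmetic remark: you do not need Theorem~\ref{th:Cinfty} to get $\Exd[\log Z_{n,h}^{\text{-}}]=nf(h)+o(n)$; this is just the definition of $f$ together with $\Exd[\log Z_{n,h}^{\text{-}}]=\Exd[\log Z_{n,h}]-h$.

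There is, however, a genuine gap in the existence argument. Your Fekete step produces the factor $\Exd[e^{-\omega_0}]$, which Assumption~\ref{assump:omega} does \emph{not} guarantee to be finite (only $\Exd[e^{\eta|\omega_0|}]<\infty$ for some small $\eta>0$); Remark~\ref{rem:mu} in the paper makes exactly this point. Your truncation patch is an unjustified interchange of limits: you obtain $\mu_M(h)$ for each truncation level $M$, and monotone convergence gives $\Exd[1/Z_{n,h}^{M,\text{-}}]\uparrow\Exd[1/Z_{n,h}^{\text{-}}]$ for each fixed $n$, but nothing in what you wrote controls the $n\to\infty$ limit uniformly in $M$, so existence of $\mu(h)$ does not follow. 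The paper circumvents the issue by running Fekete in the \emph{opposite} direction. Using the pinning estimate of Remark~\ref{remark:enforce_contact} (a consequence of Lemma~\ref{lem:utile_per_tutto}), it writes
\[
\frac{1}{Z_{m+n,h}^{\text{-}}(\omega)}\;\ge\;\frac{1}{e^{h+\omega_m}+\xi\min\{m^\xi,n^\xi\}}\cdot\frac{1}{Z_{m,h}^{\text{-}}(\omega)}\cdot\frac{1}{Z_{n,h}^{\text{-}}(\vartheta^m\omega)},
\]
and the point is that the first factor is \emph{bounded} in $\omega_m$ (so always integrable), replacing your problematic $e^{-\omega_m}$. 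Integrating and absorbing a $\log$-polynomial correction, the sequence $\log[2^{\xi+1}(e^\lambda+\xi n^\xi)]-\log\Exd[1/Z_{n,h}^{\text{-}}]$ becomes genuinely subadditive, and Fekete applies without any moment restriction on the left tail of $\omega_0$.
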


 \medskip
 
A direct consequence of the bounds relating $\mu(h)$ and $f(h)$ in
Proposition~\ref{prop:mu} is that the function $\mu$ is equivalent to
$f$ as far as detecting the phase transition is concerned, but whether
their behaviour approaching criticality is the same or not is an open
problem when  disorder is {relevant} (see \cite{cf:GTirrel2009},
notably Theorem~2.4).  This
point is further discussed in \cite{cf:CGT12}, and in
\cite{giacomin2006_1,cf:GTirrel2009,cf:CGT12} it is also explained why
both $1/\mu(h)$ and $1/f(h)$ are two natural correlation lengths for
the system: in fact, $\mu(h)$ directly enters the correlation bounds
we give and use starting from the next section. One of the important
properties of $\mu(h)$ is that it determines the maximal polymer
excursion $M_n:=\max\{T_1,\ldots,T_{L_n}\}$ in the localized phase as
follows.

 \medskip

\begin{proposition}
   \label{prop:maximal_excursion}
    The following property holds for $\pae$: for every $h>h_c$ the sequence
  \begin{equation*}
 \bigg\{\frac{M_n}{\log n}\bigg\}_{n\in\N}
  \end{equation*}
  converges in probability, with respect to the law
  $\prob_{n,h,\omega}$, to $1/\mu(h)$.
\end{proposition}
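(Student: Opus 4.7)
The plan is to establish matching upper and lower bounds $\prob_{n,h,\omega}[M_n > (1+\varepsilon)\log n/\mu(h)] \to 0$ and $\prob_{n,h,\omega}[M_n < (1-\varepsilon)\log n/\mu(h)] \to 0$ for every $\varepsilon>0$. Both bounds hinge on the renewal-decomposition identity: for $0 \le a < a+s \le n$,
\begin{equation*}
\prob_{n,h,\omega}\big[a,\, a+s \in \{\ST_i\}_{i\in\N_0},\ \{\ST_i\}_{i\in\N_0} \cap (a, a+s) = \emptyset\big]
= \frac{Z_{a,h}(\omega)\, p(s)\, \ee^{h+\omega_{a+s}}\, \widetilde{Z}^{a+s,n}_{h}(\omega)}{Z_{n,h}(\omega)},
\end{equation*}
where $\widetilde{Z}^{a+s,n}_h(\omega)$ is the partition function of the segment from $a+s$ to $n$ (distributed as $Z_{n-a-s,h}$ after a shift of the disorder). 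The key input I would take from Section~\ref{sec:corr} is that ratios of the form $Z_{a,h}(\omega)\widetilde{Z}^{a+s,n}_h(\omega)/Z_{n,h}(\omega)$ decay like $\ee^{-\mu(h) s}$, with quenched prefactors that are $\probd$-integrable and uniform in $a, s, n$. This matches the characterization $\Exd[1/Z_{n,h}^{\text{-}}] \asymp \ee^{-\mu(h) n}$ supplied by Proposition~\ref{prop:mu}, so that $\mu(h)$ is exactly the thermodynamic cost per unit length of forcing a long excursion.

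\smallskip

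For the upper bound I would union-bound over the location $a$ and length $s\ge t$ of the longest excursion, obtaining
\begin{equation*}
\prob_{n,h,\omega}[M_n \ge t] \le \sum_{a=0}^{n-t} \sum_{s=t}^{n-a} \frac{Z_{a,h}(\omega)\, p(s)\, \ee^{h+\omega_{a+s}}\, \widetilde{Z}^{a+s,n}_h(\omega)}{Z_{n,h}(\omega)}.
\end{equation*}
Integrating over $\probd$, using the correlation estimate together with $\int_\Omega \ee^{\eta|\omega_0|}\probd[\dd\omega] < +\infty$ (via a H\"older step) to absorb $\ee^{\omega_{a+s}}$, and using $\sum_{s\ge 1} p(s) \le 1$, one gets $\Exd[\prob_{n,h,\omega}[M_n \ge t]] \le C\, n\, \ee^{-\mu(h) t}$. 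Choosing $t = (1+\varepsilon)\log n /\mu(h)$, applying Markov, and then Borel--Cantelli along a geometric subsequence of $n$, with interpolation by monotonicity of $M_n$ and of the probability in $t$, yields the upper direction $\probd$-almost surely.

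\smallskip

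For the lower bound I would introduce the truncated partition function
\begin{equation*}
Z^{<t}_{n,h}(\omega) := \Ex\bigg[\ee^{\sum_{i=1}^{L_n}(h+\omega_{\ST_i})}\, \mathds{1}_{\{n \in \{\ST_i\}_{i\in\N_0}\}}\, \mathds{1}_{\{\max_{1 \le i \le L_n}\TT_i < t\}}\bigg],
\end{equation*}
so that $\prob_{n,h,\omega}[M_n < t] = Z^{<t}_{n,h}(\omega)/Z_{n,h}(\omega)$. This corresponds to the pinning model built on the truncated renewal law $p(\cdot)\mathds{1}_{[1,t)}(\cdot)$. Applying the renewal decomposition to the first long excursion in any trajectory contributing to $Z_{n,h}$, I would write
\begin{equation*}
Z_{n,h}(\omega) \ge Z^{<t}_{n,h}(\omega) + \sum_{\substack{0\le a,\ s\ge t\\ a+s\le n}} Z^{<t}_{a,h}(\omega)\, p(s)\, \ee^{h+\omega_{a+s}}\, \widetilde{Z}^{a+s,n}_h(\omega),
\end{equation*}
and use the Section~\ref{sec:corr} estimates (now for the truncated model, whose free energy is strictly smaller than $f(h)$) to get $Z_{n,h}(\omega) \ge Z^{<t}_{n,h}(\omega) \exp\{c\, n\, \ee^{-\mu(h) t}\}$ with $\probd$-high probability. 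Taking $t=(1-\varepsilon)\log n/\mu(h)$ makes $n\, \ee^{-\mu(h) t}=n^\varepsilon\to\infty$, concluding the lower direction.

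\smallskip

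The main obstacle will be the lower bound: one must quantify the free-energy gap between the truncated and full models in the quenched setting, and verify that the sharp rate $\mu(h)$ comes out of the correlation estimates rather than being blurred by subexponential corrections that would erode the $1/\mu(h)$ constant. A secondary but not entirely routine point is the construction of a single full-measure set of disorder on which the limit holds simultaneously for all $h > h_c$, which calls for a continuity-in-$h$ argument in the same spirit as the one needed for Theorem~\ref{th:CLT+concentration}.
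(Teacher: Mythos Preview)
Your upper bound has a real gap in the ``Borel--Cantelli along a geometric subsequence with interpolation by monotonicity'' step. The quantity $\prob_{n,h,\omega}[M_n\ge t]$ is a \emph{number} depending on $n$ through both the observable $M_n$ and the measure $\prob_{n,h,\omega}$; there is no monotonicity relating $\prob_{n,h,\omega}[M_n\ge t_n]$ for different $n$, so the subsequence argument cannot be filled in. Moreover, with $t=(1+\varepsilon)\log n/\mu(h)$ your annealed bound is $Cn^{-\varepsilon}$, which is not summable for small $\varepsilon$. The paper sidesteps this entirely: after the same union bound and a use of Lemma~\ref{lem:fact}, it writes the quenched probability as a bound of the form $n^{-(1+\delta)}\sum_{i=0}^{n-1}\Lambda(\vartheta^i\omega)$ with $\Lambda$ integrable (this is where Corollary~\ref{cor:PT1} supplies the sharp rate $\mu(h)$), and then invokes Birkhoff's ergodic theorem directly---no subsequence, no interpolation.

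Your lower bound sketch is the more serious problem, and you are right to flag it. The inequality $Z_{n,h}\ge Z^{<t}_{n,h}\exp\{c\,n\,\ee^{-\mu(h)t}\}$ does not follow from the first-long-excursion decomposition you wrote (which is essentially an \emph{equality}, with the full $\widetilde Z$ on the right, not an amplification of $Z^{<t}$). Getting the sharp constant $\mu(h)$ out of a free-energy comparison between the full and truncated models would require knowing that the truncated free energy differs from $f(h)$ by \emph{exactly} $\rho_h\,\ee^{-\mu(h)t}(1+o(1))$, which is not available. The paper takes a completely different route: it plants $r_n\sim n/(\log n)^2$ candidate excursions of length $m_n\sim(1/\mu(h)-\varepsilon)\log n$ at locations separated by gaps $d_n\sim(\log n)^2$, uses the correlation bound of Lemma~\ref{lem:corr} (with Lemma~\ref{lem:decay}) to decouple the factors across the gaps, and then bounds $\prob_{n,h,\omega}[M_n<m_n]$ by a product $\prod_s(1-p_s)$ plus a vanishing correlation error. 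Each $p_s$ is bounded below via Corollary~\ref{cor:PT1} by roughly $n^{-(1-\varepsilon')}$, so the product goes to zero because $r_n\cdot n^{-(1-\varepsilon')}\to\infty$; a final Borel--Cantelli over the disorder closes the argument. Your anticipation of the uniformity-in-$h$ issue is correct, and the paper handles it exactly as you suggest, via a Lindel\"of covering and continuity of the relevant quantities in $h$.
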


\medskip

Propositions \ref{prop:mu} and \ref{prop:maximal_excursion} generalize
and improve part (1) and part (2), respectively, of \cite[Theorem
  2.5]{giacomin2006_1}.  While Proposition~\ref{prop:mu} generalizes
part (1) of this theorem only for the weaker assumptions on the
charges (see however Remark~\ref{rem:mu} below),
Proposition~\ref{prop:maximal_excursion} improves part (2) also
because it establishes convergence in probability for almost all the
realizations of the disorder: in fact, the upper bound on the size of
the maximal excursion in \cite{giacomin2006_1} is obtained only in
probability with respect to the disorder.  Finally, we note that the
proof of Proposition~\ref{prop:maximal_excursion} in Section
\ref{sec:maximal_excursion} yields a result that is uniform in $h$
belonging to any compact set $H\subset (h_c, +\infty)$, but we have
chosen to present here only the weaker result for readability.

\medskip

\begin{remark}
 \label{rem:mu}
{\rm The definition of $\mu(h)$ in \cite{cf:AZ96,giacomin2006_1} is
  rather $-\lim_{n \uparrow \infty} (1/n) \log \Exd[ 1/Z_{n,h}]$. The
  two definitions are equivalent if and only if $\int_\Omega
  \ee^{-\omega_0} \probd [\dd \omega]< +\infty$, otherwise $\Exd
  \left[ 1/{Z_{n,h}}\right]=+\infty$. For several reasons, see in
  particular Corollary~\ref{cor:PT1}, considering $\Exd
  [1/Z_{n,h}^{\textbf{-}}]$ is more natural, and even necessary if
  $\int_\Omega \ee^{-\omega_0} \probd[\dd \omega]= +\infty$, and we
  point out that the superadditivity argument in
  \cite{cf:AZ96,giacomin2006_1} for the existence of $\mu(h)$ does not
  work when $\int_\Omega \ee^{-\omega_0} \probd [\dd \omega]= +\infty$
  and needs to be modified.}
 \end{remark}

\medskip

\subsection{Further comments on the results and comparisons with  \cite{giacomin2006_1}}
 
Theorem \ref{th:Cinfty} generalizes and expands \cite[Theorem
  2.1]{giacomin2006_1}, while Propositions \ref{prop:mu} and
\ref{prop:maximal_excursion} generalize and improve analogous results
stated in \cite[Theorem 2.5]{giacomin2006_1} as already pointed
out. On the other hand, Theorems \ref{th:CLT+concentration} and
\ref{th:centering} are new, as well as Proposition
\ref{prop:fluctuations_EL}.  We list here a number of comments about
our findings:

\begin{enumerate}[leftmargin=0.7 cm]

\smallskip
  
\item 
In comparison to \cite{giacomin2006_1}, our hypotheses on disorder are
substantially weaker and completely explicit. In fact, in
\cite{giacomin2006_1} it is asked that disorder has the property that
Lipschitz convex functions of the charges satisfy a suitable
concentration inequality.  This property is used to show that
$\mu(h)>0$ for $h>h_c$ by means of a concentration bound for the
finite-volume free energy.  In particular, the concentration
inequality assumed in \cite{giacomin2006_1} requires (super-)Gaussian
tail decay of the charge distribution. Here we just assume Assumption
\ref{assump:omega}: the boundedness of some exponential moments of the
disorder variable.  We achieve a concentration bound for the
finite-volume free energy via a version of the McDiarmid's inequality
for functions of independent random variables \cite{maurer2021}. In
\cite{cf:watbled} concentration for the finite-volume free energy has
been achieved with a generalization of Hoeffding's inequality and
requiring exponential boundedness for $\eta=1$ in the language of
Assumption \ref{assump:omega}.

\smallskip
   
\item The correlation estimates in Section~\ref{sec:corr} are the
  technical basis of our results. They build on the ideas in
  \cite{cf:taming95,giacomin2006_1}: \cite{cf:taming95} deals with
  disordered Ising models in high temperature or strong external field
  regime and exploits cluster expansion estimates, while
  \cite{giacomin2006_1} deals with pinning and copolymer models till
  the critical point, i.e., in the whole localized regime. We
  generalize and make more explicit the estimates in
  \cite{giacomin2006_1} to accommodate the more general set up and to
  achieve new results. In particular, with respect to
  \cite{giacomin2006_1}, the estimates are improved so that they are
  amenable to be used in a Birkhoff--sum approach, to which we resort
  here to prove the almost sure results with respect to disorder.
  Furthermore, they are also improved in the direction of stronger
  mixing estimates, like those in Lemma~\ref{lem:mixing} and Corollary
  \ref{cor:mixing}, that are needed for establishing the CLT for the
  centering variable.

  The mentioned Birkhoff--sum approach, which we use starting from
  Section \ref{sec:maximal_excursion}, turns out to be a powerful
  tool: in particular, it is crucial in establishing almost sure
  converge of the free-energy derivatives in Theorem \ref{th:Cinfty},
  which in turn is the basis of the quenched CLT for the contact
  number, and in upgrading \cite[Theorem 2.5]{giacomin2006_1} to a
  full almost sure result in Proposition~\ref{prop:maximal_excursion}.
  We believe that the applications of such approach are not restricted
  to the results of this paper and that it could provide further
  insight into the localized phase of pinning models in future
  works. Moreover, the Birkhoff--sum approach might be of interest
  also for the study of pinning models subjected to correlated
  disorder with good mixing properties (see \cite{cf:CCP2019} and
  references therein).

\smallskip  

\item We stress once more that we have taken care of providing almost
  sure results that are uniform in the parameter $h$: by this we mean
  that we have identified a set of charges of full probability that
  works for every $h>h_c$. This is another byproduct of the
  Birkhoff--sum approach. The pinning model is usually presented as a
  model with two free parameters
  \cite{giacomin2007,cf:G-SF,cf:dH,cf:Velenik}: $h$ and an amplitude
  $\beta\ge 0$ of the charges, so $\go_a$ enters the model via $\gb
  \go_a$, and then one fixes $\int_\gO \go_0^2 \,\probd[\dd \go] =1$.
  In this framework, the localized phase is a region of the parameter
  plane where the free energy is strictly positive. For the sake of
  simplifying the formulas, we have decided to set $\gb =1$, but we
  have not fixed the variance of the charges. There is therefore no
  loss of generality except that one may wonder about the uniformity
  of the results with respect to both $h$ and $\gb$ together: our
  approach generalizes in a straightforward way to obtain such a
  double uniformity.

\smallskip
  
\item Compared to \cite[Theorem 2.1]{giacomin2006_1} and to the
  results in \cite{cf:taming95}, our estimates in Theorem
  \ref{th:Cinfty} control the dependence on the bounds on (all) the
  derivatives of the free energy in such a way that a Gevrey class to
  which the free energy belongs is identified. The Gevrey class we
  find is the same for all the models we consider. Obtaining the
  optimal Gevrey class, probably dependent on details of the model
  such as the regular variation class of the inter-arrival law and the
  regularity properties of the law of the disorder, is a particular
  case of the very challenging issue of understanding Griffiths
  singularities.

\smallskip  
  
\item We prove a quenched CLT for the key observable of pinning
  models, i.e., the contact number.  Disorder induces a number of
  technical difficulties, but also a substantial one: since we want to
  prove the results for typical realizations of the environment, the
  centering constant $\Ex_{n,h,\omega}[L_n]$ depends on the
  realization $\omega$ of the environment. A full result includes
  therefore a control on the behaviour of $\Ex_{n,h,\omega}[L_n]$ and,
  a posteriori, one realizes that thermal and disorder fluctuations
  are of the same order, so neither one can be neglected. For this
  reason we establish a CLT also for this variable, as well as a
  Hardy--Littlewood type-estimate, and we show that the limiting
  distribution is degenerate only in the absence of disorder. Non
  degeneracy can be demonstrated in a rather straightforward way when
  disorder is Gaussian (via integration by parts), but our argument
  controls general disorder laws.

\smallskip
  
\item In the forthcoming work \cite{giacominzamparo} we are going to
  exploit the estimates we develop here, notably the quenched CLT for
  the contact number, to study the pinning model conditioned on the
  number of pinned sites. The aim is to obtain sharp results on the
  effect of disorder on the so called \emph{big-jump phenomenon} that
  has been widely studied in the absence of disorder (see
  \cite{giacomin2020,cf:VBB,zamparo2021} and references therein). A
  central role in \cite{giacominzamparo} is played by the fact that
  disorder smooths the pinning transition and first order transitions
  do not exist anymore in disorder pinning models
  \cite{caravenna2013,giacomin2006_2}: this is in strict analogy with
  the smoothing phenomenon in the Ising model \cite{cf:AW,cf:DingXia}
  with the notable difference that in pinning models one does know
  that the model still has a phase transition. With respect to these
  aspects we point out that a pinning model can be rewritten in binary
  Ising like variables, with many body interactions of all orders, as
  it is taken up and discussed in detail in \cite{giacominzamparo}.
\end{enumerate}

\medskip

\subsection{Organization of the rest of the paper}

The rest of the paper is organized as follows. In Section
\ref{sec:basics} we discuss some basic facts of the model, such as the
fundamental property of conditional independence between consecutive
stretches of the polymer and a concentration bound for the
finite-volume free energy. The latter is then used to prove
Proposition \ref{prop:mu}. Section \ref{sec:corr} presents some
correlation estimates, a first application of which is the proof of
Proposition \ref{prop:maximal_excursion}. In Section
\ref{sec:regularity_estimates} we develop some regularity estimates
for the free energy, which allows us to prove Theorem \ref{th:Cinfty}
and, on the basis of general results about cumulants, also Theorem
\ref{th:CLT+concentration}.  Finally, in Section \ref{sec:CLTEL} we
study the cumulants of the centering variable involved in
Theorem~\ref{th:CLT+concentration}, with particular attention to the
mean and variance, proving Theorem \ref{th:centering} and Proposition
\ref{prop:fluctuations_EL}.

\medskip

\section{Some basic facts}
\label{sec:basics}

\subsection{Conditional independence in the pinning model}

The renewal nature of the pinning model implies a conditional
independence between consecutive stretches of the polymer. We state
this fundamental property in terms of the random variables
$X_a:=\mathds{1}_{\{a\in\{S_i\}_{i\in\N_0}\}}$ for $a\in\N_0$, which
take value 1 at the contact sites and value 0 at the other sites. Note
that $X_0=1$ as $S_0:=0$, which can be interpreted by saying that
there is a contact on the left of any site $a\in\N$. Also note that
the model can be recast as
\begin{equation*}
  \frac{\dd\prob_{n,h,\omega}}{\dd\prob}=\frac{1}{Z_{n,h}(\omega)}\ee^{\sum_{a=1}^n(h+\omega_a)X_a}X_n
\end{equation*}
with $Z_{n,h}(\omega):=\Ex[\ee^{\sum_{a=1}^n(h+\omega_a)X_a}X_n]$ and
$Z_{n,h}^{\textbf{-}}(\omega):=Z_{n,h}(\omega)\ee^{-h-\omega_n}=\Ex[\ee^{\sum_{a=1}^{n-1}(h+\omega_a)X_a}X_n]$. In
this setting, the contact number reads $L_n=\sum_{a=1}^nX_a$.

The following lemma introduces the mentioned conditional independence,
whose standard proof is omitted (see, e.g., \cite[Proposition
  3.1]{zamparo2022}). Let $\mathcal{M}_n$ be the set of functions
$\phi:\{0,1\}^{n+1}\to\Cm$ such that $|\phi(x_0,\ldots,x_n)|\le 1$ for
all $x_0,\ldots,x_n\in\{0,1\}$ and denote by $\vartheta$ the left
shift that maps the sequence
$\omega:=\{\omega_a\}_{a\in\N_0}\in\Omega$ to
$\vartheta\omega:=\{\omega_{a+1}\}_{a\in\N_0}$.

\medskip

\begin{lemma}
  \label{lem:fact}
For every $h\in\Rl$, $\omega\in\Omega$, integers $0\le a\le n$, and
functions $\phi\in\mathcal{M}_a$ and $\psi\in\mathcal{M}_{n-a}$
\begin{align}
  \nonumber
  &\Ex_{n,h,\omega}\Big[\phi(X_0,\ldots,X_a)\psi(X_a,\ldots,X_n)\Big|X_a=1\Big]\\
  \nonumber
  &\qquad=\Ex_{a,h,\omega}\big[\phi(X_0,\ldots,X_a)\big]\Ex_{n-a,h,\vartheta^a\omega}\big[\psi(X_0,\ldots,X_{n-a})\big]\,.
\end{align}
In particular, it follows that
\begin{align}
  \nonumber
  &\Ex_{n,h,\omega}\Big[\phi(X_0,\ldots,X_a)\psi(X_a,\ldots,X_n)\Big|X_a=1\Big]\\
  \nonumber
  &\qquad=\Ex_{n,h,\omega}\big[\phi(X_0,\ldots,X_a)\big|X_a=1\big]\Ex_{n,h,\omega}\big[\psi(X_a,\ldots,X_n)\big|X_a=1\big]\,.
\end{align}
\end{lemma}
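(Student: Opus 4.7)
The plan is to exploit the strong Markov (renewal) property of the underlying process $(X_a)_{a\in\N_0}$ under the reference measure $\prob$ at the deterministic site $a$, which is a regeneration time on the event $\{X_a=1\}$. I would start by expressing the conditional pinning expectation as a ratio using the very definition of $\prob_{n,h,\omega}$:
\begin{equation*}
\Ex_{n,h,\omega}\big[\phi\psi\,\big|\,X_a=1\big]\,=\,\frac{\Ex\!\left[\phi(X_0,\ldots,X_a)\,\psi(X_a,\ldots,X_n)\,\ee^{\sum_{b=1}^n(h+\omega_b)X_b}X_n\mathds{1}_{\{X_a=1\}}\right]}{\Ex\!\left[\ee^{\sum_{b=1}^n(h+\omega_b)X_b}X_n\mathds{1}_{\{X_a=1\}}\right]}\,.
\end{equation*}
The exponential then factorises as $\ee^{\sum_{b=1}^a(h+\omega_b)X_b}\cdot\ee^{\sum_{b=a+1}^n(h+\omega_b)X_b}$, where the first factor is a function of $(X_0,\ldots,X_a)$ and the second is a function of $(X_a,\ldots,X_n)$ (since the summation variables $X_{a+1},\ldots,X_n$ depend only on those coordinates, and the value $X_a$ itself contributes nothing to the second sum).

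Next I would invoke the two renewal facts one needs: under $\prob$, conditionally on $\{X_a=1\}$, the blocks $(X_0,\ldots,X_a)$ and $(X_a,X_{a+1},\ldots,X_n)$ are independent, and the shifted block $(X_a,X_{a+1},\ldots,X_n)$ has the same distribution as $(X_0,X_1,\ldots,X_{n-a})$. This is immediate from the i.i.d.\ structure of the inter-arrival times $\TT_1,\TT_2,\ldots$: on $\{X_a=1\}$ the renewal index $L_a$ is the (deterministic-in-$a$) smallest $i$ with $\ST_i=a$, so $\TT_{L_a+1},\TT_{L_a+2},\ldots$ are i.i.d.\ copies of $\TT_1$ independent of $\TT_1,\ldots,\TT_{L_a}$, which is precisely what the two claims encode. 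Combining conditional independence with the identity $\Ex[F\,|\,X_a=1]=\Ex[F\,X_a]/\prob[X_a=1]$ applied to the two halves, the numerator becomes
\begin{equation*}
\frac{1}{\prob[X_a=1]}\cdot Z_{a,h}(\omega)\,\Ex_{a,h,\omega}\big[\phi(X_0,\ldots,X_a)\big]\cdot Z_{n-a,h}(\vartheta^a\omega)\,\Ex_{n-a,h,\vartheta^a\omega}\big[\psi(X_0,\ldots,X_{n-a})\big]\,,
\end{equation*}
where one recognises $Z_{a,h}(\omega)$ and $Z_{n-a,h}(\vartheta^a\omega)$ as the partition functions of the two sub-systems and the ratios as the corresponding pinning expectations.

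Taking $\phi\equiv\psi\equiv 1$ in the same computation shows that the denominator equals $Z_{a,h}(\omega)\,Z_{n-a,h}(\vartheta^a\omega)/\prob[X_a=1]$, so the common factors cancel and the first displayed identity follows. The second identity is an immediate corollary: setting $\psi\equiv 1$ in the first identity gives $\Ex_{n,h,\omega}[\phi\,|\,X_a=1]=\Ex_{a,h,\omega}[\phi]$, and similarly $\Ex_{n,h,\omega}[\psi\,|\,X_a=1]=\Ex_{n-a,h,\vartheta^a\omega}[\psi]$ by taking $\phi\equiv 1$, and multiplying these two equalities reproduces the right-hand side of the first identity.

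There is no genuine obstacle: the only substantive input is the strong Markov property of the renewal $(X_a)_{a\in\N_0}$ at the site $a$ on $\{X_a=1\}$, which is classical; the remainder is bookkeeping with the exponential weight. The one point to watch is that the shift by $a$ in the disorder sequence, encoded by $\vartheta^a\omega$, must be matched with the shift in the $X$-variables, which is what allows the second block to be re-expressed as a pinning expectation on the system of length $n-a$ with disorder $\vartheta^a\omega$.
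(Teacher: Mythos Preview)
Your argument is correct and is precisely the standard renewal computation the paper has in mind; the paper in fact omits the proof as routine, citing \cite{zamparo2022}, Proposition~3.1. One cosmetic slip: the prefactor $1/\prob[X_a=1]$ you display in the numerator (and denominator) should not appear---once you use the regeneration identity for the second block the factors of $\prob[X_a=1]$ cancel completely---but since the same spurious factor sits in both numerator and denominator the ratio, and hence the conclusion, is unaffected.
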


\medskip

Lemma \ref{lem:fact} is at the basis of a technical estimate that we
use all over the paper to compare configurations where a site is not a
contact with configurations where that site becomes a contact.  In
order to present such estimate in the next lemma, we observe that the
properties of slowly varying functions (see \cite[Theorem 1.2.1 and
  Proposition 1.3.6]{bingham1989}) imply that there exists a constant
$\xi>0$ such that for all $t,\tau\in\N$
\begin{equation}
  \frac{p(t+\tau)}{p(t)p(\tau)}\le\xi \min\{t^\xi,\tau^\xi\}\,.
  \label{eq:xi_def}
\end{equation}
Recall our convention that empty sums are equal to 0 and empty
products are equal to 1.

\medskip

\begin{lemma}
  \label{lem:utile_per_tutto}
  For every $h\in\Rl$, $\omega:=\{\omega_b\}_{b\in\N_0}\in\Omega$,
  integers $0<a<n$, and non-negative real functions
  $\phi\in\mathcal{M}_{a-1}$ and $\psi\in\mathcal{M}_{n-a-1}$
  \begin{align}
    \nonumber
  &\Ex_{n,h,\omega}\Big[\phi(X_0,\ldots,X_{a-1})(1-X_a)\psi(X_{a+1},\ldots,X_n)\Big]\\
  \nonumber
  &\qquad\le \xi\ee^{-h-\omega_a}\min\big\{a^\xi,(n-a)^\xi\big\}\Ex_{n,h,\omega}\Big[\phi(X_0,\ldots,X_{a-1})X_a\psi(X_{a+1},\ldots,X_n)\Big]\,.
  \end{align}
  Furthermore, for each $\lambda>0$
  \begin{align}
    \nonumber
  &\Ex_{n,h,\omega}\Big[\phi(X_0,\ldots,X_{a-1})\mathds{1}_{\{T_{L_a+1}\le \lambda\}}(1-X_a)\psi(X_{a+1},\ldots,X_n)\Big]\\
  \nonumber
  &\qquad\le \xi\ee^{-h-\omega_a}\min\big\{\lambda^\xi,a^\xi,(n-a)^\xi\big\}\\
  \nonumber
  &\qquad\quad\times\Ex_{n,h,\omega}\Big[\phi(X_0,\ldots,X_{a-1})\mathds{1}_{\{T_{L_a}+T_{L_a+1}\le \lambda\}}X_a\psi(X_{a+1},\ldots,X_n)\Big]\,.
\end{align}
\end{lemma}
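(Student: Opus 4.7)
The plan is to set up a weight-preserving bijection between contact configurations in which $X_a=0$ and those in which $X_a=1$, and compare the Boltzmann weights on paired configurations. This bypasses the normalising constant $Z_{n,h}(\omega)$ entirely and reduces the estimate to a single application of the elementary inequality \eqref{eq:xi_def}.

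Concretely, for each pair $0\le b<a<c\le n$ I would introduce the events $\mathcal{L}_b:=\{X_b=1,\,X_{b+1}=\cdots=X_{a-1}=0\}$ and $\mathcal{R}_c:=\{X_{a+1}=\cdots=X_{c-1}=0,\,X_c=1\}$, so that $b$ is the last contact strictly before $a$ and $c$ is the first contact strictly after $a$. Since $X_0=X_n=1$ by construction of the model, both $\{X_a=0\}$ and $\{X_a=1\}$ decompose as disjoint unions of events $\mathcal{L}_b\cap\{X_a=\cdot\}\cap\mathcal{R}_c$ indexed by such pairs. The map flipping the value at $a$ gives a bijection between configurations in $\mathcal{L}_b\cap\{X_a=0\}\cap\mathcal{R}_c$ and those in $\mathcal{L}_b\cap\{X_a=1\}\cap\mathcal{R}_c$, and on each pair the Boltzmann weight changes only through the replacement of the jump factor $p(c-b)$ by $p(a-b)p(c-a)\ee^{h+\omega_a}$, while $\phi$ and $\psi$ are unaffected because they do not read $X_a$. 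This yields
\begin{equation*}
\Ex_{n,h,\omega}\big[\phi\,\mathds{1}_{\mathcal{L}_b}(1-X_a)\mathds{1}_{\mathcal{R}_c}\psi\big]=\ee^{-h-\omega_a}\,\frac{p(c-b)}{p(a-b)p(c-a)}\,\Ex_{n,h,\omega}\big[\phi\,\mathds{1}_{\mathcal{L}_b}X_a\mathds{1}_{\mathcal{R}_c}\psi\big]\,.
\end{equation*}
Applying \eqref{eq:xi_def} to the ratio, using $a-b\le a$ and $c-a\le n-a$, then summing over $(b,c)$ and invoking positivity of $\phi$ and $\psi$, will give the first inequality.

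For the second inequality the only addition is to match the two indicators through the bijection: on $\mathcal{L}_b\cap\{X_a=0\}\cap\mathcal{R}_c$ one has $T_{L_a+1}=c-b$, while on $\mathcal{L}_b\cap\{X_a=1\}\cap\mathcal{R}_c$ one has $T_{L_a}+T_{L_a+1}=(a-b)+(c-a)=c-b$, so both indicators cut the sum down to the same set of pairs with $c-b\le\lambda$, on which $a-b$ and $c-a$ are both at most $\lambda$; hence $\lambda^\xi$ joins the minimum. I expect the only mildly delicate step to be the bookkeeping that produces a single indexing $(b,c)=(B,C):=(\max\{i<a:X_i=1\},\min\{i>a:X_i=1\})$ giving simultaneously disjoint and exhaustive decompositions of $\{X_a=0\}$ and $\{X_a=1\}$; once this is in place the remainder is combinatorial rearrangement and a single invocation of \eqref{eq:xi_def}.
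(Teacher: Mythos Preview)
Your proposal is correct and follows essentially the same approach as the paper: both decompose according to the nearest contacts $b<a<c$ on either side of $a$, compare the jump factor $p(c-b)$ against $p(a-b)p(c-a)\ee^{h+\omega_a}$ via \eqref{eq:xi_def}, and match the two indicators in the second part through the identity $T_{L_a+1}=c-b$ (when $X_a=0$) versus $T_{L_a}+T_{L_a+1}=c-b$ (when $X_a=1$). The only difference is presentational: the paper routes the comparison through Lemma~\ref{lem:fact}, factoring each expectation into three pieces and computing the middle one explicitly, whereas your bijection argument reads off the Boltzmann-weight ratio directly and thereby avoids invoking Lemma~\ref{lem:fact} altogether.
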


\medskip

\begin{proof}[Proof of Lemma~\ref{lem:utile_per_tutto}]
  It suffices to prove the second part of the lemma. In fact, the
  first part follows from the second one with a number $\lambda\ge n$
  since for $0<a<n$ we have $T_{L_a+1}\le n$ and $T_{L_a}+T_{L_a+1}\le
  n$ almost surely with respect to the polymer measure
  $\prob_{n,h,\omega}$.

Pick $h\in\Rl$, $\omega:=\{\omega_b\}_{b\in\N_0}\in\Omega$, integers
$0< a< n$, and $\lambda>0$. The lemma uses a standard trick to
introduce the indicator function that there is a contact at site $a$
at the expense of a multiplicative constant that depends on the charge
at $a$ and on a power of $a$ or $n-a$. In detail, we are going to
exploit the two identities
\begin{equation}
  \mathds{1}_{\{T_{L_a+1}\le \lambda\}}(1-X_a)=\sum_{i=0}^{a-1}\sum_{j=a+1}^\infty \mathds{1}_{\{j-i\le \lambda\}}X_i\prod_{k=i+1}^{j-1}
  (1-X_k)X_j
\label{eq:identity_a1}
\end{equation}
and
\begin{equation}
  \mathds{1}_{\{T_{L_a}+T_{L_a+1}\le \lambda\}}X_a=\sum_{i=0}^{a-1}\sum_{j=a+1}^\infty\mathds{1}_{\{j-i\le \lambda\}}X_i\prod_{k=i+1}^{a-1}(1-X_k)X_a\prod_{k=a+1}^{j-1}(1-X_k)X_j\,.
  \label{eq:identity_a2}
\end{equation}
The identity \eqref{eq:identity_a1} decomposes the event that the site
$a$ is in a polymer excursion of size at most $\lambda$, but there is
no contact in $a$, as a disjoint union of the events that there is a
contact at $i<a$, a contact at $j>a$ with $j-i\le \gl$, and no contact
between $i$ and $j$.  Similarly, \eqref{eq:identity_a2} decomposes the
event that the site $a$ is a contact separating two excursions of
total size at most $\lambda$ according to the positions $i$ and $j$ of
the first contacts on the left and on the right of $a$, respectively.
Also in this case $j-i \le \gl$.

Fix non-negative real functions $\phi\in\mathcal{M}_{a-1}$ and
$\psi\in\mathcal{M}_{n-a-1}$, and put for brevity
$\Phi:=\phi(X_0,\ldots,X_{a-1})$ and
$\Psi:=\psi(X_{a+1},\ldots,X_n)$. For $i<a<j$ also put
$\Phi_i:=\phi(X_0,\ldots,X_i,0,\ldots,0)$ and
$\Psi_j:=\psi(0,\ldots,0,X_j,\ldots,X_n)$. Since $X_n=1$ almost surely
under the polymer measure, the identity (\ref{eq:identity_a1}) implies
\begin{align}
  \nonumber
  &\Ex_{n,h,\omega}\Big[\Phi\mathds{1}_{\{T_{L_a+1}\le \lambda\}}(1-X_a)\Psi\Big]\\
  \nonumber
  &\qquad=
  \sum_{i=0}^{a-1}\sum_{j=a+1}^n\mathds{1}_{\{j-i\le \lambda\}}\Ex_{n,h,\omega}\bigg[\Phi X_i\prod_{k=i+1}^{j-1}(1-X_k)X_j\Psi\bigg]\\
  \nonumber
  &\qquad=\sum_{i=0}^{a-1}\sum_{j=a+1}^n\mathds{1}_{\{j-i\le \lambda\}}\Ex_{n,h,\omega}\bigg[\Phi_i X_i\prod_{k=i+1}^{j-1}(1-X_k)X_j\Psi_j\bigg]\,.
\end{align}
We can invoke Lemma \ref{lem:fact} to obtain
\begin{align}
  \nonumber
  &\Ex_{n,h,\omega}\Big[\Phi\mathds{1}_{\{T_{L_a+1}\le \lambda\}}(1-X_a)\Psi\Big]\\
  \nonumber
  &\qquad=\sum_{i=0}^{a-1}\sum_{j=a+1}^n\mathds{1}_{\{j-i\le \lambda\}}
  \Ex_{j,h,\omega}\big[\Phi_iX_i\big]\Ex_{j-i,h,\vartheta^i\omega}\bigg[\prod_{k=1}^{j-i-1}(1-X_k)\bigg]\Ex_{n,h,\omega}\big[X_j\Psi_j\big]\\
  &\qquad=\sum_{i=0}^{a-1}\sum_{j=a+1}^n\mathds{1}_{\{j-i\le \lambda\}}
  \Ex_{j,h,\omega}\big[\Phi_iX_i\big]\,\frac{p(j-i)\ee^{h+\omega_j}}{Z_{j-i,h}(\vartheta^i\omega)}\,\Ex_{n,h,\omega}\big[X_j\Psi_j\big]\,.
  \label{eq:lower_one_1}
\end{align}
Similarly, (\ref{eq:identity_a2}) and Lemma \ref{lem:fact} show that
\begin{align}
  \nonumber
  &\Ex_{n,h,\omega}\Big[\Phi \mathds{1}_{\{T_{L_a}+T_{L_a+1}\le \lambda\}}X_a\Psi\Big]\\
  \nonumber
  &\qquad=\sum_{i=0}^{a-1}\sum_{j=a+1}^n\mathds{1}_{\{j-i\le \lambda\}}\Ex_{j,h,\omega}\big[\Phi_iX_i\big]\\
  \nonumber
  &\qquad\qquad\quad\times\Ex_{j-i,h,\vartheta^i\omega}\bigg[\prod_{k=1}^{a-i-1}(1-X_k)X_{a-i}\prod_{k=a-i+1}^{j-i-1}(1-X_k)\bigg]\Ex_{n,h,\omega}\big[X_j\Psi_j\big]\\
  &\qquad=\sum_{i=0}^{a-1}\sum_{j=a+1}^n\mathds{1}_{\{j-i\le \lambda\}}\Ex_{j,h,\omega}\big[\Phi_iX_i\big]\,\frac{p(a-i)p(j-a)\ee^{2h+\omega_a+\omega_j}}{Z_{j-i,h}(\vartheta^i\omega)}\,
  \Ex_{n,h,\omega}\big[X_j\Psi_j\big]\,.
  \label{eq:lower_one_2}
\end{align}
Now we observe that if $i<a<j$ with $j-i\le\lambda$, then the inequality
(\ref{eq:xi_def}) gives
\begin{align}
  \nonumber
  p(j-i)&\le \xi \min\big\{(a-i)^\xi,(j-a)^\xi\big\}p(a-i)p(j-a)\\
  &\le \xi \min\big\{\lambda^\xi,a^\xi,(n-a)^\xi\big\}p(a-i)p(j-a)\,.
  \label{eq:lower_one_3}
\end{align}
Therefore, plugging (\ref{eq:lower_one_3}) in (\ref{eq:lower_one_1}),
and then comparing with (\ref{eq:lower_one_2}), we get
\begin{align}
  \nonumber
  &\Ex_{n,h,\omega}\Big[\Phi\mathds{1}_{\{T_{L_a+1}\le \lambda\}}(1-X_a)\Psi\Big]\\
  \nonumber
  &\qquad
  \le \xi\ee^{-h-\omega_a}\min\big\{\lambda^\xi,a^\xi,(n-a)^\xi\big\}\Ex_{n,h,\omega}\Big[\Phi \mathds{1}_{\{T_{L_a}+T_{L_a+1}\le \lambda\}}X_a\Psi\Big]\,.
  \qedhere
\end{align}
\end{proof}

\medskip

\begin{remark}
   \label{remark:enforce_contact}
  {\rm The first part of Lemma \ref{lem:utile_per_tutto} is equivalent to 
\begin{align}
    \nonumber
  &\Ex_{n,h,\omega}\Big[\phi(X_0,\ldots,X_{a-1})\psi(X_{a+1},\ldots,X_n)\Big]\\
  \nonumber
  &\qquad\le 
  \Big[1+\xi\ee^{-h-\omega_a}\min\big\{a^\xi,(n-a)^\xi\big\}\Big]\Ex_{n,h,\omega}\Big[\phi(X_0,\ldots,X_{a-1})X_a\psi(X_{a+1},\ldots,X_n)\Big]\,.
\end{align}
This bound with $\phi$ and $\psi$ identically equal to 1 reads
\begin{equation*}
    \Ex_{n,h,\omega}[X_a]\ge \frac{1}{1+\xi\ee^{-h-\omega_a}\min\{a^\xi,(n-a)^\xi\}}\,.
\end{equation*}
The latter, which trivially holds even if $a=0$ or $a=n$, is a
refinement of \cite[Lemma A.1]{giacomin2006_1}.}
\end{remark}

\medskip

\subsection{A concentration bound under subexponential conditions}
A version of the McDiarmid's inequality for functions of independent
random variables under subexponential conditions is provided in
\cite{maurer2021}. We use this result to obtain a concentration bound
for the finite-volume free energies $\log
Z_{n,h}=\log\Ex[\ee^{\sum_{a=1}^n(h+\omega_a)X_a}X_n]$ and $\log
Z_{n,h}^{\textbf{-}}=\log\Ex[\ee^{\sum_{a=1}^{n-1}(h+\omega_a)X_a}X_n]$.
Note that
\begin{equation}
  \Big|\log Z_{n,h}(\omega)-\log Z_{n,h'}(\omega')\Big|\le n|h-h'|+\sum_{a=1}^n|\omega_a-\omega_a'|
  \label{Lipschitz}
\end{equation}
for all $n\in\N$, $h$ and $h'$ in $\Rl$, and
$\omega:=\{\omega_a\}_{a\in\N_0}$ and
$\omega':=\{\omega_a'\}_{a\in\N_0}$ in $\Omega$, as one can easily
verify. The same holds if $Z_{n,h}$ is replaced by $
Z^{\textbf{-}}_{n,h}$. In particular, bound (\ref{Lipschitz}) gives
$|f(h)-f(h')|\le|h-h'|$.

\medskip

\begin{theorem}
  \label{th:concentration}
  There exists a constant $\kappa>0$ such that for every $n\in\N$, $h\in\Rl$,
  and $u\ge 0$
 \begin{equation*}
\probd\bigg[\Big|\log Z_{n,h}-\Exd\big[\log Z_{n,h}\big]\Big|>u\bigg]\le 2\ee^{-\frac{\kappa u^2}{n+u}}\, ,
  \end{equation*}
 and the very same bound holds if $Z_{n,h}$ is replaced by $ Z^{\textbf{-}}_{n,h}$.
\end{theorem}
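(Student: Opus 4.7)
The plan is to view $F_n(\omega) := \log Z_{n,h}(\omega)$, and analogously $F_n^{-}(\omega) := \log Z_{n,h}^{\text{-}}(\omega)$, as a function of the independent charges $\omega_0,\omega_1,\ldots$ and to apply the sub-exponential extension of McDiarmid's inequality from \cite{maurer2021}. The Lipschitz bound (\ref{Lipschitz}), taken with $h=h'$, implies that whenever $\omega$ and $\omega'$ agree in every coordinate except possibly $\omega_a$ with $1\le a\le n$,
\begin{equation*}
|F_n(\omega) - F_n(\omega')| \le |\omega_a - \omega_a'|,
\end{equation*}
and the identical estimate holds for $F_n^{-}$. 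Thus $F_n$ is Lipschitz in each coordinate with constant $1$, but the per-coordinate fluctuations $|\omega_a - \omega_a'|$ are unbounded, so classical bounded-differences McDiarmid does not apply directly.

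Next I would verify the sub-exponential moment hypothesis needed to feed into \cite{maurer2021}. Assumption \ref{assump:omega} gives $\Exd[\ee^{\eta|\omega_0|}] < +\infty$ for some $\eta>0$, and a standard truncation plus Taylor expansion around $t=0$ yields a Bernstein-type estimate $\Exd[\ee^{t(\omega_a-\omega_a')}] \le \ee^{c t^2}$ for all $|t|\le \eta/2$, with $c$ depending only on the law of $\omega_0$. Equivalently, the symmetrised differences $\omega_a - \omega_a'$ are sub-exponential with uniformly bounded Orlicz norm, which is precisely the hypothesis on the per-coordinate increments required by the Maurer inequality.

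Combining the unit Lipschitz property with the sub-exponential moment bound, \cite{maurer2021} produces a Bernstein-type concentration of the form
\begin{equation*}
\probd\big[|F_n - \Exd F_n| > u\big] \le 2\exp\bigg(-\min\Big\{\frac{\kappa_1 u^2}{n},\, \kappa_2 u\Big\}\bigg),
\end{equation*}
for constants $\kappa_1,\kappa_2>0$ depending only on the law of $\omega_0$. Since $\min\{u^2/n,\, u\}$ and $u^2/(n+u)$ differ at most by a universal multiplicative factor, this is exactly the claimed bound with a single $\kappa>0$. Neither the Lipschitz constants nor the moment bounds involve $h$, so the resulting $\kappa$ is automatically uniform in $h\in\Rl$, and the verbatim argument handles $F_n^{-}$. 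The only delicate step is matching the precise normalisation and hypotheses of the inequality in \cite{maurer2021}; beyond this bookkeeping there is no substantial probabilistic obstacle.
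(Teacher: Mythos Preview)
Your proposal is correct and follows essentially the same route as the paper: both use the coordinate-wise Lipschitz bound from (\ref{Lipschitz}) to reduce to a sub-exponential McDiarmid inequality and then invoke \cite{maurer2021}. The only cosmetic difference is that the paper verifies the hypothesis of Theorem~4 in \cite{maurer2021} in its moment form, showing $\Exd[|\Lambda_{\varpi,a}-\Exd\Lambda_{\varpi,a}|^q]\le (cq)^q$ via the elementary bound $\zeta^q\le (q/\eta\ee)^q\ee^{\eta\zeta}$, whereas you phrase it through the equivalent MGF/Orlicz-norm characterisation of sub-exponential variables; both are standard and interchangeable.
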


\medskip

\begin{proof}[Proof of Theorem \ref{th:concentration}]
We develop the argument for $Z_{n,h}$, as the argument for
$Z^{\textbf{-}}_{n,h}$ is identical.  Fix $n\in\N$, $h\in\Rl$, and
$u\ge 0$. For every $\varpi:=\{\varpi_b\}_{b\in\N_0}\in\Omega$ and
$a\in\{1,\ldots,n\}$ let $\Lambda_{\varpi,a}$ be the random variable on
$\Omega$ that maps $\omega:=\{\omega_b\}_{b\in\N_0}$ to
\begin{equation*}
\Lambda_{\varpi,a}(\omega):=\log Z_{n,h}(\{\varpi_0,\ldots,\varpi_{a-1},\omega_a,\varpi_{a+1},\ldots\})\,,
\end{equation*}
namely, $\Lambda_{\varpi,a}(\omega)$ is the finite-volume free energy
for a system that at site $b$ has the charge $\varpi_b$ if $b\ne a$
and $\omega_a$ if $b=a$.  An analog of the McDiarmid's inequality for
functions of independent random variables under subexponential
conditions (see \cite[Theorem 4]{maurer2021}) states that if there
exists a constant $c>0$ such that for every $\varpi\in\Omega$,
$a\in\{1,\ldots,n\}$, and $q\ge 1$ we have
  \begin{equation}
    \Exd\Big[\big|\Lambda_{\varpi,a}-\Exd[\Lambda_{\varpi,a}]\big|^q\Big]\le (cq)^q\,,
\label{th:concentration_1}
  \end{equation}
  then
  \begin{equation*}
    \probd\bigg[\Big|\log Z_{n,h}-\Exd\big[\log Z_{n,h}\big]\Big|>u\bigg]\le 2\ee^{-\frac{u^2}{(2\ee c)^2n+2\ee cu}}\,.
  \end{equation*}
Let us show that (\ref{th:concentration_1}) is satisfied with
$c:=(2/\eta\ee)\int_\Omega\ee^{\eta|\omega_0|}\probd[\dd\omega]$ and
$\eta$ as in Assumption \ref{assump:omega}. This proves the theorem
with $1/\kappa:=\max\{2ec,(2\ee c)^2\}$.
  
Bound (\ref{Lipschitz}) yields
$|\Lambda_{\varpi,a}(\omega)-\Lambda_{\varpi,a}(\omega')|\le
|\omega_a-\omega_a'|$ for $\omega:=\{\omega_b\}_{b\in\N_0}$ and
$\omega':=\{\omega_b'\}_{b\in\N_0}$ in $\Omega$, which thanks to
H\"older's inequality gives for $q\ge 1$
\begin{align}
  \nonumber
  \Big|\Lambda_{\varpi,a}(\omega)-\Exd[\Lambda_{\varpi,a}]\Big|^q &\le \int_\Omega\Big|\Lambda_{\varpi,a}(\omega)-\Lambda_{\varpi,a}(\omega')\Big|^q\,\probd[\dd\omega']\\
  \nonumber
  &\le\int_\Omega|\omega_a-\omega'_a|^q\,\probd[\dd\omega']\le 2^{q-1}|\omega_a|^q+2^{q-1}\int_\Omega|\omega'_0|^q\,\probd[\dd\omega']\,.
\end{align}
Using the inequality $\zeta^q\le (q/\eta \ee)^q\ee^{\eta\zeta}$ for
$\zeta\ge 0$ we finally get
\begin{equation*}
  \Exd\Big[\big|\Lambda_{\varpi,a}-\Exd[\Lambda_{\varpi,a}]\big|^q\Big]\le 2^q\int_\Omega|\omega_0|^q\,\probd[\dd\omega]
  \le \bigg(\frac{2q}{\eta\ee}\bigg)^q\int_\Omega\ee^{\eta|\omega_0|}\probd[\dd\omega]=(cq)^q\,.
  \qedhere
\end{equation*}
  \end{proof}

\medskip

\subsection{First consequences of concentration}

The first consequence of the concentration bounds for the free
energies is the lower bound on $\mu(h)$ in Proposition~\ref{prop:mu},
which is proved in this section.  The superadditivity property
exploited in \cite{cf:AZ96,giacomin2006_1} to establish the existence
of $\mu(h)$ is no longer available because now we work with
$Z^{\textbf{-}}_{n,h}$. However, one can show that the sequence
$\{\log\Exd[1/Z^{\textbf{-}}_{n,h}]\}_{n\in\N_0}$ is nearly
superadditive and nearly subadditive: we choose to show the second
property since it follows directly from Lemma
\ref{lem:utile_per_tutto}.

\medskip

\begin{proof}[Proof of Proposition~\ref{prop:mu}]
Let us show that the limit defining $\mu(h)$ exists for all
$h\in\Rl$. We remark that $q:=\int_\Omega\mathds{1}_{\{\omega_0\le
  0\}}\probd[\dd\omega]>0$ because
$\int_\Omega\omega_0\probd[\dd\omega]=0$. Fix $h\in\Rl$ and consider
the sequence whose $n^{\mathrm{th}}$ term is $\log [2^\xi(\ee^h+\xi
  n^\xi)/q]-\log\Exd[{1}/{Z_{n,h}^{\textbf{-}}}]$, where the number
$\xi$ has been defined in (\ref{eq:xi_def}). We claim that such
sequence is subadditive, so that
$\lim_{n\uparrow\infty}-({1}/{n})\log\Exd\big[{1}/{Z_{n,h}^{\textbf{-}}}\big]=:\mu(h)$
exists as a consequence. To prove this, we appeal to Lemma
\ref{lem:utile_per_tutto}, through Remark
\ref{remark:enforce_contact}, to state that for all $\omega\in\Omega$
and $m,n\in\N$
\begin{equation*}
  \Ex_{m+n,h,\omega}[X_m]\ge\frac{1}{1+\xi\ee^{-h-\omega_m}\min\{m^\xi,n^\xi\}}\,.
\end{equation*}
Then we observe that
\begin{align}
  \nonumber
  \Ex_{m+n,h,\omega}[X_m]=\frac{\Ex[X_m\ee^{\sum_{a=1}^{m+n}(h+\omega_a)X_a}X_{m+n}]}{Z_{m+n,h}(\omega)}
  &=\frac{Z_{m,h}(\omega)Z_{n,h}(\vartheta^m\omega)}{Z_{m+n,h}(\omega)}\\
  \nonumber
  &=\ee^{h+\omega_m}\frac{Z^{\textbf{-}}_{m,h}(\omega)Z^{\textbf{-}}_{n,h}(\vartheta^m\omega)}{Z^{\textbf{-}}_{m+n,h}(\omega)}\,,
\end{align}
where we have used the renewal property in the second equality.
Therefore
\begin{equation*}
  \ee^{h+\omega_m}\frac{Z^{\textbf{-}}_{m,h}(\omega)Z^{\textbf{-}}_{n,h}(\vartheta^m\omega)}{Z^{\textbf{-}}_{m+n,h}(\omega)}\ge\frac{1}{1+\xi\ee^{-h-\omega_m}\min\{m^\xi,n^\xi\}}\,,
\end{equation*}
namely
\begin{equation*}
  \frac{1}{Z^{\textbf{-}}_{m+n,h}(\omega)}\ge \frac{1}{\ee^{h+\omega_m}+\xi\min\{m^\xi,n^\xi\}}\,
  \frac{1}{Z^{\textbf{-}}_{m,h}(\omega)}\frac{1}{Z^{\textbf{-}}_{n,h}(\vartheta^m\omega)}\,.
\end{equation*}
At this point, to obtain the desired subadditivity property it
suffices to note that the three factors in the right-hand side of the
last inequality are statistically independent, so by integrating with
respect to $\probd[\dd\omega]$ and exploiting that
\begin{align}
  \nonumber
\int_\Omega\frac{1}{\ee^{h+\omega_0}+\xi\min\{m^\xi,n^\xi\}}\,\probd[\dd\omega]&\ge 
\int_\Omega\frac{\mathds{1}_{\{\omega_0\le0\}}}{\ee^h+\xi\min\{m^\xi,n^\xi\}}\,\probd[\dd\omega]\\
\nonumber
&= q\,\frac{1}{\ee^h+\xi \min\{m^\xi,n^\xi\}}\ge  \frac{q}{2^\xi}\frac{\ee^h+\xi(m+n)^\xi}{(\ee^h+\xi m^\xi)(\ee^h+\xi n^\xi)}
\end{align}
we complete the task. 

\smallskip

\noindent\textit{The Lipschitz property of $\mu$.}  The function $\mu$ that
maps $h\in\Rl$ to $\mu(h)$ is Lipschitz continuous with Lipschitz
constant equal to $1$. In fact, since $0\le \partial_h \log
Z_{n,h}^{\textbf{-}}(\omega)\le n$, for every $n\in\N$ and $h\in\Rl$
we have
\begin{equation*}
  \Bigg|\partial_h  \frac{1}{n}\log \Exd\bigg[\frac{1}{Z_{n,h}^{\textbf{-}}}\bigg]\Bigg|
  =\left|\frac{1}{n}\frac{\Exd\Big[\frac{\partial_h\log Z_{n,h}^{\textbf{-}}}{Z_{n,h}^{\textbf{-}}}\Big]}{\Exd\Big[\frac{1}{Z_{n,h}^{\textbf{-}}}\Big]}\right|\le 1\,.
\end{equation*}
From here we deduce that for all $n\in\N$ and $h,h'\in\Rl$
\begin{equation*}
\Bigg|\frac{1}{n}\log \Exd\bigg[\frac{1}{Z_{n,h}^{\textbf{-}}}\bigg]-\frac{1}{n}\log \Exd\bigg[\frac{1}{Z_{n,h'}^{\textbf{-}}}\bigg]\Bigg|\le |h-h'|\,,
\end{equation*}
which in turn gives $|\mu(h)-\mu(h')|\le|h-h'|$ by sending $n$ to
infinity.

\smallskip

\noindent\textit{The bounds on $\mu(h)$.}  The inequality $\mu(h) \le f(h)$
follows from Jensen's inequality, which gives for all $n\in\N$ and
$h\in\Rl$
\begin{equation*}
  -\log \Exd\bigg[\frac{1}{Z_{n,h}^{\textbf{-}}}\bigg]\le\Exd\big[\log Z_{n,h}^{\textbf{-}}\big]=\Exd\big[\log Z_{n,h}\big]-h\,.
\end{equation*}
For a lower bound on $\mu(h)$, let $\kappa>0$ be the constant of
Theorem \ref{th:concentration} and put $c:=1-1/\sqrt{1+\kappa}$.
Since $p(n)/Z^{\textbf{-}}_{n,h}(\omega)\le 1$, because the left-hand
side is the probability
$\prob_{n,h,\omega}[T_1=n]=\ee^{h+\omega_n}p(n)/Z_{n,h}(\omega)$, for
every $n\in\N$, $h\in\Rl$, and $\omega\in\Omega$ we have
\begin{equation}
  \frac{p(n)}{Z_{n,h}^{\textbf{-}}(\omega)}\le \mathds{1}_{\big\{\log Z^{\textbf{-}}_{n,h}( \go)
    - \Exd[ \log Z^{\textbf{-}}_{n,h}] < -(1-c)f(h)n\big\}}+p(n)\,\ee^{(1-c) f(h)n-\Exd[ \log Z^{\textbf{-}}_{n,h}]}\,.
  \label{eq:lower_bound_mu}
\end{equation}
At the same time, using that $\Exd[\log Z^{\textbf{-}}_{n,h}] =
\Exd[\log Z_{n,h}]-h$, we can state that $\Exd[\log
  Z^{\textbf{-}}_{n,h}]\ge f(h)n-\epsilon n$ for any given
$\epsilon>0$ and all sufficiently large $n$. Thus, integrating
(\ref{eq:lower_bound_mu}) with respect to $\probd[\dd\omega]$ and
invoking Theorem \ref{th:concentration}, we find for all sufficiently
large $n$
\begin{align}
  \nonumber
  p(n)\,\Exd\bigg[ \frac 1 {Z^{\textbf{-}}_{n,h}}\bigg] &\le 2 \ee^{- \frac{\kappa(1-c)^2 f(h)^2n}{1+(1-c)f(h)}}+p(n)\,\ee^{-cf(h)n+\epsilon n}\\
  \nonumber
  &\le 2 \ee^{-c\min\{f(h),f(h)^2\}}+p(n)\,\ee^{-cf(h)n+\epsilon n}\,.
\end{align}
Taking the logarithm, dividing by $n$, and then sending $n$ to
infinity, this bound allows us to conclude that $\mu(h)\ge
c\min\{f(h),f(h)^2\}-\epsilon$, which implies $\mu(h)\ge
c\min\{f(h),f(h)^2\}$ because $\epsilon$ is arbitrary.
\end{proof}

\medskip

In the proof of Proposition~\ref{prop:mu} we have observed that
$\prob_{n,h,\omega}[T_1=n]=p(n)/Z_{n,h}^{\textbf{-}}(\omega)$. As a
direct consequence, Proposition~\ref{prop:mu} has the following
corollary.

\medskip

\begin{corollary}
  \label{cor:PT1}
For every $h\in\Rl$
\begin{equation*}
    \lim_{n \uparrow \infty}  \frac{1}{n} \log \Exd\Big[\prob_{n,h,\cdot}[T_1=n]\Big]=-\mu(h)\,.
  \end{equation*}
\end{corollary}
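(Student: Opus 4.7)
My plan is to reduce the claim to the already-established existence of $\mu(h)$ in Proposition~\ref{prop:mu} through a one-line algebraic identity, namely the formula for $\prob_{n,h,\omega}[T_1=n]$ that was observed during the proof of Proposition~\ref{prop:mu}.

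First I would write out the exact expression for the event $\{T_1=n\}$ under $\prob_{n,h,\omega}$. Since $T_1=n$ forces $S_1=n$ and thus $L_n=1$, the partition-function weight of this event is $\ee^{h+\omega_n}p(n)$, so
\begin{equation*}
\prob_{n,h,\omega}[T_1=n]=\frac{\ee^{h+\omega_n}\,p(n)}{Z_{n,h}(\omega)}=\frac{p(n)}{Z^{\textbf{-}}_{n,h}(\omega)}\,,
\end{equation*}
where the last equality is just the definition $Z_{n,h}^{\text{-}}(\omega):=Z_{n,h}(\omega)\ee^{-h-\omega_n}$. This identity was already recorded in the proof of Proposition~\ref{prop:mu}.

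Next, since $p(n)$ is deterministic, taking the disorder expectation gives
\begin{equation*}
\Exd\big[\prob_{n,h,\cdot}[T_1=n]\big]=p(n)\,\Exd\bigg[\frac{1}{Z^{\textbf{-}}_{n,h}}\bigg]\,.
\end{equation*}
Taking logarithms and dividing by $n$ yields the decomposition
\begin{equation*}
\frac{1}{n}\log\Exd\big[\prob_{n,h,\cdot}[T_1=n]\big]=\frac{\log p(n)}{n}+\frac{1}{n}\log\Exd\bigg[\frac{1}{Z^{\textbf{-}}_{n,h}}\bigg]\,.
\end{equation*}

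Finally I would send $n\uparrow\infty$ and evaluate both terms. By Assumption~\ref{assump:p}, $p(n)=\ell(n)/n^{\alpha+1}$ with $\ell$ slowly varying, hence $\log p(n)=O(\log n)$ and the first term vanishes. The second term converges to $-\mu(h)$ by the very definition \eqref{eq:mu-def}, which has just been established in Proposition~\ref{prop:mu}. Summing the two limits gives the desired identity. There is essentially no obstacle here beyond recognizing the identity; the corollary is just the statement that the $n^{-1}\log$-asymptotics of $\Exd[\prob_{n,h,\cdot}[T_1=n]]$ coincides with that of $\Exd[1/Z^{\textbf{-}}_{n,h}]$ because the polynomial/slowly varying prefactor $p(n)$ is subexponential.
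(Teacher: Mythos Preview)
Your proof is correct and follows exactly the approach the paper indicates. The paper does not even write out a formal proof: it simply notes, immediately before the corollary, that the identity $\prob_{n,h,\omega}[T_1=n]=p(n)/Z_{n,h}^{\text{-}}(\omega)$ was observed in the proof of Proposition~\ref{prop:mu} and that the corollary is a ``direct consequence'' of this and of the definition~\eqref{eq:mu-def}; you have just spelled out that direct consequence.
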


\medskip

Corollary \ref{cor:PT1} and the fact that $\mu(h)\ge
c\min\{f(h),f(h)^2\}>0$ for $h>h_c$ imply that the contact fraction in
the localized phase is small with exponentially small probability. We
let the following lemma to state a precise result.

\begin{lemma}
  \label{lem:density}
  For every closed set $H\subset(h_c,+\infty)$ there exist constants
  $\delta>0$ and $\gamma>0$ such that for all $n\in\N$
  \begin{equation*}
    \Exd\bigg[\sup_{h\in H}\prob_{n,h,\cdot}\big[L_n<\delta n\big]\bigg]\le \ee^{-\gamma n}\,.
    \end{equation*}
\end{lemma}

\medskip

Measurability of the variable of which we are taking the expectation
in Lemma~\ref{lem:density} holds because
$\prob_{n,h,\omega}[L_n<\delta n]$ is continuous with respect to $h$
for any $n$ and $\omega$, so that $\sup_{h\in
  H}\prob_{n,h,\cdot}[L_n<\delta n]=\sup_{h\in
  D}\prob_{n,h,\cdot}[L_n<\delta n]$, $D$ being a countable dense
subset of $H$.  The same argument applies to also the variables of
which we are taking the expectation in Lemmas \ref{lem:decay},
\ref{lem:decay_aux}, and \ref{lem:mixing}.

\medskip

\begin{proof}[Proof of Lemma \ref{lem:density}]
Fix a closed set $H\subset(h_c,+\infty)$ and define $h_o:=\inf
H>h_c$. Since $\mu(h_o)>0$ by Proposition~\ref{prop:mu}, Corollary
\ref{cor:PT1} shows that there exist constants $\gamma_o>0$ and
$G_o>0$ such that
\begin{equation}
  \Exd\Big[\prob_{n,h_o,\cdot}[T_1=n]\Big]\le G_o\ee^{-\gamma_o n}
  \label{cor:density_0}
\end{equation}
for all $n\in\N$. Put $\gamma:=\gamma_o/3$ and
$\delta:=\gamma/\log(1+G_o/\gamma)$.  We claim that the lemma
holds with such $\delta$ and $\gamma$. In fact, for every
$n\in\N$, $h\in\Rl$, and $\omega\in\Omega$ we have
\begin{align}
  \nonumber
  &\prob_{n,h,\omega}\big[L_n<\delta n\big]\\
  \nonumber
  &\qquad=\sum_{l\in\N}\mathds{1}_{\{l<\delta n\}}\sum_{0=a_0<a_1<\cdots <a_l=n}
    \Ex_{n,h,\omega}\bigg[\prod_{i=1}^lX_{a_{i-1}}\prod_{k=a_{i-1}+1}^{a_i-1}(1-X_k)X_{a_i}\bigg]\,.
  \end{align}
On the other hand, repeated applications of Lemma \ref{lem:fact} show
that for all integers $0=a_0<a_1<\cdots <a_l=n$
\begin{align}
  \nonumber
  &\Ex_{n,h,\omega}\bigg[\prod_{i=1}^lX_{a_{i-1}}\prod_{k=a_{i-1}+1}^{a_i-1}(1-X_k)X_{a_i}\bigg]\\
  \nonumber
  &\qquad=\prod_{i=1}^l\Ex_{a_i-a_{i-1},h,\vartheta^{a_{i-1}}\omega}\bigg[\prod_{k=1}^{a_i-a_{i-1}-1}(1-X_k)\bigg]\Ex_{a_i,h,\omega}[X_{a_{i-1}}]\\
  \nonumber
  &\qquad\le\prod_{i=1}^l\Ex_{a_i-a_{i-1},h,\vartheta^{a_{i-1}}\omega}\bigg[\prod_{k=1}^{a_i-a_{i-1}-1}(1-X_k)\bigg]\\
  \nonumber
  &\qquad=\prod_{i=1}^l\prob_{a_i-a_{i-1},h,\vartheta^{a_{i-1}}\omega}\big[T_1=a_i-a_{i-1}\big]\,.
\end{align}
Since $\prob_{m,h,\omega}[T_1=m]=p(m)/Z_{m,h}^{\textbf{-}}(\omega)$
with
$Z_{m,h}^{\textbf{-}}(\omega)=\Ex[\ee^{\sum_{a=1}^{m-1}(h+\omega_a)X_a}X_m]$
is decreasing with respect to $h$ for any $m$ and $\omega$, we deduce
that
\begin{align}
  \nonumber
  &\sup_{h\in H}\prob_{n,h,\omega}\big[L_n<\delta n\big]\\
    \nonumber
    &\qquad\le\sum_{l\in\N}\mathds{1}_{\{l<\delta n\}}\sum_{0=a_0<a_1<\cdots <a_l=n}
    \prod_{i=1}^l\prob_{a_i-a_{i-1},h_o,\vartheta^{a_{i-1}}\omega}\big[T_1=a_i-a_{i-1}\big]\,.
  \end{align}
In this way, integrating with respect to $\probd[\dd\omega]$, the statistical
independence of the factors
$\prob_{a_i-a_{i-1},h_o,\vartheta^{a_{i-1}}\cdot}[T_1=a_i-a_{i-1}]$
combined with bound (\ref{cor:density_0}) leads us to the inequality
  \begin{align}
    \nonumber
    \Exd\bigg[\sup_{h\in H}\prob_{n,h,\omega}\big[L_n<\delta n\big]\bigg]&\le \sum_{l\in\N}\mathds{1}_{\{l<\delta n\}}{n-1 \choose l-1}G_o^l\ee^{-\gamma_o n}\,.
  \end{align}
As $(1+G_o/\gamma)^\delta=\ee^\gamma$ by construction, we can
finally appeal to the bound $\mathds{1}_{\{l<\delta n\}}\le
(1+G_o/\gamma)^{\delta n-l}\le(\gamma/G_o)^l\ee^{\gamma n}$ and to the
relationship $\gamma_o=3\gamma$ to get
  \begin{align}
    \nonumber
    \Exd\bigg[\sup_{h\in H}\prob_{n,h,\omega}\big[L_n<\delta n\big]\bigg]&\le \sum_{l=1}^n (\gamma/G_o)^l\ee^{\gamma n}{n-1 \choose l-1}G_o^l\ee^{-3\gamma n}\\
      \nonumber
      &=\gamma(1+\gamma)^{n-1}\ee^{-2\gamma n}\le\ee^{-\gamma n}\,.
\qedhere
  \end{align}
\end{proof}

\medskip

\section{Correlation estimates}
\label{sec:corr}

This section introduces the correlation estimates that we use for the
main proofs of the paper. The first lemma we present is concerned with
the covariance of two observables under the polymer measure
$\prob_{n,h,\omega}$: given two measurable complex functions $\Phi$
and $\Psi$ over $(\mathcal{S},\mathfrak{S})$ we define their
covariance as
\begin{equation*}
  \cov_{n,h,\omega}[\Phi,\Psi]:=\Ex_{n,h,\omega}\big[\Phi\overline{\Psi}\,\big]-\Ex_{n,h,\omega}\big[\Phi\big]\Ex_{n,h,\omega}\big[\,\overline{\Psi}\,\big]\,,
\end{equation*}
provided that all the expectations exist. As usual, $\bar{\zeta}$ is
the complex conjugate of $\zeta\in\Cm$.  The variance
$\cov_{n,h,\omega}[\Phi,\Phi]$ is denoted by
$\var_{n,h,\omega}[\Phi]$.  The lemma involves two independent copies
of the polymer, so let $\{S_i\}_{i\in\N_0}$ and
$\{S_i'\}_{i\in\N_0}$ be two independent copies of the renewal
process, defined on the product probability space
$(\mathcal{S}^2,\mathfrak{S}^{\otimes 2},\prob^{\otimes 2})$. For
$n\in\N_0$, $h\in\Rl$, and $\omega\in\Omega$, consider on
$(\mathcal{S}^2,\mathfrak{S}^{\otimes 2})$ also the product measure
$\prob_{n,h,\omega}^{\otimes 2}$ and denote by
$\Ex_{n,h,\omega}^{\otimes 2}$ the corresponding expectation. Finally,
let $\xi$ be the number introduced in \eqref{eq:xi_def}.

\medskip

\begin{lemma}
  \label{lem:corr}
  For every $h\in\Rl$, $\omega:=\{\omega_c\}_{c\in\N_0}\in\Omega$,
  integers $0\le a\le b\le n$, and functions $\phi\in\mathcal{M}_a$
  and $\psi\in\mathcal{M}_{n-b}$
\begin{equation*}
\bigg|\cov_{n,h,\omega}\Big[\phi(X_0,\ldots,X_a),\psi(X_b,\ldots,X_n)\Big]\bigg|
\le 2\sum_{i=0}^{a-1}\sum_{j=b+1}^\infty\Ex_{j-i,h,\vartheta^i\omega}^{\otimes 2}\bigg[\prod_{k=1}^{j-i-1}(1-X_kX_k')\bigg]\,.
\end{equation*}
Moreover
\begin{align}
  \nonumber
  &\frac{|\cov_{n,h,\omega}[X_a,X_b]|}{\min\{\Ex_{n,h,\omega}[X_a],\Ex_{n,h,\omega}[X_b]\}}\\
  \nonumber
  &\qquad\le
  \sum_{i=0}^a\sum_{j=b}^\infty 2\xi (j-i)^\xi\ee^{-h-\min\{\omega_a,\omega_b\}}\Ex_{j-i,h,\vartheta^i\omega}^{\otimes 2}\bigg[\prod_{k=1}^{j-i-1}(1-X_kX_k')\bigg]\,.
\end{align}
\end{lemma}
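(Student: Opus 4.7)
I will prove both parts via a two-replica identity together with a swap argument along common contacts of two independent copies. For $X,X'$ independent under $\prob_{n,h,\omega}$, I start from
\begin{equation*}
\cov_{n,h,\omega}[\phi,\psi] \;=\; \tfrac12\,\Ex^{\otimes 2}_{n,h,\omega}\!\big[(\phi(X_0,\ldots,X_a) - \phi(X'_0,\ldots,X'_a))(\psi(X_b,\ldots,X_n) - \psi(X'_b,\ldots,X'_n))\big],
\end{equation*}
and let $E$ be the event that $X$ and $X'$ share a common contact at some site in $[a,b]$. On $E$, with $K$ denoting the smallest such site, Lemma~\ref{lem:fact} shows that conditionally on $K=k_0$ the restrictions of $X$ and $X'$ to $[k_0,n]$ are i.i.d.\ under $\prob_{n-k_0,h,\vartheta^{k_0}\omega}$; hence the involution swapping the two copies on $(k_0,n]$ is measure-preserving. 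It leaves both $\phi$-factors invariant (since $a\le k_0$) and it exchanges the two $\psi$-factors (since $b\ge k_0$, using $X_{k_0}=X'_{k_0}=1$ to cover the boundary case $k_0=b$), flipping the sign of the integrand. Therefore the contribution of $E$ vanishes.

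For part~1, the bounds $|\phi|,|\psi|\le 1$ yield $|\cov_{n,h,\omega}[\phi,\psi]|\le 2\prob^{\otimes 2}_{n,h,\omega}[E^c]$. On $E^c$ the last common contact $I\le a-1$ and the first common contact $J\ge b+1$ are well-defined (using $X_0=X'_0=X_n=X'_n=1$), so I decompose $E^c=\bigsqcup_{i,j}\{I=i,J=j\}$. Applying Lemma~\ref{lem:fact} at the sites $I=i$ and $J=j$ to each copy I obtain
\begin{equation*}
\prob^{\otimes 2}_{n,h,\omega}[I=i,J=j] \;=\; \prob_{n,h,\omega}[X_j=1]^2\,\prob_{j,h,\omega}[X_i=1]^2\,\Ex^{\otimes 2}_{j-i,h,\vartheta^i\omega}\!\Big[\prod_{k=1}^{j-i-1}(1-X_kX'_k)\Big].
\end{equation*}
Dropping the first two probability factors (each $\le 1$) and extending the $j$-sum up to $\infty$ proves the first bound.

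For part~2, Lemma~\ref{lem:fact} provides two equivalent factorizations,
\begin{equation*}
\cov_{n,h,\omega}[X_a,X_b] \;=\; \Ex[X_a]\big(\Ex_{n-a,h,\vartheta^a\omega}[X_{b-a}]-\Ex[X_b]\big) \;=\; \Ex[X_b]\big(\Ex_{b,h,\omega}[X_a]-\Ex[X_a]\big),
\end{equation*}
which reduces the task to bounding each bracketed difference. For the first I run the swap argument conditioning on the first common contact $K=j>b$ and use Lemma~\ref{lem:fact} to factor through a sub-polymer of length $j$; the resulting normalization $1/\Ex_{j,h,\omega}[X_a]$ is converted through Remark~\ref{remark:enforce_contact} into $1+\xi\ee^{-h-\omega_a}\min\{a^\xi,(j-a)^\xi\}\le 1+\xi(j-a)^\xi\ee^{-h-\omega_a}$, and the residual sum is exactly $\sum_{j>b}\Ex^{\otimes 2}_{j-a,h,\vartheta^a\omega}[\prod_{k=1}^{j-a-1}(1-X_kX'_k)]$, yielding $|\cov|\le \Ex[X_a]R_1$ with $R_1$ the first sum on the right-hand side. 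The symmetric one-sided estimate on the second factorization, conditioning now on the last common contact $L=i<a$ inside a length-$b$ sub-polymer, gives $|\cov|\le \Ex[X_b]R_2$. Combining through the minimum, if $\min=\Ex[X_a]$ the first inequality gives $|\cov|/\min\le R_1\le R_1+R_2$, and symmetrically otherwise; this is the claimed bound.

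The main obstacle is the refined bookkeeping in part~2: I must perform the Lemma~\ref{lem:fact} decomposition \emph{inside} the sub-polymer whose length matches the surviving index ($j$ or $b-i$) so that the enforcement factor from Remark~\ref{remark:enforce_contact} carries the sharp power $(j-a)^\xi$ or $(b-i)^\xi$ rather than the coarser $\min\{a^\xi,(n-a)^\xi\}$, and I must verify that after this conversion the residual sum reduces to a single index (collapsing either $i$ to $a$ or $j$ to $b$) rather than the full double sum $\sum_{i<a}\sum_{j>b}$ produced by the argument in part~1.
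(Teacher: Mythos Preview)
Your proof is correct and, for Part~1, essentially identical to the paper's: your swap argument is the probabilistic counterpart of the algebraic identity $\cov_{n,h,\omega}[\Phi,\Psi]=\Ex_{n,h,\omega}^{\otimes 2}[\Phi(\Psi-\Psi')\prod_{k=a}^b(1-X_kX_k')]$ that the paper establishes by inclusion--exclusion, and the decomposition by the last/first common contact $(I,J)$ with the Lemma~\ref{lem:fact} factorisation is the same. For Part~2 your route differs in organisation. The paper does not use the identities $\cov=\Ex[X_a](\Ex_{n-a,h,\vartheta^a\omega}[X_{b-a}]-\Ex[X_b])=\Ex[X_b](\Ex_{b,h,\omega}[X_a]-\Ex[X_a])$; it stays entirely inside $\prob_{n,h,\omega}^{\otimes 2}$, obtaining from the same identity the one-sided bound $|\cov|\le 2\Ex_{n,h,\omega}^{\otimes 2}[X_a\prod_{k=a+1}^b(1-X_kX_k')]$, decomposing by the first common contact $j>b$, and then treating the two copies asymmetrically---the copy carrying $X_a$ is reduced to $\prob_{j-a,\vartheta^a\omega}$ by two applications of Lemma~\ref{lem:fact} (this is where $\Ex_n[X_a]$ comes out), while the other copy is first reduced to $\prob_{j,h,\omega}$ and only then has a contact at $a$ enforced via Remark~\ref{remark:enforce_contact}, yielding the factor $1+\xi(j-a)^\xi\ee^{-h-\omega_a}$. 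Your variant extracts $\Ex_n[X_a]$ upfront through the factorisation and then runs the swap on a \emph{heterogeneous} pair $(\prob_{n-a,\vartheta^a\omega},\prob_{n,h,\omega})$; this is legitimate because after a common contact at $k\ge a$ both futures have the same law $\prob_{n-k,\vartheta^k\omega}$, and it collapses the double sum to a single sum automatically. One small imprecision: what actually enters is the enforcement factor from Remark~\ref{remark:enforce_contact} applied in the length-$j$ sub-polymer to the copy lacking a contact at $a$, not a literal normalisation $1/\Ex_{j,h,\omega}[X_a]$, though the two are bounded by the same quantity via that remark.
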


\medskip

\begin{proof}[Proof of Lemma \ref{lem:corr}]
  Fix $h\in\Rl$, $\omega:=\{\omega_c\}_{c\in\N_0}\in\Omega$, and
  integers $0\le a\le b\le n$.  To begin with, we prove
  that for every functions $\phi\in\mathcal{M}_a$ and
  $\psi\in\mathcal{M}_{n-b}$
   \begin{equation}
    \cov_{n,h,\omega}[\Phi,\Psi]=\Ex_{n,h,\omega}^{\otimes 2}\big[\Phi\overline{\Psi}-\Phi\overline{\Psi'}\,\big]
    =\Ex_{n,h,\omega}^{\otimes 2}\bigg[\Phi\big(\,\overline{\Psi}-\overline{\Psi'}\,\big)\prod_{k=a}^b(1-X_kX_k')\bigg]\,,
  \label{lem:corr_0}
  \end{equation}
where we have set $\Phi:=\phi(X_0,\ldots,X_a)$,
$\Psi:=\psi(X_b,\ldots,X_n)$, and $\Psi':=\psi(X_b',\ldots,X_n')$ for
brevity.  The first equality is immediate. Regarding the second
equality, we use the identity
\begin{equation*}
  \prod_{k=a}^b(1-X_kX_k')-\sum_{\mathcal{A}\subseteq\{a,\ldots,b\}}(-1)^{|\mathcal{A}|}\prod_{k\in\mathcal{A}}X_kX_k'=1
\end{equation*}
to write down
\begin{align}
  \nonumber
  \Ex_{n,h,\omega}[\Phi\Psi-\Phi\Psi']&=\Ex_{n,h,\omega}^{\otimes 2}\bigg[\Phi(\Psi-\Psi')\prod_{k=a}^b(1-X_kX_k')\bigg]\\
  \nonumber
  &\qquad-\sum_{\mathcal{A}\subseteq\{a,\ldots,b\}}(-1)^{|\mathcal{A}|}\Ex_{n,h,\omega}^{\otimes 2}\bigg[\Phi(\Psi-\Psi')\prod_{k\in\mathcal{A}}X_kX_k'\bigg]\,.
\end{align}
The desired result is due to the fact that $\Ex_{n,h,\omega}^{\otimes
  2}[\Phi(\Psi-\Psi')\prod_{k\in\mathcal{A}}X_kX_k']=0$ for any given
set $\mathcal{A}\subseteq\{a,\ldots,b\}$. To prove this fact we
invoke Lemma \ref{lem:fact} to state that
\begin{equation*}
\Ex_{n,h,\omega}\bigg[\Phi\prod_{k\in\mathcal{A}}X_k\Psi\bigg]=\frac{\Ex_{n,h,\omega}\big[\Phi\prod_{k\in\mathcal{A}}X_k\big]\Ex_{n,h,\omega}\big[\prod_{k\in\mathcal{A}}X_k\Psi\big]}
      {\Ex_{n,h,\omega}\big[\prod_{k\in\mathcal{A}}X_k\big]}\,,
\end{equation*}
which entails
\begin{align}
  \nonumber
  &\Ex_{n,h,\omega}^{\otimes 2}\bigg[\Phi(\Psi-\Psi')\prod_{k\in\mathcal{A}}X_kX_k'\bigg]\\
  \nonumber
  &\qquad=\Ex_{n,h,\omega}\bigg[\Phi\prod_{k\in\mathcal{A}}X_k\Psi\bigg]\Ex_{n,h,\omega}\bigg[\prod_{k\in\mathcal{A}}X_k\bigg]
  -\Ex_{n,h,\omega}\bigg[\Phi\prod_{k\in\mathcal{A}}X_k\bigg]\Ex_{n,h,\omega}\bigg[\prod_{k\in\mathcal{A}}X_k\Psi\bigg]=0\,.
\end{align}

\smallskip

\noindent\textit{The bound on
  $\cov_{n,h,\omega}[\phi(X_0,\ldots,X_a),\psi(X_b,\ldots,X_n)]$.}
Given functions $\phi\in\mathcal{M}_a$ and $\psi\in\mathcal{M}_{n-b}$,
formula (\ref{lem:corr_0}) shows that
\begin{equation*}
\Big|\cov_{n,h,\omega}\big[\Phi,\Psi\big]\Big|\le 2\Ex_{n,h,\omega}^{\otimes 2}\bigg[\prod_{k=a}^b(1-X_kX_k')\bigg]\, .
\end{equation*}
The identity
\begin{align}
  \nonumber
  \prod_{k=a}^b(1-X_kX_k')&=\sum_{i=0}^{a-1}\sum_{j=b+1}^nX_iX_i'\prod_{k=i+1}^{j-1}(1-X_kX_k')X_jX_j'+\prod_{k=a}^n(1-X_kX_k')\,,
\end{align}
together with the fact that $X_n=1$ and $X_n'=1$ almost surely under
the two-polymers measure $\prob_{n,h,\omega}^{\otimes 2}$, yields
\begin{equation}
  \Big|\cov_{n,h,\omega}\big[\Phi,\Psi\big]\Big|
  \le 2\sum_{i=0}^{a-1}\sum_{j=b+1}^n\Ex_{n,h,\omega}^{\otimes 2}\bigg[X_iX_i'\prod_{k=i+1}^{j-1}(1-X_kX_k')X_jX_j'\bigg]\,.
  \label{eq:corr_1000} 
\end{equation}
Finally, we appeal once again to Lemma \ref{lem:fact} to get the bound
\begin{align}
  \nonumber
  &\Ex_{n,h,\omega}^{\otimes 2}\bigg[X_iX_i'\prod_{k=i+1}^{j-1}(1-X_kX_k')X_jX_j'\bigg]\\
  \nonumber
  &\qquad=\sum_{x_i\in\{0,1\}}\cdots\sum_{x_j\in\{0,1\}}\Ex_{n,h,\omega}\bigg[X_i\prod_{k=i+1}^{j-1}(1-x_kX_k)X_j\bigg]
  \Ex_{n,h,\omega}\bigg[X_iX_j\prod_{k=i}^j\mathds{1}_{\{X_k=x_k\}}\bigg]\\
  \nonumber
  &\qquad\le\sum_{x_i\in\{0,1\}}\cdots\sum_{x_j\in\{0,1\}}\Ex_{j-i,h,\vartheta^i\omega}\bigg[\prod_{k=1}^{j-i-1}(1-x_{k+i}X_k)\bigg]
  \Ex_{j-i,h,\vartheta^i\omega}\bigg[\prod_{k=0}^{j-i}\mathds{1}_{\{X_k=x_{k+i}\}}\bigg]\\
  &\qquad=\Ex_{j-i,h,\vartheta^i\omega}^{\otimes 2}\bigg[\prod_{k=1}^{j-i-1}(1-X_kX_k')\bigg]\,.
  \label{eq:corr_1001}
\end{align}
Inequalities \eqref{eq:corr_1000} and \eqref{eq:corr_1001} give the
bound on
$\cov_{n,h,\omega}[\phi(X_0,\ldots,X_a),\psi(X_b,\ldots,X_n)]$ stated
by the lemma.

\smallskip

\noindent\textit{The bounds on $\cov_{n,h,\omega}[X_a,X_b]$.}  Formula
(\ref{lem:corr_0}) with the choice $\phi(x_0,\ldots,x_a):=x_a$ and
$\psi(x_b,\ldots,x_n):=x_b$ shows that both
 \begin{equation*}
   \Big|\cov_{n,h,\omega}[X_a,X_b]\Big|\le 2\Ex_{n,h,\omega}^{\otimes 2}\bigg[X_a\prod_{k=a}^b(1-X_kX_k')\bigg]
 \end{equation*}
 and
  \begin{equation*}
   \Big|\cov_{n,h,\omega}[X_a,X_b]\Big|\le 2\Ex_{n,h,\omega}^{\otimes 2}\bigg[\prod_{k=a}^b(1-X_kX_k')X_b\bigg]\,.
  \end{equation*}
  Then, the identities
\begin{equation*}
  \prod_{k=a}^b(1-X_kX_k')=\sum_{j=b+1}^n\prod_{k=a}^{j-1}(1-X_kX_k')X_jX_j'+\prod_{k=a}^n(1-X_nX_n')
  \end{equation*}
and
\begin{equation*}
  \prod_{k=a}^b(1-X_kX_k')=\sum_{i=0}^{a-1}X_iX_i'\prod_{k=i+1}^b(1-X_kX_k')\,,
  \end{equation*}
together with the fact that $X_n=1$ and $X_n'=1$ almost surely under
the two-polymers measure, give
 \begin{equation}
   \Big|\cov_{n,h,\omega}[X_a,X_b]\Big|\le 2\sum_{j=b+1}^n\Ex_{n,h,\omega}^{\otimes 2}\bigg[X_a\prod_{k=a}^{j-1}(1-X_kX_k')X_jX_j'\bigg]
   \label{covab_bound_a}
 \end{equation}
and
\begin{equation}
  \Big|\cov_{n,h,\omega}[X_a,X_b]\Big|\le 2\sum_{i=0}^{a-1}\Ex_{n,h,\omega}^{\otimes 2}\bigg[X_iX_i'\prod_{k=i+1}^b(1-X_kX_k')X_b\bigg]\,.
  \label{covab_bound_b}
\end{equation}
We shall verify that (\ref{covab_bound_a}) implies the inequality
\begin{align}
  \nonumber
  &\big|\cov_{n,h,\omega}[X_a,X_b]\big|\\
  \nonumber
  &\qquad\le 
  \Ex_{n,h,\omega}[X_a]\sum_{j=b+1}^n 2\xi (j-a)^\xi\ee^{-h-\omega_a}\Ex_{j-a,h,\vartheta^a\omega}^{\otimes 2}\bigg[\prod_{k=1}^{j-a-1}(1-X_kX_k')\bigg]\,,
\end{align}
whereas (\ref{covab_bound_b}) provides
\begin{align}
  \nonumber
  &\big|\cov_{n,h,\omega}[X_a,X_b]\big|\\
  \nonumber
  &\qquad\le 
  \Ex_{n,h,\omega}[X_b]
  \sum_{i=0}^{a-1} 2\xi (b-i)^\xi\ee^{-h-\omega_b}\Ex_{b-i,h,\vartheta^i\omega}^{\otimes 2}\bigg[\prod_{k=1}^{b-i-1}(1-X_kX_k')\bigg]\,.
\end{align}
The latter inequalities demonstrate the loose bound on
$\cov_{n,h,\omega}[X_a,X_b]$ stated by the lemma.

\smallskip

We develop the argument for \eqref{covab_bound_a}, as the argument
for \eqref{covab_bound_b} is identical. We write the terms in the
right-hand side of \eqref{covab_bound_a} as
\begin{align}
   \nonumber
   &\Ex_{n,h,\omega}^{\otimes 2}\bigg[X_a\prod_{k=a}^{j-1}(1-X_kX_k')X_jX_j'\bigg]\\
   \nonumber
   &\qquad=\Ex_{n,h,\omega}^{\otimes 2}\bigg[X_a(1-X_a')\prod_{k=a+1}^{j-1}(1-X_kX_k')X_jX_j'\bigg]\\
   \nonumber
   &\qquad =\sum_{x_{a+1}\in\{0,1\}}\cdots\sum_{x_j\in\{0,1\}}
   \Ex_{n,h,\omega}\bigg[X_a\prod_{k=a+1}^{j-1}(1-x_kX_k)X_j\bigg]\\
   \nonumber
   &\qquad\qquad\qquad\qquad\qquad\qquad\quad\times\Ex_{n,h,\omega}\bigg[(1-X_a)\prod_{k=a+1}^j\mathds{1}_{\{X_k=x_k\}}X_j\bigg]\,.
\end{align}
Repeated applications of Lemma \ref{lem:fact} give
\begin{equation*}
   \Ex_{n,h,\omega}\bigg[X_a\prod_{k=a+1}^{j-1}(1-x_kX_k)X_j\bigg]\le\Ex_{n,h,\omega}[X_a]\,\Ex_{j-a,h,\vartheta^a\omega}\bigg[\prod_{k=1}^{j-a-1}(1-x_{k+a}X_k)\bigg]
\end{equation*}
and
\begin{equation}
  \Ex_{n,h,\omega}\bigg[(1-X_a)\prod_{k=a+1}^j\mathds{1}_{\{X_k=x_k\}}X_j\bigg]\le \Ex_{j,h,\omega}\bigg[(1-X_a)\prod_{k=a+1}^j\mathds{1}_{\{X_k=x_k\}}\bigg]\,.
  \label{eq:covab_bound_a_1}
\end{equation}
At this point, we invoke Lemma \ref{lem:utile_per_tutto}, through
Remark \ref{remark:enforce_contact}, to enforce in the right-hand side
of (\ref{eq:covab_bound_a_1}) the constraint that $a$ is a contact
site, which in turn allows us a further application of Lemma
\ref{lem:fact}:
\begin{align}
  \nonumber
  \Ex_{j,h,\omega}\bigg[(1-X_a)\prod_{k=a+1}^j\mathds{1}_{\{X_k=x_k\}}\bigg]&\le \xi(j-a)^\xi\ee^{-h-\omega_a}\Ex_{j,h,\omega}\bigg[X_a\prod_{k=a+1}^j\mathds{1}_{\{X_k=x_k\}}\bigg] \\
  \nonumber
  &\le\xi(j-a)^\xi\ee^{-h-\omega_a}\Ex_{j-a,h,\vartheta^a\omega}\bigg[\prod_{k=1}^{j-a}\mathds{1}_{\{X_k=x_{k+a}\}}\bigg]\,.
\end{align}
Putting the pieces together, for the terms in the right-hand side of
(\ref{covab_bound_a}) we find the estimate
\begin{align}
  \nonumber
  &\Ex_{n,h,\omega}^{\otimes 2}\bigg[X_a\prod_{k=a}^{j-1}(1-X_kX_k')X_jX_j'\bigg]\\
  \nonumber
  &\qquad\le \Ex_{n,h,\omega}[X_a]\,\xi(j-a)^\xi\ee^{-h-\omega_a}\Ex_{j-a,h,\vartheta^a\omega}^{\otimes 2}\bigg[\prod_{k=1}^{j-a-1}(1-X_kX_k')\bigg]\,.
  \qedhere
\end{align}
\end{proof}

\medskip

In the light of Lemma \ref{lem:corr}, the next lemma entails that
correlations decay exponentially fast in the localized phase.

\medskip

\begin{lemma}
  \label{lem:decay}
  For every closed set $H\subset(h_c,+\infty)$ there exist constants $\gamma>0$ and $G>0$ such
  that for all $n\in\N$
\begin{equation*}
  \Exd\Bigg[\sup_{h\in H}\Ex_{n,h,\cdot}^{\otimes 2}\bigg[\prod_{k=1}^{n-1}(1-X_kX_k')\bigg]\Bigg]\le G\,\ee^{-\gamma n}\,.
\end{equation*}
\end{lemma}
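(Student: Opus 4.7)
My plan is to view the quantity
\[
q_n(h,\omega)\;:=\;\Ex_{n,h,\omega}^{\otimes 2}\!\Bigl[\textstyle\prod_{k=1}^{n-1}(1-X_kX_k')\Bigr]
\]
as the $\prob_{n,h,\omega}^{\otimes 2}$-probability that two independent copies of the polymer, coupled through the same disorder $\omega$ and the same parameter $h$, share no contact in $(0,n)$. The exponential decay will stem from Proposition~\ref{prop:mu} (which gives $\mu(h)>0$ on $(h_c,+\infty)$), combined with the contact-density bound Lemma~\ref{lem:density}; the uniformity in $h\in H$ is then obtained by pairing \eqref{Lipschitz} with a net argument.

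The first step is a renewal decomposition at the time $\tau$ of the first simultaneous contact of the two polymers. Applying Lemma~\ref{lem:fact} to each polymer separately gives the identity
\[
Z_{n,h}(\omega)^2 \;=\; \sum_{\tau=1}^n Z_{\tau,h}(\omega)^2\,q_\tau(h,\omega)\,Z_{n-\tau,h}(\vartheta^\tau\omega)^2.
\]
Since $Z_{\tau,h}(\omega)^2 q_\tau(h,\omega)$ depends only on $\omega_1,\ldots,\omega_\tau$ while $Z_{n-\tau,h}(\vartheta^\tau\omega)^2$ depends only on $\omega_{\tau+1},\ldots,\omega_n$, these factors are $\probd$-independent; taking $\Exd$ yields a closed renewal equation for $\Exd[Z_{n,h}^2]$ whose inter-arrival weights are $\Exd[Z_{\tau,h}^2 q_\tau]$.

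The core of the proof lies in showing that those inter-arrival weights are exponentially smaller than $\Exd[Z_{n,h}^2]$. Conditioning on the contact set $\mathcal I$ of the first polymer, one has
\[
q_n(h,\omega)\;=\;\Ex_{n,h,\omega}\!\Bigl[\prob_{n,h,\omega}\bigl[X_k'=0 \ \forall\,k\in\mathcal I\cap(0,n)\bigr]\Bigr].
\]
By Lemma~\ref{lem:density}, $|\mathcal I\cap(0,n)|\ge\delta n$ with $\probd$-probability $\ge 1-G\ee^{-\gamma n}$ uniformly in $h\in H$ for some $\delta=\delta(H)>0$. Iterating Lemma~\ref{lem:utile_per_tutto} (through Remark~\ref{remark:enforce_contact}) together with Lemma~\ref{lem:fact} then bounds the inner avoidance probability by a product of factors $\xi\,\ee^{-h-\omega_{a_i}}\min\{(a_i-a_{i-1})^\xi,(n-a_i)^\xi\}$ over the ordered elements $a_0=0<a_1<\cdots<a_m$ of $\mathcal I\cap(0,n)$. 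After a mild truncation of $\omega$ to handle the weak moment hypothesis in Assumption~\ref{assump:omega}, averaging over $\probd$ on disjoint blocks of disorder and using the density lower bound $m\ge\delta n$ yield a strictly exponential rate, quantifiable through $\mu(h)$ via Corollary~\ref{cor:PT1}.

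Finally, to obtain the uniform-in-$h$ quenched estimate of the statement, I would use the Lipschitz bound $|\log q_n(h,\omega)-\log q_n(h',\omega)|\le 4n|h-h'|$, a consequence of~\eqref{Lipschitz} applied to both the numerator and denominator of $q_n$: discretising $H$ at scale $\ee^{-\gamma' n/2}$ produces a net of cardinality $O(\ee^{\gamma' n/2})$, and a union bound preserves the exponential decay. Theorem~\ref{th:concentration} is used to control the deviations of $\log Z_{n,h}(\omega)$ from $nf(h)$ uniformly in $h$. The main obstacle will be the iteration in the previous paragraph: the product of $\min$-factors can blow up when some gap $a_i-a_{i-1}$ or boundary distance $n-a_i$ is large, and the truncation of $\omega$ must be performed carefully to avoid losing more than a sub-exponential factor; this is handled by restricting to a bulk sub-interval of $[0,n]$ (losing only a polynomial-in-$n$ prefactor absorbed by the exponential) and by using the density of contacts in each sub-block to keep consecutive gaps bounded.
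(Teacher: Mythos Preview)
Your central iteration has a genuine gap. Applying Lemma~\ref{lem:utile_per_tutto} at the contact sites $a_1<\cdots<a_m$ of the first polymer produces, at each step, a comparison factor of order $\xi\,\ee^{-h-\omega_{a_i}}g_i^{\xi}$ with $g_i$ a local gap. These factors are \emph{not} uniformly small: when $\omega_{a_i}$ is very negative or $g_i$ is large, the raw factor exceeds~$1$ and the normalized version $\tfrac{\mathrm{factor}}{1+\mathrm{factor}}$ is arbitrarily close to~$1$. Your proposed fixes do not resolve this. Density $\ge\delta$ on sub-blocks bounds only the \emph{average} gap, not individual gaps (contacts can cluster, leaving macroscopic holes), and truncating to $\{\omega_a\ge\lambda\}$ removes a fixed positive fraction of contact sites, which again enlarges the surviving gaps without bound. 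The resulting product therefore is not shown to decay exponentially for arbitrary $h>h_c$; at best one gets something useful only when $h$ is so large that $\log\xi-h+\xi\log(1/\delta)<0$, which is unrelated to the localization threshold. A secondary issue: under Assumption~\ref{assump:omega} only $\ee^{\eta|\omega_0|}$ is integrable for some small $\eta>0$, so $\Exd[Z_{n,h}^2]$ may be infinite and the renewal equation is vacuous; even when finite, it yields the growth rate of $\Exd[Z_{n,h}^2]$ rather than a bound on $\Exd[q_n]$.

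The paper circumvents exactly this obstacle by a different mechanism. It introduces the local indicator $Y_a$ that the excursion of the \emph{first} polymer straddling site~$a$ has length at most a fixed~$\tau$, and iterates over those sites where simultaneously $X_a'=1$, $\omega_a\ge\lambda$, and $Y_a=1$. The second part of Lemma~\ref{lem:utile_per_tutto} then gives at each such site the \emph{fixed} factor $\tfrac{\xi\tau^\xi}{\xi\tau^\xi+\ee^{h+\lambda}}<1$, independent of any global gap structure. That there are linearly many such sites is ensured by combining Lemma~\ref{lem:density} with an auxiliary estimate (Lemma~\ref{lem:decay_aux}) showing that most of $[0,n]$ is covered by short excursions of the first polymer; a sublattice of spacing~$2\tau$ makes the $Y_a$ non-overlapping so the iteration is legitimate. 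The uniformity in $h\in H$ is then obtained by the same Lipschitz comparison you mention, but via a \emph{fixed} finite cover of $H$ (balls of radius $\gamma_h/3$) rather than an $n$-dependent net. The three ingredients---the $Y_a$ construction, the short-excursion input Lemma~\ref{lem:decay_aux}, and the sublattice decoupling---are precisely what your sketch is missing.
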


\medskip

The proof of Lemma \ref{lem:decay} follows the proof of \cite[Lemma
  3.1]{giacomin2006_1} and relies on the following technical result,
which states the possibility to cover most of the sites with
inter-arrival times of finite length.

\medskip

\begin{lemma}
  \label{lem:decay_aux}
  For every $H\subset(h_c,+\infty)$ closed and $\varkappa>0$ there
  exist constants $\tau\in\N$ and $\gamma>0$ such that for all
  $n\in\N$
  \begin{equation*}
    \Exd\Bigg[\sup_{h\in H}\prob_{n,h,\cdot}\bigg[\sum_{i=1}^{L_n}T_i\mathds{1}_{\{T_i>\tau\}}\ge\varkappa n\bigg]\Bigg]\le \ee^{-\gamma n}\,.
    \end{equation*}
\end{lemma}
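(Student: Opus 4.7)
The plan is to bound $\Exd\!\left[\prob_{n,h,\cdot}[R_\tau \ge \chi n]\right]$, where $R_\tau := \sum_i T_i \mathds{1}_{\{T_i > \tau\}}$, by enumerating configurations via the positions and sizes of their \emph{large} inter-arrivals, and then exploiting the exponential decay of $q_t := \Exd[\prob_{t,h,\cdot}[\TT_1 = t]]$ provided by Proposition~\ref{prop:mu} and Corollary~\ref{cor:PT1}. Since $h > h_c$ gives $\mu(h) > 0$, for any $0 < \gamma_0 < \gamma_1 < \mu(h)$ there exists $G_1 > 0$ with $q_t \le G_1 e^{-\gamma_1 t}$ for all $t \in \N$, so
\begin{equation*}
\epsilon(\tau) \,:=\, \sup_{t > \tau} q_t\, e^{\gamma_0 t} \,\le\, G_1 e^{-(\gamma_1 - \gamma_0)\tau}
\end{equation*}
decays exponentially as $\tau \to \infty$, and in particular $q_t \le \epsilon(\tau)\, e^{-\gamma_0 t}$ for every $t > \tau$.

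Next, a union bound over ordered non-overlapping positions $0 \le b_1 < c_1 \le b_2 < \cdots \le b_k < c_k \le n$ with $c_r - b_r > \tau$ and $\sum_r (c_r - b_r) \ge \chi n$ gives
\begin{equation*}
\prob_{n,h,\omega}[R_\tau \ge \chi n] \,\le\, \sum_{k \ge 1}\sum_{(b_r, c_r)}\prob_{n,h,\omega}\!\left[X_{b_r} = X_{c_r} = 1\;\forall r,\; X_a = 0 \text{ for } b_r < a < c_r\right].
\end{equation*}
Iterating Lemma~\ref{lem:fact} and bounding the global contact factor $\prob[\cap_r X_{b_r} = X_{c_r} = 1]$ by $1$, each summand is no greater than $\prod_r \prob_{c_r - b_r, h, \vartheta^{b_r}\omega}[\TT_1 = c_r - b_r]$. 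Taking $\Exd$, using the independence of $\{\omega_a\}$ across the disjoint blocks $(b_r, c_r]$, and setting $t_r := c_r - b_r$, $\sigma := \sum_r t_r$, one obtains
\begin{equation*}
\Exd\!\left[\prob_{n,h,\cdot}[R_\tau \ge \chi n]\right] \,\le\, \sum_{k \ge 1}\sum_{\substack{t_1,\ldots,t_k > \tau \\ \sigma \ge \chi n}}\binom{n - \sigma + k}{k}\prod_{r=1}^k q_{t_r},
\end{equation*}
where $\binom{n - \sigma + k}{k}$ counts the arrangements of the $k+1$ non-negative gaps of total length $n - \sigma$ in between the $k$ intervals.

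The key combinatorial step is to combine $\binom{n - \sigma + k}{k}$ with the composition count $\binom{\sigma - k\tau - 1}{k-1}$ of ordered $k$-tuples with $t_r > \tau$ and $\sum t_r = \sigma$, using the Vandermonde-type inequality $\binom{a}{i}\binom{b}{j} \le \binom{a+b}{i+j}$ to get
\begin{equation*}
\binom{n - \sigma + k}{k}\binom{\sigma - k\tau - 1}{k-1} \,\le\, \binom{n - k(\tau - 1) - 1}{2k - 1} \,\le\, \frac{n^{2k-1}}{(2k-1)!}.
\end{equation*}
Inserting $\prod_r q_{t_r} \le \epsilon(\tau)^k e^{-\gamma_0 \sigma}$, summing the geometric series in $\sigma \ge \chi n$, and using the identity $\sum_{k \ge 1} y^{2k}/(2k-1)! = y \sinh y$ (with $y := n\sqrt{\epsilon(\tau)}$ and $\sinh y \le e^y/2$) produces
\begin{equation*}
\Exd\!\left[\prob_{n,h,\cdot}[R_\tau \ge \chi n]\right] \,\le\, \frac{\sqrt{\epsilon(\tau)}}{2(1 - e^{-\gamma_0})}\,\exp\!\left(-n\big[\gamma_0 \chi - \sqrt{\epsilon(\tau)}\big]\right).
\end{equation*}
Choosing $\tau$ large enough that $\sqrt{\epsilon(\tau)} < \gamma_0 \chi / 2$ then delivers the claim with $\gamma := \gamma_0 \chi / 2$ and $G := \sqrt{\epsilon(\tau)}/(2(1 - e^{-\gamma_0}))$.

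The main obstacle is taming the combinatorial proliferation of large-inter-arrival configurations: without the Vandermonde-type bound (and the $(2k-1)!$ denominator it produces) the sum over $k$ would grow super-exponentially in $n$, and without the refined decay $q_t \le \epsilon(\tau)\, e^{-\gamma_0 t}$ for $t > \tau$ (rather than the uniform $q_t \le G_1 e^{-\gamma_1 t}$) one could not bring the prefactor $\sqrt{\epsilon(\tau)}$ below the Chernoff-like penalty $\gamma_0 \chi$ by tuning $\tau$.
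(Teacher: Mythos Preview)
Your proof is correct and follows essentially the same strategy as the paper's: decompose the event according to the positions of the large inter-arrivals, use Lemma~\ref{lem:fact} to factorize the polymer probability into a product of single-excursion probabilities (bounding the residual contact factors by $1$), take $\Exd$ using independence of the disorder on disjoint blocks, and then control the resulting combinatorial sum by choosing $\tau$ large. The only substantive difference is in the combinatorial bookkeeping: the paper bounds the number of placements of $r$ stretches by $\binom{n+r}{2r}$ and controls the sum over $r$ via the elementary estimate $\binom{n}{l}\le \zeta^{-l}(1-\zeta)^{l-n}$ with $\zeta=1-\ee^{-\gamma}$, whereas you track the total large-excursion length $\sigma$ explicitly, combine the two exact counts $\binom{n-\sigma+k}{k}$ and $\binom{\sigma-k\tau-1}{k-1}$ via Vandermonde, and sum the resulting $n^{2k-1}/(2k-1)!$ via the $\sinh$ series. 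Your introduction of $\epsilon(\tau)=\sup_{t>\tau}q_t\ee^{\gamma_0 t}$ is a clean way to make the $\tau$-dependence explicit (the paper achieves the same effect by imposing $G_o\le \ee^{\gamma(\tau-1)}(\ee^\gamma-1)^2$ and using $r<n/\tau$), but the underlying mechanism is identical.
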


\medskip

\begin{proof}[Proof of Lemma \ref{lem:decay_aux}] 
Fix $H\subset(h_c,+\infty)$ closed and $\varkappa>0$. Define
$h_o:=\inf H>h_c$. According to Corollary \ref{cor:PT1}, there exist
constants $\gamma_o>0$ and $G_o>0$ such that
\begin{equation}
  \Exd\Big[\prob_{n,h_o,\cdot}[T_1=n]\Big]\le G_o\ee^{-\gamma_o n}
  \label{lem:decay_aux_0}
\end{equation}
for all $n\in\N$. Put $\gamma:=\varkappa\gamma_o/4$ and take
$\tau\in\N$ so large that $\tau\ge
(1/\gamma)\log(1+G_o\ee^\gamma/\gamma^2)$.  We claim that the lemma
holds with such $\tau$ and $\gamma$. In fact, since
$\tau\sum_{i=1}^{L_n}\mathds{1}_{\{T_i>\tau\}}<\sum_{i=1}^{L_n}T_i\mathds{1}_{\{T_i>\tau\}}\le
S_{L_n}\le n$, we can write for all $n\in\N$, $h\in\Rl$, and
$\omega\in\Omega$
\begin{align}
    \nonumber
    &\prob_{n,h,\omega}\bigg[\sum_{i=1}^{L_n}T_i\mathds{1}_{\{T_i>\tau\}}\ge\varkappa n\bigg]\\
    \nonumber
    &\qquad =\sum_{r\in\N}\mathds{1}_{\{r<\frac{n}{\tau}\}}\prob_{n,h,\omega}\bigg[\sum_{i=1}^{L_n}T_i\mathds{1}_{\{T_i>\tau\}}\ge\varkappa n,\,
      \sum_{i=1}^{L_n}\mathds{1}_{\{T_i>\tau\}}=r\bigg]\,.
\end{align}
On the other hand, the conditions
$\sum_{i=1}^{L_n}T_i\mathds{1}_{\{T_i>\tau\}}\ge\varkappa n$ and
$\sum_{i=1}^{L_n}\mathds{1}_{\{T_i>\tau\}}=r$ imply that there are
$r$ inter-arrival times that cover more than $\varkappa n$ sites, namely,
there are integers $0\le a_1< b_1\le \cdots \le a_r<b_r\le n$ with
$\sum_{s=1}^r(b_s-a_s)\ge\varkappa n$ such that $X_{a_s}=X_{b_s}=1$ for all
$s$ and, when $a_s+1<b_s$, $X_{a_s+1}=\cdots=X_{b_s-1}=0$. It follows
that
\begin{align}
    \nonumber
    &\prob_{n,h,\omega}\bigg[\sum_{i=1}^{L_n}T_i\mathds{1}_{\{T_i>\tau\}}\ge\varkappa n,\,\sum_{i=1}^{L_n}\mathds{1}_{\{T_i>\tau\}}=r\bigg]\\
    \nonumber
    &\qquad \le\sum_{0\le a_1< b_1\le \cdots \le a_r<b_r\le n}\mathds{1}_{\big\{\sum_{s=1}^r(b_s-a_s)\ge\varkappa n\big\}}
    \Ex_{n,h,\omega}\bigg[\prod_{s=1}^rX_{a_s}\prod_{k=a_s+1}^{b_s-1}(1-X_k)X_{b_s}\bigg]\,,
\end{align}
and Lemma \ref{lem:fact} gives for every $0\le a_1< b_1\le \cdots \le
a_r<b_r\le n$
\begin{align}
  \nonumber
  \Ex_{n,h,\omega}\bigg[\prod_{s=1}^rX_{a_s}\prod_{k=a_s+1}^{b_s-1}(1-X_k)X_{b_s}\bigg]
  &\le\prod_{s=1}^r\Ex_{b_s-a_s,h,\vartheta^{a_s}\omega}\bigg[\prod_{k=1}^{b_s-a_s-1}(1-X_k)\bigg]\\
  \nonumber
  &=\prod_{s=1}^r\prob_{b_s-a_s,h,\vartheta^{a_s}\omega}\big[T_1=b_s-a_s\big]\,.
\end{align}
Putting the pieces together and taking the supremum with respect to $h$
bearing in mind that
$\prob_{m,h,\omega}[T_1=m]=p(m)/Z_{m,h}^{\textbf{-}}(\omega)$ with
$Z_{m,h}^{\textbf{-}}(\omega)=\Ex[\ee^{\sum_{a=1}^{m-1}(h+\omega_a)X_a}X_m]$
is decreasing with respect to $h$ for any $m$ and $\omega$, we deduce
that
\begin{align}
    \nonumber
    &\sup_{h\in H}\prob_{n,h,\omega}\bigg[\sum_{i=1}^{L_n}T_i\mathds{1}_{\{T_i>\tau\}}\ge\varkappa n\bigg]\\
    \nonumber
    &\quad \le\sum_{r\in\N}\mathds{1}_{\{r<\frac{n}{\tau}\}}\sum_{0\le a_1< b_1\le \cdots \le a_r<b_r\le n}\mathds{1}_{\big\{\sum_{s=1}^r(b_s-a_s)\ge\varkappa n\big\}}
   \prod_{s=1}^r\prob_{b_s-a_s,h_o,\vartheta^{a_s}\omega}\big[T_1=b_s-a_s\big]\,.
\end{align}
In this way, integrating with respect to $\probd[\dd\omega]$ and using the
statistically independence of the factors
$\prob_{b_s-a_s,h_o,\vartheta^{a_s}\cdot}[T_1=b_s-a_s]$,
(\ref{lem:decay_aux_0}) and an estimate of the number of possible ways
one can place $r$ stretches in $\{1,\ldots,n\}$ give
\begin{align}
    \nonumber
    &\Exd\Bigg[\sup_{h\in H}\prob_{n,h,\cdot}\bigg[\sum_{i=1}^{L_n}T_i\mathds{1}_{\{T_i>\tau\}}>\varkappa n\bigg]\Bigg]\\
    \nonumber
    &\qquad\le\sum_{r\in\N}\mathds{1}_{\{r<\frac{n}{\tau}\}}\sum_{0\le a_1< b_1\le \cdots \le a_r<b_r\le n}\mathds{1}_{\big\{\sum_{s=1}^r(b_s-a_s)\ge\varkappa n\big\}}
    G_o^r\ee^{-\gamma_o\sum_{s=1}^r(b_s-a_s)}\\
    \nonumber
    &\qquad\le \sum_{r\in\N}\mathds{1}_{\{r<\frac{n}{\tau}\}}{n+r \choose 2r}G_o^r\ee^{-\gamma_o\varkappa n}\,.
\end{align}
As ${n+r\choose 2r}\le [(n+r)^r/r!]{n\choose
  r}=\gamma^{-r}[(\gamma n+\gamma r)^r/r!]{n\choose
  r}\le\gamma^{-r}\ee^{\gamma n+\gamma r}{n\choose r}$ and
$\varkappa\gamma_o=4\gamma$, we get
\begin{equation*}
    \Exd\Bigg[\sup_{h\in H}\prob_{n,h,\cdot}\bigg[\sum_{i=1}^{L_n}T_i\mathds{1}_{\{T_i>\tau\}}>\varkappa n\bigg]\Bigg]
\le \sum_{r\in\N}\mathds{1}_{\{r<\frac{n}{\tau}\}}{n \choose r}\bigg(\frac{G_o\ee^\gamma}{\gamma}\bigg)^{\!\!r}\ee^{-3\gamma n}\,.
\end{equation*}
To conclude, set $\zeta:=1+G_o\ee^\gamma/\gamma^2$ for brevity and
note that by construction $\zeta^{1/\tau}\le\ee^\gamma$ and
$G_o\ee^\gamma/\gamma\le \zeta\gamma$. Also note that
$\mathds{1}_{\{r<n/\tau\}}\le\zeta^{n/\tau-r}$ as $\zeta\ge 1$, so
that $\mathds{1}_{\{r<\frac{n}{\tau}\}}\le
\ee^{n\gamma}\zeta^{-r}$. Then
\begin{align}
  \nonumber
    \Exd\Bigg[\sup_{h\in H}\prob_{n,h,\cdot}\bigg[\sum_{i=1}^{L_n}T_i\mathds{1}_{\{T_i>\tau\}}>\varkappa n\bigg]\Bigg]
      &\le \sum_{r=0}^n\ee^{n\gamma}\zeta^{-r}{n \choose r}(\zeta\gamma)^r\ee^{-3\gamma n}\\
      \nonumber
      &=(1+\gamma)^n\ee^{-2\gamma n}\le \ee^{-\gamma n}\,.
      \qedhere
\end{align}
\end{proof}

\medskip

\begin{proof}[Proof of Lemma \ref{lem:decay}]
Fix a closed set $H\subset(h_c,+\infty)$.  Lemmas \ref{lem:density}
and \ref{lem:decay_aux} ensure us that there exist constants
$\delta>0$, $\tau\in\N$, and $\gamma_o>0$ such that for all
$n\in\N$
\begin{equation}
  \Exd\bigg[\sup_{h\in H}\prob_{n,h,\cdot}[L_n<\delta n]\bigg]\le \ee^{-\gamma_o n}
  \label{lem:decay_1}
\end{equation}
and
  \begin{equation}
    \Exd\Bigg[\sup_{h\in H}\prob_{n,h,\cdot}\bigg[\sum_{i=1}^{L_n}T_i\mathds{1}_{\{T_i>\tau\}}\ge\frac{\delta}{2}n\bigg]\Bigg]\le \ee^{-\gamma_o n}\,.
    \label{lem:decay_2}
  \end{equation}
Moreover, if we pick a number $\lambda<0$ so large in modulus that
$\int_\Omega \mathds{1}_{\{\omega_0<\lambda\}}\probd[\dd\omega]\le
\delta/32$, then a Chernoff--type bound yields
\begin{align}
  \nonumber
  \int_\Omega \mathds{1}_{\big\{\sum_{a=1}^n\mathds{1}_{\{\omega_a<\lambda\}}>\frac{\delta}{4}n\big\}}\probd[\dd\omega]&\le
  \ee^{-\frac{\delta}{2} n}\int_\Omega \ee^{2\sum_{a=1}^n\mathds{1}_{\{\omega_a<\lambda\}}}\probd[\dd\omega]\\
  \nonumber
  &=\ee^{-\frac{\delta}{2}n}\bigg\{1+(\ee^2-1)\int_\Omega \mathds{1}_{\{\omega_0<\lambda\}}\probd[\dd\omega]\bigg\}^n\\
  &\le \ee^{-\frac{\delta}{2}n}\bigg(1+\frac{\delta}{4}\bigg)^n\le \ee^{-\frac{\delta}{4} n}\,.
  \label{lem:decay_3}
 \end{align}
We will show below that for all $n\in\N$, $h\in\Rl$, and
$\omega:=\{\omega_a\}_{a\in\N_0}\in\Omega$
\begin{equation}
\Ex_{n,h,\omega}^{\otimes 2}\bigg[\mathds{1}_{\big\{\sum_{a=1}^{n-1}X_a'\mathds{1}_{\{\omega_a\ge\lambda,\,T_{L_a+1}\le\tau\}}\ge\frac{\delta}{4}n\big\}}
    \prod_{k=1}^{n-1}(1-X_kX_k')\bigg]\le 2\tau\bigg(\frac{\xi\tau^\xi}{\xi\tau^\xi+\ee^{h+\lambda}}\bigg)^{\frac{\delta n}{8\tau}},
\label{lem:decay_4}
\end{equation}
where $\xi$ is the number introduced in (\ref{eq:xi_def}).  Bounds
(\ref{lem:decay_1}), (\ref{lem:decay_2}), (\ref{lem:decay_3}), and
(\ref{lem:decay_4}) imply the lemma as follows.  The conditions
$\sum_{a=1}^n\mathds{1}_{\{\omega_a<\lambda\}}\le n{\delta}/4$,
$L_n'=\sum_{a=1}^nX_a'\ge\delta n$, and
$\sum_{a=1}^{n-1}X_a'\mathds{1}_{\{\omega_a\ge\lambda,\,T_{L_a+1}\le\tau\}}<
n \delta/4$ give $\sum_{a=0}^{n-1}\mathds{1}_{\{T_{L_a+1}>\tau\}}\ge
n{\delta}/{2}$. In turn, the latter yields for
$\{S_i\}_{i\in\N_0}\ni n$
\begin{equation*}
  \frac{\delta}{2}n\le\sum_{a=0}^{n-1}\mathds{1}_{\{T_{L_a+1}>\tau\}}=\sum_{i=1}^{L_n}\sum_{a=S_{i-1}}^{S_i-1}\mathds{1}_{\{T_{L_a+1}>\tau\}}
  =\sum_{i=1}^{L_n}T_i\mathds{1}_{\{T_i>\tau\}}\,.
\end{equation*}
Thus, for every $n\in\N$, $h\in\Rl$, and
$\omega:=\{\omega_a\}_{a\in\N_0}\in\Omega$ we find
\begin{align}
  \nonumber
  &\Ex_{n,h,\omega}^{\otimes 2}\bigg[\prod_{k=1}^{n-1}(1-X_kX_k')\bigg]\\
  \nonumber
  &\qquad\le  \mathds{1}_{\big\{\sum_{a=1}^n\mathds{1}_{\{\omega_a<\lambda\}}>\frac{\delta}{4}n\big\}}+
  \prob_{n,h,\omega}[L_n<\delta n]+\prob_{n,h,\omega}\bigg[\sum_{i=1}^{L_n}T_i\mathds{1}_{\{T_i>\tau\}}\ge\frac{\delta}{2}n\bigg]\\
  \nonumber
  &\qquad\quad +\Ex_{n,h,\omega}^{\otimes 2}\bigg[\mathds{1}_{\big\{\sum_{a=1}^{n-1}X_a'\mathds{1}_{\{\omega_a\ge\lambda,\,T_{L_a+1}\le\tau\}}\ge\frac{\delta}{4}n\big\}}
    \prod_{k=1}^{n-1}(1-X_kX_k')\bigg]\\
  \nonumber
  &\qquad\le  \mathds{1}_{\big\{\sum_{a=1}^n\mathds{1}_{\{\omega_a<\lambda\}}>\frac{\delta}{4}n\big\}}+
  \prob_{n,h,\omega}[L_n<\delta n]+\prob_{n,h,\omega}\bigg[\sum_{i=1}^{L_n}T_i\mathds{1}_{\{T_i>\tau\}}\ge\frac{\delta}{2}n\bigg]\\
  \nonumber
  &\qquad\quad+2\tau\bigg(\frac{\xi\tau^\xi}{\xi\tau^\xi+\ee^{h+\lambda}}\bigg)^{\frac{\delta n}{8\tau}}.
\end{align}
Taking the supremum with respect to $h$ in this inequality, defining
$h_o:=\inf H$, and integrating with respect to $\probd[\dd\omega]$, we
get for all $n\in\N$
\begin{equation*}
  \Exd\Bigg[\sup_{h\in H}\Ex_{n,h,\cdot}^{\otimes 2}\bigg[\prod_{k=1}^{n-1}(1-X_kX_k')\bigg]\Bigg]\le \ee^{-\frac{\delta}{4} n}+2\ee^{-\gamma_o n}
  +2\tau\bigg(\frac{\xi\tau^\xi}{\xi\tau^\xi+\ee^{h_o+\lambda}}\bigg)^{\frac{\delta n}{8\tau}}\,,
\end{equation*}
which allows us to verify the lemma with the constants $G:=3+2\tau$ and
\begin{equation*}
  \gamma:=\min\left\{\frac{\delta}{4},\,\gamma_o,\,\frac{\delta}{8\tau}\log\frac{\xi\tau^\xi+\ee^{h_o+\lambda}}{\xi\tau^\xi}\right\}\,.
\end{equation*}

\smallskip

\noindent\textit{The bound \eqref{lem:decay_4}.}  In order to demonstrate
\eqref{lem:decay_4}, we introduce for $a\in\N$
the variable
\begin{align}
  \nonumber
  Y_a&:=X_a\mathds{1}_{\{T_{L_a}+T_{L_a+1}\le\tau\}}+(1-X_a)\mathds{1}_{\{T_{L_a+1}\le\tau\}}\\
  &=\sum_{i=\max\{0,a+1-\tau\}}^{a-1}\,\sum_{j=a+1}^{i+\tau}X_i\prod_{\substack{k=i+1\\k\ne a}}^{j-1}(1-X_k)X_j\,,
  \label{eq:def_Ya}
\end{align}
namely, $Y_a$ is the indicator function that the distance between the
first contact before $a$ and the first contact after $a$ is at most
$\tau$. The variable $Y_a$ has the following nice property, which is a
direct consequence of Lemma \ref{lem:utile_per_tutto}: for every
non-negative real functions $\phi\in\mathcal{M}_{a-1}$ and
$\psi\in\mathcal{M}_{n-a-1}$
\begin{equation}
  \Ex_{n,h,\omega}\big[\Phi (1-X_a)Y_a\Psi\big]\le \frac{\xi\tau^\xi}{\xi\tau^\xi+\ee^{h+\omega_a}}\,\Ex_{n,h,\omega}\big[\Phi Y_a\Psi\big]\,,
\label{eq:prop_Ya}
\end{equation}
where we have set $\Phi:=\phi(X_0,\ldots,X_{a-1})$ and
$\Psi:=\psi(X_{a+1},\ldots,X_n)$ for brevity. This bound shows how the
constraint $X_a=0$ can be relaxed when $Y_a$ is involved. For its
proof we invoke Lemma \ref{lem:utile_per_tutto} to obtain
\begin{align}
  \nonumber
  \Ex_{n,h,\omega}\big[\Phi (1-X_a)Y_a\Psi\big]&=\Ex_{n,h,\omega}\Big[\Phi (1-X_a)\mathds{1}_{\{T_{L_a+1}\le\tau\}}\Psi\Big]\\
  \nonumber
  &\le\xi\tau^\xi\ee^{-h-\omega_a}\,\Ex_{n,h,\omega}\Big[\Phi X_a\mathds{1}_{\{T_{L_a}+T_{L_a+1}\le\tau\}}\Psi\Big]\\
  \nonumber
  &=\xi\tau^\xi\ee^{-h-\omega_a}\,\Ex_{n,h,\omega}\big[\Phi X_aY_a\Psi\big]\,,
\end{align}
which is nothing but a different way to write (\ref{eq:prop_Ya}).

Let us put the variables $Y_a$ into context. Since
$X_a'Y_a=X_a'\mathds{1}_{\{T_{L_a+1}\le\tau\}}$ under the condition
that $X_a=0$ if $X_a'=1$, for every $n\in\N$ and
$\omega:=\{\omega_a\}_{a\in\N_0}\in\Omega$ we have
\begin{align}
  \nonumber
  &\Ex_{n,h,\omega}^{\otimes 2}\bigg[\mathds{1}_{\big\{\sum_{a=1}^{n-1}X_a'\mathds{1}_{\{\omega_a\ge\lambda,\,T_{L_a+1}\le\tau\}}\ge\frac{\delta}{4}n\big\}}
    \prod_{k=1}^{n-1}(1-X_kX_k')\bigg]\\
  \nonumber
  &\qquad =\Ex_{n,h,\omega}^{\otimes 2}\bigg[\mathds{1}_{\big\{\sum_{a=1}^{n-1}X_a'Y_a\mathds{1}_{\{\omega_a\ge\lambda\}}\ge\frac{\delta}{4}n\big\}}
    \prod_{k=1}^{n-1}(1-X_kX_k')\bigg]\,.
\end{align}
For a reason that will be clear in a moment, we prefer that the index
$a$ in the last expression runs over sublattices with spacing
$2\tau$. Thus, for $r\in\{1,\ldots,2\tau\}$ put
$\mathbb{A}_r:=\{r+2\tau i: i\in\N_0\mbox{ and }r+2\tau
i<n\}$. Since $\cup_{r=1}^{2\tau}\mathbb{A}_r=\{1,\ldots,n-1\}$, the
condition
$\sum_{a=1}^{n-1}X_a'Y_a\mathds{1}_{\{\omega_a\ge\lambda\}}\ge
n{\delta}/{4}$ entails that there exists $r$ such that
$\sum_{a\in\mathbb{A}_r}X_a'Y_a\mathds{1}_{\{\omega_a\ge\lambda\}}\ge
n{\delta}/8\tau$. In this way, we find
\begin{align}
  \nonumber
  &\Ex_{n,h,\omega}^{\otimes 2}\bigg[\mathds{1}_{\big\{\sum_{a=1}^{n-1}X_a'\mathds{1}_{\{\omega_a\ge\lambda,\,T_{L_a+1}\le\tau\}}\ge\frac{\delta}{4}n\big\}}
  \prod_{k=1}^{n-1}(1-X_kX_k')\bigg]\\
  \nonumber
  &\qquad\le\sum_{r=1}^{2\tau}
  \Ex_{n,h,\omega}^{\otimes 2}\bigg[\mathds{1}_{\big\{\sum_{a\in\mathbb{A}_r}X_a'Y_a\mathds{1}_{\{\omega_a\ge\lambda\}}\ge\frac{\delta}{8\tau}n\big\}}
    \prod_{a\in\mathbb{A}_r}(1-X_aX_a')\bigg]\\
  \nonumber
  &\qquad=\sum_{r=1}^{2\tau}\sum_{x_1\in\{0,1\}}\cdots\sum_{x_n\in\{0,1\}}{}^r\!E_{n,h,\omega}(x_1,\ldots,x_n)\,\Ex_{n,h,\omega}\bigg[\prod_{k=1}^n\mathds{1}_{\{X_k=x_k\}}\bigg]
\end{align}
with
\begin{equation*}
  {}^r\!E_{n,h,\omega}(x_1,\ldots,x_n):=\Ex_{n,h,\omega}\bigg[\mathds{1}_{\big\{\sum_{a\in\mathbb{A}_r}x_aY_a\mathds{1}_{\{\omega_a\ge\lambda\}}\ge\frac{\delta}{8\tau}n\big\}}
    \prod_{a\in\mathbb{A}_r}(1-x_aX_a)\bigg]\,.
\end{equation*}
The last step to obtain bound (\ref{lem:decay_4}) is to show that
\begin{equation}
  {}^r\!E_{n,h,\omega}(x_1,\ldots,x_n)\le \bigg(\frac{\xi\tau^\xi}{\xi\tau^\xi+\ee^{h+\lambda}}\bigg)^{\frac{\delta}{8\tau}n}
  \label{lem:decay_5}
\end{equation}
uniformly with respect to $r$ and $x_1,\ldots,x_n$.  This is
facilitated by the fact that $Y_a$ and $Y_b$ for different $a$ and $b$
in a sublattice of spacing $2\tau$ do not overlap, in the sense that
depend on the non-overlapping sets
$\{X_{\max\{0,a-\tau+1\}},\ldots,X_{a+\tau-1}\}$ and
$\{X_{\max\{0,b-\tau+1\}},\ldots,X_{b+\tau-1}\}$ of binary variables, as
(\ref{eq:def_Ya}) makes evident.  Given $r\in\{1,\ldots,2\tau\}$
and $x_1,\ldots,x_n\in\{0,1\}$, to prove (\ref{lem:decay_5}) we expand
the indicator function in ${}^r\!E_{n,h,\omega}(x_1,\ldots,x_n)$ to
get
\begin{align}
  \nonumber
  &{}^r\!E_{n,h,\omega}(x_1,\ldots,x_n)\\
  \nonumber
  &\qquad\le\sum_{\mathcal{A}\subseteq\mathbb{A}_r}\mathds{1}_{\{|\mathcal{A}|\ge\frac{\delta}{8\tau}n\}}
  \Ex_{n,h,\omega}\bigg[\prod_{a\in\mathcal{A}}x_a(1-X_a)Y_a\mathds{1}_{\{\omega_a\ge\lambda\}}\prod_{a\in\mathbb{A}_r\setminus\mathcal{A}}
    \big(1-x_aY_a\mathds{1}_{\{\omega_a\ge\lambda\}}\big)\bigg]\,.
\end{align}
We are going to relax the constraints that $X_a=0$ for
$a\in\mathcal{A}$.  Bearing in mind that $Y_a$ and $Y_b$ for distinct
$a,b\in\mathbb{A}_r$ do not overlap, repeated applications of
(\ref{eq:prop_Ya}) yield for any $\mathcal{A}\subseteq\mathbb{A}_r$
  \begin{align}
  \nonumber
  &\Ex_{n,h,\omega}\bigg[\prod_{a\in\mathcal{A}}x_a(1-X_a)Y_a\mathds{1}_{\{\omega_a\ge\lambda\}}\prod_{a\in\mathbb{A}_r\setminus\mathcal{A}}
    \big(1-x_aY_a\mathds{1}_{\{\omega_a\ge\lambda\}}\big)\bigg]\\
  \nonumber
  &\qquad \le\prod_{a\in\mathcal{A}}\frac{\xi\tau^\xi}{\xi\tau^\xi+\ee^{h+\omega_a}}\,
  \Ex_{n,h,\omega}\bigg[\prod_{a\in\mathcal{A}}x_aY_a\mathds{1}_{\{\omega_a\ge\lambda\}}\prod_{a\in\mathbb{A}_r\setminus\mathcal{A}}
    \big(1-x_aY_a\mathds{1}_{\{\omega_a\ge\lambda\}}\big)\bigg]\\
  \nonumber
  &\qquad\le\bigg(\frac{\xi\tau^\xi}{\xi\tau^\xi+\ee^{h+\lambda}}\bigg)^{|\mathcal{A}|}\,
  \Ex_{n,h,\omega}\bigg[\prod_{a\in\mathcal{A}}x_aY_a\mathds{1}_{\{\omega_a\ge\lambda\}}\prod_{a\in\mathbb{A}_r\setminus\mathcal{A}}
    \big(1-x_aY_a\mathds{1}_{\{\omega_a\ge\lambda\}}\big)\bigg]\,.
  \end{align}
We conclude that 
\begin{align}
  \nonumber
  &{}^r\!E_{n,h,\omega}(x_1,\ldots,x_n)\\
  \nonumber
  &\qquad\le\bigg(\frac{\xi\tau^\xi}{\xi\tau^\xi+\ee^{h+\lambda}}\bigg)^{\frac{\delta}{8\tau}n}\,
  \prob_{n,h,\omega}\bigg[\sum_{a\in\mathbb{A}_r}x_aY_a\mathds{1}_{\{\omega_a\ge\lambda\}}\ge\frac{\delta}{8\tau}n\bigg]\,,
\end{align}
which proves (\ref{lem:decay_5}).
\end{proof}

\medskip

We can now go into the first application of Lemmas \ref{lem:corr} and
\ref{lem:decay}.

\medskip

\begin{lemma}
  \label{lem:mixing}
  For each closed set $H\subset(h_c,+\infty)$ there exist constants
  $\gamma>0$ and $G>0$ such that for every integers $0\le a\le b\le n$
  \begin{equation*}
   \Exd\bigg[ \sup_{h\in H}\,\frac{|\cov_{n,h,\cdot}[X_a,X_b]|}{\min\{\Ex_{n,h,\cdot}[X_a],\Ex_{n,h,\cdot}[X_b]\}}\bigg]\le G\ee^{-\gamma(b-a)}\,.
  \end{equation*}
\end{lemma}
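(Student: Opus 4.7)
The plan is to apply the second part of Lemma~\ref{lem:corr} pointwise in $(\omega,h)$ and then to take $\Exd$-expectation term by term, exploiting a natural independence together with the exponential decay supplied by Lemma~\ref{lem:decay}.

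First I would push $\sup_{h \in H}$ inside the sums. Setting $h_0 := \min H$, the inequality $\ee^{-h-\omega_a} \le \ee^{-h_0-\omega_a}$ and the fact that the supremum of a product of non-negative factors is dominated by the product of suprema give that the ratio in the statement is bounded by
\[
2 \sum_{j=b+1}^\infty \bigl[1 + \xi (j-a)^\xi \ee^{-h_0-\omega_a}\bigr] \sup_{h \in H} \Ex_{j-a,h,\vartheta^a\omega}^{\otimes 2}\Bigl[\prod_{k=1}^{j-a-1}(1-X_k X_k')\Bigr]
\]
plus the analogous sum where $(\omega_a, \vartheta^a\omega, j)$ is replaced by $(\omega_b, \vartheta^i\omega, i)$ and the outer summation runs over $0 \le i \le a-1$.

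Next I would take $\Exd$ term by term and factorize. The key observation is that the two-polymer expectation $\Ex_{m,h,\omega'}^{\otimes 2}\bigl[\prod_{k=1}^{m-1}(1-X_k X_k')\bigr]$ is a function of $(\omega_1',\ldots,\omega_{m-1}')$ only: on the a.s.\ support $\{X_m = X_m' = 1\}$ of the two-polymer measure, the endpoint charge $\omega_m'$ contributes a deterministic factor $\ee^{2(h+\omega_m')}$ to both numerator and denominator and therefore cancels. Hence the polymer-expectation factor in the first sum above depends only on $\omega_{a+1},\ldots,\omega_{j-1}$, so is $\probd$-independent of $\omega_a$, and symmetrically in the second sum it is $\probd$-independent of $\omega_b$. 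Each term then factorizes under $\Exd$; by $\probd$-shift invariance and Lemma~\ref{lem:decay} the polymer-expectation factor is dominated by $G \ee^{-\gamma(j-a)}$ (resp.\ $G \ee^{-\gamma(b-i)}$), while the coefficient factor is dominated by $1 + \xi (j-a)^\xi \ee^{-h_0}\Exd[\ee^{-\omega_0}]$ (resp.\ $1 + \xi (b-i)^\xi \ee^{-h_0}\Exd[\ee^{-\omega_0}]$). Summing from $j - a \ge b - a + 1$ (resp.\ $b - i \ge b - a + 1$) the resulting polynomial-geometric tail is controlled by absorbing the polynomial prefactor into any strictly smaller exponent $\gamma' \in (0,\gamma)$, producing a bound of the form $G' \ee^{-\gamma'(b-a)}$, which is the claim.

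The main obstacle is the control of $\Exd[\ee^{-\omega_0}]$ under Assumption~\ref{assump:omega} alone: the parameter $\eta$ is only known to be positive, so if $\eta<1$ this expectation may be infinite and the clean product-of-expectations step cannot be used directly. In that case I would replace the factorization by a H\"older-type estimate, coupling the integrability of $\ee^{-\eta \omega_0}$ (which does follow from the assumption) with a higher-moment refinement of Lemma~\ref{lem:decay}. Such a refinement is accessible because the polymer expectation is a priori bounded by $1$, so the proof of Lemma~\ref{lem:decay} in fact controls all its $L^p$ norms with an only slightly degraded exponential rate, and an appropriate pair of conjugate exponents restores the bound.
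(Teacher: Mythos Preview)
Your reduction to the second part of Lemma~\ref{lem:corr}, the pushing of the supremum inside, and the independence observation (the two-polymer expectation $\Ex_{m,h,\vartheta^a\omega}^{\otimes 2}[\prod_{k=1}^{m-1}(1-X_kX_k')]$ does not feel the endpoint charge, hence is $\probd$-independent of the prefactor $\ee^{-\omega_a}$) are all correct, and the direct factorization does give the result whenever $\Exd[\ee^{-\omega_0}]<\infty$. The gap is in your workaround for the general case. A H\"older inequality with conjugate exponents $p,q>1$ would require $\Exd[\ee^{-p\omega_0}]<\infty$ for some $p>1$, which Assumption~\ref{assump:omega} does not provide when $\eta<1$; the ``higher-moment'' control of the polymer factor (correct, since it lies in $[0,1]$) sits on the wrong side of the pairing. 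Worse, because $\ee^{-\omega_a}$ and $Y_m$ are $\probd$-independent and nonnegative, Tonelli forces $\Exd[\ee^{-\omega_a}\,Y_m]=\Exd[\ee^{-\omega_0}]\,\Exd[Y_m]$, which is genuinely $+\infty$ in that regime: once the bound has been expanded term by term, no H\"older manipulation can restore finiteness.

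The paper's remedy is to exploit the a priori bound (the ratio is $\le 1$) \emph{before} integrating: on the event $\{|\omega_a|\le T,\,|\omega_b|\le T\}$ with $T:=\tfrac{\gamma_o}{3}(b-a)$ one may replace $\ee^{-h-\omega_a}$ and $\ee^{-h-\omega_b}$ in the coefficients by the deterministic $\ee^{-h_0+T}$, while on the complement one simply bounds the ratio by $1$. This yields
\[
\text{ratio}\;\le\;\mathds{1}_{\{|\omega_a|>T\}}+\mathds{1}_{\{|\omega_b|>T\}}+\big[\text{sum of Lemma~\ref{lem:corr} with deterministic coefficients}\big],
\]
after which one takes the supremum over $H$ and integrates: the indicators are handled by the Chernoff bound $\probd[|\omega_0|>T]\le \ee^{-\eta T}\,\Exd[\ee^{\eta|\omega_0|}]$, and the remaining sum needs only Lemma~\ref{lem:decay} (no independence, no H\"older). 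If you prefer to stay closer to your own route, a \emph{fractional}-moment argument does work where H\"older does not: use $\min\{1,S\}\le S^\theta$ for $\theta\in(0,\eta]$, the subadditivity of $x\mapsto x^\theta$ to distribute the power over the sum, then your independence to factor $\Exd[\ee^{-\theta\omega_a}]\,\Exd[Y_m^\theta]$, and finally Jensen's inequality $\Exd[Y_m^\theta]\le(\Exd[Y_m])^\theta$ to recover exponential decay at rate $\theta\gamma$.
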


\medskip

\begin{proof}[Proof of Lemma \ref{lem:mixing}]
Fix any $H\subset(h_c,+\infty)$ closed.  Lemma \ref{lem:decay} ensures
us the existence of two constants $\gamma_o>0$ and $G_o>0$ such that
for all $n\in\N$
\begin{equation}
 \Exd\Bigg[ \sup_{h\in H}\Ex_{n,h,\cdot}^{\otimes 2}\bigg[\prod_{k=1}^{n-1}(1-X_kX_k')\bigg]\Bigg]\le G_o\ee^{-\gamma_o n}\,.
  \label{lem:mixing_0}
\end{equation}
The second part of Lemma \ref{lem:corr} states that for every integers
$0\le a\le b\le n$, $h\in\Rl$, and
$\omega:=\{\omega_c\}_{c\in\N_0}\in\Omega$
\begin{align}
  \nonumber
  &\frac{|\cov_{n,h,\omega}[X_a,X_b]|}{\min\{\Ex_{n,h,\omega}[X_a],\Ex_{n,h,\omega}[X_b]\}}\\
  &\qquad\le\sum_{i=0}^a\sum_{j=b}^\infty 2\xi (j-i)^\xi\ee^{-h-\min\{\omega_a,\omega_b\}}\Ex_{j-i,h,\vartheta^i\omega}^{\otimes 2}\bigg[\prod_{k=1}^{j-i-1}(1-X_kX_k')\bigg]\,,
\label{lem:mixing_1}
\end{align}
where $\xi$ is the number defined in \eqref{eq:xi_def}.  Since the
left-hand side of (\ref{lem:mixing_1}) is smaller than or equal to
$1$, we can further state that
\begin{align}
  \nonumber
  &\frac{|\cov_{n,h,\omega}[X_a,X_b]|}{\min\{\Ex_{n,h,\omega}[X_a],\Ex_{n,h,\omega}[X_b]\}}\\
  \nonumber
  &\qquad\le \mathds{1}_{\big\{|\omega_a|>\frac{\gamma_o}{3}(b-a)\big\}}+\mathds{1}_{\big\{|\omega_b|>\frac{\gamma_o}{3}(b-a)\big\}}\\
  \nonumber
  &\qquad\quad+\sum_{i=0}^a\sum_{j=b}^\infty 2\xi (j-i)^\xi\ee^{-h+\frac{\gamma_o}{3}(b-a)}\Ex_{j-i,h,\vartheta^i\omega}^{\otimes 2}\bigg[\prod_{k=1}^{j-i-1}(1-X_kX_k')\bigg]\,.
\end{align}
In this way, taking the supremum with respect to $h$, defining
$h_o:=\inf H$, and then integrating with respect to
$\probd[\dd\omega]$, we get thanks to (\ref{lem:mixing_0}) and to a
Chernoff--type bound
\begin{align}
  \nonumber
  &\Exd\bigg[\sup_{h\in H}\,\frac{|\cov_{n,h,\cdot}[X_a,X_b]|}{\min\{\Ex_{n,h,\cdot}[X_a],\Ex_{n,h,\cdot}[X_b]\}}\bigg]\\
  \nonumber
 &\qquad\le 2\int_\Omega\mathds{1}_{\big\{|\omega_0|>\frac{\gamma_o}{3}(b-a)\big\}}\probd[\dd\omega]+\sum_{i=0}^a\sum_{j=b}^\infty 2\xi (j-i)^\xi\ee^{-h_o+\frac{\gamma_o}{3}(b-a)} G_o\ee^{-\gamma_o (j-i)}\\
  \nonumber
  &\qquad\le 2\int_\Omega\ee^{\eta|\omega_0|-\frac{\eta\gamma_o}{3}(b-a)}\,\probd[\dd\omega]+
 \sum_{j\in\N}^\infty 2\xi G_o (j+1)j^\xi\ee^{-h_o-\frac{\gamma_o}{3}(b-a)-\frac{\gamma_o}{3}j}\,,
\end{align}
$\eta>0$ being the number introduced by Assumption \ref{assump:omega}.
This shows that the lemma holds with
$\gamma:=\min\{\eta\gamma_o/3,\gamma_o/3\}$ and $G:=
\sum_{j\in\N}^\infty 2\xi G_o (j+1)j^\xi\ee^{-h_o-\gamma_oj/3}$.
\end{proof}

\medskip

One corollary of Lemma \ref{lem:mixing} is the following.
\medskip

\begin{corollary}
  \label{cor:mixing}
  For each $H\subset(h_c,+\infty)$ closed there exist constants
  $\gamma>0$ and $G>0$ such that for every integers $0\le a\le b\le n$
  \begin{equation*}
    \sup_{h\in H}\bigg|\Exd\Big[\Ex_{n,h,\cdot}[X_a]\Ex_{n,h,\cdot}[X_b]\Big]-\Exd\Big[\Ex_{n,h,\cdot}[X_a]\Big]\Exd\Big[\Ex_{n,h,\cdot}[X_b]\Big]\bigg|\le G\ee^{-\gamma(b-a)}\,.
  \end{equation*}
\end{corollary}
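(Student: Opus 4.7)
The strategy is to reduce the $\probd$-covariance in the corollary to quantities already controlled by Lemma~\ref{lem:mixing}. The idea is to split each of $\Ex_{n,h,\cdot}[X_a]$ and $\Ex_{n,h,\cdot}[X_b]$ into a piece that depends only on disorder to one side of an intermediate site $c\in(a,b)$ plus an error that is exactly the quenched covariance with $X_c$, normalized by $\Ex_{n,h,\cdot}[X_c]$.

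Concretely, I would fix $c:=\lfloor(a+b)/2\rfloor$. Two applications of Lemma~\ref{lem:fact} yield the identities
\begin{equation*}
\Ex_{n,h,\omega}[X_a X_c]=\Ex_{c,h,\omega}[X_a]\,\Ex_{n,h,\omega}[X_c],\qquad \Ex_{n,h,\omega}[X_c X_b]=\Ex_{n,h,\omega}[X_c]\,\Ex_{n-c,h,\vartheta^c\omega}[X_{b-c}],
\end{equation*}
and dividing by $\Ex_{n,h,\omega}[X_c]>0$ these rearrange into $\Ex_{n,h,\cdot}[X_a]=f_1+R_1$ and $\Ex_{n,h,\cdot}[X_b]=f_2+R_2$ with
\begin{equation*}
f_1(\omega):=\Ex_{c,h,\omega}[X_a],\qquad f_2(\omega):=\Ex_{n-c,h,\vartheta^c\omega}[X_{b-c}],
\end{equation*}
and $R_i$ equal to minus the corresponding quenched covariance divided by $\Ex_{n,h,\omega}[X_c]$. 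The crucial structural fact is that $f_1$ is measurable with respect to $(\omega_1,\dots,\omega_c)$ while $f_2$ is measurable with respect to $(\omega_{c+1},\dots,\omega_n)$, so by Assumption~\ref{assump:omega} they are $\probd$-independent and $\Exd[f_1 f_2]=\Exd[f_1]\Exd[f_2]$.

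Expanding the target difference and using this independence collapses the leading term, leaving
\begin{equation*}
\Exd\big[\Ex_{n,h,\cdot}[X_a]\Ex_{n,h,\cdot}[X_b]\big]-\Exd\big[\Ex_{n,h,\cdot}[X_a]\big]\Exd\big[\Ex_{n,h,\cdot}[X_b]\big]=\cov_\probd(f_1,R_2)+\cov_\probd(R_1,f_2)+\cov_\probd(R_1,R_2).
\end{equation*}
To bound each piece I would invoke the Cauchy-Schwarz-type inequality $|\cov_{n,h,\omega}[X_a,X_c]|\le\min\{\Ex_{n,h,\omega}[X_a],\Ex_{n,h,\omega}[X_c]\}$ valid for Bernoulli variables (and analogously for the pair $X_c,X_b$): this simultaneously shows $|R_i|\le 1$ pointwise and makes $|R_i|$ pointwise dominated by the quantity estimated in Lemma~\ref{lem:mixing}, yielding $\Exd[\sup_{h\in H}|R_1|]\le Ge^{-\gamma(c-a)}$ and $\Exd[\sup_{h\in H}|R_2|]\le Ge^{-\gamma(b-c)}$. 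Combined with $|f_i|\le 1$, each of the three covariances is of order $e^{-\gamma(b-a)/2}$ for our symmetric choice of $c$, and renaming $\gamma$ gives the claim uniformly in $h\in H$.

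The main delicate point I expect is the division by $\Ex_{n,h,\omega}[X_c]$ in the definition of $R_1,R_2$, which could in principle be very small when $\omega_c$ is very negative. This is precisely neutralized by the Cauchy-Schwarz bound above, which cancels the dangerous denominator against the covariance in the numerator, so that no truncation on a "good disorder" event is needed; this is also the reason Lemma~\ref{lem:mixing} was formulated with the normalization $\min\{\Ex_{n,h,\cdot}[X_a],\Ex_{n,h,\cdot}[X_b]\}$ in the denominator.
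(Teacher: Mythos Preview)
Your proposal is correct and is essentially the same argument as the paper's: the paper also takes the midpoint $m:=\lfloor(a+b)/2\rfloor$, uses Lemma~\ref{lem:fact} to write $\Ex_{n,h,\omega}[X_a]=\Ex_{m,h,\omega}[X_a]-\cov_{n,h,\omega}[X_a,X_m]/\Ex_{n,h,\omega}[X_m]$ and the analogous identity for $X_b$, exploits the $\probd$-independence of the two main pieces, and bounds the remainders via Lemma~\ref{lem:mixing}. The only cosmetic difference is that the paper bounds the error directly by $2\Exd[|R_1|]+2\Exd[|R_2|]$ (using $|uv-u'v'|\le|u-u'|+|v-v'|$ for $[0,1]$-valued quantities) rather than expanding into the three $\probd$-covariances, but your route gives the same exponential rate.
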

\medskip 

\begin{proof}[Proof of Corollary \ref{cor:mixing}]
Fix a closed set $H\subset(h_c,+\infty)$ and integers $0\le a\le b\le
n$. Put $m:=\lfloor(a+b)/2\rfloor$. Lemma \ref{lem:fact} gives for all
$h\in\Rl$ and $\omega\in\Omega$
  \begin{align}
    \nonumber
\Ex_{n,h,\omega}[X_a]&=\frac{\Ex_{n,h,\omega}[X_aX_m]}{\Ex_{n,h,\omega}[X_m]}-\frac{\Ex_{n,h,\omega}[X_aX_m]-\Ex_{n,h,\omega}[X_a]\Ex_{n,h,\omega}[X_m]}{\Ex_{n,h,\omega}[X_m]}\\
    &=\Ex_{m,h,\omega}[X_a]-\frac{\cov_{n,h,\omega}[X_a,X_m]}{\Ex_{n,h,\omega}[X_m]}
    \label{cor:mixing_1}
  \end{align}
  and
  \begin{align}
    \nonumber
    \Ex_{n,h,\omega}[X_b]&=\frac{\Ex_{n,h,\omega}[X_mX_b]}{\Ex_{n,h,\omega}[X_m]}-\frac{\Ex_{n,h,\omega}[X_mX_b]-\Ex_{n,h,\omega}[X_m]\Ex_{n,h,\omega}[X_b]}
       {\Ex_{n,h,\omega}[X_m]}\\
       &=\Ex_{n-m,h,\vartheta^m\omega}[X_{b-m}]-\frac{\cov_{n,h,\omega}[X_m,X_b]}{\Ex_{n,h,\omega}[X_m]}\,.
        \label{cor:mixing_2}
  \end{align}
Lemma \ref{lem:mixing} implies that there exist constants $\gamma_o>0$
and $G_o>0$ independent of $a$, $b$, and $n$ such that
\begin{align}
  \nonumber
  \sup_{h\in H}\Exd\bigg[\frac{|\cov_{n,h,\cdot}[X_a,X_m]|}{\Ex_{n,h,\cdot}[X_m]}\bigg]\le
  \Exd\bigg[\sup_{h\in H}\frac{|\cov_{n,h,\cdot}[X_a,X_m]|}{\Ex_{n,h,\cdot}[X_m]}\bigg]&\le G_o\ee^{-\gamma_o(m-a)}\\
  \nonumber
  &\le G_o\ee^{-\frac{\gamma_o}{2}(b-a)+\gamma_o}
\end{align}
and, similarly,
\begin{equation*}
  \sup_{h\in H}\Exd\bigg[\frac{|\cov_{n,h,\cdot}[X_m,X_b]|}{\Ex_{n,h,\cdot}[X_m]}\bigg]\le G_o\ee^{-\gamma_o(b-m)}\le
  G_o\ee^{-\frac{\gamma_o}{2}(b-a)}\,.
\end{equation*}
In this way, since the expectation $\Ex_{m,h,\cdot}[X_a]$ and
$\Ex_{n-m,h,\vartheta^m\cdot}[X_{b-m}]$ are statistically independent,
we can conclude from (\ref{cor:mixing_1}) and (\ref{cor:mixing_2})
that for all $h\in H$
\begin{align}
  \nonumber
  &\bigg|\Exd\Big[\Ex_{n,h,\cdot}[X_a]\Ex_{n,h,\cdot}[X_b]\Big]-\Exd\Big[\Ex_{n,h,\cdot}[X_a]\Big]\Exd\Big[\Ex_{n,h,\cdot}[X_b]\Big]\bigg|\\
  \nonumber
  &\qquad\le 2\Exd\bigg[\frac{|\cov_{n,h,\cdot}[X_a,X_m]|}{\Ex_{n,h,\cdot}[X_m]}\bigg]+2\Exd\bigg[\frac{|\cov_{n,h,\cdot}[X_m,X_b]|}{\Ex_{n,h,\cdot}[X_m]}\bigg]
  \le 2G_o(\ee^{\gamma_o}+1)\ee^{-\frac{\gamma_o}{2}(b-a)}\,,
\end{align}
which proves the corollary with $\gamma:={\gamma_o}/{2}$ and
$G:=2G_o(\ee^{\gamma_o}+1)$.
\end{proof}

\medskip

\subsection{First consequences of the correlation estimates}
\label{sec:maximal_excursion}

In this section we investigate the maximal polymer excursion
$M_n:=\max\{T_1,\ldots,T_{L_n}\}$ in the localized phase, so providing
a first application of the correlation bounds and proving Proposition
\ref{prop:maximal_excursion}.

\medskip

\begin{proof}[Proof of Proposition \ref{prop:maximal_excursion}]
We shall show that for every $h_o>h_c$ and $\epsilon\in(0,1]$ and for $\pae$
\begin{equation}
  \adjustlimits\limspace_{n\uparrow\infty}\sup_{h\in U_{h_o,\epsilon}}\prob_{n,h,\omega}\bigg[\frac{M_n}{\log n}>\frac{1+\epsilon}{\mu(h)}\bigg]=0
\label{eq:max_uncond_1}
\end{equation}
and
\begin{equation}
  \adjustlimits\limspace_{n\uparrow\infty}\sup_{h\in U_{h_o,\epsilon}}\prob_{n,h,\omega}\bigg[\frac{M_n}{\log n}<\frac{1-\epsilon}{\mu(h)}\bigg]=0\,,
\label{eq:max_uncond_25}
\end{equation}
where $U_{h_o,\epsilon}$ is an open interval centered at $h_o$ with
radius $\min\{\epsilon\mu(h_o)/8,(h_o-h_c)/2\}>0$.  These limits prove
the proposition as follows. For $\epsilon\in(0,1]$, Lindel\"of's lemma
  ensures that from the open cover
  $\{U_{h_o,\epsilon}\}_{h_o\in(h_c,+\infty)}$ of $(h_c,+\infty)$ we
  can extract countably many elements corresponding to the points
  $h_o$ in a countable set $\mathbb{H}_\epsilon\subset(h_c,+\infty)$
  such that $(h_c,+\infty)\subseteq\cup_{h_o\in
    \mathbb{H}_\epsilon}U_{h_o,\epsilon}$.  Let
  $\Omega_{h_o,\epsilon}\in\mathcal{F}$ be a set of charges with full
  probability, that is $\probd[\Omega_{h_o,\epsilon}]=1$, for which
  \eqref{eq:max_uncond_1} and \eqref{eq:max_uncond_25} are valid and
  define
  $\Omega_o:=\cap_{i\in\N}\cap_{h_o\in\mathbb{H}_{1/i}}\Omega_{h_o,1/i}$. We
  have $\probd[\Omega_o]=1$.  By construction, for every
  $\omega\in\Omega_o$, $H\subset(h_c,+\infty)$ compact, and
  $\epsilon>0$ there exist $i\in\N$ and finitely many points
  $h_1,\ldots,h_s$ in $\mathbb{H}_{1/i}$ such that $\inf_{h\in
    H}\mu(h)\epsilon>1/i$, $H\subset \cup_{r=1}^sU_{h_s,1/i}$, and
  $\omega\in\Omega_{h_s,1/i}$ for all $s\in\{1,\ldots,r\}$. We remark
  that $\inf_{h\in H}\mu(h)>0$ because $\mu$ is Lipschitz continuous
  and strictly positive throughout $(h_c,+\infty)$.  It therefore
  follows from \eqref{eq:max_uncond_1} and \eqref{eq:max_uncond_25}
  that
\begin{align}
  \nonumber
  &\limsup_{n\uparrow\infty}\,\sup_{h\in H}\,\prob_{n,h,\omega}\Bigg[\bigg|\frac{M_n}{\log n}-\frac{1}{\mu(h)}\bigg|>\epsilon\Bigg]\\
  \nonumber
  &\qquad\le\adjustlimits\limsmallspace_{n\uparrow\infty}\maxp_{s\in\{1,\ldots,r\}}\,\sup_{h\in U_{h_s,1/i}}
  \prob_{n,h,\omega}\Bigg[\frac{M_n}{\log n}>\frac{1+1/i}{\mu(h)}\Bigg]\\
  \nonumber
  &\qquad\quad+\adjustlimits\limsmallspace_{n\uparrow\infty}\maxp_{s\in\{1,\ldots,r\}}\,\sup_{h\in U_{h_s,1/i}}
  \prob_{n,h,\omega}\Bigg[\frac{M_n}{\log n}<\frac{1-1/i}{\mu(h)}\Bigg]=0\,.
\end{align}
This demonstrates that, as $n$ goes to infinity, $M_n/\log n$
converges in probability to $1/\mu(h)$ with respect to the polymer
measure $\prob_{n,h,\omega}$ for all $\omega\in\Omega_o$ and $h>h_c$,
and the result is uniform in $h$ belonging to compact sets in the
localized phase.

\smallskip

\noindent\textit{The limit \eqref{eq:max_uncond_1}.}  Fix $h_o>h_c$
and $\epsilon\in(0,1]$ and put
  $\delta_o:=\min\{\epsilon\mu_o/8,(h_o-h_c)/2\}$ with
  $\mu_o:=\mu(h_o)>0$, so that
  $U_{h_o,\epsilon}=(h_o-\delta_o,h_o+\delta_o)$. Noting that
  $\mu(h)\le\mu_o+\delta_o$ for $h\in(h_o-\delta_o,h_o+\delta_o)$, as
  the function $\mu$ is Lipschitz continuous with Lipschitz constant
  equal to $1$ by Proposition \ref{prop:mu}, and setting
  $m_n:=\lfloor\{(1+\epsilon)/(\mu_o+\delta_o)\}\log n\rfloor$ for
  brevity, we write for every $n\in\N$, $h\in U_{h_o,\epsilon}$, and
  $\omega\in\Omega$
\begin{align}
  \nonumber
  \prob_{n,h,\omega}\bigg[\frac{M_n}{\log n}>\frac{1+\epsilon}{\mu(h)}\bigg]&\le\prob_{n,h,\omega}\bigg[\frac{M_n}{\log n}>\frac{1+\epsilon}{\mu_o+\delta_o}\bigg]\\
  \nonumber
  &=\prob_{n,h,\omega}\big[M_n>m_n\big]\\
  \nonumber
  &\le \sum_{i=0}^{n-m_n-1}\Ex_{n,h,\omega}\bigg[X_i\prod_{k=i+1}^{i+m_n}(1-X_k)\bigg]\\
  \nonumber
  &= \sum_{i=0}^{n-m_n-1}\sum_{j=i+m_n+1}^n\Ex_{n,h,\omega}\bigg[X_i\prod_{k=i+1}^{j-1}(1-X_k)X_j\bigg]\,,
\end{align}
where the second inequality is due to the fact that the condition
$M_n>m_n$ implies that there is a contact site
$i\in\{0,\ldots,n-m_n-1\}$ followed by an excursion of size at least
$m_n$, while the last equality exploits the identity
\begin{equation*}
  X_i\prod_{k=i+1}^{i+m_n}(1-X_k)=\sum_{j=i+m_n+1}^nX_i\prod_{k=i+1}^{j-1}(1-X_k)X_j+\prod_{k=i+1}^n(1-X_k)\,,
\end{equation*}
as well as the fact that $X_n=1$ almost surely with respect to the
polymer measure $\prob_{n,h,\omega}$.  An application of Lemma
\ref{lem:fact} first and a Chernoff--type bound later give
\begin{align}
  \nonumber
  \prob_{n,h,\omega}\bigg[\frac{M_n}{\log n}>\frac{1+\epsilon}{\mu(h)}\bigg]&\le \sum_{i=0}^{n-m_n-1}\sum_{j=i+m_n+1}^n\prob_{j-i,h,\vartheta^i\omega}[T_1=j-i]\\
  \nonumber
  &\le \sum_{i=0}^{n-1}\sum_{j=m_n+1}^\infty\prob_{j,h,\vartheta^i\omega}[T_1=j]\\
  &\le\sum_{i=0}^{n-1}\sum_{j\in\N}\ee^{\zeta_o j-\zeta_o(m_n+1)}\,\prob_{j,h,\vartheta^i\omega}[T_1=j]\,,
  \label{eq:max_uncond_2}
\end{align}
where we have introduced the number $\zeta_o:=(1-\epsilon/4)\mu_o>0$.  We
now note that for all $j\in\N$, $h\in(h_o-\delta_o,h_o+\delta_o)$, and
$\omega:=\{\omega_a\}_{a\in\N_0}\in\Omega$
\begin{align}
  \nonumber
  \prob_{j,h,\omega}[T_1=j]&=\frac{p(j)}{\Ex[\ee^{\sum_{a=1}^{j-1}(h+\omega_a)X_a}X_j]}\\
  \nonumber
  &\le\ee^{\delta_o j}\,\frac{p(j)}{\Ex[\ee^{\sum_{a=1}^{j-1}(h_o+\omega_a)X_a}X_j]}=\ee^{\delta_o j}\,\prob_{j,h_o,\omega}[T_1=j]\,.
\end{align}
Similarly, we have
\begin{equation}
   \label{eq:max_uncond_100}
  \prob_{j,h,\omega}[T_1=j]\ge \ee^{-\delta_o j}\,\prob_{j,h_o,\omega}[T_1=j]\,.
\end{equation}
Thus, (\ref{eq:max_uncond_2}) finally yields
\begin{align}
  \nonumber
  \sup_{h\in U_{h_o,\epsilon}}\prob_{n,h,\omega}\bigg[\frac{M_n}{\log n}>\frac{1+\epsilon}{\mu(h)}\bigg]
  &\le  \ee^{-\zeta_o(m_n+1)}\sum_{i=0}^{n-1}\sum_{j\in\N}\ee^{(\zeta_o+\delta_o)j}\,\prob_{j,h_o,\vartheta^i\omega}[T_1=j]\\
  &\le  \frac{1}{n^{\zeta_o\frac{1+\epsilon}{\mu_o+\delta_o}}}\sum_{i=0}^{n-1}\Lambda(\vartheta^i\omega)\,,
  \label{eq:max_uncond_3}
\end{align}
$\Lambda$ being the random variable on $\Omega$ defined as
\begin{equation*}
  \Lambda:=\sum_{j\in\N}\ee^{(\zeta_o+\delta_o)j}\,\prob_{j,h_o,\cdot}[T_1=j]\,.
\end{equation*}
Recalling that $\zeta_o:=(1-\epsilon/4)\mu_o$ and noting that
$\delta_o\le\epsilon\mu_o/8$ by definition, Corollary \ref{cor:PT1}
shows that $\Exd[\Lambda]<+\infty$ since $\zeta_o+\delta_o\le
(1-\epsilon/8)\mu_o<\mu_o$. In this way, as
\begin{equation*}
\zeta_o\frac{1+\epsilon}{\mu_o+\delta_o}\ge 1+\epsilon\frac{5-2\epsilon}{8+\epsilon}\ge 1+\frac{\epsilon}{3}
\end{equation*}
because again $\delta_o\le\epsilon\mu_o/8$ and $\epsilon\le 1$,
\eqref{eq:max_uncond_1} follows from \eqref{eq:max_uncond_3} thanks to
Birkhoff's ergodic theorem.

\smallskip

\noindent\textit{The limit \eqref{eq:max_uncond_25}.} The correlation
estimates enter the proof of \eqref{eq:max_uncond_25} because in order
to exclude that all the excursions are below a certain threshold it
suffices to take into account only the excursions that are included on
well separated segments of the polymer. The dilution procedure that we
implement is even more drastic than this because takes into account
only excursions of very specific size.  For this we start by fixing
$h_o>h_c$ and $\epsilon\in(0,1]$ and by putting
  $\delta_o:=\min\{\epsilon\mu_o/8,(h_o-h_c)/2\}$ with
  $\mu_o:=\mu(h_o)>0$ as before, so that
  $U_{h_o,\epsilon}=(h_o-\delta_o,h_o+\delta_o)$. Then, planning to
  deal with $r_n$ excursions of size $m_n$ and spacing $d_n$, we
  define
  \begin{equation}
    \label{eq:mdr_n}
    \begin{cases}
      m_n:=\big\lfloor \frac{1-\epsilon}{\mu_o-\delta_o}\log n\big\rfloor+1 \\[0.3em]
      d_n:=\lfloor (\log n)^2\rfloor+1 \\[0.3em]
      r_n:=\big\lfloor \frac{n}{m_n+d_n}\big\rfloor
  \end{cases}\,.
  \end{equation}  Set $l_s:=s(m_n+d_n)$ for
  $s\in\{0,\ldots,r_n-1\}$. Bearing in mind that
  $\mu(h)\ge\mu_o-\delta_o$ for $h\in(h_o-\delta_o,h_o+\delta_o)$ by
  the Lipschitz property of $\mu$, for every $n\in\N$, $h\in
  U_{h_o,\epsilon}$, and $\omega\in\Omega$ we have
\begin{align}
  \nonumber
  \prob_{n,h,\omega}\bigg[\frac{M_n}{\log n}< \frac{1-\epsilon}{\mu(h)}\bigg]&\le
  \prob_{n,h,\omega}\bigg[\frac{M_n}{\log n}\le \frac{1-\epsilon}{\mu_o-\delta_o}\bigg]\\
  \nonumber
  &=\prob_{n,h,\omega}[M_n<m_n]\\
  \nonumber
  &\le \Ex_{n,h,\omega}\Bigg[\prod_{s=0}^{r_n-1}\bigg\{1-X_{l_s}\prod_{k=l_s+1}^{l_s+m_n-1}(1-X_k)X_{l_s+m_n}\bigg\}\Bigg]\, ,
\end{align}
since the condition $M_n< m_n$ implies that an excursion from the
position $l_s$ to the position $l_s+m_n$ cannot occur for any choice
of $s$. This bound is the essence of the dilution procedure. Repeated
applications of the first part of Lemma \ref{lem:corr} give
\begin{align}
  \nonumber
  \prob_{n,h,\omega}\bigg[\frac{M_n}{\log n}<\frac{1-\epsilon}{\mu(h)}\bigg]
  &\le \prod_{s=0}^{r_n-1}\Ex_{n,h,\omega}\bigg[1-X_{l_s}\prod_{k=l_s+1}^{l_s+m_n-1}(1-X_k)X_{l_s+m_n}\bigg]\\
  &\quad+2\sum_{s=0}^{r_n-2}\,\sum_{i=0}^{l_s+m_n-1}\sum_{j=l_{s+1}}^\infty\Ex_{j-i,h,\vartheta^i\omega}^{\otimes 2}\bigg[\prod_{k=1}^{j-i-1}(1-X_kX_k')\bigg]\,.
  \label{eq:max_uncond_7}
\end{align}
We are going to show that for $\pae$
\begin{align}
  \nonumber
  &\adjustlimits\limsup_{n\uparrow\infty}\sup_{h\in U_{h_o,\epsilon}}
  \prod_{s=0}^{r_n-1}\Ex_{n,h,\omega}\bigg[1-X_{l_s}\prod_{k=l_s+1}^{l_s+m_n-1}(1-X_k)X_{l_s+m_n}\bigg]\\
&\qquad\le \lim_{n\uparrow\infty}\prod_{s=0}^{r_n-1}\Bigg\{1-\inf_{h\in U_{h_o,\epsilon}}\Ex_{n,h,\omega}\bigg[X_{l_s}\prod_{k=l_s+1}^{l_s+m_n-1}(1-X_k)X_{l_s+m_n}\bigg]\Bigg\}=0
\label{eq:max_uncond_8}
\end{align}
and
\begin{equation}
  \adjustlimits\limspace_{n\uparrow\infty}\sup_{h\in U_{h_o,\epsilon}}
  \sum_{s=0}^{r_n-2}\,\sum_{i=0}^{l_s+m_n-1}\sum_{j=l_{s+1}}^\infty\Ex_{j-i,h,\vartheta^i\omega}^{\otimes 2}\bigg[\prod_{k=1}^{j-i-1}(1-X_kX_k')\bigg]=0\,.
\label{eq:max_uncond_9}
\end{equation}
These limits demonstrate (\ref{eq:max_uncond_25}) via (\ref{eq:max_uncond_7}).

\smallskip

Let us prove (\ref{eq:max_uncond_9}) first, which is the simplest
limit among the two.  As $h_o-\delta_o>h_c$ because
$\delta_o\le(h_o-h_c)/2$ by definition, according to Lemma
\ref{lem:decay} there exist constants $\gamma>0$ and $G>0$ such that
\begin{equation*}
\Exd\Bigg[\sup_{h\in[h_o-\delta_o,+\infty)}\Ex_{j,h,\cdot}^{\otimes 2}\bigg[\prod_{k=1}^{j-1}(1-X_kX_k')\bigg]\Bigg]\le G\,\ee^{-\gamma j}
\end{equation*}
for all $j\in\N$. In this way, introducing the random variable
\begin{equation*}
  \Lambda:=\sum_{j=1}^\infty \ee^{\frac{\gamma}{2}j}\sup_{h\in[h_o-\delta_o,+\infty)}\Ex_{j,h,\cdot}^{\otimes 2}\bigg[\prod_{k=1}^{j-1}(1-X_kX_k')\bigg]\,,
\end{equation*}
we have $\Exd[\Lambda]<+\infty$ and 
\begin{align}
  \nonumber
  &\sup_{h\in U_{h_o,\epsilon}}
  \sum_{s=0}^{r_n-2}\,\sum_{i=0}^{l_s+m_n-1}\sum_{j=l_{s+1}}^\infty\Ex_{j-i,h,\vartheta^i\omega}^{\otimes 2}\bigg[\prod_{k=1}^{j-i-1}(1-X_kX_k')\bigg]\\
  \nonumber
  &\qquad\le
  r_n\sum_{i=0}^{n-1}\sum_{j=d_n}^\infty \sup_{h\in[h_o-\delta_o,+\infty)}\Ex_{j,h,\vartheta^i\omega}^{\otimes 2}\bigg[\prod_{k=1}^{j-1}(1-X_kX_k')\bigg]
    \le  r_n\ee^{-\frac{\gamma}{2}d_n}\sum_{i=0}^{n-1}\Lambda(\vartheta^i\omega)
\end{align}
for all $\omega\in\Omega$. Thus, (\ref{eq:max_uncond_9}) is due to
Birkhoff's ergodic theorem since $r_n\le n$ and $d_n\ge (\log n)^2$
according to \eqref{eq:mdr_n}.

\smallskip

Regarding (\ref{eq:max_uncond_8}), we use Lemma \ref{lem:fact} to
write down for every $n\in\N$, $h\in\Rl$, $\omega\in\Omega$, and
$s\in\{0,\ldots,r_n-1\}$ the identity
\begin{align}
  \nonumber
  &\Ex_{n,h,\omega}\bigg[X_{l_s}\prod_{k=l_s+1}^{l_s+m_n-1}(1-X_k)X_{l_s+m_n}\bigg]\\
  &\qquad=\Ex_{n,h,\omega}[X_{l_s}]\,\prob_{m_n,h,\vartheta^{l_s}\omega}[T_1=m_n]\,\Ex_{n-l_s,h,\vartheta^{l_s}\omega}[X_{m_n}]\,.
\label{eq:max_uncond_20}
\end{align}
The second part of Lemma \ref{lem:utile_per_tutto} entails for $0<a<n$
and $\lambda>0$ 
 \begin{equation*}
  \Ex_{n,h,\omega}\Big[\mathds{1}_{\{T_{L_a+1}\le \lambda\}}(1-X_a)\Big]
  \le \xi\ee^{-h-\omega_a}\lambda^\xi\,\Ex_{n,h,\omega}\Big[\mathds{1}_{\{T_{L_a}+T_{L_a+1}\le \lambda\}}X_a\Big]
\end{equation*}
with $\xi$ given in \eqref{eq:xi_def}, namely
 \begin{equation*}
  \prob_{n,h,\omega}[T_{L_a+1}\le \lambda]
  \le  \Ex_{n,h,\omega}\Big[\mathds{1}_{\{T_{L_a+1}\le \lambda\}}X_a\Big]+\xi\ee^{-h-\omega_a}\lambda^\xi\,\Ex_{n,h,\omega}\Big[\mathds{1}_{\{T_{L_a}+T_{L_a+1}\le \lambda\}}X_a\Big]\,.
 \end{equation*}
 This shows that
\begin{equation*}
  \prob_{n,h,\omega}[M_n\le \lambda]\le\prob_{n,h,\omega}[T_{L_a+1}\le \lambda]\le (1+\xi\ee^{-h-\omega_a}\lambda^\xi)\Ex_{n,h,\omega}[X_a]\,,
\end{equation*}
which trivially holds even for $a=0$ and $a=n$. The choice
$\lambda=\{(1+\epsilon)/\mu(h)\}\log n$ yields
 \begin{equation*}
   \Ex_{n,h,\omega}[X_{l_s}]\ge\frac{\prob_{n,h,\omega}\big[\frac{M_n}{\log n}\le
       \frac{1+\epsilon}{\mu(h)}\big]}{1+\xi\ee^{-h-\omega_{l_s}}\big[\frac{1+\epsilon}{\mu(h)}\big]^\xi (\log n)^\xi}
   \ge \mathds{1}_{\{\omega_{l_s}\ge 0\}}\frac{1-\prob_{n,h,\omega}\big[\frac{M_n}{\log n}>
       \frac{1+\epsilon}{\mu(h)}\big]}{1+\xi\ee^{-h}\big[\frac{1+\epsilon}{\mu(h)}\big]^\xi (\log n)^\xi}\,.
 \end{equation*}
Lemma \ref{lem:utile_per_tutto}, through Remark
\ref{remark:enforce_contact}, also shows that
\begin{equation*}
  \Ex_{n-l_s,h,\vartheta^{l_s}\omega}[X_{m_n}]\ge \frac{1}{1+\xi \ee^{-h-\omega_{l_s+m_n}}m_n^\xi}
  \ge \mathds{1}_{\{\omega_{l_s+m_n}\ge 0\}}\frac{1}{1+\xi \ee^{-h}m_n^\xi}\,.
\end{equation*} 
Thus, having a glance at (\ref{eq:max_uncond_100}) and recalling that
$\mu(h)\ge\mu_o-\delta_o$ for $h\in(h_o-\delta_o,h_o+\delta_o)$, from
(\ref{eq:max_uncond_20}) we get
\begin{align}
  \nonumber
  &\inf_{h\in U_{h_o,\epsilon}}\Ex_{n,h,\omega}\bigg[X_{l_s}\prod_{k=l_s+1}^{l_s+m_n-1}(1-X_k)X_{l_s+m_n}\bigg]\\
  \nonumber
  &\qquad\ge  2\zeta_n\mathds{1}_{\{\omega_{l_s}\ge 0,\,\omega_{l_s+m_n}\ge 0\}}
  \Bigg(1-\sup_{h\in U_{h_o,\epsilon}}\prob_{n,h,\omega}\bigg[\frac{M_n}{\log n}>\frac{1+\epsilon}{\mu(h)}\bigg]\Bigg)\\
  &\qquad\qquad\qquad\qquad\qquad\qquad\qquad\qquad\qquad\quad\times\ee^{-\delta_om_n}\,\prob_{m_n,h_o,\vartheta^{l_s}\omega}[T_1=m_n]\,,
  \label{eq:max_uncond_23}
\end{align}
where we have defined the numerical factor
\begin{equation}
\label{eq:zeta_n}
  \zeta_n:=\frac{1}{2}\,\frac{1}{1+\xi\ee^{-h_o+\delta_o}\big(\frac{1+\epsilon}{\mu_o-\delta_o}\big)^\xi (\log n)^\xi}\,\frac{1}{1+\xi \ee^{-h_o+\delta_o}m_n^\xi}\,.
\end{equation}
Now we remark that (\ref{eq:max_uncond_1}) implies that for $\pae$ the
upper bound
\begin{equation*}
  \sup_{h\in U_{h_o,\epsilon}}\prob_{n,h,\omega}\bigg[\frac{M_n}{\log n}>\frac{1+\epsilon}{\mu(h)}\bigg] \le  \frac{1}{2}
\end{equation*}
is valid for all sufficiently large $n$. In this way,
(\ref{eq:max_uncond_23}) yields for $\pae$
\begin{align}
  \nonumber
  &\limsup_{n\uparrow\infty}
  \prod_{s=0}^{r_n-1}\Bigg\{1-\inf_{h\in U_{h_o,\epsilon}}\Ex_{n,h,\omega}\bigg[X_{l_s}\prod_{k=l_s+1}^{l_s+m_n-1}(1-X_k)X_{l_s+m_n}\bigg]\Bigg\}\\
  &\qquad\le\limsup_{n\uparrow\infty}
  \prod_{s=0}^{r_n-1}\bigg(1-\zeta_n\mathds{1}_{\{\omega_{l_s}\ge 0,\,\omega_{l_s+m_n}\ge 0\}}\,\ee^{-\delta_om_n}\,\prob_{m_n,h_o,\vartheta^{l_s}\omega}[T_1=m_n]\bigg)\,.
  \label{eq:max_uncond_123}
\end{align}
In order to demonstrate (\ref{eq:max_uncond_8}) we must prove that the
right-hand side of (\ref{eq:max_uncond_123}) is 0 for $\pae$. This is
due to the Borel--Cantelli Lemma since we are going to show that
$\sum_{n\in\N}E_n<+\infty$,
\begin{equation*}
 E_n:=\int_\Omega\prod_{s=0}^{r_n-1}\bigg(1-\zeta_n\mathds{1}_{\{\omega_{l_s}\ge 0,\,\omega_{l_s+m_n}\ge 0\}}\,\ee^{-\delta_om_n}\,\prob_{m_n,h_o,\vartheta^{l_s}\omega}[T_1=m_n]\bigg)
 \probd[\dd\omega]
\end{equation*}
being the expectation of the right-hand side of
(\ref{eq:max_uncond_123}). Note that
$c:=\int_\Omega\mathds{1}_{\{\omega_0\ge 0\}}\probd[\dd\omega]>0$
since $\int_\Omega \omega_0\,\probd[\dd\omega]=0$ and that
$\Exd[\prob_{m_n,h_o,\cdot}[T_1=m_n]]\ge\ee^{-(\mu_o+\delta_o)(m_n-1)+\delta_o}$
for all sufficiently large $n$ by Corollary \ref{cor:PT1}. Also note
that all the factors in the right-hand side of
(\ref{eq:max_uncond_123}) are statistically independent as
$\prob_{m_n,h_o,\vartheta^{l_s}\omega}[T_1=m_n]$ depends on
$\omega:=\{\omega_a\}_{a\in\N_0}$ only through the components
$\omega_{l_s+1},\ldots,\omega_{l_s+m_n-1}$.  Thus, carrying out the
integration and recalling that $m_n:=\lfloor
\{(1-\epsilon)/(\mu_o-\delta_o)\}\log n\rfloor+1$, for all sufficiently
large $n$ we find
\begin{align}
  \nonumber
  E_n&=\bigg(1-c^2\zeta_n\ee^{-\delta_om_n}\Exd\Big[\prob_{m_n,h_o,\cdot}[T_1=m_n]\Big]\bigg)^{\!r_n}\\
  \nonumber
  &\le \bigg(1-c^2\zeta_n\ee^{-(\mu_o+2\delta_o)(m_n-1)}\bigg)^{\!r_n}\le \exp\bigg\{\!-c^2\zeta_nr_n\ee^{-(\mu_o+2\delta_o)(m_n-1)}\bigg\}\\
  \nonumber
  &\le \exp\Bigg\{\!-c^2\frac{\zeta_nr_n}{n^{(\mu_o+2\delta_o)\big(\frac{1-\epsilon}{\mu_o-\delta_o}\big)}}\Bigg\}\,.
\end{align}
On the other hand, using that $\delta_o\le\epsilon\mu_o/8$ by
definition and that $\epsilon\le 1$ we deduce that
\begin{equation*}
  (\mu_o+2\delta_o)\frac{1-\epsilon}{\mu_o-\delta_o}
  \le 1-\epsilon\frac{5+2\epsilon}{8-\epsilon}\le1-\frac{\epsilon}{2}\,.
\end{equation*}
In conclusion, for all sufficiently large $n$ we have
\begin{equation*}
E_n\le \exp\bigg\{\!-c^2\frac{\zeta_nr_n}{n^{1-\frac{\epsilon}{2}}}\bigg\}\,,
\end{equation*}
which proves that $\sum_{n\in\N}E_n<+\infty$ because, bearing
\eqref{eq:mdr_n} and \eqref{eq:zeta_n} in mind, we see that
$\lim_{n\uparrow\infty}(\log n)^{2\xi+2}\zeta_nr_n/n>0$.
\end{proof}

\medskip

\section{Regularity estimates}
\label{sec:regularity_estimates}

In this section we prove Theorem \ref{th:Cinfty} about the
differentiability of the free energy and, as a consequence, the
quenched concentration bound and the quenched CLT stated by Theorem
\ref{th:CLT+concentration} for the contact number $L_n$. We start with
a preliminary result about its variance.  In the introduction we
observed that
$\liminf_{n\uparrow\infty}\Exd[\Ex_{n,h,\cdot}[L_n/n]]>0$ for $h>h_c$.
The following lemma, which is inspired by \cite[Theorem
  B.1]{giacomin2020}, states that the same holds for
$\var_{n,h,\omega}[L_n]$.

\medskip

\begin{lemma}
   \label{lem:var_positive}
   For every $h>h_c$
   \begin{equation*}
  \liminf_{n\uparrow\infty}\Exd\bigg[\frac{\var_{n,h,\cdot}[L_n]}{n}\bigg]>0\,.
\end{equation*} 
 \end{lemma}
 
 \medskip

 \begin{proof}[Proof of Lemma \ref{lem:var_positive}]
   Setting $\#_n:=\lfloor(n-1)/2\rfloor$ for brevity, we shall show
   that for every $n\in\N$, $h\in\Rl$, and
   $\omega:=\{\omega_a\}_{a\in\N_0}\in\Omega$
\begin{equation}
  \label{lem:var_positive_0}
  \var_{n,h,\omega}[L_n]\ge \sum_{k=1}^{\#_n}\frac{\ee^{h+\omega_{2k}}p(1)^2p(2)}{[p(2)+\ee^{h+\omega_{2k}}p(1)^2]^2}\,\Ex_{n,h,\omega}[X_{2k-1}X_{2k+1}]
\end{equation}
and that, for $k\in\{1,\ldots,\#_n\}$,
\begin{align}
  \nonumber
  &\Ex_{n,h,\omega}[X_{2k-1}X_{2k+1}]\\
  &\qquad\ge \frac{1}{2}\frac{1}{1+\xi 2^\xi \ee^{-h-\omega_{2k-1}}}\frac{1}{1+\xi\ee^{-h-\omega_{2k+1}}}\Big(\Ex_{n,h,\omega}[X_{2k}]+\Ex_{n,h,\omega}[X_{2k+1}]\Big)\,.
  \label{lower_bound_XX}
\end{align}
These two lower bounds prove the lemma as follows. Pick $h>h_c$ and
recall that
$\delta:=\liminf_{n\uparrow\infty}\Exd[\Ex_{n,h,\cdot}[L_n/n]]>0$. Let
$\lambda>0$ be so small that $\probd[\Lambda\le\lambda]\le \delta/2$,
$\Lambda$ being the random variable that maps
$\omega:=\{\omega_a\}_{a\in\N_0}\in\Omega$ to
\begin{equation*}
\Lambda(\omega):=\frac{1}{2}\frac{\ee^{h+\omega_1}p(1)^2p(2)}{[p(2)+\ee^{h+\omega_1}p(1)^2]^2}\frac{1}{1+\xi 2^\xi \ee^{-h-\omega_0}}\frac{1}{1+\xi\ee^{-h-\omega_2}}\,.
\end{equation*}
Combining \eqref{lem:var_positive_0} with \eqref{lower_bound_XX} we
see that for all $n\in\N$ and $\omega\in\Omega$
\begin{align}
  \nonumber
  \var_{n,h,\omega}[L_n]&\ge \sum_{k=1}^{\#_n}\Lambda(\vartheta^{2k-1}\omega)\Big(\Ex_{n,h,\omega}[X_{2k}]+\Ex_{n,h,\omega}[X_{2k+1}]\Big)\\
  \nonumber
  &\ge\lambda\sum_{k=1}^{\#_n}\mathds{1}_{\{\Lambda(\vartheta^{2k-1}\omega)>\lambda\}}\Big(\Ex_{n,h,\omega}[X_{2k}]+\Ex_{n,h,\omega}[X_{2k+1}]\Big)\\
  \nonumber
  &\ge \lambda\sum_{k=1}^{\#_n}\Big(\Ex_{n,h,\omega}[X_{2k}]+\Ex_{n,h,\omega}[X_{2k+1}]\Big)-2\lambda\sum_{k=1}^{\#_n}\mathds{1}_{\{\Lambda(\vartheta^{2k-1}\omega)\le\lambda\}}\\
  \nonumber
  &\ge \lambda\,\Ex_{n,h,\omega}[L_n]-2\lambda-2\lambda\sum_{k=1}^{\#_n}\mathds{1}_{\{\Lambda(\vartheta^{2k-1}\omega)\le\lambda\}}\,.
\end{align}
Thus, integrating with respect to $\probd[\dd\omega]$, dividing by
$n$, and then sending $n$ to infinity we get
\begin{equation*}
  \liminf_{n\uparrow\infty}\Exd\bigg[\frac{\var_{n,h,\cdot}[L_n]}{n}\bigg]\ge \lambda \delta-\lambda\probd[\Lambda\le\lambda]\ge\frac{\lambda \delta}{2}>0\,.
\end{equation*}

\smallskip

\noindent\textit{The bound \eqref{lem:var_positive_0}.}  Fix
$n\in\N$, $h\in\Rl$, and
$\omega:=\{\omega_a\}_{a\in\N_0}\in\Omega$. Assume that $n\ge 3$,
otherwise \eqref{lem:var_positive_0} is trivial. Denoting by
$\mathfrak{O}$ the $\sigma$-algebra generated by $X_1,X_3,X_5,\ldots$,
we can state that
\begin{align}
  \nonumber
  \var_{n,h,\omega}[L_n]&=\Ex_{n,h,\omega}\big[L_n^2\big]-\Big(\Ex_{n,h,\omega}\big[\Ex_{n,h,\omega}[L_n|\mathfrak{O}]\big]\Big)^{\!2}\\
\nonumber
  &\ge\Ex_{n,h,\omega}\big[L_n^2\big]-\Ex_{n,h,\omega}\big[\Ex_{n,h,\omega}[L_n|\mathfrak{O}]^2\big]\\
  \nonumber
  &=\Ex_{n,h,\omega}\Big[\big\{L_n-\Ex_{n,h,\omega}[L_n|\mathfrak{O}]\big\}^{\!2}\Big]\\
  \nonumber
  &=\Ex_{n,h,\omega}\Bigg[\bigg\{\sum_{k=1}^{\#_n}\big(X_{2k}-\Ex_{n,h,\omega}[X_{2k}|\mathfrak{O}]\big)\bigg\}^{\!2}\Bigg]\,,
\end{align}
so that it suffices to demonstrate that
\begin{align}
  \nonumber
  &\Ex_{n,h,\omega}\Bigg[\bigg\{\sum_{k=1}^{\#_n}\big(X_{2k}-\Ex_{n,h,\omega}[X_{2k}|\mathfrak{O}]\big)\bigg\}^{\!2}\Bigg|\mathfrak{O}\Bigg]\\
  &\qquad\ge\sum_{k=1}^{\#_n}\frac{\ee^{h+\omega_{2k}}p(1)^2p(2)}{[p(2)+\ee^{h+\omega_{2k}}p(1)^2]^2}\,X_{2k-1}X_{2k+1}\,.
   \label{eq:var_O_1}
\end{align}
Bound \eqref{eq:var_O_1} is trivial if at most one variable among
$X_1,X_3,\ldots,X_{2\#_n+1}$ takes value 1 since the right-hand side
is $0$ in this case. Then, suppose that at least two of these
variables take value 1 and let $ 2l_1+1<\cdots<2l_r+1$ with $r\ge 2$
be the sites where the values 1 are attained. In this way, bound
\eqref{eq:var_O_1} is tantamount to
\begin{align}
  \nonumber
  &\Ex_{n,h,\omega}\Bigg[\bigg\{\sum_{k=1}^{\#_n}\big(X_{2k}-\Ex_{n,h,\omega}[X_{2k}|\mathfrak{O}]\big)\bigg\}^{\!2}\Bigg|\mathfrak{O}\Bigg]\\
  &\qquad\ge\sum_{s=2}^r\mathds{1}_{\{l_s=l_{s-1}+1\}}\frac{\ee^{h+\omega_{2l_s}}p(1)^2p(2)}{[p(2)+\ee^{h+\omega_{2l_s}}p(1)^2]^2}\,.
   \label{eq:var_O_2}
\end{align}
Let us verify bound \eqref{eq:var_O_2}.  To this aim, we observe that
conditioning makes the sums
$\sum_{k=1}^{l_1}X_{2k},\sum_{k=l_1+1}^{l_2}X_{2k},\ldots,\sum_{k=l_{r-1}+1}^{l_r}X_{2k},\sum_{k=l_r+1}^{\#_n}X_{2k}$
independent thanks to Lemma \ref{lem:fact}, so that
\begin{align}
  \nonumber
  &\Ex_{n,h,\omega}\Bigg[\bigg\{\sum_{k=1}^{\#_n}\big(X_{2k}-\Ex_{n,h,\omega}[X_{2k}|\mathfrak{O}]\big)\bigg\}^{\!2}\Bigg|\mathfrak{O}\Bigg]\\
  \nonumber
  &\qquad=\Ex_{n,h,\omega}\Bigg[\bigg\{\sum_{k=1}^{l_1}\big(X_{2k}-\Ex_{n,h,\omega}[X_{2k}|\mathfrak{O}]\big)\bigg\}^{\!2}\Bigg|\mathfrak{O}\Bigg]\\
  \nonumber
  &\qquad\quad+\sum_{s=2}^r\Ex_{n,h,\omega}\Bigg[\bigg\{\sum_{k=l_{s-1}+1}^{l_s}\big(X_{2k}-\Ex_{n,h,\omega}[X_{2k}|\mathfrak{O}]\big)\bigg\}^{\!2}\Bigg|\mathfrak{O}\Bigg]\\
  \nonumber
  &\qquad\quad\quad+\Ex_{n,h,\omega}\Bigg[\bigg\{\sum_{k=l_r+1}^{\#_n}\big(X_{2k}-\Ex_{n,h,\omega}[X_{2k}|\mathfrak{O}]\big)\bigg\}^{\!2}\Bigg|\mathfrak{O}\Bigg]\\
  \nonumber
  &\qquad\ge\sum_{s=2}^r\mathds{1}_{\{l_s=l_{s-1}+1\}}\Ex_{n,h,\omega}\Big[\big\{X_{2l_s}-\Ex_{n,h,\omega}[X_{2l_s}|\mathfrak{O}]\big\}^{\!2}\Big|\mathfrak{O}\Big]\\
  &\qquad=\sum_{s=2}^r\mathds{1}_{\{l_s=l_{s-1}+1\}}\Ex_{n,h,\omega}[X_{2l_s}|\mathfrak{O}]\Big(1-\Ex_{n,h,\omega}[X_{2l_s}|\mathfrak{O}]\Big)\,.
  \label{eq:var_O_3}
\end{align}
On the other hand, when $l_s=l_{s-1}+1$ Lemma \ref{lem:fact} also
shows that
\begin{equation}
  \Ex_{n,h,\omega}[X_{2l_s}|\mathfrak{O}]=\Ex_{2l_s+1,h,\omega}[X_{2l_s}|\mathfrak{O}]=\Ex_{2,h,\vartheta^{2l_s-1}\omega}[X_1]=\frac{p(2)}{p(2)+p(1)^2\ee^{h+\omega_{2l_s}}}\,.
\label{eq:var_O_4}
\end{equation}
We obtain \eqref{eq:var_O_2} by combining \eqref{eq:var_O_3} with
\eqref{eq:var_O_4}.

\smallskip

\noindent\textit{The bound \eqref{lower_bound_XX}.}  Lemma
\ref{lem:fact} shows that for $k\in\{1,\ldots,\#_n\}$
\begin{align}
  \nonumber
  \Ex_{n,h,\omega}[X_{2k-1}X_{2k+1}]&=\Ex_{n,h,\omega}[X_{2k-1}|X_{2k+1}=1]\Ex_{n,h,\omega}[X_{2k+1}]\\
  \nonumber
  &=\Ex_{2k+1,h,\omega}[X_{2k-1}]\Ex_{n,h,\omega}[X_{2k+1}]
\end{align}
and
\begin{align}
  \nonumber
  \Ex_{n,h,\omega}[X_{2k+1}]\ge \Ex_{n,h,\omega}[X_{2k}X_{2k+1}]&=\Ex_{n,h,\omega}[X_{2k+1}|X_{2k}=1]\Ex_{n,h,\omega}[X_{2k}]\\
  \nonumber
  &=\Ex_{n-2k,h,\vartheta^{2k}\omega}[X_1]\Ex_{n,h,\omega}[X_{2k}]\,,
\end{align}
so we can write down the inequality
\begin{align}
  \nonumber
  \Ex_{n,h,\omega}[X_{2k-1}X_{2k+1}]&\ge \frac{1}{2}\Ex_{2k+1,h,\omega}[X_{2k-1}]\Ex_{n,h,\omega}[X_{2k+1}]\\
  \nonumber
  &\quad +\frac{1}{2}\Ex_{2k+1,h,\omega}[X_{2k-1}]\Ex_{n-2k,h,\vartheta^{2k}\omega}[X_1]\Ex_{n,h,\omega}[X_{2k}]\\
  \nonumber
  &\ge \frac{1}{2}\Ex_{2k+1,h,\omega}[X_{2k-1}]\Ex_{n-2k,h,\vartheta^{2k}\omega}[X_1]\Big(\Ex_{n,h,\omega}[X_{2k}]+\Ex_{n,h,\omega}[X_{2k+1}]\Big)\,.
\end{align}
This entails the lower bound \eqref{lower_bound_XX} because Lemma
\ref{lem:utile_per_tutto}, through Remark
\ref{remark:enforce_contact}, gives
\begin{equation*}
 \Ex_{2k+1,h,\omega}[X_{2k-1}]\ge \frac{1}{1+\xi 2^\xi \ee^{-h-\omega_{2k-1}}}
\end{equation*}
and
\begin{equation*}
  \Ex_{n-2k,h,\vartheta^{2k}\omega}[X_1]\ge \frac{1}{1+\xi\ee^{-h-\omega_{2k+1}}}\,.
  \qedhere
\end{equation*}
\end{proof}

\medskip

 The first step towards Theorem \ref{th:Cinfty} is to bound the
 derivatives of $\log Z_{n,h}(\omega)$ with respect to $h$ by means of
 a Birkhoff sum. The following result is due to Lemma \ref{lem:corr}.

\medskip

\begin{lemma}
  \label{Birkhoff}
  For every $h\in\Rl$, $\omega\in\Omega$, and integers $n\ge 1$ and
  $r\ge 2$
\begin{equation*}
    \Big|\partial^r_h\log Z_{n,h}(\omega)\Big|\le 2^r(r!)^2\sum_{i=0}^{n-1}\sum_{j\in\N}j^r\Ex_{j+1,h,\vartheta^i\omega}^{\otimes 2}\bigg[\prod_{k=1}^j(1-X_kX_k')\bigg]\,.
\end{equation*}
\end{lemma}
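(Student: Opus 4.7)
The plan is to interpret $\partial_h^r \log Z_{n,h}(\omega)$ as the $r$-th cumulant of the contact number $L_n = \sum_{a=1}^n X_a$ under the polymer measure. Indeed, repeated differentiation of $\log Z_{n,h}(\omega) = \log\Ex[\ee^{\sum_{a=1}^n(h+\omega_a)X_a}X_n]$ in $h$ yields
\begin{equation*}
\partial_h^r \log Z_{n,h}(\omega) \,=\, \kappa_r^{(n,h,\omega)}(L_n)\,=\,\sum_{a_1,\ldots,a_r=1}^n \kappa_r^{(n,h,\omega)}(X_{a_1},\ldots,X_{a_r}),
\end{equation*}
where the second equality is the multilinearity of joint cumulants. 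By permutation symmetry, the sum over all $r$-tuples can be restricted to ordered tuples $1\le a_1\le\cdots\le a_r\le n$ at the cost of an overall factor $r!$.

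The heart of the argument is an estimate on the joint cumulant of indicator variables on the same renewal process. I would prove, by induction on $r$, that for every ordered tuple
\begin{equation*}
\big|\kappa_r(X_{a_1},\ldots,X_{a_r})\big|\,\le\,2^r\,r!\sum_{i=0}^{a_1-1}\sum_{j=a_r+1}^{\infty}\Ex^{\otimes 2}_{j-i,h,\vartheta^i\omega}\bigg[\prod_{k=1}^{j-i-1}(1-X_kX_k')\bigg].
\end{equation*}
The base case $r=2$ is exactly the first part of Lemma~\ref{lem:corr} since $\kappa_2=\cov$. For $r\ge 3$, one uses the M\"obius/cumulant--moment inversion, or equivalently an iterated identity of the form $\kappa_r(Y_1,\ldots,Y_r)=\cov\bigl(\prod_{i\le k}Y_i,\prod_{i>k}Y_i\bigr)-\sum(\text{products of lower-order joint cumulants and means})$, choosing the cut at an extreme position so that the surviving covariance is amenable to Lemma~\ref{lem:corr}, while the lower-order cumulants are handled by the induction hypothesis on sub-tuples whose extremes still sit in $[a_1,a_r]$. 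All the combinatorial coefficients from the inversion are absorbed in the prefactor $2^r\,r!$.

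To conclude, insert the cumulant estimate into the symmetrized sum, reparametrize the double sum via $j':=j-i$, and bound the number of ordered tuples $a_1\le\cdots\le a_r$ with $i<a_1$ and $a_r<i+j'$ by $(j')^r$ (a crude count that suffices). Combining the factor $r!$ from symmetrization with the factor $2^r\,r!$ from the cumulant estimate produces the stated constant $2^r(r!)^2$ in front of $\sum_{i=0}^{n-1}\sum_{j\in\N}j^r\,\Ex^{\otimes 2}_{j,h,\vartheta^i\omega}[\prod_{k=1}^{j-1}(1-X_kX_k')]$.

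The principal obstacle is the cumulant bound: the M\"obius inversion expresses $\kappa_r$ as a signed sum over set partitions, and the challenge is to reorganize this signed sum so that each surviving covariance falls under Lemma~\ref{lem:corr} with a range compatible with the extreme indices $a_1$ and $a_r$, rather than with some intermediate cut $(a_k,a_{k+1})$ that yields a strictly larger but incompatible $(i,j)$-domain. The target constant $2^r(r!)^2$ leaves ample slack, but the inductive bookkeeping must be arranged with care so that every term in the signed expansion is simultaneously absorbed in a single Lemma~\ref{lem:corr}-type sum over $i<a_1$ and $j>a_r$.
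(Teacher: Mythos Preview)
Your overall architecture (cumulant interpretation, symmetrization to ordered tuples, then a decay bound on each joint cumulant, then a counting step) matches the paper's. The gap is precisely where you flag it: the cumulant bound you want, with the $(i,j)$-sum restricted to $i<a_1$ and $j>a_r$, does not come out of the inductive scheme you describe. When you split at any cut $k$, Lemma~\ref{lem:corr} bounds the leading covariance by a sum over $i<a_k$ and $j>a_{k+1}$, which is a \emph{larger} domain than your target and hence a \emph{weaker} bound; and in the recursion the lower-order cumulants $\kappa(X_{a_1},X_{a_i}:i\in S)$ have extreme indices $a_1$ and $\max S$, so the inductive hypothesis again produces $j>\max S$ rather than $j>a_r$. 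No amount of slack in the constant repairs this: the issue is the shape of the domain, not the prefactor. Your simple count ``tuples with $i<a_1\le\cdots\le a_r<i+j'$, at most $(j')^r$'' genuinely requires the extreme-index form, so the whole chain breaks.

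The paper's fix is \emph{not} to insist on extreme indices. It shows (by a short induction on $r$, differentiating the $(r-1)$-cumulant in one more charge) that for \emph{every} $s\in\{1,\ldots,r-1\}$ the cumulant can be written as a signed sum of fewer than $2^{r-1}(r-1)!$ terms of the form $\cov[\prod_{k\in\mathcal{A}}X_k,\prod_{k\in\mathcal{B}}X_k]\prod_j\Ex[\prod_{k\in\mathcal{C}_j}X_k]$ with $\mathcal{A}\subseteq\{a_1,\ldots,a_s\}$ and $\mathcal{B}\subseteq\{a_{s+1},\ldots,a_r\}$; Lemma~\ref{lem:corr} then gives the cut-index bound $|\kappa_r|\le 2^r(r-1)!\sum_i\sum_{j}\mathds{1}_{\{i<a_s\le i+j,\,a_{s+1}-a_s<j\}}\,\Ex^{\otimes 2}_{j,h,\vartheta^i\omega}[\cdots]$. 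The crucial extra idea is to exploit the freedom in $s$: choose $s$ to be the position of the \emph{maximal} consecutive gap. Then every gap $a_{t+1}-a_t$ is at most $a_{s+1}-a_s<j$, so when you sum over ordered tuples and over $s$ you are counting tuples with $a_s\in(i,i+j]$ and all $r-1$ gaps in $\{0,\ldots,j-1\}$, which is at most $(r-1)j^r$. That is where the factor $j^r$ really comes from, and it replaces your extreme-index counting.
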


\medskip

\begin{proof}[Proof of Lemma \ref{Birkhoff}]
Fix $h\in\Rl$, $\omega\in\Omega$, and integers $n\ge 1$ and $r\ge 2$.
We have
  \begin{equation*}
  \partial^r_h\log Z_{n,h}(\omega)=\sum_{a_1=1}^n\cdots\sum_{a_r=1}^nU_{n,h,\omega}(a_1,\ldots,a_r)
  \end{equation*}
with
$U_{n,h,\omega}(a_1,\ldots,a_r):=\partial_{\omega_{a_1}}\!\!\cdots\partial_{\omega_{a_r}}\log
Z_{n,h}(\omega)$ the joint cumulant of $X_{a_1},\ldots,X_{a_r}$ with
respect to the probability measure $\prob_{n,h,\omega}$, also known in
statistical mechanics with the name of \textit{Ursell function} (see,
e.g., \cite{sylvester1975,shlosman1986,cf:taming95}). An explicit
expression is
\begin{equation*}
U_{n,h,\omega}(a_1,\ldots,a_r)\,=\sum_{l=1}^r\sum_{\{I_1,\ldots,I_l\}\in\mathcal{P}_l^r} (-1)^{l-1}(l-1)! 
\prod_{j=1}^l\Ex_{n,h,\omega}\Big[ \prod_{k \in I_j} X_{a_k}\Big]\,,
\end{equation*}
where $\mathcal{P}_l^r$ is the collection of all the partitions of the
set $\{1,\ldots,r\}$ into $l$ components (see \cite[Proposition
  3.2.1]{peccati2011}).

Since $U_{n,h,\omega}(a_1,\ldots,a_r)$ is symmetric in its arguments
we can state that
\begin{equation}
    \Big|\partial^r_h\log Z_{n,h}(\omega)\Big|\le r!\sum_{1\le a_1\le\cdots\le a_r\le n}\Big|U_{n,h,\omega}(a_1,\ldots,a_r)\Big|\,.
    \label{Birkhoff_0}
\end{equation}
We shall verify later that, given an integer $s\in\{1,\ldots,r-1\}$,
the cumulant $U_{n,h,\omega}(a_1,\ldots,a_r)$ with $1\le
a_1\le\cdots\le a_r\le n$ can be written by adding and subtracting
less than $2^{r-1}(r-1)!$ terms of the form
\begin{equation*}
 \cov_{n,h,\omega}\bigg[\prod_{k\in I}X_{a_k},\prod_{k\in J}X_{a_k}\bigg]
  \prod_{j=1}^l\Ex_{n,h,\omega}\bigg[\prod_{k\in R_j}X_{a_k}\bigg]
\end{equation*}
with sets $I\subseteq\{1,\ldots,s\}$, $J\subseteq\{s+1,\ldots,r\}$,
and $R_1,\ldots,R_l$ constituting a partition of
$\{1,\ldots,r\}\setminus I\cup J$. In this way, as by Lemma
\ref{lem:corr}
\begin{equation*}
  \Bigg|\cov_{n,h,\omega}\bigg[\prod_{k\in I}X_{a_k},\prod_{k\in J}X_{a_k}\bigg]\Bigg|
  \le 2\sum_{i=0}^{a_s-1}\sum_{j=a_{s+1}+1}^\infty \Ex_{j-i,h,\vartheta^i\omega}^{\otimes 2}\bigg[\prod_{k=1}^{j-i-1}(1-X_kX_k')\bigg]\,,
\end{equation*}
we deduce that
\begin{align}
  \nonumber
  &\Big|U_{n,h,\omega}(a_1,\ldots,a_r)\Big|\\
  \nonumber
  &\qquad \le 2^r(r-1)!\sum_{i=0}^{a_s-1}\sum_{j=a_{s+1}+1}^\infty \Ex_{j-i,h,\vartheta^i\omega}^{\otimes 2}\bigg[\prod_{k=1}^{j-i-1}(1-X_kX_k')\bigg]\\
  \nonumber
  &\qquad \le 2^r(r-1)!\sum_{i=0}^{n-1}\sum_{j=i+2}^\infty \mathds{1}_{\{i<a_s\le j-1,\,a_{s+1}-a_s<j-i-1\}}\Ex_{j-i,h,\vartheta^i\omega}^{\otimes 2}\bigg[\prod_{k=1}^{j-i-1}(1-X_kX_k')\bigg]\\
  \nonumber
  &\qquad =2^r(r-1)!\sum_{i=0}^{n-1}\sum_{j\in\N} \mathds{1}_{\{i<a_s\le j+i,\,a_{s+1}-a_s<j\}}\Ex_{j+1,h,\vartheta^i\omega}^{\otimes 2}\bigg[\prod_{k=1}^j(1-X_kX_k')\bigg]\,.
\end{align}
The arbitrariness of $s$ gives for $1\le a_1\le\cdots\le a_r\le n$ the
improved bound
\begin{align} \nonumber
  \Big|U_{n,h,\omega}(a_1,\ldots,a_r)\Big|&\le 2^r(r-1)!\sum_{s=1}^{r-1}\prod_{t=1}^{r-1}\mathds{1}_{\{a_{t+1}-a_t\le a_{s+1}-a_s\}}\\
  &\quad\times\sum_{i=0}^{n-1}\sum_{j\in\N}\mathds{1}_{\{i<a_s\le j+i,\,a_{s+1}-a_s<j\}}\Ex_{j+1,h,\vartheta^i\omega}^{\otimes 2}\bigg[\prod_{k=1}^j(1-X_kX_k')\bigg]\,.
\label{eq:per_wh_positive}
\end{align}
The latter in combination with \eqref{Birkhoff_0} proves the lemma
since for every $i$ and $j$
\begin{align}
  \nonumber
&\sum_{1\le a_1\le\cdots\le a_r\le n}
  \sum_{s=1}^{r-1}\mathds{1}_{\{i<a_s\le i+j,\,a_{s+1}-a_s<j\}}\prod_{t=1}^{r-1}\mathds{1}_{\{a_{t+1}-a_t\le a_{s+1}-a_s\}}\\
\nonumber
  &\qquad\qquad\le\sum_{-\infty<a_1\le\cdots\le a_r<\infty}
\sum_{s=1}^{r-1}\mathds{1}_{\{0< a_s\le j\}}\prod_{t=1}^{r-1}\mathds{1}_{\{a_{t+1}-a_t<j\}}=(r-1)j^r\,.
\end{align}

\smallskip

\noindent\textit{The expansion of $U_{n,h,\omega}(a_1,\ldots,a_r)$.}
To conclude, let us demonstrate that for every $r\ge 2$,
$s\in\{1,\ldots,r-1\}$, and integers $1\le a_1\le\cdots\le a_r\le n$
the cumulant $U_{n,h,\omega}(a_1,\ldots,a_r)$ can be written by adding
and subtracting less than $2^{r-1}(r-1)!$ terms of the form
\begin{equation}
  \cov_{n,h,\omega}\bigg[\prod_{k\in I}X_{a_k},\prod_{k\in J}X_{a_k}\bigg]
  \prod_{j=1}^l\Ex_{n,h,\omega}\bigg[\prod_{k\in R_j}X_{a_k}\bigg]
\label{term_in_U_0}
\end{equation}
with sets $I\subseteq\{1,\ldots,s\}$, $J\subseteq\{s+1,\ldots,r\}$,
and $R_1,\ldots,R_l$ constituting a partition of
$\{1,\ldots,r\}\setminus I\cup J$. We also show that the rule
according to which the terms are added and subtracted is independent
of $\omega$. We proceed by induction over $r$. For $r=2$ the claim is
trivially true as
$U_{n,h,\omega}(a_1,a_2)=\cov_{n,h,\omega}[X_{a_1},X_{a_2}]$.  For
$r>2$, fix integers $1\le a_1\le\cdots\le a_r\le n$ and
$s\in\{1,\ldots,r-1\}$ and note that
\begin{equation*}
U_{n,h,\omega}(a_1,\ldots,a_r)=\partial_{\omega_r}U_{n,h,\omega}(a_1,\ldots,a_{r-1})=\partial_{\omega_1}U_{n,h,\omega}(a_2,\ldots,a_r)\,.
\end{equation*}
If $s<r-1$, then by the inductive hypothesis we can write
$U_{n,h,\omega}(a_1,\ldots,a_{r-1})$ by adding and subtracting less
than $2^{r-2}(r-2)!$ terms of the form
\begin{equation}
  \cov_{n,h,\omega}\bigg[\prod_{k\in I}X_{a_k},\prod_{k\in J}X_{a_k}\bigg]
  \prod_{j=1}^l\Ex_{n,h,\omega}\bigg[\prod_{k\in R_j}X_{a_k}\bigg]
  \label{term_in_U}
\end{equation}
with sets $I\subseteq\{1,\ldots,s\}$, $J\subseteq\{s+1,\ldots,r-1\}$,
and $R_1,\ldots,R_l$ constituting a partition of
$\{1,\ldots,r-1\}\setminus I\cup J$. Moreover, the rule according to
which the terms are added and subtracted is independent of $\omega$,
so that $\partial_{\omega_r}U_{n,h,\omega}(a_1,\ldots,a_{r-1})$ can be
computed by combining with the same rule the derivative of the terms
\eqref{term_in_U} with respect to $\omega_r$.  Simple calculations
show that such a derivative originates $2l+3<2(r-1)$ terms of the form
(\ref{term_in_U_0}), which are added and subtracted according to a
rule that is independent of $\omega$:
\begin{align}
  \nonumber
   &\partial_{\omega_r}\Bigg\{\cov_{n,h,\omega}\bigg[\prod_{k\in I}X_{a_k},\prod_{k\in J}X_{a_k}\bigg]
   \prod_{j=1}^l\Ex_{n,h,\omega}\bigg[\prod_{k\in R_j}X_{a_k}\bigg]\Bigg\}\\
   \nonumber
   &\qquad=\cov_{n,h,\omega}\bigg[\prod_{k\in I}X_{a_k},\prod_{k\in J\cup\{r\}}X_{a_k}\bigg]
   \prod_{j=1}^l\Ex_{n,h,\omega}\bigg[\prod_{k\in R_j}X_{a_k}\bigg]\\
   \nonumber
    &\qquad\quad-\cov_{n,h,\omega}\bigg[\prod_{k\in I}X_{a_k},X_{a_r}\bigg]\Ex_{n,h,\omega}\bigg[\prod_{k\in J}X_{a_k}\bigg]
   \prod_{j=1}^l\Ex_{n,h,\omega}\bigg[\prod_{k\in R_j}X_{a_k}\bigg]\\
   \nonumber
   &\qquad\quad\quad+\sum_{i=1}^l\cov_{n,h,\omega}\bigg[\prod_{k\in I}X_{a_k},\prod_{k\in J}X_{a_k}\bigg]
  \Ex_{n,h,\omega}\bigg[\prod_{k\in R_i\cup\{r\}}X_{a_k}\bigg] \prod_{\substack{j=1\\j\ne i}}^l\Ex_{n,h,\omega}\bigg[\prod_{k\in R_j}X_{a_k}\bigg]\\
    \nonumber
   &\qquad\quad\quad\quad-(l+1)\,\cov_{n,h,\omega}\bigg[\prod_{k\in I}X_{a_k},\prod_{k\in J}X_{a_k}\bigg]\Ex_{n,h,\omega}[X_r]\prod_{j=1}^l\Ex_{n,h,\omega}\bigg[\prod_{k\in R_j}X_{a_k}\bigg]\,.
\end{align}
This proves that the claim is true for $r>2$ and $s<r-1$. It remains
to discuss the case $s=r$.

If $s>1$, then we can write $U_{n,h,\omega}(a_2,\ldots,a_r)$ by adding
and subtracting less than $2^{r-2}(r-2)!$ terms of the form
\begin{equation*}
  \cov_{n,h,\omega}\bigg[\prod_{k\in I}X_{a_k},\prod_{k\in J}X_{a_k}\bigg]
  \prod_{j=1}^l\Ex_{n,h,\omega}\bigg[\prod_{k\in R_j}X_{a_k}\bigg]
\end{equation*}
with sets $I\subseteq\{2,\ldots,s\}$, $J\subseteq\{s+1,\ldots,r\}$,
and $R_1,\ldots,R_l$ constituting a partition of
$\{2,\ldots,r\}\setminus I\cup J$. The terms are added and subtracted
according to a rule that is independent of $\omega$.  Thus, taking the
derivative with respect to $\omega_{a_1}$, a repetition of the above
arguments demonstrates that the claim holds for $r>2$ and $s>1$.
\end{proof}

\medskip

We use Lemma \ref{Birkhoff} in combination with Lemma \ref{lem:decay}
to prove Theorem \ref{th:Cinfty}. The strict convexity of $f$ will
come from Lemma \ref{lem:var_positive}.

\medskip

\begin{proof}[Proof of Theorem \ref{th:Cinfty}]
For $s\in\N$ put $h_s:=-s$ if $h_c=-\infty$ or $h_s:=h_c+1/s$ if
$h_c>-\infty$ and note that $h_s>h_c$ and
$\lim_{s\uparrow\infty}h_s=h_c$. By Lemma \ref{lem:decay} there exist
constants $\gamma_s>0$ and $G_s>0$ such that
\begin{equation*}
\Exd\Bigg[\sup_{h\in [h_s,+\infty)}\Ex_{j,h,\cdot}^{\otimes 2}\bigg[\prod_{k=1}^{j-1}(1-X_kX_k')\bigg]\Bigg]\le G_s\ee^{-\gamma_s j}
\end{equation*}
for all $j\in\N$. This shows that the random variables
\begin{equation*}
\Lambda_s:=\frac{\gamma_s}{4}+\sum_{j\in\N}\ee^{\frac{\gamma_s}{2}j}\sup_{h\in [h_s,+\infty)}\Ex_{j+1,h,\cdot}^{\otimes 2}\bigg[\prod_{k=1}^j(1-X_kX_k')\bigg]
\end{equation*}
possess finite expectation. Lemma \ref{Birkhoff}, together with the
inequality $\zeta^r\le r!\ee^\zeta$ for $\zeta\ge 0$, gives for every
integers $n\ge 1$, $r\ge 2$, and $s\ge 1$ and for every $h\ge h_s$ and
$\omega\in\Omega$
\begin{align}
  \nonumber
  \Big|\partial^r_h\log Z_{n,h}(\omega)\Big|&\le 2^r(r!)^2\sum_{i=0}^{n-1}
  \sum_{j\in\N} j^r\Ex_{j+1,h,\vartheta^i\omega}^{\otimes 2}\bigg[\prod_{k=1}^j(1-X_kX_k')\bigg]\\
  &\le\bigg(\frac{4}{\gamma_s}\bigg)^{\!\!r}(r!)^3 \sum_{i=0}^{n-1}\Lambda_s(\vartheta^i\omega)\,.
  \label{eq:Cinfty_1}
\end{align}
This bound holds also for $r=1$, as $|\partial_h\log
Z_{n,h}(\omega)|=\Ex_{n,h,\omega}[L_n]\le n$ on the one hand and
$\Lambda_s\ge \gamma_s/4$ by construction on the other hand, and is
the key for proving the theorem.

\smallskip

\textit{Results for the expectation of the finite-volume free
  energy.}  Let $f_n^{(r)}$ for $r\in\N_0$ be the function that maps
$h\in\Rl$ to $f_n^{(r)}(h):=\Exd[(1/n)\partial^r_h\log Z_{n,h}]$.  We
know that $\lim_{n\uparrow\infty}f_n^{(0)}(h)=f(h)$ for all $h\in\Rl$
(see \cite[Theorem 4.6]{giacomin2007}) and, by definition, we have
$f_n^{(r)}(h)-f_n^{(r)}(h')=\int_h^{h'}f_n^{(r+1)}(\zeta)\dd\zeta$ for
every $r\in\N_0$, $n\in\N$, and $h,h'\in\Rl$. Let us show that $f$ is
infinitely differentiable on $(h_c,+\infty)$ and that
$\lim_{n\uparrow\infty}\sup_{h\in H}|f^{(r)}_n(h)-\partial_h^rf(h)|=0$
for any $r\in\N_0$ and compact interval $H\subset(h_c,+\infty)$. The
latter obviously implies that
$\lim_{n\uparrow\infty}f^{(r)}_n(h)=\partial_h^rf(h)$ for all $h>h_c$
and $r\in\N$.

Pick a compact interval $H\subset(h_c,+\infty)$ and choose $s\in\N$ so
large that $[h_s,+\infty)\supset H$.  Bound (\ref{eq:Cinfty_1}) yields
  $|f_n^{(r)}(h)|\le (4/\gamma_s)^r(r!)^3 \,\Exd[\Lambda_s]$ for all
  $n,r\in\N$ and $h\ge h_s$.  This allows us an application of the
  Arzel\`a-Ascoli theorem, which, in combination with a
  diagonalization argument, implies the existence of continuous
  functions $f^{(0)},f^{(1)},\ldots$ on $H$ and of an increasing
  sequence $\{n_i\}_{i\in\N}$ in $\N$ such that
  $\lim_{i\uparrow\infty}\sup_{h\in H}|f_{n_i}^{(r)}(h)-f^{(r)}(h)|=0$
  for all $r\in\N_0$. It follows that $f^{(0)}(h)=f(h)$ and that
  $f^{(r)}(h)-f^{(r)}(h')=\int_h^{h'}f^{(r+1)}(\zeta)\dd\zeta$ for
  every $r\in\N_0$ and $h,h'\in H$. Thus, $f$ turns out to be
  infinitely differentiable on $H$, and hence on $(h_c,+\infty)$
  because the interval $H$ is arbitrary, and $f^{(r)}=\partial_h^r f$,
  so that $\lim_{i\uparrow\infty}\sup_{h\in
    H}|f_{n_i}^{(r)}(h)-\partial_h^r f(h)|=0$ for all $r\in\N_0$.
  Convergence of the derivatives is not restricted to the sequence
  $\{n_i\}_{i\in\N}$.  In fact, assume by contradiction that there
  exist $r_o\in\N_0$, $\epsilon>0$, and a divergent sequence
  $\{m_i\}_{i\in\N}$ in $\N$ with the property that $\sup_{h\in
    H}|f^{(r_o)}_{m_i}(h)-\partial_h^{r_o}f(h)|\ge\epsilon$ for all
  $i\in\N$. By repeating all the above arguments, we can find an
  increasing subsequence $\{m_{i_j}\}_{j\in\N}$ of $\{m_i\}_{i\in\N}$
  such that $\lim_{j\uparrow\infty}\sup_{h\in
    H}|f^{(r)}_{m_{i_j}}(h)-\partial_h^rf(h)|=0$ for all $r\in\N_0$,
  which is a contradiction when $r=r_o$.

The function $f$ is strictly convex on $(h_c,+\infty)$ thanks to Lemma
\ref{lem:var_positive} because
$\partial_h^2f(h)=\lim_{n\uparrow\infty}f_n^{(2)}(h)$ for all $h>h_c$
on the one hand and $f_n^{(2)}(h)=\Exd[\var_{n,h,\cdot}[L_n]/n]$ for
all $n\in\N$ and $h\in\Rl$ on the other hand. Moreover, for every
closed set $H\subset(h_c,+\infty)$ there exists $c>0$ such that
$\sup_{h\in H}|\partial_h^rf(h)|\le c^r(r!)^3$ for all $r\in\N$.  In
fact, given $H\subset(h_c,+\infty)$ closed and $s\in\N$ such that
$[h_s,+\infty)\supset H$, bound (\ref{eq:Cinfty_1}) yields
  $|f_n^{(r)}(h)|\le (4/\gamma_s)^r(r!)^3 \,\Exd[\Lambda_s]$ for all
  $n,r\in\N$ and $h\in H$. By sending $n$ to infinity we get
  $|\partial_h^rf(h)|\le c^r(r!)^3$ for all $r\in\N$ and $h\in H$ with
  $c:=(4/\gamma_s)\max\{1,\Exd[\Lambda_s]\}$.

\smallskip

\textit{Results for the typical behaviour of the finite-volume free
  energy.}  Since $\Exd[\Lambda_s]<+\infty$, Birkhoff's ergodic
theorem states that there exists $\Omega_o\in\mathcal{F}$ with
$\probd[\Omega_o]=1$ such that for every $\omega\in\Omega_o$ and
$s\in\N$
\begin{equation}
A_s(\omega):=\sup_{n\in\N}\,\frac{1}{n}\sum_{i=0}^{n-1}\Lambda_s(\vartheta^i\omega)<+\infty\,.
\label{eq:Cinfty_2}
\end{equation}
As for each $h\in\Rl$ we have $\lim_{n\uparrow\infty}(1/n)\log
Z_{n,h}(\omega)=f(h)$ for $\pae$ (see \cite[Theorem
  4.6]{giacomin2007}) and as the set $\mathbb{Q}$ of rational numbers
is countable, we can even suppose that the limit
$\lim_{n\uparrow\infty}(1/n)\log Z_{n,h}(\omega)=f(h)$ holds for all
$\omega\in\Omega_o$ and $h\in\mathbb{Q}$. Then, recalling the bounds
$|(1/n)\log Z_{n,h}(\omega)-(1/n)\log Z_{n,h'}(\omega)|\le|h-h'|$ and
$|f(h)-f(h')|\le|h-h'|$ valid for all $n$, $\omega$, $h$, and $h'$, we
deduce that $\lim_{n\uparrow\infty}(1/n)\log Z_{n,h}(\omega)=f(h)$ for
all $\omega\in\Omega_o$ and $h\in\Rl$. Fixing $\omega\in\Omega_o$,
setting $f_n^{(r)}(h):=(1/n)\partial^r_h\log Z_{n,h}(\omega)$ for
$r\in\N_0$, and combining (\ref{eq:Cinfty_1}) with
(\ref{eq:Cinfty_2}), a repetition of all the above arguments with
$A_s(\omega)$ in place of $\Exd[\Lambda_s]$ shows that
$\lim_{n\uparrow\infty}\sup_{h\in H}|f_n^{(r)}(h)-\partial_h^rf(h)|=0$
for all $H\subset(h_c,+\infty)$ compact and $r\in\N_0$.
\end{proof}

\medskip

\subsection{A concentration inequality and the CLT for the contact number}
\label{sec:CLTL}

The control we have on the derivatives of the finite-volume free
energy, which are the cumulants of the contact number, allows us to
exploit the method of cumulants for normal approximation (see
\cite{doring2022}). In fact, the arguments of the proof of Theorem
\ref{th:Cinfty} also give the statements of Theorem
\ref{th:CLT+concentration}.

\medskip

\begin{proof}[Proof of Theorem \ref{th:CLT+concentration}]
The concentration bound in part $(i)$ of the theorem is essentially
\cite[Theorem 2.5]{doring2022} because for each $r\ge 2$ the
derivative $\partial^r_h\log Z_{n,h}(\omega)$ is the cumulant of order
$r$ of $L_n-\Ex_{n,h,\omega}[L_n]$ with respect to the polymer law
$\prob_{n,h,\omega}$, and also of $-L_n+\Ex_{n,h,\omega}[L_n]$ except
possibly for a minus sign. The CLT in part $(ii)$ follows from
\cite[Theorem 2.4]{doring2022} since $\partial^r_h\log
Z_{n,h}(\omega)/\sqrt{\var_{n,h,\omega}[L_n]^r}$ is the cumulant of order
$r\ge 2$ of the centered, unit-variance random variable
$(L_n-\Ex_{n,h,\omega}[L_n])/\sqrt{\var_{n,h,\omega}[L_n]}$. Let us go into
details.

\smallskip

Let $h_s$, $\gamma_s$, $\Lambda_s$, and $\Omega_o$ be as in the proof
of Theorem \ref{th:Cinfty}. Then, for all $\omega\in\Omega_o$,
$s\in\N$, and $H\subset(h_c,+\infty)$ compact we have
\begin{equation*}
  A_s(\omega):=\sup_{n\in\N}\,\frac{1}{n}\sum_{i=0}^{n-1}\Lambda_s(\vartheta^i\omega)<+\infty
\end{equation*}
and, recalling that $v(h):=\partial_h^2f(h)$,
\begin{equation}
  \adjustlimits\lim_{n\uparrow\infty}\sup_{h\in H}\bigg|\frac{\var_{n,h,\omega}[L_n]}{n}-v(h)\bigg|=0\,.
  \label{eq:stand_dev}
\end{equation}

\smallskip

\noindent\textit{The concentration inequality.}  Given
$\omega\in\Omega_o$ and $H\subset(h_c,+\infty)$ closed, combining
bound (\ref{eq:Cinfty_1}) with \cite[Theorem 2.5]{doring2022} after
choosing $s$ so large that $[h_s,+\infty)\supset H$, we realize that
  for every $n\in\N$, $h\in H$, and $u\ge 0$
\begin{equation*}
   \prob_{n,h,\omega}\Big[\big|L_n-\Ex_{n,h,\omega}[L_n]\big|>u\Big] \le 2\ee^{-\frac{u^2}{256 A_s(\omega)n/\gamma_s^2+2(4/\gamma_s)^{1/3} u^{5/3}}}\,.
\end{equation*}
This prove part $(i)$ of Theorem \ref{th:CLT+concentration} with
$1/\kappa_\omega:=\max\{256
A_s(\omega)/\gamma_s^2,2(4/\gamma_s)^{1/3}\}$.

\smallskip

\noindent\textit{The CLT.} Fix $\omega\in\Omega_o$ and a compact set
$H\subset(h_c,+\infty)$ and let $s$ be so large that
$[h_s,+\infty)\supset H$. Bound (\ref{eq:Cinfty_1}) shows that the following
  \textit{Statulevi\v{c}ius condition} is satisfied: for every
  $n\in\N$, $h\in H$, and integer $r\ge 3$
\begin{equation*}
  \bigg|\frac{\partial^r_h\log Z_{n,h}(\omega)}{\sqrt{\var_{n,h,\omega}[L_n]^r}}\bigg|\le (r!)^3
  \Bigg(\max\bigg\{\frac{4}{\gamma_s\sqrt{\var_{n,h,\omega}[L_n]}},\frac{64A_s(\omega)n}{\gamma_s^3\sqrt{\var_{n,h,\omega}[L_n]^3}}\bigg\}\Bigg)^{\!\!r-2}\,,
\end{equation*}
because for positive numbers $\zeta$ and $z$ and an integer $r\ge 3$
we have $\zeta^rz=\zeta^{r-2}\zeta^2z\le
\zeta^{r-2}(\max\{1,\zeta^2z\})^{r-2}=(\max\{\zeta,\zeta^3z\})^{r-2}$.
In this way, \cite[Theorem 2.4]{doring2022} assures us that for all
$n\in\N$ and $h\in H$
\begin{align}
  \nonumber
 &\sup_{u\in\Rl}\,\Bigg|\prob_{n,h,\omega}\bigg[\frac{L_n-\Ex_{n,h,\omega}[L_n]}{\sqrt{\var_{n,h,\omega}[L_n]}}\le u\bigg]-
  \frac{1}{\sqrt{2\pi}}\int_{-\infty}^u\ee^{-\frac{1}{2}z^2}\dd z\Bigg|\\
  \nonumber
  &\qquad\qquad\qquad\qquad\qquad
  \le 145  \Bigg(\max\bigg\{\frac{4}{\gamma_s\sqrt{\var_{n,h,\omega}[L_n]}},\frac{64A_s(\omega)n}{\gamma_s^3\sqrt{\var_{n,h,\omega}[L_n]^3}}\bigg\}\Bigg)^{\!\!\frac{1}{5}}\,.
\end{align}
Replacing $u$ with the quantity $\sqrt{nv(h)/\var_{n,h,\omega}[L_n]}\,u$
in the left-hand side of this bound we deduce that
\begin{align}
  \nonumber
 &\sup_{u\in\Rl}\,\Bigg|\prob_{n,h,\omega}\bigg[\frac{L_n-\Ex_{n,h,\omega}[L_n]}{\sqrt{nv(h)}}\le u\bigg]-
  \frac{1}{\sqrt{2\pi}}\int_{-\infty}^u\ee^{-\frac{1}{2}z^2}\dd z\Bigg|\\
  \nonumber
  &\qquad\qquad\le 145\Bigg(\max\bigg\{\frac{4}{\gamma_s\sqrt{\var_{n,h,\omega}[L_n]}},\frac{64A_s(\omega)n}{\gamma_s^3\sqrt{\var_{n,h,\omega}[L_n]^3}}\bigg\}\Bigg)^{\!\!\frac{1}{5}}\\
  &\qquad\qquad\quad+\sup_{u\in\Rl}\,\Bigg| \frac{1}{\sqrt{2\pi}}\int_{-\infty}^{\sqrt{\frac{nv(h)}{\var_{n,h,\omega}[L_n]}}\,u}\ee^{-\frac{1}{2}z^2}\dd z
  -\frac{1}{\sqrt{2\pi}}\int_{-\infty}^u\ee^{-\frac{1}{2}z^2}\dd z\Bigg|\,.
  \label{eq:CLT_primobound}
\end{align}
We shall demonstrate in a moment that for all $u\in\Rl$ and $\zeta>0$
\begin{equation}
 \Bigg|\frac{1}{\sqrt{2\pi}}\int_{-\infty}^{u/\zeta}\ee^{-\frac{1}{2}z^2}\dd z-\frac{1}{\sqrt{2\pi}}\int_{-\infty}^u\ee^{-\frac{1}{2}z^2/2}\dd z\Bigg|
 \le |1-\zeta^2|\,,
  \label{eq:CLT_secondobound}
\end{equation}
so \eqref{eq:CLT_primobound} finally yields
\begin{align}
  \nonumber
 &\sup_{u\in\Rl}\,\Bigg|\prob_{n,h,\omega}\bigg[\frac{L_n-\Ex_{n,h,\omega}[L_n]}{\sqrt{nv(h)}}\le u\bigg]-
  \frac{1}{\sqrt{2\pi}}\int_{-\infty}^u\ee^{-\frac{1}{2}z^2}\dd z\Bigg|\\
  \nonumber
  &\qquad\le 145\Bigg(\max\bigg\{\frac{4}{\gamma_s\sqrt{\var_{n,h,\omega}[L_n]}},\frac{64A_s(\omega)n}{\gamma_s^3\sqrt{\var_{n,h,\omega}[L_n]^3}}\bigg\}\Bigg)^{\!\!\frac{1}{5}}
  +\frac{|\var_{n,h,\omega}[L_n]-nv(h)|}{nv(h)}\,.
\end{align}
To conclude, we note that there exists $h_o\in H$ such that
$\inf_{h\in H}v(h)=v(h_o)>0$ because $H$ is compact on the one hand
and $v:=\partial^2_h f$ is continuous and strictly positive throughout
$(h_c,+\infty)$ on the other hand. It follows that $\inf_{h\in
  H}\var_{n,h,\omega}[L_n]\ge v(h_o)n/4$ for all sufficiently large
$n$ by \eqref{eq:stand_dev}. Thus, for all sufficiently large $n$ we
find
\begin{align}
  \nonumber
 &\sup_{h\in H}\,\sup_{u\in\Rl}\,\Bigg|\prob_{n,h,\omega}\bigg[\frac{L_n-\Ex_{n,h,\omega}[L_n]}{\sqrt{nv(h)}}\le u\bigg]-
  \frac{1}{\sqrt{2\pi}}\int_{-\infty}^u\ee^{-\frac{1}{2}z^2}\dd z\Bigg|\\
  \nonumber
  &\qquad\le \frac{145}{n^{\frac{1}{10}}}\Bigg(\max\bigg\{\frac{8}{\gamma_s\sqrt{v(h_o)}},\frac{512A_s(\omega)}{\gamma_s^3\sqrt{v(h_o)^3}}\bigg\}\Bigg)^{\!\!\frac{1}{5}}
  +\frac{1}{v(h_o)}\sup_{h\in H}\bigg|\frac{\var_{n,h,\omega}[L_n]}{n}-v(h)\bigg|\,.
\end{align}
This proves part $(ii)$ of Theorem \ref{th:CLT+concentration} thanks
to another application of\eqref{eq:stand_dev}.

\smallskip

\noindent\textit{The bound \eqref{eq:CLT_secondobound}.} Pick
$u\in\Rl$ and $\zeta>0$. If $\zeta\le 1/\sqrt{\ee}$, then we can write
\begin{align}
  \nonumber
  \bigg|\frac{1}{\sqrt{2\pi}}\int_{-\infty}^{u/\zeta}\ee^{-\frac{1}{2}z^2}\dd z-\frac{1}{\sqrt{2\pi}}\int_{-\infty}^u\ee^{-\frac{1}{2}z^2/2}\dd z\bigg|
  &=\frac{1}{\sqrt{2\pi}}\int_{|u|}^{|u|/\zeta}\ee^{-\frac{1}{2}z^2}\dd z\\
  \nonumber
  &\le \frac{1}{\sqrt{2\pi}}\int_0^{+\infty}\ee^{-\frac{1}{2}z^2}\dd z=\frac{1}{2}\,.
\end{align}
If $1/\sqrt{\ee}<\zeta\le 1$, then we have
\begin{align}
  \nonumber
  \bigg|\frac{1}{\sqrt{2\pi}}\int_{-\infty}^{u/\zeta}\ee^{-\frac{1}{2}z^2}\dd z-\frac{1}{\sqrt{2\pi}}\int_{-\infty}^u\ee^{-\frac{1}{2}z^2/2}\dd z\bigg|
  &=\frac{1}{\sqrt{2\pi}}\int_{|u|}^{|u|/\zeta}\ee^{-\frac{1}{2}z^2}\dd z\\
  \nonumber
  &\le \bigg(\frac{1}{\zeta}-1\bigg)|u|\ee^{-\frac{1}{2}u^2}\le \frac{1-\zeta}{\zeta\sqrt{\ee}}
\end{align}
and if instead $\zeta>1$, then we find
\begin{align}
  \nonumber
  \bigg|\frac{1}{\sqrt{2\pi}}\int_{-\infty}^{u/\zeta}\ee^{-\frac{1}{2}z^2}\dd z-\frac{1}{\sqrt{2\pi}}\int_{-\infty}^u\ee^{-\frac{1}{2}z^2/2}\dd z\bigg|
  &=\frac{1}{\sqrt{2\pi}}\int_{|u|/\zeta}^{|u|}\ee^{-\frac{1}{2}z^2}\dd z\\
  \nonumber
  &\le (\zeta-1)\frac{|u|}{\zeta}\ee^{-\frac{1}{2}(\frac{u}{\zeta})^2}\le\frac{\zeta-1}{\sqrt{\ee}}\,.
\end{align}
In all the three cases the right-hand side is smaller than or equal to $|1-\zeta^2|$.
\end{proof}

\section{A concentration bound and the CLT for the centering variable}
\label{sec:CLTEL}

In this section we study the centering variable $\Ex_{n,h,\cdot}[L_n]$
in order to prove Theorem \ref{th:centering} and Proposition
\ref{prop:fluctuations_EL}. Basic tools to this aim are the following
formulas due to Lemma \ref{lem:fact}: for every $h\in\Rl$,
$\omega\in\Omega$, and integers $0\le a\le m\le b\le n$
  \begin{align}
    \nonumber
    \Ex_{n,h,\omega}[X_a]&=\frac{\Ex_{n,h,\omega}[X_aX_m]}{\Ex_{n,h,\omega}[X_m]}-\frac{\Ex_{n,h,\omega}[X_aX_m]-\Ex_{n,h,\omega}[X_a]\Ex_{n,h,\omega}[X_m]}{\Ex_{n,h,\omega}[X_m]}\\
    &=\Ex_{m,h,\omega}[X_a]-\frac{\cov_{n,h,\omega}[X_a,X_m]}{\Ex_{n,h,\omega}[X_m]}
    \label{basic_tool_1}
  \end{align}
  and
  \begin{align}
    \nonumber
    \Ex_{n,h,\omega}[X_b]&=\frac{\Ex_{n,h,\omega}[X_mX_b]}{\Ex_{n,h,\omega}[X_m]}-\frac{\Ex_{n,h,\omega}[X_mX_b]-\Ex_{n,h,\omega}[X_m]\Ex_{n,h,\omega}[X_b]}{\Ex_{n,h,\omega}[X_m]}\\
    &=\Ex_{n-m,h,\vartheta^m\omega}[X_{b-m}]-\frac{\cov_{n,h,\omega}[X_m,X_b]}{\Ex_{n,h,\omega}[X_m]}\,.
    \label{basic_tool_2}
  \end{align}
We shall use these formulas several times in combination with Lemma
\ref{lem:mixing}.

\medskip

\subsection{Cumulants of the centering variable}
Let ${}^{r\!}\mathcal{K}_{n,h}$ be the cumulant of order $r$ of
$\Ex_{n,h,\cdot}[L_n]$ with respect to the law $\probd$. The following
lemma provides an overall description of ${}^{r\!}\mathcal{K}_{n,h}$.

\medskip

\begin{lemma}
  \label{lem:cumulants_EL}
  For each closed set $H\subset(h_c,+\infty)$ there exists a constant
  $c>0$ such that for all $r,n\in\N$
  \begin{equation*}
    \sup_{h\in H}\big|{}^{r\!}\mathcal{K}_{n,h}\big|\le c^r(r!)^3 n\,.
  \end{equation*}
\end{lemma}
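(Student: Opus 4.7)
The plan is to adapt the Ursell-function expansion used in the proof of Lemma~\ref{Birkhoff} to the disorder cumulants of the random variables $Y_a:=\Ex_{n,h,\cdot}[X_a]$. Writing $\Ex_{n,h,\cdot}[L_n]=\sum_{a=1}^n Y_a$, multilinearity and symmetry of joint cumulants give
\[
\big|{}^{r\!}\kappa_{n,h}\big|\le r!\sum_{1\le a_1\le\cdots\le a_r\le n}\big|U_{n,h}(a_1,\ldots,a_r)\big|,
\]
where $U_{n,h}(a_1,\ldots,a_r)$ is the joint $\probd$-cumulant of $(Y_{a_1},\ldots,Y_{a_r})$. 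The task reduces to bounding $|U_{n,h}(a_1,\ldots,a_r)|$ by a quantity that decays exponentially in the largest gap $\max_s(a_{s+1}-a_s)$, after which a counting argument analogous to the one at the end of the proof of Lemma~\ref{Birkhoff} will produce the claim.

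The algebraic part of the proof of Lemma~\ref{Birkhoff}, which is purely combinatorial and does not depend on the underlying measure, gives, for each $s\in\{1,\ldots,r-1\}$, a representation of $U_{n,h}(a_1,\ldots,a_r)$ as a signed sum of at most $2^{r-1}(r-1)!$ terms of the form
\[
\cov_\probd\Bigl[\prod_{k\in\mathcal{A}}Y_k,\prod_{k\in\mathcal{B}}Y_k\Bigr]\prod_{j=1}^l\Exd\Bigl[\prod_{k\in\mathcal{C}_j}Y_k\Bigr],
\]
with $\mathcal{A}\subseteq\{a_1,\ldots,a_s\}$, $\mathcal{B}\subseteq\{a_{s+1},\ldots,a_r\}$ and $\{\mathcal{C}_j\}_{j\le l}$ a partition of the remaining indices. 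Since $0\le Y_a\le 1$, the product of expectations is bounded by $1$, so only the covariance has to be controlled.

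For this covariance I would introduce the cut $m:=\lfloor(a_s+a_{s+1})/2\rfloor$ and use \eqref{basic_tool_1} and \eqref{basic_tool_2} to write, for $k\in\mathcal{A}$,
\[
Y_k=\Ex_{m,h,\cdot}[X_k]-\frac{\cov_{n,h,\cdot}[X_k,X_m]}{\Ex_{n,h,\cdot}[X_m]},
\]
and, for $k\in\mathcal{B}$, $Y_k=\Ex_{n-m,h,\vartheta^m\cdot}[X_{k-m}]-\cov_{n,h,\cdot}[X_m,X_k]/\Ex_{n,h,\cdot}[X_m]$. The leading pieces on the two sides depend on disjoint blocks of the disorder, are therefore $\probd$-independent and contribute nothing to the covariance. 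A telescoping identity combined with $|Y_k|\le 1$ and $|\Ex_{m,h,\cdot}[X_k]|\le 1$ bounds $|\prod_{\mathcal{A}}Y_k-\prod_{\mathcal{A}}\Ex_{m,h,\cdot}[X_k]|$, and its analog for $\mathcal{B}$, by $\sum_k|\cov_{n,h,\cdot}[X_k,X_m]|/\Ex_{n,h,\cdot}[X_m]$. Passing $\sup_{h\in H}$ through the expectation and applying Lemma~\ref{lem:mixing} to each of the at most $r$ resulting error terms, using that both $m-k$ for $k\in\mathcal{A}$ and $k-m$ for $k\in\mathcal{B}$ are at least $(a_{s+1}-a_s)/2$, yields
\[
\sup_{h\in H}\Bigl|\cov_\probd\Bigl[\prod_{k\in\mathcal{A}}Y_k,\prod_{k\in\mathcal{B}}Y_k\Bigr]\Bigr|\le C\,r\,\ee^{-\frac{\gamma}{2}(a_{s+1}-a_s)}
\]
for constants $C,\gamma>0$ depending only on $H$.

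Collecting the bounds as in \eqref{eq:per_wh_positive} (choosing for each ordered tuple the $s$ that maximises the gap, and estimating via the sum over $s$ with the constraint $a_{t+1}-a_t\le a_{s+1}-a_s$), the sum over tuples at fixed $s$ becomes $n\sum_{j\ge 1}j^{r-2}\ee^{-\gamma j/2}\le n\,C_1^{\,r-1}(r-1)!$ via the elementary bound $\zeta^{r-2}\le(r-2)!\,\ee^\zeta$. Multiplying by the combinatorial factor $2^r(r-1)!$ from the Ursell expansion, by the $r!$ from symmetry, and absorbing the remaining linear factors in $r$ into the exponential constant, one arrives at $|{}^{r\!}\kappa_{n,h}|\le c^r(r!)^3n$ uniformly on $H$, using $r!\cdot((r-1)!)^2\le(r!)^3$. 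The hard part will be the covariance estimate: it is crucial both that $\sup_{h\in H}$ can be pushed through $\Exd$ so that Lemma~\ref{lem:mixing} is applicable, and that the telescoping keeps the number of error terms linear, rather than exponential, in $r$; everything else is bookkeeping that essentially mirrors the sum-over-tuples analysis already carried out in the proof of Lemma~\ref{Birkhoff}.
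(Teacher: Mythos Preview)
Your proposal is correct and follows essentially the same approach as the paper: the same Ursell-function decomposition, the same cut at $m=\lfloor(a_s+a_{s+1})/2\rfloor$ via \eqref{basic_tool_1}--\eqref{basic_tool_2} to produce $\probd$-independent leading pieces, and the same use of Lemma~\ref{lem:mixing} to bound the error terms. The only cosmetic difference is in the final combinatorics---the paper replaces the bound $\ee^{-\frac{\gamma}{2}(a_{s+1}-a_s)}$ valid for every $s$ by its geometric mean $\ee^{-\frac{\gamma}{2(r-1)}(a_r-a_1)}$ and sums directly, whereas you invoke the max-gap counting of Lemma~\ref{Birkhoff}; both routes give the same $(r!)^3$ scaling.
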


\medskip

\begin{proof}[Proof of Lemma \ref{lem:cumulants_EL}]
Fix any $H\subset(h_c,+\infty)$ closed. According to Lemma
\ref{lem:mixing}, there exist constants $\gamma>0$ and $G>0$ such
that for every integers $1\le a\le b\le n$
\begin{align}
  \nonumber
    \sup_{h\in H}\Exd\bigg[\frac{|\cov_{n,h,\cdot}[X_a,X_b]|}{\min\{\Ex_{n,h,\cdot}[X_a],\Ex_{n,h,\cdot}[X_b]\}}\bigg]&\le
    \Exd\bigg[\sup_{h\in H}\frac{|\cov_{n,h,\cdot}[X_a,X_b]|}{\min\{\Ex_{n,h,\cdot}[X_a],\Ex_{n,h,\cdot}[X_b]\}}\bigg]\\
    &\le G\ee^{-\gamma(b-a)}\,.
    \label{lem:cumulants_EL_0}
  \end{align}
  Put $c:=\max\{1,4\ee/\gamma,4\ee G\ee^{2\gamma}\}$.  We are going to
  prove that $\sup_{h\in H}|{}^{r\!}\mathcal{K}_{n,h}|\le c^r(r!)^3 n$
  for all $r,n\in\N$.

The case $r=1$ is trivial as
$|{}^{1\!}\mathcal{K}_{n,h}|=\Exd[\Ex_{n,h,\cdot}[L_n]]\le n$ and $c\ge
1$ by construction. Suppose $r\ge 2$ and fix $n\in\N$. We
have \begin{equation*}
  {}^{r\!}\mathcal{K}_{n,h}=\sum_{a_1=1}^n\cdots\sum_{a_r=1}^n\mathcal{U}_{n,h}(a_1,\ldots,a_r)\,,
  \end{equation*}
$\mathcal{U}_{n,h}(a_1,\ldots,a_r)$ being the joint cumulant of
$\Ex_{n,h,\cdot}[X_{a_1}],\ldots,\Ex_{n,h,\cdot}[X_{a_r}]$ with
respect to the probability measure $\probd$. The symmetry of
$\mathcal{U}_{n,h}(a_1,\ldots,a_r)$ yields
\begin{equation}
  \big|{}^{r\!}\mathcal{K}_{n,h}\big|\le r!\sum_{1\le a_1\le\cdots\le a_r\le n}\Big|\,\mathcal{U}_{n,h}(a_1,\ldots,a_r)\Big|\,.
  \label{lem:cumulants_EL_123}
\end{equation}

Repeating some of the arguments of the proof of Lemma \ref{Birkhoff}
we can state that, given an integer $s\in\{1,\ldots,r-1\}$, the
cumulant $\mathcal{U}_{n,h}(a_1,\ldots,a_r)$ with $1\le a_1\le\cdots\le a_r\le
n$ can be written by adding and subtracting less than $2^{r-1}(r-1)!$
terms of the form
\begin{multline}
\label{term_in_EU}
  \Bigg\{\Exd\bigg[\prod_{k\in I}\Ex_{n,h,\cdot}[X_{a_k}]\prod_{k\in J}\Ex_{n,h,\cdot}[X_{a_k}]\bigg]
  -\Exd\bigg[\prod_{k\in I}\Ex_{n,h}[X_{a_k}]\bigg]\Exd\bigg[\prod_{k\in J}\Ex_{n,h,\cdot}[X_{a_k}]\bigg]\Bigg\}\\
  \times
  \prod_{j=1}^l\Exd\bigg[\prod_{k\in R_j}\Ex_{n,h,\cdot}[X_{a_k}]\bigg]\,,
\end{multline}
with sets $I\subseteq\{1,\ldots,s\}$, $J\subseteq\{s+1,\ldots,r\}$,
and $R_1,\ldots,R_l$ constituting a partition of
$\{1,\ldots,r\}\setminus I\cup J$.  We claim that for $h\in H$ the
modulus of these terms is bounded above by
$2rG\ee^{-\frac{\gamma}{2}(a_{s+1}-a_s)+\gamma}$, so that
\begin{equation}
  \sup_{h\in H}\Big|\,\mathcal{U}_{n,h}(a_1,\ldots,a_r)\Big|\le 2^rr!\,G\ee^{-\frac{\gamma}{2}(a_{s+1}-a_s)+\gamma}\,.
\label{lem:cumulants_EL_1}
\end{equation}
In fact, bearing in mind that
$|\prod_{i=1}^lu_i-\prod_{i=1}^lv_i|\le\sum_{i=1}^l|u_i-v_i|$ for
every $l\in\N$ and $u_1,\ldots,u_l,v_1,\ldots,v_l\in[0,1]$, formula
(\ref{basic_tool_1}) with $m:=\lfloor{(a_s+a_{s+1})}/{2}\rfloor$
gives for all $\omega\in\Omega$
\begin{align}
  \nonumber
  \bigg|\prod_{k\in I}\Ex_{n,h,\omega}[X_{a_k}]-\prod_{k\in I}\Ex_{m,h,\omega}[X_{a_k}]\bigg|&\le\sum_{k\in I}\Big|\Ex_{n,h,\omega}[X_{a_k}]-\Ex_{m,h,\omega}[X_{a_k}]\Big|\\
  \nonumber
  &\le \sum_{k\in I}\frac{|\cov_{n,h,\omega}[X_{a_k},X_m]|}{\Ex_{n,h,\omega}[X_m]}\,.
\end{align}
Similarly, formula (\ref{basic_tool_2}) with
$m:=\lfloor{(a_s+a_{s+1})}/{2}\rfloor$ yields for all
$\omega\in\Omega$
\begin{align}
  \nonumber
  \bigg|\prod_{k\in J}\Ex_{n,h,\omega}[X_{a_k}]-\prod_{k\in J}\Ex_{n-m,h,\vartheta^m\omega}[X_{a_k-m}]\bigg|
  &\le\sum_{k\in J}\Big|\Ex_{n,h,\omega}[X_{a_k}]-\Ex_{n-m,h,\vartheta^m\omega}[X_{a_k-m}]\Big|\\
  \nonumber
  &\le\sum_{k\in J}\frac{|\cov_{n,h,\omega}[X_m,X_{a_k}]|}{\Ex_{n,h,\omega}[X_m]}\,.
\end{align}
Importantly, $\prod_{k\in I}\Ex_{m,h,\cdot}[X_{a_k}]$ and $\prod_{k\in
  J}\Ex_{n-m,h,\vartheta^m\cdot}[X_{a_k-m}]$ are statistically
independent. Then, by replacing in (\ref{term_in_EU}) the products
$\prod_{k\in I}\Ex_{n,h,\cdot}[X_{a_k}]$ and $\prod_{k\in
  J}\Ex_{n,h,\cdot}[X_{a_k}]$ with the products $\prod_{k\in
  I}\Ex_{m,h,\cdot}[X_{a_k}]$ and $\prod_{k\in
  J}\Ex_{n-m,h,\vartheta^m\cdot}[X_{a_k-m}]$, respectively, thanks to
(\ref{lem:cumulants_EL_0}) we find for $h\in H$
\begin{align}
  \nonumber
  &\Bigg|\Exd\bigg[\prod_{k\in I}\Ex_{n,h,\cdot}[X_{a_k}]\prod_{k\in J}\Ex_{n,h,\cdot}[X_{a_k}]\bigg]
  -\Exd\bigg[\prod_{k\in I}\Ex_{n,h}[X_{a_k}]\bigg]\Exd\bigg[\prod_{k\in J}\Ex_{n,h,\cdot}[X_{a_k}]\bigg]\Bigg|\\
  \nonumber
  &\qquad\le 2\sum_{k\in I}\Exd\bigg[\frac{|\cov_{n,h,\omega}[X_{a_k},X_m]|}{\Ex_{n,h,\omega}[X_m]}\bigg]
  +2\sum_{k\in J}\Exd\bigg[\frac{|\cov_{n,h,\omega}[X_m,X_{a_k}]|}{\Ex_{n,h,\omega}[X_m]}\bigg]\\
  \nonumber
  &\qquad\le 2\sum_{k\in I}G\ee^{-\gamma(m-a_k)}+ 2\sum_{k\in J}G\ee^{-\gamma(a_k-m)}\\
  \nonumber
  &\qquad\le 2sG\ee^{-\gamma(m-a_s)}+ 2(r-s)G\ee^{-\gamma(a_{s+1}-m)}\le 2rG\ee^{-\frac{\gamma}{2}(a_{s+1}-a_s)+\gamma}\,.
\end{align}

The arbitrariness of $s\in\{1,\ldots,r-1\}$ in
(\ref{lem:cumulants_EL_1}) gives for all $1\le a_1\le\cdots\le a_r\le
n$ 
\begin{equation*}
  \sup_{h\in H}\Big|\,\mathcal{U}_{n,h}(a_1,\ldots,a_r)\Big|\le 2^rr!\,G\ee^{-\frac{\gamma}{2(r-1)}\sum_{s=1}^{r-1}(a_{s+1}-a_s)+\gamma}\,.
\end{equation*}
Combining this bound with (\ref{lem:cumulants_EL_123}) and bearing in
mind that $1/(1-\ee^{-\zeta})\le \ee^\zeta/\zeta$ for
$\zeta>0$ and that $(r-1)^{r-1}\le r!\ee^r$, we finally find
\begin{align}
  \nonumber
  \sup_{h\in H}\big|{}^{r\!}\mathcal{K}_{n,h}\big|&\le r!\sum_{1\le a_1\le\cdots\le a_r\le n}\sup_{h\in H}\Big|\,\mathcal{U}_{n,h}(a_1,\ldots,a_r)\Big|\\
  \nonumber
  &\le \frac{2^r(r!)^2 G\ee^{\gamma}}{\big(1-\ee^{-\frac{\gamma}{2(r-1)}}\big)^{r-1}}\,n
  \le 2^r(r!)^2 G\ee^{2\gamma}\bigg[\frac{2(r-1)}{\gamma}\bigg]^{\!r-1}n\\
  \nonumber
  &\le \gamma G\ee^{2\gamma}\bigg(\frac{4\ee}{\gamma}\bigg)^{\!\!r}(r!)^3n\le c^r(r!)^3 n\,.
  \qedhere
\end{align}
\end{proof}

\medskip

The acquired control of the cumulants of $\Ex_{n,h,\cdot}[L_n]$
immediately implies the concentration inequality presented in part
$(i)$ of Theorem \ref{th:centering}.

\medskip

\begin{proof}[Proof of part $(i)$ of Theorem \ref{th:centering}] Lemma \ref{lem:cumulants_EL} states
that for each $H\subset(h_c,+\infty)$ closed there exists a constant
$c>0$ such that $\sup_{h\in H}|{}^{r\!}\mathcal{K}_{n,h}|\le
  c^r(r!)^3n$ for all $r,n\in\N$. Since ${}^{r\!}\mathcal{K}_{n,h}$ is
  the cumulant of $\Ex_{n,h,\cdot}[L_n]-\Exd[\Ex_{n,h,\cdot}[L_n]]$
  for $r\ge 2$, and also of
  $-\Ex_{n,h,\cdot}[L_n]+\Exd[\Ex_{n,h,\cdot}[L_n]]$ except possibly
  for a minus sign, \cite[Theorem 2.5]{doring2022} shows that for
  every $n\in\N$, $h\in H$, and $u\ge 0$
\begin{equation*}
   \probd\bigg[\Big|\Ex_{n,h,\cdot}[L_n]-\Exd\big[\Ex_{n,h,\cdot}[L_n]\big]\Big|>u\bigg] \le 2\ee^{-\frac{u^2}{16c^2n+2c^{-1/3}u^{5/3}}}\,.
 \end{equation*}
Thus, part $(i)$ of Theorem \ref{th:centering} holds with
$1/\kappa:=\max\big\{16c^2,2c^{-1/3}\big\}$.
\end{proof}

\medskip

\subsection{More on the mean and the variance}
We now deepen the study of the first two cumulants of
$\Ex_{n,h,\cdot}[L_n]$. First of all, we describe the convergence of
$\Exd[\Ex_{n,h,\cdot}[L_n/n]]$ towards $\rho(h):=\partial_h f(h)$ as
$n$ goes to infinity, thus proving part $(ii)$ of Theorem
\ref{th:centering}.

\medskip

\begin{proof}[Proof of part $(ii)$ of Theorem \ref{th:centering}]
  Fix a closed set $H\subset(h_c,+\infty)$. According to Lemma
  \ref{lem:mixing}, there exist constants $\gamma>0$ and $G>0$
  such that for all integers $1\le a\le 2n$
  \begin{equation*}
    \sup_{h\in H}\Exd\bigg[\frac{|\cov_{2n,h,\cdot}[X_a,X_n]|}{\Ex_{2n,h,\cdot}[X_n]}\bigg]\le
    \Exd\bigg[\sup_{h\in H}\frac{|\cov_{2n,h,\cdot}[X_a,X_n]|}{\Ex_{2n,h,\cdot}[X_n]}\bigg]\le G\ee^{-\gamma|n-a|}\,.
  \end{equation*}
  We shall show in a moment that for every $n\in\N$ and $h\in H$
  \begin{equation}
    \label{lem:conv_L_0}
    \bigg|\Exd\Big[\Ex_{n,h,\cdot}[L_n]\Big]-\frac{1}{2}\Exd\Big[\Ex_{2n,h,\cdot}[L_{2n}]\Big]\bigg|\le \frac{G}{1-\ee^{-\gamma}}\,.
\end{equation}
Iterating this bound we get for all $n,i\in\N$, and $h\in H$
\begin{equation*} 
    \bigg|\Exd\Big[\Ex_{n,h,\cdot}[L_n]\Big]-\frac{1}{2^i}\Exd\Big[\Ex_{2^in,h,\cdot}[L_{2^in}]\Big]\bigg|\le
    \frac{2G}{1-\ee^{-\gamma}}\,.
\end{equation*}
The latter proves the lemma with $c:=2G/(1-\ee^{-\gamma})$ by sending
$i$ to infinity thanks to Theorem \ref{th:Cinfty} and
(\ref{eq:contact_fraction}).

\smallskip

\noindent\textit{The bound \eqref{lem:conv_L_0}.} Formula (\ref{basic_tool_1}) gives for every
$\omega\in\Omega$ and $a\in\{1,\ldots,n\}$
  \begin{equation*}
 \Ex_{2n,h,\omega}[X_a]=\Ex_{n,h,\omega}[X_a]-\frac{\cov_{2n,h,\omega}[X_a,X_n]}{\Ex_{2n,h,\omega}[X_n]}\,.
  \end{equation*}
  Formula (\ref{basic_tool_2}) gives for every $\omega\in\Omega$ and
  $a\in\{n+1,\ldots,2n\}$
  \begin{align}
    \nonumber
    \Ex_{2n,h,\omega}[X_a]=\Ex_{n,h,\vartheta^n\omega}[X_{a-n}]-\frac{\cov_{2n,h,\omega}[X_a,X_n]}{\Ex_{2n,h,\omega}[X_n]}\,.
  \end{align}
Taking the sum over $a$ and integrating with respect to
$\probd[\dd\omega]$, we realize that for all $n\in\N$ and $h\in H$
  \begin{equation*}
    \bigg|\Exd\Big[\Ex_{2n,h,\cdot}[L_{2n}]\Big]-2\Exd\Big[\Ex_{n,h,\cdot}[L_n]\Big]\bigg|
    \le\sum_{a=1}^{2n}\Exd\bigg[\frac{|\cov_{2n,h,\cdot}[X_a,X_n]|}{\Ex_{2n,h,\cdot}[X_n]}\bigg]\le \frac{2G}{1-\ee^{-\gamma}}\,.
    \qedhere
\end{equation*}
\end{proof}

\medskip

Next, we investigate the asymptotics of the variance of
$\Ex_{n,h,\cdot}[L_n]$, demonstrating part $(iii)$ of Theorem
\ref{th:centering}.

\medskip

\begin{proof}[Proof of part $(iii)$ of Theorem
\ref{th:centering}] Given a random variable $\Lambda$ on $\Omega$,
  here we denote for brevity $\Lambda-\Exd[\Lambda]$ by
  $\overline{\Lambda}$.  The proof of part $(iii)$ of Theorem
  \ref{th:centering} is quite involved and we split it into two
  parts. We first show that the limit
\begin{equation}
  \lim_{n\uparrow\infty}\Exd\Bigg[\bigg(\frac{\overline{\Ex_{n,h,\cdot}[L_n]}}{\sqrt{n}}\bigg)^{\!\!2}\Bigg]=
  \lim_{n\uparrow\infty}\frac{1}{n}\Exd\Big[\overline{\Ex_{n,h,\cdot}[L_n]}^2\Big]=:w(h)
  \label{eq:existence_limit_w}
\end{equation}
exists and is finite for all $h>h_c$. Then we prove that for every
closed set $H\subset(h_c,+\infty)$ there exists a constant $c>0$ such
that for all $n\in\N$ and $h,h'\in H$
\begin{equation}
  \bigg|\Exd\Big[\overline{\Ex_{n,h,\cdot}[L_n]}^2\Big]-\Exd\Big[\overline{\Ex_{n,h',\cdot}[L_n]}^2\Big]\bigg|\le cn|h-h'|\,.
  \label{eq:w_equicontinuity}
\end{equation}
The latter demonstrates immediately that the function $w$ that associates
$h\in(h_c,+\infty)$ with $w(h)$ is locally Lipschitz continuous. It also
demonstrates that the limit \eqref{eq:existence_limit_w} is uniform on
the compact sets in the localized phase. In fact, given
$H\subset(h_c,+\infty)$ compact, assume by way of contradiction that
there exists $\epsilon>0$ and a sequence $\{n_i\}_{i\in\N}$ of
positive integers such that for all $i\in\N$
\begin{equation}
  \sup_{h\in H}\bigg|\frac{1}{n_i}\Exd\Big[\overline{\Ex_{n_i,h,\cdot}[L_{n_i}]}^2\Big]-w(h)\bigg|\ge \epsilon\,.
\label{eq:w_uniform_contradiction}
\end{equation}
Since the sequence of functions whose $i^{\mathrm{th}}$ term maps
$h\in H$ to $(1/n_i)\Exd[\overline{\Ex_{n_i,h,\cdot}[L_{n_i}]}^2]$ is
uniformly bounded and equicontinuous by \eqref{eq:existence_limit_w}
and \eqref{eq:w_equicontinuity}, according to the Arzel\`a--Ascoli
theorem there exists a subsequence $\{n_{i_j}\}_{j\in\N}$ of
$\{n_i\}_{i\in\N}$ that convergences uniformly. Its limit is the
restriction of $w$ to $H$ because of \eqref{eq:existence_limit_w}, but
this contradicts \eqref{eq:w_uniform_contradiction}.

\medskip

\textit{Existence of the limit \eqref{eq:existence_limit_w}}.  Fix
$h>h_c$ and put
$\nu_n:=\Exd\big[\overline{\Ex_{2n,h,\cdot}[L_n]}^2\big]$.  We shall
show that there exists a constant $\delta>0$ such that for all $m,n\in\N$
\begin{equation}
  \label{eq:var_EL_1}
  \nu_{m+n}\le \nu_m+\nu_n+\delta\big(\sqrt{\nu_{\min\{m,n\}}}+1\big)
\end{equation}
and
\begin{equation}
  \label{eq:var_EL_2}
  \bigg|\Exd\Big[\overline{\Ex_{n,h,\cdot}[L_n]}^2\Big]-\nu_n\bigg|\le \delta\big(\sqrt{\nu_n}+1\big) \,.
\end{equation}
Bounds (\ref{eq:var_EL_1}) and (\ref{eq:var_EL_2}) prove the limit
\eqref{eq:existence_limit_w} as follows.  The former entails that
$\nu_{n+1}\le\nu_n+\nu_1+\delta(\sqrt{\nu_1}+1)\le\nu_n+1+2\delta$,
which in turn gives $\nu_n\le(1+2\delta)n$ for each $n\in\N$. Thus,
since $\sqrt{\min\{m,n\}}\le 2(\sqrt{m}+\sqrt{n}-\sqrt{m+n})$, we
deduce from (\ref{eq:var_EL_1}) again that
$\nu_n+2\delta\sqrt{(1+2\delta)n}+\delta$ is the $n^{\mathrm{th}}$
term of a subadditive sequence. It therefore follows that
$\lim_{n\uparrow\infty}\nu_n/n=:w(h)$ exists and is finite. Bound
(\ref{eq:var_EL_2}) implies that
$\lim_{n\uparrow\infty}(1/n)\Exd[\overline{\Ex_{n,h,\cdot}[L_n]}^2]=w(h)$.

We point out that the sequence with $n^{\mathrm{th}}$ term
$\Exd\big[\overline{\Ex_{n,h,\cdot}[L_n]}^2\big]$ does not appear to
have a direct subadditivity property like (\ref{eq:var_EL_1}). For
this reason, we found it convenient to double the system size and work
with $\nu_n$ in place of
$\Exd\big[\overline{\Ex_{n,h,\cdot}[L_n]}^2\big]$, and then to come
back to $\Exd\big[\overline{\Ex_{n,h,\cdot}[L_n]}^2\big]$ by means of
(\ref{eq:var_EL_2}).

\smallskip

Let us verify (\ref{eq:var_EL_1}).  To begin with, we note that by
Lemma \ref{lem:mixing} and Corollary \ref{cor:mixing} there exist
constants $\gamma>0$ and $G>0$ such that for every integers $0\le
a\le b\le l$
  \begin{equation}
    \Exd\bigg[\frac{|\cov_{l,h,\cdot}[X_a,X_b]|}{\min\{\Ex_{l,h,\cdot}[X_a],\Ex_{l,h,\cdot}[X_b]\}}\bigg]\le G\ee^{-\gamma(b-a)}
    \label{eq:var_EL_3}
  \end{equation}
and
  \begin{equation}
    \bigg|\Exd\Big[\Ex_{l,h,\cdot}[X_a]\Ex_{l,h,\cdot}[X_b]\Big]-\Exd\Big[\Ex_{l,h,\cdot}[X_a]\Big]\Exd\Big[\Ex_{l,h,\cdot}[X_b]\Big]\bigg|\le G\ee^{-\gamma(b-a)}\,.
    \label{eq:var_EL_31}
  \end{equation}
Pick $m,n\in\N$ and suppose, without restriction, that $m\le
n$. Decomposing $L_{m+n}$ as the sum of $L_n$ and $L_{m+n}-L_n$ we can
write
\begin{align}
  \nonumber
  \nu_{m+n}:=\Exd\Big[\overline{\Ex_{{2m+2n},h,\cdot}[L_{m+n}]}^2\Big]&=\Exd\Big[\overline{\Ex_{2m+2n,h,\cdot}[L_n]}^2\Big]\\
  \nonumber
  &\quad +\Exd\Big[\overline{\Ex_{2m+2n,h,\cdot}[L_{m+n}-L_n]}^2\Big]\\
  &\quad +2\Exd\Big[\overline{\Ex_{2m+2n,h,\cdot}[L_n]}\,\overline{\Ex_{2m+2n,h,\cdot}[L_{m+n}-L_n]}\Big]\,.
\label{eq:var_EL_4}
\end{align}
Regarding the third term in the right-hand side of
(\ref{eq:var_EL_4}), bound (\ref{eq:var_EL_31}) shows that
\begin{align}
  \nonumber
  &\Exd\Big[\overline{\Ex_{2m+2n,h,\cdot}[L_n]}\,\overline{\Ex_{2m+2n,h,\cdot}[L_{m+n}-L_n]}\Big]\\
  \nonumber
  &\qquad=\sum_{a=1}^n\sum_{b=n+1}^{m+n}\Exd\Big[\overline{\Ex_{2m+2n,h,\cdot}[X_a]}\,\overline{\Ex_{2m+2n,h,\cdot}[X_b]}\Big]\\
  \nonumber
  &\qquad\le\sum_{a=1}^n\sum_{b=n+1}^{m+n}\bigg|\Exd\Big[\Ex_{2m+2n,h,\cdot}[X_a]\Ex_{2m+2n,h,\cdot}[X_b]\Big]-\Exd\Big[\Ex_{2m+2n,h,\cdot}[X_a]\Big]
    \Exd\Big[\Ex_{2m+2n,h,\cdot}[X_b]\Big]\bigg|\\
  \nonumber
  &\qquad\le \frac{G\ee^{\gamma}}{(\ee^\gamma-1)^2}\le \frac{G}{\gamma(1-\ee^{-\gamma})}\,.
\end{align}
Thus, (\ref{eq:var_EL_4}) implies
\begin{equation}
  \label{eq:var_EL_5}
  \nu_{m+n}\le\Exd\Big[\overline{\Ex_{2m+2n,h,\cdot}[L_n]}^2\Big]
  +\Exd\Big[\overline{\Ex_{2m+2n,h,\cdot}[L_{m+n}-L_n]}^2\Big]+\frac{2G}{\gamma(1-\ee^{-\gamma})}\,.
\end{equation}

We now analyze the first term in the right-hand side of
(\ref{eq:var_EL_5}). Decomposing $\Ex_{2m+2n,h,\omega}[L_n]$ as the
sum of $\Ex_{2n,h,\omega}[L_n]$ and
$\Ex_{2m+2n,h,\omega}[L_n]-\Ex_{2n,h,\omega}[L_n]$ and bearing in mind
that $0\le L_n\le n$ we deduce the inequality
\begin{align}
  \nonumber
  \Exd\Big[\overline{\Ex_{2m+2n,h,\cdot}[L_n]}^2\Big]
  &=\nu_n+2\Exd\Big[\overline{\Ex_{2n,h,\cdot}[L_n]}\,\overline{\Ex_{2m+2n,h,\cdot}[L_n]-\Ex_{2n,h,\cdot}[L_n]}\Big]\\
  \nonumber
  &\quad+\Exd\Big[\overline{\Ex_{2m+2n,h,\cdot}[L_n]-\Ex_{2n,h,\cdot}[L_n]}^2\Big]\\
  \nonumber
  &\le \nu_n+3n\Exd\bigg[\Big|\overline{\Ex_{2m+2n,h,\cdot}[L_n]-\Ex_{2n,h,\cdot}[L_n]}\Big|\bigg]\\
  \nonumber
  &\le  \nu_n+6n\Exd\bigg[\Big|\Ex_{2m+2n,h,\cdot}[L_n]-\Ex_{2n,h,\cdot}[L_n]\Big|\bigg]\,.
\end{align}
On the other hand, formula (\ref{basic_tool_1}) and bound
(\ref{eq:var_EL_3}) yield
\begin{align}
  \nonumber
  \Exd\bigg[\Big|\Ex_{2m+2n,h,\cdot}[L_n]-\Ex_{2n,h,\cdot}[L_n]\Big|\bigg]
  &\le\sum_{a=1}^n \Exd\bigg[\Big|\Ex_{2m+2n,h,\cdot}[X_a]-\Ex_{2n,h,\cdot}[X_a]\Big|\bigg]\\
  \nonumber
  &=\sum_{a=1}^n\Exd\bigg[\frac{|\cov_{2m+2n,h,\cdot}[X_a,X_{2n}]|}{\Ex_{2m+2n,h,\cdot}[X_{2n}]}\bigg]
  \le \frac{G\ee^{-\gamma n}}{1-\ee^{-\gamma}}\,.
\end{align}
It therefore follows that
\begin{align}
  \nonumber
  \Exd\Big[\overline{\Ex_{2m+2n,h,\cdot}[L_n]}^2\Big]
  \le  \nu_n+\frac{6G}{1-\ee^{-\gamma}}n\ee^{-\gamma n}\le\nu_n+\frac{3G}{\gamma(1-\ee^{-\gamma})}\,,
\end{align}
and (\ref{eq:var_EL_5}) entails
\begin{equation}
  \label{eq:var_EL_6}
  \nu_{m+n}\le\nu_n
  +\Exd\Big[\overline{\Ex_{2m+2n,h,\cdot}[L_{m+n}-L_n]}^2\Big]+\frac{5G}{\gamma(1-\ee^{-\gamma})}\,.
\end{equation}

To conclude, we investigate the second term in the right-hand side of
(\ref{eq:var_EL_6}). Decomposing the expectation
$\Ex_{2m+2n,h,\omega}[L_{m+n}-L_n]$ as the sum of the expectation
$\Ex_{2m,h,\vartheta^n\omega}[L_m]$ and the difference
$\Ex_{2m+2n,h,\omega}[L_{m+n}-L_n]-\Ex_{2m,h,\vartheta^n\omega}[L_m]$
and appealing to the Cauchy--Schwarz inequality we get
\begin{align}
  \nonumber
  &\Exd\Big[\overline{\Ex_{2m+2n,h,\cdot}[L_{m+n}-L_n]}^2\Big]\\
  \nonumber
  &\qquad=\nu_m+2\Exd\Big[\overline{\Ex_{2m,h,\vartheta^n\cdot}[L_m]}\,\overline{\Ex_{2m+2n,h,\cdot}[L_{m+n}-L_n]-\Ex_{2m,h,\vartheta^n\cdot}[L_m]}\Big]\\
  \nonumber
  &\qquad\quad +\Exd\Big[\overline{\Ex_{2m+2n,h,\cdot}[L_{m+n}-L_n]-\Ex_{2m,h,\vartheta^n\cdot}[L_m]}^2\Big]\\
  \nonumber
  &\qquad\le \nu_m+2\sqrt{\nu_m}\sqrt{\Exd\Big[\overline{\Ex_{2m+2n,h,\cdot}[L_{m+n}-L_n]-\Ex_{2m,h,\vartheta^n\cdot}[L_m]}^2\Big]}\\
  &\qquad\quad+\Exd\Big[\overline{\Ex_{2m+2n,h,\cdot}[L_{m+n}-L_n]-\Ex_{2m,h,\vartheta^n\cdot}[L_m]}^2\Big]\,.
  \label{eq:var_EL_99}
\end{align}
Moreover, decomposing the difference
$\Ex_{2m+2n,h,\omega}[L_{m+n}-L_n]-\Ex_{2m,h,\vartheta^n\omega}[L_m]$
as the sum of
$\Ex_{2m+2n,h,\omega}[L_{m+n}-L_n]-\Ex_{2m+n,h,\vartheta^n\omega}[L_m]$
and $\Ex_{2m+n,h,\omega}[L_m]-\Ex_{2m,h,\omega}[L_m]$, and then
recalling that $0\le L_m\le m$, we find
\begin{align}
  \nonumber
  &\Exd\Big[\overline{\Ex_{2m+2n,h,\cdot}[L_{m+n}-L_n]-\Ex_{2m,h,\vartheta^n\cdot}[L_m]}^2\Big]\\
  \nonumber
  &\qquad\le \Exd\bigg[\Big(\Ex_{2m+2n,h,\cdot}[L_{m+n}-L_n]-\Ex_{2m,h,\vartheta^n\cdot}[L_m]\Big)^{\!2}\bigg]\\
  \nonumber
  &\qquad\le 2\Exd\bigg[\Big(\Ex_{2m+2n,h,\cdot}[L_{m+n}-L_n]-\Ex_{2m+n,h,\vartheta^n\cdot}[L_m]\Big)^{\!2}\bigg]\\
  \nonumber
  &\qquad\quad+2\Exd\bigg[\Big(\Ex_{2m+n,h,\cdot}[L_m]-\Ex_{2m,h,\cdot}[L_m]\Big)^{\!2}\bigg]\\
  \nonumber
  &\qquad\le 2\Exd\bigg[\Big(\Ex_{2m+2n,h,\cdot}[L_{m+n}-L_n]-\Ex_{2m+n,h,\vartheta^n\cdot}[L_m]\Big)^{\!2}\bigg]\\
  &\qquad\quad+2m\Exd\bigg[\Big|\Ex_{2m+n,h,\cdot}[L_m]-\Ex_{2m,h,\cdot}[L_m]\Big|\bigg]\,.
  \label{eq:var_EL_7}
\end{align}
The first term in the right-hand side of (\ref{eq:var_EL_7}) reads
\begin{align}
  \nonumber
  &\Exd\bigg[\Big(\Ex_{2m+2n,h,\cdot}[L_{m+n}-L_n]-\Ex_{2m+n,h,\vartheta^n\cdot}[L_m]\Big)^{\!2}\bigg]\\
  \nonumber
  &\qquad=\sum_{a=1}^m\sum_{b=1}^m\Exd\bigg[\Big(\Ex_{2m+2n,h,\cdot}[X_{n+a}]-\Ex_{2m+n,h,\vartheta^n\cdot}[X_a]\Big)\\
    \nonumber
    &\qquad\qquad\qquad\qquad\qquad\quad\times\Big(\Ex_{2m+2n,h,\cdot}[X_{n+b}]-\Ex_{2m+n,h,\vartheta^n\cdot}[X_b]\Big)\bigg]\,,
\end{align}
and, thanks to the fact that
$|\Ex_{2m+2n,h,\cdot}[X_{n+a}]-\Ex_{2m+n,h,\vartheta^n\cdot}[X_a]|\le
1$ for every $a$, the Cauchy--Schwarz inequality shows that
\begin{align}
  \nonumber
  &\Exd\bigg[\Big(\Ex_{2m+2n,h,\cdot}[L_{m+n}-L_n]-\Ex_{2m+n,h,\vartheta^n\cdot}[L_m]\Big)^{\!2}\bigg]\\
  &\qquad\le\Bigg\{\sum_{a=1}^m\sqrt{\Exd\bigg[\Big|\Ex_{2m+2n,h,\cdot}[X_{n+a}]-\Ex_{2m+n,h,\vartheta^n\cdot}[X_a]\Big|\bigg]}\Bigg\}^{\!2}\,.
  \label{eq:var_EL_101}
\end{align}
Thus, formula (\ref{basic_tool_2}) and bound
(\ref{eq:var_EL_3}) give
\begin{align}
  \nonumber
  &\Exd\bigg[\Big(\Ex_{2m+2n,h,\cdot}[L_{m+n}-L_n]-\Ex_{2m+n,h,\vartheta^n\cdot}[L_m]\Big)^{\!2}\bigg]\\
  &\qquad\le\Bigg\{\sum_{a=1}^m\sqrt{\Exd\bigg[\frac{|\cov_{2m+2n,h,\cdot}[X_n,X_{n+a}]|}{\Ex_{2m+2n,h,\cdot}[X_n]}\bigg]}\Bigg\}^{\!2}
  \le \frac{G}{(\ee^{\frac{\gamma}{2}}-1)^2}\le \frac{4G}{\gamma(1-\ee^{-\gamma})}\,.
   \label{eq:var_EL_102}
\end{align}
Moving to the second term in the right-hand side of
(\ref{eq:var_EL_7}), we invoke (\ref{basic_tool_1}) and
(\ref{eq:var_EL_3}) to obtain
\begin{align}
  \nonumber
  \Exd\bigg[\Big|\Ex_{2m+n,h,\cdot}[L_m]-\Ex_{2m,h,\cdot}[L_m]\Big|\bigg]&\le\sum_{a=1}^m \Exd\bigg[\Big|\Ex_{2m+n,h,\cdot}[X_a]-\Ex_{2m,h,\cdot}[X_a]\Big|\bigg]\\
  \nonumber
  &=\sum_{a=1}^m\Exd\bigg[\frac{|\cov_{2m+n,h,\cdot}[X_a,X_{2m}]|}{\Ex_{2m+n,h,\cdot}[X_{2m}]}\bigg]
  \le \frac{G\ee^{-\gamma m}}{1-\ee^{-\gamma}}\,.
\end{align}
In this way, (\ref{eq:var_EL_7}) shows that
\begin{align}
  \nonumber
  \Exd\Big[\overline{\Ex_{2m+2n,h,\cdot}[L_{m+n}-L_n]-\Ex_{2m,h,\vartheta^n\cdot}[L_m]}^2\Big]
  &\le \frac{8G}{\gamma(1-\ee^{-\gamma})}+\frac{2G}{1-\ee^{-\gamma}}m\ee^{-\gamma m}\\
  \nonumber
  &\le \frac{9G}{\gamma(1-\ee^{-\gamma})}\,,
\end{align}
which, coming back to \eqref{eq:var_EL_99}, in turn yields
 \begin{equation*}
  \Exd\Big[\overline{\Ex_{2m+2n,h,\cdot}[L_{m+n}-L_n]}^2\Big]
  \le \nu_m+\sqrt{\nu_m}\sqrt{\frac{36G}{\gamma(1-\ee^{-\gamma})}}+\frac{9G}{\gamma(1-\ee^{-\gamma})}\,.
\end{equation*}
Combining this inequality with (\ref{eq:var_EL_6}) we finally find
  \begin{equation*}
  \nu_{m+n}\le\nu_m+\nu_n+\sqrt{\nu_m}\sqrt{\frac{36G}{\gamma(1-\ee^{-\gamma})}}+\frac{14G}{\gamma(1-\ee^{-\gamma})}\,, 
\end{equation*}
  which proves (\ref{eq:var_EL_1}) with
\begin{equation}
\delta:=\max\Bigg\{\sqrt{\frac{36G}{\gamma(1-\ee^{-\gamma})}},\,\frac{14G}{\gamma(1-\ee^{-\gamma})}\Bigg\}\,.
\label{deg_G_var_EL}
\end{equation}

\smallskip

It remains to demonstrate (\ref{eq:var_EL_2}). To this aim, we fix
$n\in\N$ and write for all $\omega\in\Omega$
  \begin{align}
    \nonumber
    \overline{\Ex_{n,h,\omega}[L_n]}^2-\overline{\Ex_{2n,h,\omega}[L_n]}^2&=2\overline{\Ex_{2n,h,\omega}[L_n]}\,\overline{\Ex_{n,h,\omega}[L_n]-\Ex_{2n,h,\omega}[L_n]}\\
    &\quad +\overline{\Ex_{n,h,\omega}[L_n]-\Ex_{2n,h,\omega}[L_n]}^2\,.
\end{align}
  Integrating with respect to $\probd[\dd\omega]$ and applying the
  Cauchy--Schwarz inequality we get
  \begin{align}
    \nonumber
    \bigg|\Exd\Big[\overline{\Ex_{n,h,\cdot}[L_n]}^2\Big]-\nu_n\bigg|&\le 2\sqrt{\nu_n}\sqrt{\Exd\Big[\overline{\Ex_{2n,h,\cdot}[L_n]-\Ex_{n,h,\cdot}[L_n]}^2\Big]}\\
    \nonumber
    &\quad+\Exd\Big[\overline{\Ex_{2n,h,\cdot}[L_n]-\Ex_{n,h,\cdot}[L_n]}^2\Big]\,.
  \end{align}
Similarly to \eqref{eq:var_EL_101} and \eqref{eq:var_EL_102}, the use
of the Cauchy--Schwarz inequality first, then formula
(\ref{basic_tool_1}), and finally bound (\ref{eq:var_EL_3}) allow us
to deduce that
 \begin{align}
    \nonumber
    &\Exd\Big[\overline{\Ex_{2n,h,\cdot}[L_n]-\Ex_{n,h,\cdot}[L_n]}^2\Big]\\
    \nonumber
    &\qquad \le \Exd\bigg[\Big(\Ex_{2n,h,\cdot}[L_n]-\Ex_{n,h,\cdot}[L_n]\Big)^{\!2}\bigg]\\
    \nonumber
    &\qquad\le\Bigg\{\sum_{a=1}^n\sqrt{\Exd\bigg[\Big|\Ex_{2n,h,\cdot}[X_a]-\Ex_{n,h,\cdot}[X_a]\Big|\bigg]}\Bigg\}^{\!2}\\
    \nonumber
    &\qquad=\Bigg\{\sum_{a=1}^m\sqrt{\Exd\bigg[\frac{|\cov_{2n,h,\cdot}[X_a,X_n]|}{\Ex_{2n,h,\cdot}[X_n]}\bigg]}\Bigg\}^{\!2}
    \le \frac{G}{(\ee^{\frac{\gamma}{2}}-1)^2}\le \frac{4G}{\gamma(1-\ee^{-\gamma})}\,.
\end{align}
  In conclusion, we find
  \begin{equation*}
    \bigg|\Exd\Big[\overline{\Ex_{n,h,\cdot}[L_n]}^2\Big]-\nu_n\bigg|\le 
    \sqrt{\nu_n}\sqrt{\frac{16G}{\gamma(1-\ee^{-\gamma})}}+\frac{4G}{\gamma(1-\ee^{-\gamma})}\le \delta\big(\sqrt{\nu_n}+1\big)
\end{equation*}
with $\delta$ as in (\ref{deg_G_var_EL}).

\medskip

\textit{Lipschitz property \eqref{eq:w_equicontinuity}.}  Pick a
closed set $H\subset(h_c,+\infty)$ and let $H_o$ be the smallest
closed interval that contains $H$. Since $H_o\subset(h_c,+\infty)$,
by Lemma \ref{lem:mixing} there exist constants $\gamma>0$ and $G>0$
such that for every integers $0\le a\le b\le n$
  \begin{equation}
    \Exd\bigg[\sup_{h\in H_o}\,\frac{|\cov_{n,h,\cdot}[X_a,X_b]|}{\min\{\Ex_{n,h,\cdot}[X_a],\Ex_{n,h,\cdot}[X_b]\}}\bigg]\le G\ee^{-\gamma(b-a)}\,.
    \label{eq:mixing_w_equicontinuity}
  \end{equation}
  We are going to show that for all $n\in\N$ and $h\in H_o$
  \begin{equation*}
  \bigg|\partial_h\Exd\Big[\overline{\Ex_{n,h,\cdot}[L_n]}^2\Big]\bigg|\le\frac{23G\ee^{\gamma/3}n}{(1-\ee^{-\gamma/3})^2}\,,
\end{equation*}
which proves the Lipschitz property \eqref{eq:w_equicontinuity} with
$c:=23G\ee^{\gamma/3}/(1-\ee^{-\gamma/3})^2$ by the Lagrange's mean
value theorem.

Fix any $n\in\N$ and $h\in H_o$.  To begin with, we note that
\begin{equation*}
    \partial_h\Exd\Big[\overline{\Ex_{n,h,\cdot}[L_n]}^2\Big]
    =2\Exd\bigg[\overline{\Ex_{n,h,\cdot}[L_n]}\Big(\Ex_{n,h,\cdot}[L_n^2]-\Ex_{n,h,\cdot}[L_n]^2\Big)\bigg]\,,
\end{equation*}
which using that $L_n=\sum_{a=1}^nX_a$ gives
\begin{equation}
  \bigg|\partial_h\Exd\Big[\overline{\Ex_{n,h,\cdot}[L_n]}^2\Big]\bigg|
  \le 4\sum_{a=1}^n\sum_{b=1}^n\sum_{c=b}^n\bigg|\Exd\Big[\overline{\Ex_{n,h,\cdot}[X_a]}\,\cov_{n,h,\cdot}[X_b,X_c]\Big]\bigg|\,.
 \label{eq:zero_bound_equicontinuity}
\end{equation}
Since $|\overline{\Ex_{n,h,\cdot}[X_a]}|\le 1$, bound
\eqref{eq:mixing_w_equicontinuity} shows that for all $a$ and $b\le c$
\begin{equation}
  \bigg|\Exd\Big[\overline{\Ex_{n,h,\cdot}[X_a]}\,\cov_{n,h,\cdot}[X_b,X_c]\Big]\bigg|\le
  \Exd\Big[\big|\cov_{n,h,\cdot}[X_b,X_c]\big|\Big]\le G\ee^{-\gamma(c-b)}\,.
  \label{eq:first_bound_equicontinuity}
\end{equation}
This estimate suffices when $b\le a\le c$, whereas it needs to be
improved when $a<b\le c$ and $b\le c<a$.  In order to do that, suppose
first that $a<b\le c$. Lemma \ref{lem:fact} implies that
$\Ex_{n,h,\cdot}[X_bX_c]=\Ex_{c,h,\cdot}[X_b]\Ex_{n,h,\cdot}[X_c]$, so we can write
\begin{equation}
  \Exd\Big[\overline{\Ex_{n,h,\cdot}[X_a]}\,\cov_{n,h,\cdot}[X_b,X_c]\Big]=\Exd\Big[\overline{\Ex_{n,h,\cdot}[X_a]}\Ex_{c,h,\cdot}[X_b]\Ex_{n,h,\cdot}[X_c]\Big]\,.
\label{eq:second_bound_equicontinuity_0}
\end{equation}
Identity \eqref{basic_tool_1} with $m:=\lfloor (a+b)/2\rfloor$ allows
us to write
\begin{equation}
    \Ex_{n,h,\omega}[X_a]=\Ex_{m,h,\omega}[X_a]-\frac{\cov_{n,h,\omega}[X_a,X_m]}{\Ex_{n,h,\omega}[X_m]}\,.
\label{eq:second_bound_equicontinuity_1}
\end{equation}
Similarly, identity \eqref{basic_tool_2} with $m:=\lfloor
(a+b)/2\rfloor$ yields
\begin{equation}
    \Ex_{c,h,\omega}[X_b]=\Ex_{c-m,h,\vartheta^m\omega}[X_{b-m}]-\frac{\cov_{c,h,\omega}[X_m,X_b]}{\Ex_{c,h,\omega}[X_m]}
\label{eq:second_bound_equicontinuity_2}
\end{equation}
and
\begin{equation}
    \Ex_{n,h,\omega}[X_c]=\Ex_{n-m,h,\vartheta^m\omega}[X_{c-m}]-\frac{\cov_{n,h,\omega}[X_m,X_c]}{\Ex_{n,h,\omega}[X_m]}\,.
\label{eq:second_bound_equicontinuity_3}
\end{equation}
Thus, since $\Ex_{m,h,\cdot}[X_a]$ is statistically independent of
$\Ex_{c-m,h,\vartheta^m\cdot}[X_{b-m}]$ and
$\Ex_{n-m,h,\vartheta^m\cdot}[X_{c-m}]$ and since all these
expectations do not exceed $1$, plugging
\eqref{eq:second_bound_equicontinuity_1},
\eqref{eq:second_bound_equicontinuity_2}, and
\eqref{eq:second_bound_equicontinuity_3} in
\eqref{eq:second_bound_equicontinuity_0} we deduce that
\begin{align}
  \nonumber
  &\bigg|\Exd\Big[\overline{\Ex_{n,h,\cdot}[X_a]}\,\cov_{n,h,\cdot}[X_b,X_c]\Big]\bigg|\\
  \nonumber
  &\qquad\le 2\Exd\bigg[\frac{|\cov_{n,h,\omega}[X_a,X_m]|}{\Ex_{n,h,\omega}[X_m]}\bigg]+\Exd\bigg[\frac{|\cov_{c,h,\omega}[X_m,X_b]|}{\Ex_{c,h,\omega}[X_m]}\bigg]
  +\Exd\bigg[\frac{|\cov_{n,h,\omega}[X_m,X_c]|}{\Ex_{n,h,\omega}[X_m]}\bigg]\,.
\end{align}
In this way, \eqref{eq:mixing_w_equicontinuity} implies
\begin{equation*}
  \bigg|\Exd\Big[\overline{\Ex_{n,h,\cdot}[X_a]}\,\cov_{n,h,\cdot}[X_b,X_c]\bigg]\Bigg|\le 4G\ee^{-\frac{\gamma}{2}(b-a)+\gamma}\,,
\end{equation*}
and combining this inequality with
\eqref{eq:first_bound_equicontinuity} we finally find
\begin{equation}
  \bigg|\Exd\Big[\overline{\Ex_{n,h,\cdot}[X_a]}\,\cov_{n,h,\cdot}[X_b,X_c]\Big]\bigg|
  \le G(4\ee^{\gamma})^{\frac{2}{3}}\,\ee^{-\frac{\gamma}{3}(b-a)-\frac{\gamma}{3}(c-b)}\,.
   \label{eq:second_bound_equicontinuity}
\end{equation}

When $b\le c<a$, repeating the above arguments with $m:=\lfloor
(c+a)/2\rfloor$ we get
\begin{align}
  \nonumber
  &\bigg|\Exd\Big[\overline{\Ex_{n,h,\cdot}[X_a]}\,\cov_{n,h,\cdot}[X_b,X_c]\Big]\bigg|\\
  \nonumber
  &\qquad\le \Exd\bigg[\frac{|\cov_{n,h,\omega}[X_c,X_m]|}{\Ex_{n,h,\omega}[X_m]}\bigg]+2\Exd\bigg[\frac{|\cov_{n,h,\omega}[X_m,X_a]|}{\Ex_{n,h,\omega}[X_m]}\bigg]
  \le 3G\ee^{-\frac{\gamma}{2}(a-c)+\gamma}\,
\end{align}
so
\begin{equation}
  \bigg|\Exd\Big[\overline{\Ex_{n,h,\cdot}[X_a]}\,\cov_{n,h,\cdot}[X_b,X_c]\Big]\bigg|
  \le G(3\ee^{\gamma})^{\frac{2}{3}}\,\ee^{-\frac{\gamma}{3}(c-b)-\frac{\gamma}{3}(a-c)}\,.
 \label{eq:third_bound_equicontinuity}
\end{equation}
We stress that $\Ex_{c,h,\cdot}[X_b]$ is independent of
$\Ex_{n-m,h,\vartheta^m\cdot}[X_{a-m}]$ and, for this reason, it does
not need any manipulation to achieve
\eqref{eq:third_bound_equicontinuity}.

At this point, we use bound \eqref{eq:first_bound_equicontinuity} in
\eqref{eq:zero_bound_equicontinuity} for $b\le a\le c$, bound
\eqref{eq:second_bound_equicontinuity} for $a<b\le c$, and bound
\eqref{eq:third_bound_equicontinuity} for $b\le c<a$ to get
\begin{equation*}
  \bigg|\partial_h\Exd\Big[\overline{\Ex_{n,h,\cdot}[L_n]}^2\Big]\bigg|
  \le \frac{4Gn}{(1-\ee^{-\gamma})^2}+\frac{4G(4^{2/3}+3^{2/3})\ee^{\gamma/3}n}{(1-\ee^{-\gamma/3})^2}
  \le\frac{23G\ee^{\gamma/3}n}{(1-\ee^{-\gamma/3})^2}\,.
  \qedhere
\end{equation*}

\end{proof}

\medskip

The following lemma demonstrates that the function $w$ defined in the
localized phase by part $(iii)$ of Theorem \ref{th:centering} is
strictly positive unless the model is non disordered.

\medskip

\begin{lemma}
  \label{lem:wh_positive}
  $w(h)>0$ for all $h>h_c$ provided that $\int_\Omega\omega_0^2\,
  \probd[\dd\omega]>0$.
\end{lemma}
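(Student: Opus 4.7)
By Lemma~\ref{lem:var_EL} it suffices to prove $\Var[\Ex_{n,h,\cdot}[L_n]] \ge cn$ for some $c>0$ and all large $n$. The plan rests on the Hoeffding decomposition's rank-one lower bound
\begin{equation*}
\Var\bigl[\Ex_{n,h,\cdot}[L_n]\bigr] \;\ge\; \sum_{a=1}^n \Var\bigl[g_a(\omega_a)\bigr]\,,\qquad g_a(x) := \Exd\bigl[\Ex_{n,h,\omega}[L_n]\,\big|\,\omega_a = x\bigr]\,,
\end{equation*}
so the claim reduces to showing $\Var[g_a(\omega_a)] \ge c > 0$ uniformly in $n$ for $a$ in a bulk region of $\{1,\ldots,n\}$.

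Fixing such an $a$, the hypothesis $\int_\Omega \omega_0^2\,\probd[\dd\omega]>0$ furnishes $x_1 < x_2$ and $\delta>0$ with $\probd[|\omega_0 - x_i| < \delta] > 0$ for $i=1,2$; it then suffices to prove $|g_a(x_2) - g_a(x_1)| \ge c_1 > 0$ uniformly in $n$. The identity $g_a'(t) = \Exd[\cov_{n,h,\omega}[X_a, L_n] \mid \omega_a = t]$ splits as the sum of a nonnegative diagonal term
\begin{equation*}
\Exd\bigl[\Ex_{n,h,\omega}[X_a]\bigl(1-\Ex_{n,h,\omega}[X_a]\bigr) \,\big|\, \omega_a=t\bigr]
\end{equation*}
plus an off-diagonal remainder $\Exd[\sum_{b\ne a}\cov_{n,h,\omega}[X_a, X_b] \mid \omega_a=t]$. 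Using Remark~\ref{remark:enforce_contact} to bound $\Ex_{n,h,\omega}[X_a]$ away from $1$ and Lemma~\ref{lem:density} to bound it away from $0$ on an event of positive probability for bulk $a$, the diagonal piece is at least a positive constant $c_0$ there. The off-diagonal piece is controlled by a conditional refinement of Lemma~\ref{lem:mixing}: retracing that proof with $\omega_a=t$ held fixed, so that the Chernoff step on $\omega_a$ is replaced by a deterministic factor depending on $t$, yields $\Exd[|\cov_{n,h,\omega}[X_a, X_b]|\mid\omega_a=t] \le \tilde G\,(1+\ee^{-t})\,\ee^{-\tilde\gamma|b-a|}$ for some constants $\tilde G,\tilde\gamma>0$, hence an off-diagonal integral over $[x_1,x_2]$ bounded by a finite constant $M$ independent of $n$.

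The main obstacle is ensuring that $c_0 > M$, i.e.\ that the positive diagonal contribution is not swamped by possibly-negative off-diagonal contributions. In the Gaussian case, integration by parts bypasses this issue entirely: from $\cov[\omega_a, \Ex_{n,h,\omega}[L_n]] = \Exd[\omega_0^2]\,\Exd[\cov_{n,h,\omega}[X_a,L_n]]$, summing over $a$ gives $\cov[\sum_a\omega_a, \Ex_{n,h,\cdot}[L_n]] = \Exd[\omega_0^2]\,\Exd[\cov_{n,h,\cdot}[L_n,L_n]] = \Exd[\omega_0^2]\,v_h\,n\,(1+o(1))$ via Corollary~\ref{cor:positive_var} and \eqref{eq:lim_vh}, and Cauchy--Schwarz with $\Var[\sum_a\omega_a] = n\,\Exd[\omega_0^2]$ yields $w_h \ge \Exd[\omega_0^2]\,v_h^{\,2}$. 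Without Gaussianity this clean reduction is lost; closing the diagonal-versus-off-diagonal balance requires either sharpening the test function $\psi(\omega_a)$ to better localize its effect, or using Lemma~\ref{lem:fact} to condition on favorable configurations of the charges flanking $a$ that suppress long-range interference while preserving the variance-like diagonal contribution. This is presumably where the ``specific and demanding'' analysis advertised in the introduction is concentrated.
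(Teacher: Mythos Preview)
Your plan correctly identifies the central obstruction: once you write $g_a'(t)=\Exd[\cov_{n,h,\omega}[X_a,L_n]\mid\omega_a=t]$, the off-diagonal piece $\sum_{b\ne a}\Exd[\cov_{n,h,\omega}[X_a,X_b]\mid\omega_a=t]$ has no reason to be dominated by the diagonal one. You acknowledge this gap but do not close it; the two suggestions at the end (sharpening the test function, conditioning on flanking charges) are not developed, and it is not clear that either can be made to work along these lines. As written, the proposal is a diagnosis rather than a proof.

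The paper sidesteps the sign problem altogether by a Cauchy--Schwarz argument with a carefully chosen \emph{mean-zero} surrogate for the derivative. Concretely, with ${}^a\omega$ denoting $\omega$ with the charge at $a$ suppressed, one sets
\[
\Lambda_{n,a}(\omega):=q(\omega_a)\,\cov_{n,h,{}^a\omega}[X_a,L_n]\,,
\]
where $q(\zeta)=q_-\zeta\,\mathds{1}_{\{0<\zeta\le\lambda\}}+q_+\zeta\,\mathds{1}_{\{-\lambda<\zeta\le 0\}}$ is tuned so that $\int q(\omega_0)\,\probd[\dd\omega]=0$. Since $\cov_{n,h,{}^a\omega}[X_a,L_n]$ does not depend on $\omega_a$, one has $\Exd[\Lambda_{n,a}]=0$, and Cauchy--Schwarz gives
\[
\Exd\Big[\sum_a\Lambda_{n,a}\,\Ex_{n,h,\cdot}[L_n]\Big]\le \sqrt{\Exd\Big[\Big(\sum_a\Lambda_{n,a}\Big)^{\!2}\Big]}\,\sqrt{\Var\big[\Ex_{n,h,\cdot}[L_n]\big]}\,.
\]
The key is that the exact identity isolating $\omega_a$ (the paper's formula~\eqref{eq:expectation_a}) yields
\[
\Lambda_{n,a}(\omega)\,\Ex_{n,h,\omega}[L_n]=\Lambda_{n,a}(\omega)\,\Ex_{n,h,{}^a\omega}[L_n]+\frac{q(\omega_a)(\ee^{\omega_a}-1)}{1+(\ee^{\omega_a}-1)\Ex_{n,h,{}^a\omega}[X_a]}\,\bigl(\cov_{n,h,{}^a\omega}[X_a,L_n]\bigr)^{2}.
\]
The first term integrates to zero (again because $q$ is centered and the other factor is $\omega_a$-free), and $q(\zeta)(\ee^\zeta-1)\ge 0$, so the left-hand side of Cauchy--Schwarz is bounded below by a positive multiple of $\sum_a\Exd[(\cov_{n,h,{}^a\omega}[X_a,L_n])^2]$, which after undoing the charge suppression and summing is $\gtrsim v_h^2 n$ by~\eqref{eq:lim_vh}. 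The second moment $\Exd[(\sum_a\Lambda_{n,a})^2]$ is $O(n)$ because $\Exd[\Lambda_{n,a}\Lambda_{n,b}]$ involves the \emph{third} joint cumulant $U_{n,h}(a,b,c)$ (via a mean-value step in $\omega_b$), whose decay~\eqref{eq:per_wh_positive} gives summable $1/(b-a)^2$ control. The net effect is $w_h\gtrsim v_h^2>0$, mirroring the Gaussian bound you noted but without integration by parts.

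The mechanism that makes this work---and that your Hoeffding approach lacks---is that by freezing $\omega_a$ (working with ${}^a\omega$) one obtains a \emph{squared} covariance in the lower bound, eliminating any sign cancellation, while the centering of $q$ kills the otherwise uncontrolled cross term.
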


\medskip

Before going into the proof of Lemma~\ref{lem:wh_positive}, let us
point out that if $\omega_0$ is Gaussian distributed with standard
deviation $\beta>0$, then one can exploit Gaussian integration by
parts and obtain $w(h)\ge \beta^2 v(h)^2>0$ for $h>h_c$. We give the
argument here because it inspired the (substantially more involved)
proof for general disorder laws. The Cauchy--Schwarz inequality gives
for all $n\in\N$ and $h\in\Rl$
\begin{align}
  \nonumber
  \int_\Omega \sum_{a=1}^n\omega_a \Ex_{n,h,\omega}[L_n]\,\probd[\dd\omega]&=
  \int_\Omega \sum_{a=1}^n\omega_a \Big(\Ex_{n,h,\omega}[L_n]-\Exd\big[\Ex_{n,h,\cdot}[L_n]\big]\Big)\probd[\dd\omega]\\
  &\le \beta\sqrt{n}\sqrt{\Exd\bigg[\Big(\Ex_{n,h,\cdot}[L_n]-\Exd\big[\Ex_{n,h,\cdot}[L_n]\big]\Big)^{\!2}\bigg]}\,.
  \label{eq:remark_1}
\end{align}
On the other hand, Gaussian integration by parts yields
\begin{align}
  \nonumber
  \int_\Omega \sum_{a=1}^n\omega_a \Ex_{n,h,\omega}[L_n]\,\probd[\dd\omega]&=
  \beta^2\int_\Omega \sum_{a=1}^n \partial_{\omega_a} \Ex_{n,h,\omega}[L_n]\,\probd[\dd\omega]\\
  &=\beta^2\,\Exd\bigg[\Ex_{n,h,\cdot}\Big[\big(L_n-\Ex_{n,h,\cdot}[L_n]\big)^{\!2}\Big]\bigg]\,.
  \label{eq:remark_2}
\end{align}
Combining (\ref{eq:remark_1}) with (\ref{eq:remark_2}), dividing by
$n$, and then letting $n$ go to infinity we find $w(h)\ge \beta^2
v(h)^2$ for $h>h_c$ thanks to part $(iii)$ of Theorem
\ref{th:centering} and (\ref{eq:lim_vh}).

\medskip

\begin{proof}[Proof of Lemma \ref{lem:wh_positive}]
Fix $h>h_c$ and assume that $\int_\Omega\omega_0^2\,
\probd[\dd\omega]>0$. The idea for the proof is to find suitable
bounded, zero-mean random variables
$\Lambda_{n,1},\ldots,\Lambda_{n,n}$ on $\Omega$ to be plugged in the
following bound due to the Cauchy--Schwarz inequality: for every
$n\in\N$
\begin{align}
  \nonumber
  \Exd\bigg[\sum_{a=1}^n\Lambda_{n,a}\Ex_{n,h,\cdot}[L_n]\bigg]&=
  \Exd\bigg[\sum_{a=1}^n\Lambda_{n,a}\Big(\Ex_{n,h,\cdot}[L_n]-\Exd\big[\Ex_{n,h,\cdot}[L_n]\big]\Big)\bigg]\\
  \nonumber
  &\le \sqrt{\Exd\bigg[\bigg(\sum_{a=1}^n\Lambda_{n,a}\bigg)^{\!\!2}\bigg]}\sqrt{\Exd\bigg[\Big(\Ex_{n,h,\cdot}[L_n]-\Exd\big[\Ex_{n,h,\cdot}[L_n]\big]\Big)^{\!2}\bigg]}\,.
\end{align}
Therefore $w(h)>0$ follows if $\Lambda_{n,1},\ldots,\Lambda_{n,n}$
satisfy
\begin{equation}
  \liminf_{n\uparrow\infty}\frac{1}{n}\Exd\bigg[\sum_{a=1}^n\Lambda_{n,a}\Ex_{n,h,\cdot}[L_n]\bigg]>0
  \label{eq:wh_positive_1}
\end{equation}
and
\begin{equation}
  \limsup_{n\uparrow\infty}\frac{1}{n}\Exd\bigg[\bigg(\sum_{a=1}^n\Lambda_{n,a}\bigg)^{\!\!2}\bigg]<+\infty\,.
  \label{eq:wh_positive_2}
\end{equation}

Let us introduce a possible choice for the auxiliary variables
$\Lambda_{n,1},\ldots,\Lambda_{n,n}$. We note that $\int_\Omega
\omega_0\mathds{1}_{\{\omega_0\le 0\}}\probd[\dd\omega]<0$ and
$\int_\Omega \omega_0\mathds{1}_{\{\omega_0>0\}}\probd[\dd\omega]>0$
because $\int_\Omega \omega_0\,\probd[\dd\omega]=0$ and $\int_\Omega
\omega_0^2\,\probd[\dd\omega]>0$ by hypothesis. Moreover, according to
Lemma \ref{lem:decay} and Lemma \ref{lem:mixing} there exist constants
$\gamma>0$ and $G>0$ such that for every integers $1\le a\le b\le n$
\begin{equation}
  \Exd\Bigg[\Ex_{n,h,\cdot}^{\otimes 2}\bigg[\prod_{k=1}^{n-1}(1-X_kX_k')\bigg]\Bigg]\le G\ee^{-\gamma n}
  \label{eq:wh_positive_12345}
\end{equation}
and
\begin{equation}
  \label{eq:wh_positive_00}
   \Exd\Big[\big|\cov_{n,h,\cdot}[X_a,X_b]\big|\Big]\le G\ee^{-\gamma(b-a)}\,.
\end{equation}
In this way, recalling that $v(h):=\partial_h^2f(h)>0$, by elementary
considerations we can find a large $\lambda>0$ such that
\begin{equation}
  \begin{cases}
    \displaystyle{q_-:=-\int_\Omega \omega_0\mathds{1}_{\{-\lambda<\omega_0\le 0\}}\probd[\dd\omega]>0}\\[1em]
    \displaystyle{q_+:=\int_\Omega \omega_0\mathds{1}_{\{0<\omega_0\le\lambda\}}\probd[\dd\omega]>0}\\[1em]
     \displaystyle{\sqrt{G\!\int_\Omega\mathds{1}_{\{|\omega_0|>\lambda\}}\probd[\dd\omega]}\,\frac{1+\ee^{-\frac{\gamma}{2}}}{1-\ee^{-\frac{\gamma}{2}}}\le v(h)}
  \end{cases}\,.
  \label{eq:wh_positive_0}
\end{equation}
Put
$q(\zeta):=q_-\zeta\mathds{1}_{\{0<\zeta\le\lambda\}}+q_+\zeta\mathds{1}_{\{-\lambda<\zeta\le
  0\}}$ for $\zeta\in\Rl$ and, given
$\omega:=\{\omega_b\}_{b\in\N_0}\in\Omega$ and $a\in\N$, denote by
${}^a\omega:=\{\omega_0,\ldots,\omega_{a-1},0,\omega_{a+1},\ldots\}$ a
system of charges where the charge at site $a$ is suppressed.  For
$n\in\N$ and $a\in\{1,\ldots,n\}$ we finally define the random
variable $\Lambda_{n,a}$ as the function that maps $\omega$ to
\begin{equation*}
 \Lambda_{n,a}(\omega):=q(\omega_a)\,\cov_{n,h,{}^a\omega}[X_a,L_n]\,.
\end{equation*}
We have $\Exd[\Lambda_{n,a}]=0$ as $\cov_{n,h,{}^a\omega}[X_a,L_n]$ is
independent of $\omega_a$ and as $\int_\Omega
q(\omega_0)\probd[\dd\omega]=0$ by construction.

\smallskip

The proof of \eqref{eq:wh_positive_1} and \eqref{eq:wh_positive_2}
needs a formula that makes explicit the dependence of the expectation
of an observable on the charge at a certain site $a\in\{1,\ldots,n\}$.
Exploiting the binary nature of $X_a$, for any bounded measurable
function $\Phi$ on $(\mathcal{S},\mathfrak{S})$ we can write
\begin{align}
  \nonumber
  \Ex_{n,h,\omega}[\Phi]&=\frac{\Ex[\Phi\,\ee^{\sum_{c=1}^n(h+\omega_c)X_c}X_n]}{\Ex[\ee^{\sum_{c=1}^n(h+\omega_c)X_c}X_n]}\\
  \nonumber
  &=\frac{\Ex_{n,h,{}^a\omega}[\Phi]+(\ee^{\omega_a}-1)\Ex_{n,h,{}^a\omega}[X_a\Phi]}{1+(\ee^{\omega_a}-1)\,\Ex_{n,h,{}^a\omega}[X_a]}\\
  &=\Ex_{n,h,{}^a\omega}[\Phi]+\frac{\ee^{\omega_a}-1}{1+(\ee^{\omega_a}-1)\,\Ex_{n,h,{}^a\omega}[X_a]}\,\cov_{n,h,{}^a\omega}[X_a,\Phi]\,.
  \label{eq:expectation_a}
\end{align}
This formula allows us to isolate $\omega_a$.  Taking the derivative
with respect to $\omega_a$ we also find
\begin{equation}
  \cov_{n,h,\omega}[X_a,\Phi]=\frac{\ee^{\omega_a}}{\big\{1+(\ee^{\omega_a}-1)\,\Ex_{n,h,{}^a\omega}[X_a]\big\}^{\!2}}\,\cov_{n,h,{}^a\omega}[X_a,\Phi]\,.
  \label{eq:covariance_a}
\end{equation}
We are now ready to prove \eqref{eq:wh_positive_1} and
\eqref{eq:wh_positive_2}.

\smallskip

\noindent\textit{The bound \eqref{eq:wh_positive_1}.}  Formula
\eqref{eq:expectation_a} gives for all $n\in\N$ and
$\omega:=\{\omega_b\}_{b\in\N_0}$
\begin{align}
  \nonumber
  \Lambda_{n,a}(\omega)\Ex_{n,h,\omega}[L_n]&=\Lambda_{n,a}(\omega)\Ex_{n,h,{}^a\omega}[L_n]\\
  \nonumber
  &\quad+\frac{q(\omega_a)(\ee^{\omega_a}-1)}{1+(\ee^{\omega_a}-1)\,\Ex_{n,h,{}^a\omega}[X_a]}\Big(\cov_{n,h,{}^a\omega}[X_a,L_n]\Big)^{\!2}\,.
\end{align}
Then, since $q(\zeta)(\ee^\zeta-1)\ge
q_-\zeta(\ee^\zeta-1)\mathds{1}_{\{0<\zeta\le\lambda\}}\ge
q_-\zeta^2\mathds{1}_{\{0<\zeta\le\lambda\}}$, we can state that
\begin{equation*}
  \Lambda_{n,a}(\omega)\Ex_{n,h,\omega}[L_n]\ge  \Lambda_{n,a}(\omega)\Ex_{n,h,{}^a\omega}[L_n]+q_-\ee^{-\lambda}\mathds{1}_{\{0<\omega_a\le \lambda\}}\omega_a^2
  \Big(\cov_{n,h,{}^a\omega}[X_a,L_n]\Big)^{\!2}\,.
\end{equation*}
In this way, integrating with respect to $\probd[\dd\omega]$ and
bearing in mind that $\int_\Omega q(\omega_0)\probd[\dd\omega]=0$ and
that both $\cov_{n,h,{}^a\omega}[X_a,L_n]$ and
$\Ex_{n,h,{}^a\omega}[L_n]$ are independent of $\omega_a$, we get for
all $n\in\N$
\begin{align}
  \nonumber
  \Exd\bigg[\sum_{a=1}^n\Lambda_{n,a}\Ex_{n,h,\cdot}[L_n]\bigg]&\ge q_-\ee^{-\lambda}\int_\Omega\mathds{1}_{\{0<\omega_0\le \lambda\}}\omega_0^2\,\probd[\dd\omega]
  \int_\Omega\sum_{a=1}^n \Big(\cov_{n,h,{}^a\omega}[X_a,L_n]\Big)^{\!2}\,\probd[\dd\omega]\\
  &\ge q_- q_+^2\ee^{-\lambda}\int_\Omega\sum_{a=1}^n \Big(\cov_{n,h,{}^a\omega}[X_a,L_n]\Big)^{\!2}\,\probd[\dd\omega]\,,
\label{eq:wh_positive_4}
\end{align}
where the second inequality is due to the Cauchy--Schwarz inequality.
At the same time, formula (\ref{eq:covariance_a}) entails
\begin{align}
  \nonumber
  \Big(\cov_{n,h,{}^a\omega}[X_a,L_n]\Big)^{\!2}&=\ee^{-2\omega_a}\Big\{1+(\ee^{\omega_a}-1)\Ex_{n,h,{}^a\omega}[X_a]\Big\}^{\!4}\Big(\cov_{n,h,\omega}[X_a,L_n]\Big)^{\!2}\\
  \nonumber
  &\ge \ee^{-2|\omega_a|}\Big(\cov_{n,h,\omega}[X_a,L_n]\Big)^{\!2}\\
  \nonumber
  &\ge \ee^{-2\lambda}\mathds{1}_{\{|\omega_a|\le\lambda\}}\Big(\cov_{n,h,\omega}[X_a,L_n]\Big)^{\!2}\,,
\end{align}
where the second step follows by considering separately the values of
$\go_a$ positive and negative.  Plugging this bound into
(\ref{eq:wh_positive_4}), two applications of the Cauchy--Schwarz
inequality yield
\begin{align}
  \nonumber
  &\Exd\bigg[\sum_{a=1}^n\Lambda_{n,a}\Ex_{n,h,\cdot}[L_n]\bigg]\\
  \nonumber
  &\qquad\ge q_- q_+^2\ee^{-3\lambda}\int_\Omega\sum_{a=1}^n \mathds{1}_{\{|\omega_a|\le\lambda\}}\Big(\cov_{n,h,\omega}[X_a,L_n]\Big)^{\!2}\,\probd[\dd\omega]\\
  \nonumber
  &\qquad\ge \frac{q_- q_+^2\ee^{-3\lambda}}{n}\int_\Omega\bigg\{\sum_{a=1}^n \mathds{1}_{\{|\omega_a|\le\lambda\}}\cov_{n,h,\omega}[X_a,L_n]\bigg\}^{\!2}\,\probd[\dd\omega]\\
  \nonumber
  &\qquad\ge \frac{q_- q_+^2\ee^{-3\lambda}}{n}\Bigg\{\int_\Omega\sum_{a=1}^n \mathds{1}_{\{|\omega_a|\le\lambda\}}\cov_{n,h,\omega}[X_a,L_n]\,\probd[\dd\omega]\Bigg\}^{\!2}\\
  &\qquad=\frac{q_- q_+^2\ee^{-3\lambda}}{n}\Bigg\{\Exd\Big[\var_{n,h,\cdot}[L_n]\Big]-
  \int_\Omega\sum_{a=1}^n \mathds{1}_{\{|\omega_a|>\lambda\}}\cov_{n,h,\omega}[X_a,L_n]\,\probd[\dd\omega]\Bigg\}^{\!2}\,.
\label{eq:wh_positive_7}
\end{align}
Finally, a further use of the Cauchy--Schwarz inequality, combined
with the fact that $|\cov_{n,h,\omega}[X_a,X_b]|\le 1$, gives thanks
to (\ref{eq:wh_positive_00}) and the last of (\ref{eq:wh_positive_0})
\begin{align}
  \nonumber
  &\int_\Omega\sum_{a=1}^n \mathds{1}_{\{|\omega_a|>\lambda\}}\cov_{n,h,\omega}[X_a,L_n]\,\probd[\dd\omega]\\
  \nonumber
  &\qquad =
  \sum_{a=1}^n\sum_{b=1}^n\int_\Omega\mathds{1}_{\{|\omega_a|>\lambda\}}\cov_{n,h,\omega}[X_a,X_b]\,\probd[\dd\omega]\\
  \nonumber
  &\qquad\le \sum_{a=1}^n\sum_{b=1}^n\sqrt{\int_\Omega\mathds{1}_{\{|\omega_0|>\lambda\}}\probd[\dd\omega]}\sqrt{\Exd\bigg[\Big|\cov_{n,h,\cdot}[X_a,X_b]\Big|\bigg]}\\
  \nonumber
  &\qquad\le n\sqrt{G\!\int_\Omega\mathds{1}_{\{|\omega_0|>\lambda\}}\probd[\dd\omega]}\,\frac{1+\ee^{-\frac{\gamma}{2}}}{1-\ee^{-\frac{\gamma}{2}}}\le n\frac{v(h)}{2}\,.
\end{align}
In this way, (\ref{eq:wh_positive_7}) and (\ref{eq:lim_vh}) allow us
to conclude that
\begin{equation*}
  \liminf_{n\uparrow\infty}\frac{1}{n}\Exd\bigg[\sum_{a=1}^n\Lambda_{n,a}\Ex_{n,h,\cdot}[L_n]\bigg]\ge \frac{q_- q_+^2v(h)^2\ee^{-3\lambda}}{4}\,,
\end{equation*}
which demonstrates (\ref{eq:wh_positive_1}).

\smallskip

\noindent\textit{The bound \eqref{eq:wh_positive_2}.}  Writing for
every $n\in\N$
\begin{equation}
\Exd\bigg[\bigg(\sum_{a=1}^n\Lambda_{n,a}\bigg)^{\!\!2}\bigg]=\sum_{a=1}^n\Exd\big[\Lambda_{n,a}^2\big]+2\sum_{a=1}^{n-1}\sum_{b=a+1}^n\Exd\big[\Lambda_{n,a}\Lambda_{n,b}\big]\,,
\label{eq:wh_positive_49}
\end{equation}
we analyze separately the expectations $\Exd[\Lambda_{n,a}^2]$ for
$1\le a\le n$ and $\Exd[\Lambda_{n,a}\Lambda_{n,b}]$ for $1\le a<b\le
n$.  Regarding the former, the Cauchy--Schwarz inequality and the fact
that $|\cov_{n,h,\omega}[X_a,X_c]|\le 1$ give for any $a$
\begin{align}
  \nonumber
  \Exd\big[\Lambda_{n,a}^2\big]&=\sum_{c=1}^n\sum_{d=1}^n\int_\Omega q(\omega_a)^2\,\cov_{n,h,{}^a\omega}[X_a,X_c]\,\cov_{n,h,{}^a\omega}[X_a,X_d]\,\probd[\dd\omega]\\
  &\le \Bigg\{\sum_{c=1}^n\sqrt{\int_\Omega q(\omega_a)^2\Big|\cov_{n,h,{}^a\omega}[X_a,X_c]\Big|\,\probd[\dd\omega]}\Bigg\}^{\!2}\,.
  \label{eq:wh_positive_10}
\end{align}
On the other hand, formula (\ref{eq:covariance_a}) allows us to
conclude that
\begin{align}
  \nonumber
  &\int_\Omega q(\omega_a)^2\Big|\cov_{n,h,{}^a\omega}[X_a,X_c]\Big|\,\probd[\dd\omega]\\
  \nonumber
  &\qquad=\int_\Omega q(\omega_a)^2\ee^{-\omega_a}\Big\{1+(\ee^{\omega_a}-1)\Ex_{n,h,{}^a\omega}[X_a]\Big\}^{\!2}\Big|\cov_{n,h,\omega}[X_a,X_c]\Big|\,\probd[\dd\omega]\\
  &\qquad\le \int_\Omega q(\omega_a)^2\ee^{|\omega_a|}\Big|\cov_{n,h,\omega}[X_a,X_c]\Big|\,\probd[\dd\omega]\,.
   \label{eq:wh_positive_11}
\end{align}
Thus, since $|q(\zeta)|\le\max\{q_-,q_+\}\lambda$ for
$\zeta\in(-\lambda,\lambda]$ and $q(\zeta)=0$ for
  $\zeta\notin(-\lambda,\lambda]$, from (\ref{eq:wh_positive_10}) and
    (\ref{eq:wh_positive_11}) first and (\ref{eq:wh_positive_00})
    later we get
    \begin{align}
  \nonumber
  \Exd\big[\Lambda_{n,a}^2\big]&\le  \max\{q_-^2,q_+^2\}\lambda^2\ee^{\lambda}\Bigg\{\sum_{c=1}^n\sqrt{\Big[\big|\cov_{n,h,\cdot}[X_a,X_c]\big|\Big]}\Bigg\}^{\!2}\\
  &\le G\ee^{\lambda}\bigg(\max\{q_-,q_+\}\lambda\,\frac{1+\ee^{-\frac{\gamma}{2}}}{1-\ee^{-\frac{\gamma}{2}}}\bigg)^{\!\!2}\,.
  \label{eq:wh_positive_50}
\end{align}

The study of the expectations $\Exd[\Lambda_{n,a}\Lambda_{n,b}]$ is
more involved. Fix integers $1\le a<b\le n$ and denote by
${}^{a,b}\omega$ the system of charges
$\{\omega_0,\ldots,\omega_{a-1},0,\omega_{a+1},\ldots,\omega_{b-1},0,\omega_{b+1},\ldots\}$.
We have
\begin{align}
  \nonumber
  \Exd\big[\Lambda_{n,a}\Lambda_{n,b}\big]&=\int_\Omega q(\omega_a)q(\omega_b)\cov_{n,h,{}^a\omega}[X_a,L_n]\cov_{n,h,{}^b\omega}[X_b,L_n]\,\probd[\dd\omega]\\
  \nonumber
  &=\int_\Omega q(\omega_a)q(\omega_b)\Big(\cov_{n,h,{}^a\omega}[X_a,L_n]-\cov_{n,h,{}^{a,b}\omega}[X_a,L_n]\Big)\\
  \nonumber
  &\qquad\qquad\qquad\quad\times\Big(\cov_{n,h,{}^b\omega}[X_b,L_n]-\cov_{n,h,{}^{a,b}\omega}[X_b,L_n]\Big)\,\probd[\dd\omega]
\end{align}
because $\int_\Omega q(\omega_0)\probd[\dd\omega]=0$ and because
$\cov_{n,h,{}^a\omega}[X_a,L_n]$ and $\cov_{n,h,{}^b\omega}[X_b,L_n]$
are independent of $\omega_a$ and $\omega_b$, respectively, while
$\cov_{n,h,{}^{a,b}\omega}[X_a,L_n]$ and
$\cov_{n,h,{}^{a,b}\omega}[X_b,L_n]$ are independent of both
$\omega_a$ and $\omega_b$. The Cauchy--Schwarz inequality, combined
with the fact that
$|\cov_{n,h,{}^a\omega}[X_a,X_c]-\cov_{n,h,{}^{a,b}\omega}[X_a,X_c]|\le
2$ and
$|\cov_{n,h,{}^b\omega}[X_b,X_d]-\cov_{n,h,{}^{a,b}\omega}[X_b,X_d]|\le
2$, yields
\begin{align}
  \nonumber
  &\Exd\big[\Lambda_{n,a}\Lambda_{n,b}\big]\\
  \nonumber
  &\qquad \le 2\sum_{c=1}^n\sqrt{\int_\Omega q(\omega_a)q(\omega_b)\Big|\cov_{n,h,{}^a\omega}[X_a,X_c]-\cov_{n,h,{}^{a,b}\omega}[X_a,X_c]\Big|\,\probd[\dd\omega]}\\
  &\qquad\quad\times \sum_{d=1}^n\sqrt{\int_\Omega q(\omega_a)q(\omega_b)\Big|\cov_{n,h,{}^b\omega}[X_b,X_d]-\cov_{n,h,{}^{a,b}\omega}[X_b,X_d]\Big|\,\probd[\dd\omega]}\,.
  \label{eq:wh_positive_30}
\end{align}
The two factors in the right-hand side of (\ref{eq:wh_positive_30})
just differ because $a\neq b$, but they can be treated in the same
way. Let us look at the first.  We use that
$\partial_{\omega_b}\cov_{n,h,{}^a\omega}[X_a,X_c]=U_{n,h,{}^a\omega}(a,b,c)=U_{n,h,{}^a\omega}(l,m,r)$,
where $U_{n,h,{}^a\omega}(a,b,c)$ is the joint cumulant of $X_a$,
$X_b$, and $X_c$ with respect to the law $\prob_{n,h,{}^a\omega}$ and
$(l,m,r)$ is a permutation of $(a,b,c)$ such that $l\le m\le
r$. Therefore, the Lagrange's mean value theorem states that for every
$\omega:=\{\omega_e\}_{e\in\N_0}\in\Omega$ and $c\in\{1,\ldots,n\}$
there exists $z\in[0,1]$ that verifies
\begin{align}
  \nonumber
  &\cov_{n,h,{}^a\omega}[X_a,X_c]-\cov_{n,h,{}^{a,b}\omega}[X_a,X_c]\\
  \nonumber
  &\qquad=\omega_b\,U_{n,h,\{\omega_0,\ldots,\omega_{a-1},0,\omega_{a+1},\ldots,\omega_{b-1},z\omega_b,\omega_{b+1},\ldots\}}(l,m,r)\,.
\end{align}
Setting
$\varpi:=\{\omega_0,\ldots,\omega_{a-1},0,\omega_{a+1},\ldots,\omega_{b-1},z\omega_b,\omega_{b+1},\ldots\}$
for brevity, bound (\ref{eq:per_wh_positive}) gives
\begin{align}
  \nonumber
  \Big|U_{n,h,\varpi}(l,m,r)\Big|&\le
  16\sum_{i=0}^{n-1}\sum_{j\in\N} \mathds{1}_{\{i<l\le j+i,\,r-m\le m-l<j\}}\Ex_{j+1,h,\vartheta^i\varpi}^{\otimes 2}\bigg[\prod_{k=1}^j(1-X_kX_k')\bigg]\\
  \nonumber
  &\quad+16\sum_{i=0}^{n-1}\sum_{j\in\N} \mathds{1}_{\{i<m\le j+i,\,m-l\le r-m<j\}}\Ex_{j+1,h,\vartheta^i\varpi}^{\otimes 2}\bigg[\prod_{k=1}^j(1-X_kX_k')\bigg]\, ,
\end{align}
with
\begin{align}
    \nonumber
    &\Ex_{j+1,h,\vartheta^i\varpi}^{\otimes 2}\bigg[\prod_{k=1}^j(1-X_kX_k')\bigg]\\
    \nonumber
    &\qquad=\frac{\Ex^{\otimes 2}[\prod_{k=1}^j(1-X_kX_k')\ee^{\sum_{e=1}^{j+1}(h+\varpi_{i+e})(X_e+X_e')}X_{j+1}X_{j+1}']}
         {\Ex^{\otimes 2}[\ee^{\sum_{e=1}^{j+1}(h+\varpi_{i+e})(X_e+X_e')}X_{j+1}X_{j+1}']}\\
         \nonumber
         &\qquad\le \ee^{2|\omega_a|+2|\omega_b|}\,\frac{\Ex^{\otimes 2}[\prod_{k=1}^j(1-X_kX_k')\ee^{\sum_{e=1}^{j+1}(h+\omega_{i+e})(X_e+X_e')}X_{j+1}X_{j+1}']}
             {\Ex^{\otimes 2}[\ee^{\sum_{e=1}^{j+1}(h+\omega_{i+e})(X_e+X_e')}X_{j+1}X_{j+1}']}\\
             \nonumber
             &\qquad=\ee^{2|\omega_a|+2|\omega_b|}\,\Ex_{j+1,h,\vartheta^i\omega}^{\otimes 2}\bigg[\prod_{k=1}^j(1-X_kX_k')\bigg]\,.
  \end{align}
Putting the pieces together we find
\begin{align}
  \nonumber
  &\Big|\cov_{n,h,{}^a\omega}[X_a,X_c]-\cov_{n,h,{}^{a,b}\omega}[X_a,X_c]\Big|\\
  \nonumber
  &\qquad\le 16|\omega_b|\ee^{2|\omega_a|+2|\omega_b|}
  \sum_{i=0}^{n-1}\sum_{j\in\N} \mathds{1}_{\{i<l\le j+i,\,r-m\le m-l<j\}}\Ex_{j+1,h,\vartheta^i\omega}^{\otimes 2}\bigg[\prod_{k=1}^j(1-X_kX_k')\bigg]\\
  \nonumber
  &\qquad\quad+16|\omega_b|\ee^{2|\omega_a|+2|\omega_b|}\sum_{i=0}^{n-1}\sum_{j\in\N} \mathds{1}_{\{i<m\le j+i,\,m-l\le r-m<j\}}
  \Ex_{j+1,h,\vartheta^i\omega}^{\otimes 2}\bigg[\prod_{k=1}^j(1-X_kX_k')\bigg]\,.
\end{align}
Next, recalling that $|q(\zeta)|\le\max\{q_-,q_+\}\lambda$ for
$\zeta\in(-\lambda,\lambda]$ and $q(\zeta)=0$ for
  $\zeta\notin(-\lambda,\lambda]$, multiplying by
    $q(\omega_a)q(\omega_b)$ and integrating with respect to
    $\probd[\dd\omega]$ we get  
    \begin{align}
      \nonumber
      &\int_\Omega q(\omega_a)q(\omega_b)\Big|\cov_{n,h,{}^a\omega}[X_a,X_c]-\cov_{n,h,{}^{a,b}\omega}[X_a,X_c]\Big|\,\probd[\dd\omega]\\
      \nonumber
      &\qquad\le \lambda^3\Big(4\max\{q_-,q_+\}\ee^{2\lambda}\Big)^{\!2}
  \sum_{i=0}^{n-1}\sum_{j\in\N} \mathds{1}_{\{i<l\le j+i,\,r-m\le m-l<j\}}\Exd\Bigg[\Ex_{j+1,h,\cdot}^{\otimes 2}\bigg[\prod_{k=1}^j(1-X_kX_k')\bigg]\Bigg]\\
  \nonumber
  &\qquad\quad+\lambda^3\Big(4\max\{q_-,q_+\}\ee^{2\lambda}\Big)^{\!2}\sum_{i=0}^{n-1}\sum_{j\in\N} \mathds{1}_{\{i<m\le j+i,\,m-l\le r-m<j\}}
  \Exd\Bigg[\Ex_{j+1,h,\cdot}^{\otimes 2}\bigg[\prod_{k=1}^j(1-X_kX_k')\bigg]\Bigg]\\
  \nonumber
&\qquad \le2\lambda^3\Big(4\max\{q_-,q_+\}\ee^{2\lambda}\Big)^{\!2}\sum_{j\in\N} \mathds{1}_{\{j>\max\{m-l,r-m\}\}}
  j\,\Exd\Bigg[\Ex_{j+1,h,\cdot}^{\otimes 2}\bigg[\prod_{k=1}^j(1-X_kX_k')\bigg]\Bigg]\,.
    \end{align}
    At this point, using \eqref{eq:wh_positive_12345} and the bound
    $\mathds{1}_{\{j>\zeta\}}\le \ee^{\gamma(j-\zeta)/2}$ for
    $\zeta\in\Rl$ we can state that
\begin{align}
      \nonumber
      &\int_\Omega q(\omega_a)q(\omega_b)\Big|\cov_{n,h,{}^a\omega}[X_a,X_c]-\cov_{n,h,{}^{a,b}\omega}[X_a,X_c]\Big|\,\probd[\dd\omega]\\
      \nonumber
      &\qquad\qquad\qquad\qquad\qquad\le \frac{2\lambda^3G(4\max\{q_-,q_+\}\ee^{2\lambda})^2}{\big(\ee^{\frac{\gamma}{2}}-1\big)^2}\ee^{-\frac{\gamma}{2}\max\{m-l,r-m\}}\,.
    \end{align}
Finally, by extracting the square root and carrying out the sum over
$c$, simple algebra yields the inequality
 \begin{align}
      \nonumber
      &\sum_{c=1}^n\sqrt{\int_\Omega q(\omega_a)q(\omega_b)\Big|\cov_{n,h,{}^a\omega}[X_a,X_c]-\cov_{n,h,{}^{a,b}\omega}[X_a,X_c]\Big|\,\probd[\dd\omega]}\\
      \nonumber
      &\qquad\qquad\qquad\qquad\qquad\qquad\qquad\le \frac{\sqrt{2\lambda^3G}\,20\max\{q_-,q_+\}\ee^{2\lambda}}{\big(\ee^{\frac{\gamma}{2}}-1\big)\big(1-\ee^{-\frac{\gamma}{4}}\big)}
      \,\ee^{-\frac{\gamma}{8}(b-a)}\,.
 \end{align}
The same arguments show that the very same bound holds for the second
factor in the right-hand side of (\ref{eq:wh_positive_30}), so that
\begin{equation}
  \Exd\big[\Lambda_{n,a}\Lambda_{n,b}\big]
  \le \frac{4\lambda^3G\,\big(20\max\{q_-,q_+\}\ee^{2\lambda}\big)^2}{\big(\ee^{\frac{\gamma}{2}}-1\big)^2\big(1-\ee^{-\frac{\gamma}{4}}\big)^2}\,\ee^{-\frac{\gamma}{4}(b-a)}\,.
  \label{eq:wh_positive_51}
\end{equation}

In conclusion, starting from (\ref{eq:wh_positive_49}), the
inequalities (\ref{eq:wh_positive_50}) and (\ref{eq:wh_positive_51})
give
\begin{equation*}
  \frac{1}{n}\Exd\bigg[\bigg(\sum_{a=1}^n\Lambda_{n,a}\bigg)^{\!\!2}\bigg]
  \le G\ee^{\lambda}\bigg(\max\{q_-,q_+\}\lambda\,\frac{1+\ee^{-\frac{\gamma}{2}}}{1-\ee^{-\frac{\gamma}{2}}}\bigg)^{\!2}
  +\frac{4\lambda^3G\,\big(20\max\{q_-,q_+\}\ee^{2\lambda}\big)^2}{\big(\ee^{\frac{\gamma}{2}}-1\big)^2\big(1-\ee^{-\frac{\gamma}{4}}\big)^3}\,,
\end{equation*}
which proves (\ref{eq:wh_positive_2}) thanks to the arbitrariness of $n$.
\end{proof}

\medskip

Putting the pieces together, we are finally able to demonstrate the CLT
for the centering variable $\Ex_{n,h,\cdot}[L_n]$, which is the last
statement of Theorem \ref{th:centering}.

\medskip

\begin{proof}[Proof of part $(iv)$ of Theorem \ref{th:centering}]
The proof follows the argument of the proof of part $(ii)$ of Theorem
\ref{th:CLT+concentration}.  Suppose that $\int_\Omega\omega_0^2\,
\probd[\dd\omega]>0$, so that $w(h)>0$ for all $h>h_c$ according to
Lemma \ref{lem:wh_positive}, and denote by $W_{n,h}$ the variance of
$\Ex_{n,h,\cdot}[L_n]$ with respect to the law $\probd$:
\begin{equation*}
  W_{n,h}:=\Exd\bigg[\Big(\Ex_{n,h,\cdot}[L_n]-\Exd\big[\Ex_{n,h,\cdot}[L_n]\big]\Big)^{\!2}\bigg]\,.
\end{equation*}
Given a compact set $H\subset(h_c,+\infty)$, part $(iii)$ of Theorem
\ref{th:centering} tells us that
\begin{equation}
\adjustlimits\lim_{n\uparrow\infty}\sup_{h\in H}\bigg|\frac{W_{n,h}}{n}-w(h)\bigg|=0\,,
\label{eq:stand_dev_EL}
\end{equation}
while Lemma \ref{lem:cumulants_EL} assures us that there exists a
constant $c>0$ such that $|{}^{r\!}\mathcal{K}_{n,h}|\le c^r(r!)^3 n$
for all $r,n\in\N$ and $h\in H$, ${}^{r\!}\mathcal{K}_{n,h}$ being the
cumulant of order $r$ of $\Ex_{n,h,\cdot}[L_n]$. Thus, noting that
${}^{r\!}\mathcal{K}_{n,h}/\sqrt{W_{n,h}^r}$ is the cumulant of order
$r\ge 2$ of the centered, unit-variance random variable
$(\Ex_{n,h,\cdot}[L_n]-\Exd[\Ex_{n,h,\cdot}[L_n]])/\sqrt{W_{n,h}}$, we
have the following \textit{Statulevi\v{c}ius condition}: for every
$n\in\N$, $h\in H$, and integer $r\ge 3$
\begin{equation*}
  \Bigg|\frac{{}^{r\!}\mathcal{K}_{n,h}}{\sqrt{W_{n,h}^r}}\Bigg|\le (r!)^3
  \Bigg(\max\bigg\{\frac{c}{\sqrt{W_{n,h,}}},\frac{c^3n}{\sqrt{W_{n,h}^3}}\bigg\}\Bigg)^{\!\!r-2}\,.
\end{equation*}
We observe that $\inf_{h\in H}w(h)=w(h_o)>0$ for some $h_o\in H$, as
$w$ is continuous throughout $(h_c,+\infty)$, and that $\inf_{h\in
  H}W_{n,h}\ge w(h_o)n/4$ for all sufficiently large
$n$ by \eqref{eq:stand_dev_EL}.  In this way, \cite[Theorem
  2.4]{doring2022} yields for all $n\in\N$ and $h\in H$
\begin{align}
  \nonumber
 &\sup_{u\in\Rl}\,\Bigg|\probd\bigg[\frac{\Ex_{n,h,\cdot}[L_n]-\Exd[\Ex_{n,h,\cdot}[L_n]]}{\sqrt{W_{n,h}}}\le u\bigg]-
  \frac{1}{\sqrt{2\pi}}\int_{-\infty}^u\ee^{-\frac{1}{2}z^2}\dd z\Bigg|\\
  \nonumber
  &\qquad\qquad\qquad\qquad\qquad\qquad\qquad\qquad
  \le 145  \Bigg(\max\bigg\{\frac{c}{\sqrt{W_{n,h}}},\frac{c^3n}{\sqrt{W_{n,h}^3}}\bigg\}\Bigg)^{\!\!\frac{1}{5}}\,.
\end{align}
Similarly to the proof of part $(ii)$ of Theorem
\ref{th:CLT+concentration}, replacing $u$ with
$\sqrt{nw(h)/W_{n,h}}\,u$ in the left-hand side of the above
bound we obtain for all sufficiently large $n$ and $h\in H$
\begin{align}
  \nonumber
 &\sup_{u\in\Rl}\,\Bigg|\probd\bigg[\frac{\Ex_{n,h,\cdot}[L_n]-\Exd[\Ex_{n,h,\cdot}[L_n]]}{\sqrt{nw(h)}}\le u\bigg]-
  \frac{1}{\sqrt{2\pi}}\int_{-\infty}^u\ee^{-\frac{1}{2}z^2}\dd z\Bigg|\\
  \nonumber
  &\qquad\qquad\qquad\le 145\Bigg(\max\bigg\{\frac{c}{\sqrt{W_{n,h}}},\frac{c^3n}{\sqrt{W_{n,h}^3}}\bigg\}\Bigg)^{\!\!\frac{1}{5}}
  +\frac{|W_{n,h}-nw(h)|}{nw(h)}\,,
\end{align}
so
\begin{align}
  \nonumber
 &\sup_{h\in H}\,\sup_{u\in\Rl}\,\Bigg|\probd\bigg[\frac{\Ex_{n,h,\cdot}[L_n]-\Exd[\Ex_{n,h,\cdot}[L_n]]}{\sqrt{nw(h)}}\le u\bigg]-
  \frac{1}{\sqrt{2\pi}}\int_{-\infty}^u\ee^{-\frac{1}{2}z^2}\dd z\Bigg|\\
  \nonumber
  &\qquad\le \frac{145}{n^{\frac{1}{10}}}\Bigg(\max\bigg\{\frac{2c}{\sqrt{w(h_o)}},\frac{8c^3}{\sqrt{w(h_o)^3}}\bigg\}\Bigg)^{\!\!\frac{1}{5}}
  +\frac{1}{w(h_o)}\sup_{h\in H}\bigg|\frac{W_{n,h}}{n}-w(h)\bigg|\,.
\end{align}
This proves part $(iv)$ of Theorem \ref{th:centering} thanks to
\eqref{eq:stand_dev_EL}.
\end{proof}

\medskip

\subsection{Quenched fluctuations}

The concentration bound in part $(i)$ of Theorem \ref{th:centering}
allows us to provide a rough estimate of the rate by which
$\Ex_{n,h,\omega}[L_n]$ approaches $\rho(h):=\partial_hf(h)$ for
typical $\omega$ as $n$ goes to infinity, as stated by Proposition
\ref{prop:fluctuations_EL}.

\begin{proof}[Proof of Proposition \ref{prop:fluctuations_EL}]
We show that there exists $\Omega_o\in\mathcal{F}$ with
$\probd[\Omega_o]=1$ such that for all $\omega\in\Omega_o$ and
$H\subset(h_c,+\infty)$ compact
\begin{equation}
  \label{prop:fluctuations_EL_0}
    \limsup_{n\uparrow\infty}\,\sup_{h\in H}\frac{|\Ex_{n,h,\omega}[L_n]-\Exd[\Ex_{n,h,\cdot}[L_n]]|}{\sqrt{n\ln n}}<+\infty\,.
  \end{equation}
Then, the proposition follows from part $(ii)$ of Theorem
\ref{th:centering}.

For $s\in\N$ put $h_s:=-s$ if $h_c=-\infty$ and $h_s:=h_c+1/s$ if
$h_c>-\infty$, so that $h_s>h_c$ and
$\lim_{s\uparrow\infty}h_s=h_c$. According to part $(i)$ of Theorem
\ref{th:centering} there exists a constant $\kappa_s>0$ such that for
every $n\in\N$ and $u\ge 0$
 \begin{equation*}
\sup_{h\in [h_s,+\infty)}\probd\bigg[\Big|\Ex_{n,h,\cdot}[L_n]-\Exd\big[\Ex_{n,h,\cdot}[L_n]\big]\Big|>u\bigg]\le 2\ee^{-\frac{\kappa_s u^2}{n+u^{5/3}}}\,.
 \end{equation*}
Let $\Lambda_{n,s}$ be the random variable that maps $\omega\in\Omega$
to
\begin{equation*}
  \Lambda_{n,s}(\omega):=\max_{i\in\{1,\ldots,n^3\}}
  \Bigg\{\bigg|\Ex_{n,h_s+i/n^2,\omega}[L_n]-\Exd\Big[\Ex_{n,h_s+i/n^2,\cdot}[L_n]\Big]\bigg|\Bigg\}\,.
\end{equation*}
Defining $\lambda_s:=10/\kappa_s$ for brevity, for all sufficiently
large $n$ we have $(\lambda_sn\ln n)^\frac{5}{6}\le n$ and
\begin{align}
  \nonumber
  &\probd\Big[\Lambda_{n,s}>\sqrt{\lambda_sn\ln n}\Big]\\
  \nonumber
  &\qquad\le\sum_{i=1}^{n^3}\probd\Bigg[\bigg|\Ex_{n,h_s+i/n^2,\cdot}[L_n]-\Exd\Big[\Ex_{n,h_s+i/n^2,\cdot}[L_n]\Big]\bigg|>\sqrt{\lambda_sn\ln n}\Bigg]\\
  \nonumber
  &\qquad\le 2n^3\ee^{-\frac{\kappa_s \lambda_sn\ln n}{n+(\lambda_sn\ln n)^{5/6}}}\le 2n^3\ee^{-\frac{\kappa_s\lambda_s}{2}\ln n}=\frac{2}{n^2}\,.
 \end{align}
Thus, the Borel--Cantelli lemma ensures us that there exists
$\Omega_o\in\mathcal{F}$ with $\probd[\Omega_o]=1$ such that
\begin{equation}
  \limsup_{n\uparrow\infty}\frac{\Lambda_{n,s}(\omega)}{\sqrt{\lambda_sn\ln n}}\le 1
  \label{prop:fluctuations_EL_123}
\end{equation}
for any $\omega\in\Omega_o$ and $s\in\N$. This implies
(\ref{prop:fluctuations_EL_0}). In fact, given $\omega\in\Omega_o$ and
$H\subset(h_c,+\infty)$ compact there exists $s\in\N$ such that
$[h_s,h_s+n]\supset H$ for all sufficiently large $n$. In turn, for
all these large $n$ and every $h\in H$ there exists
$i\in\{1,\ldots,n^3\}$ such that $h_s+(i-1)/n^2\le h\le
h_s+i/n^2$. Recalling that $L_n\le n$, it follows that
\begin{equation*}
  \Ex_{n,h,\omega}[L_n]=\frac{\Ex[L_n\ee^{\sum_{a=1}^n(h+\omega_a)X_a}X_n]}{\Ex[\ee^{\sum_{a=1}^n(h+\omega_a)X_a}X_n]}\\
  \le\ee^{\frac{1}{n}}\,\Ex_{n,h_s+i/n^2,\omega}[L_n]\,.
\end{equation*}
Using that $\ee^\zeta\le 1+\zeta\ee^\zeta$ for $\zeta\ge 0$ and,
again, that $L_n\le n$ we find
\begin{equation}
  \label{prop:fluctuations_EL_1}
  \Ex_{n,h,\omega}[L_n]\le\Ex_{n,h_s+i/n^2,\omega}[L_n]+\ee\le\Ex_{n,h_s+i/n^2,\omega}[L_n]+3\,.
  \end{equation}
Similarly, we have
\begin{equation}
  \Ex_{n,h,\omega}[L_n]\ge\ee^{-\frac{1}{n}}\,\Ex_{n,h_s+i/n^2,\omega}[L_n]\ge\Ex_{n,h_s+i/n^2,\omega}[L_n]-1\,.
  \label{prop:fluctuations_EL_2}
\end{equation}
Bounds (\ref{prop:fluctuations_EL_1}) and
(\ref{prop:fluctuations_EL_2}) show that
\begin{align}
  \nonumber
  &\Big|\Ex_{n,h,\omega}[L_n]-\Exd\big[\Ex_{n,h,\cdot}[L_n]\big]\Big|\\
  \nonumber
  &\qquad\le\bigg|\Ex_{n,h_s+i/n^2,\omega}[L_n]-\Exd\Big[\Ex_{n,h_s+i/n^2,\cdot}[L_n]\Big]\bigg|+4\le \Lambda_{n,s}(\omega)+4\,,
\end{align}
and the arbitrariness of $h$ and $n$ gives
\begin{equation*}
\limsup_{n\uparrow\infty}\,\sup_{h\in H}\frac{|\Ex_{n,h,\omega}[L_n]-\Exd[\Ex_{n,h,\cdot}[L_n]]|}{\sqrt{\lambda_sn\ln n}}\le 1
\end{equation*}
thanks to (\ref{prop:fluctuations_EL_123}).
\end{proof}

\section*{Acknowledgments}
 We thank Quentin Berger and Hubert Lacoin for insightful
 discussions. G.G. acknowledges the support of the Cariparo
 Foundation.


\begin{thebibliography}{99}


\bibitem{cf:AW} M. Aizenman and J. Wehr, {Rounding effects of quenched randomness on first-order phase transitions}, Comm. Math. Phys. {\bf 130} (1990), 489-528.

\bibitem{cf:AZ96}
S.~Albeverio and X.~Y.~Zhou, 
Free energy and some sample path properties of a random walk with random potential,
J.\ Stat.\ Phys.\ \textbf{83} (1996), 573--622.

\bibitem{cf:AZ14}
  K.~S.~Alexander and N.~Zygouras,
  Path properties of the disordered pinning model in the delocalized regime,
  Ann.\ Appl.\ Prob.\  \textbf{24} (2014), 599--615.
  
\bibitem{bingham1989} N.~H.\ Bingham, C.~M.\ Goldie, and J.~L.\ Teugels,
  \textit{Regular Variation} (Cambridge University Press, Cambridge, 1989).
  
\bibitem{cf:notesQB}
Q.~Berger,
\textit{Interfaces et Polym\`eres Al\'eatoires},  \href{https://perso.lpsm.paris/~bergerq/documents/Interfaces_Polymeres.pdf}{lecture notes} (2023).  

%\bibitem{campanino1979} M.\ Campanino, D.\ Capocaccia, and B.\ Tirozzi,
%  The local central limit theorem for Gibbs random fields,
%  Commun.\ Math.\ Phys.\ \textbf{70} (1979), 125--132.

\bibitem{caravenna2013} F.\ Caravenna and F.\ den Hollander,
  A general smoothing inequality for disordered polymers,
Electron.\ Commun.\ Probab.\ \textbf{18} (2013), 1--15.

 
\bibitem{cf:CGT12} F.~Caravenna, G.~Giacomin and F.~L.~Toninelli,
Copolymers at selective interfaces: settled issues and open problems,
Springer Proc.\ Math.\ \textbf{11} (2012), 289--311. 

%\bibitem{caravenna2013} F.\ Caravenna and F.\ den Hollander,
%  A general smoothing inequality for disordered polymers,
%Electron.\ Commun.\ Probab.\ \textbf{18} (2013), 1--15.

\bibitem{cf:CCP2019}
D.~Cheliotis,  Y.~Chino and J.~Poisat, Julien,
The random pinning model with correlated disorder given by a renewal set,
Ann. H. Lebesgue {\bf 2} (2019), 281-329.

%\bibitem{cf:DingHuandMaia}
%J.~Ding,  F.~Huang and J.~Maia,  \emph{Phase transitions in low-dimensional long-range random field Ising models}, arXiv:2412.19281
	


\bibitem{cf:DingXia} J.~Ding and J.~Xia, 
{Exponential decay of correlations in the two-dimensional random field Ising model},
Invent. Math.   {\bf 224} (2021),  999-1045.


%\bibitem{cf:DT77} R.~L.~Dobrushin and B.~Tirozzi, 
%The central limit theorem and the problem of equivalence of ensembles,
%Comm. Math. Phys. \textbf{54} (1977),  173--192.
  
\bibitem{doring2022} H.\ D\"oring, S.\ Jansen, and K.\ Schubert,
  The method of cumulants for the normal approximation,
  Probab.\ Surv.\ \textbf{19} (2022), 185--270.
  
\bibitem{cf:taming95}
 H.~von Dreifus, A.~Klein, and J.~Fernando Perez,
Taming Griffiths' singularities: infinite differentiability of quenched correlation functions,
Comm. Math. Phys. \textbf{170} (1995), 21--39.
  
\bibitem{giacomin2007} G.\ Giacomin,
  \textit{Random Polymer Models}
  (Imperial College Press, World Scientific, 2007).
  
\bibitem{cf:G-SF} G.\ Giacomin,
  \textit{Disorder and critical phenomena through basic probability models},
  Lecture Notes in Mathematics \textbf{2025}
  (Springer, Heidelberg, 2011).
  
\bibitem{cf:GG-MPAG2022} G.\ Giacomin and R.~L.\ Greenblatt,
The zeros of the partition function of the pinning model,
Math.\ Phys.\ Anal.\ Geom.\ \textbf{25} (2022), 16.
  
\bibitem{giacomin2020} G.\ Giacomin and B.\ Havret,
  Localization, big-jump regime and the effect of disorder for a class of generalized pinning models,
  J.\ Stat.\ Phys.\ \textbf{181} (2020), 2015--2049.
  
   
\bibitem{giacomin2006_1} G.\ Giacomin and F.~L.\ Toninelli,
The localized phase of disordered copolymers with adsorption, 
Lat.\ Am.\ J.\ Probab.\ \textbf{1} (2006), 149--180.

\bibitem{giacomin2006_2} G.\ Giacomin and F.L.\ Toninelli,
  Smoothing effect of quenched disorder on polymer depinning transitions,
  Commun.\ Math.\ Phys.\ \textbf{266} (2006), 1--16.


\bibitem{cf:GTirrel2009}
G. Giacomin and F. L. Toninelli,
On the irrelevant disorder regime of pinning models,
Ann. Probab. {\bf 37} (2009), 1841-1875. 

\bibitem{giacominzamparo} G.\ Giacomin and M.\ Zamparo,
  The effect of disorder on the big-jump phenomenon in the pinning model,
  in preparation.

%\bibitem{giacomin2006_2} G.\ Giacomin and F.L.\ Toninelli,
%  Smoothing effect of quenched disorder on polymer depinning transitions,
%  Commun.\ Math.\ Phys.\ \textbf{266} (2006), 1--16.

%\bibitem{giacomin2006_3} G.\ Giacomin and F.L.\ Toninelli,
%  Smoothing of depinning transitions for directed polymers with quenched disorder,
%  Phys.\ Rev.\ Lett.\ \textbf{96} (2006), 070602.
  
  \bibitem{cf:dH}
    F.\ den Hollander,
    \textit{Random polymers}, Lecture Notes in Mathematics \textbf{1974}
    (Springer-Verlag, Berlin, 2009).

\bibitem{feller1943} W.\ Feller,
The general form of the so-called law of the iterated logarithm,
Trans.\ Amer.\ Math.\ Soc.\ \textbf{54} (1943), 373-402.

    
%\bibitem{janson1988} S.\ Janson,
%  Normal convergence by higher semiinvariants with applications to sums of dependent random variables and random graphs,
%  Ann.\ Probab.\ \textbf{16} (1988), 305--312.
  
%\bibitem{martin1973} A.\ Martin-L\"of,
%  Mixing properties, differentiability of the free energy and the central limit theorem for a pure phase in the Ising model at low temperature,
%  Commun.\ Math.\ Phys.\ \textbf{32} 75--92 (1973)

\bibitem{cf:KM2003} Y.\ Kafri and D.\ Mukamel,
  Griffiths singularities in unbinding of strongly disordered polymers,
  Phys.\ Rev.\ Lett.\ \textbf{91} (2003), 055502.

\bibitem{cf:LS2017}
H.~Lacoin and J.~Sohier, 
Disorder relevance without Harris criterion: the case of pinning model with $\gamma$-stable environment,
Electron.\ J.\ Probab.\ \textbf{22} (2017), 50.

\bibitem{maurer2021} A.\ Maurer and M.\ Pontil,
Concentration inequalities under sub-gaussian and sub-exponential conditions,
Adv.\ Neural Inf.\ Process.\ \textbf{34} (2021), 7588--97.

%\bibitem{moricz1982}  F.A.\ M\'oricz, R.J.\ Serfling, and W.F.\ Stout,
%  Moment and probability bounds with quasi-superadditive structure for the maximum partial sum,
%Ann.\ Probab.\ \textbf{10} (1982), 1032--1040.
  
%\bibitem{mcdiarmid1989} C.J.H.\ McDiarmid,
%  On the method of bounded differences, in \textit{Surveys in Combinatorics, 1989}, J. Siemons ed.,
%(Cambridge University Press, Cambridge, 1989), pp. 148--188

\bibitem{peccati2011} G.\ Peccati and M.~S.\ Taqqu,
  \textit{Wiener Chaos: Moments, Cumulants and Diagrams - A survey with computer implementation}
  (Springer, Milan, 2011).
  
 % \bibitem{cf:Poisat}
  %J.~Poisat, On quenched and annealed critical curves of random pinning model with finite range correlations, Ann. Inst. Henri Poincar\'e Probab. Stat. {\bf 49} (2013), 456-482.



%\bibitem{petrov1975} V.V.\ Petrov,
%  \textit{Sums of independent random variables}
%  (Springer-Verlag, Berlin, 1975).

\bibitem{shlosman1986} S.~B.\ Shlosman,
  Signs of the Ising model Ursell functions,
Commun.\ Math.\ Phys.\ \textbf{102} (1986), 679--686. 
  
\bibitem{sylvester1975} G.~S.\ Sylvester,
  Representations and inequalities for Ising model Ursell functions,
  Commun.\ Math.\ Phys.\ \textbf{42} (1975), 209--220.
  
 \bibitem{cf:Ton-AAP08}
 F.L.~Toninelli, 
Disordered pinning models and copolymers: beyond annealed bounds,
Ann.\ Appl.\ Probab.\ \textbf{18} (2008),  1569--1587. 
  

  \bibitem{cf:Velenik} Y.~Velenik, 
Localization and delocalization of random interfaces,  
Probab. Surv. \textbf{3} (2006), 112--169. 

 \bibitem{cf:VBB}
A.~Vezzani, E.~Barkai and R.~Burioni, {Single-big-jump principle in physical modeling},
Phys. Rev. E {\bf 100} (2019), 012108

%\bibitem{zamparo2019} M.\ Zamparo,
%  Large deviations in renewal models of statistical mechanics,
%  J.\ Phys.\ A: Math.\ Theor.\ \textbf{52} (2019), 495004.



\bibitem{zamparo2021} M.\ Zamparo,
  Critical fluctuations in renewal models of statistical mechanics,
J.\ Math.\ Phys.\ \textbf{62} (2021), 113301.

\bibitem{zamparo2022} M.\ Zamparo,
  Renewal model for dependent binary sequences,
  J.\ Stat.\ Phys.\ \textbf{187} (2022), 5 (34 pp.). 
  
  \bibitem{cf:watbled} F.\ Watbled, Concentration inequalities for disordered models,
  Lat.\ Am.\ J.\ Probab.\ \textbf{9} (2012), 129--140.
   

  
\end{thebibliography}
\end{document}